\documentclass[11pt,reqno]{amsart}
\usepackage[foot]{amsaddr}
\usepackage{latexsym}
\usepackage{amsmath}
\usepackage{amssymb}
\usepackage{amsthm}
\usepackage{epsfig}
\usepackage{xcolor}
\usepackage{graphicx}
\usepackage{bm}
\usepackage{enumitem}
\usepackage{mathtools}
\usepackage{tikz}
\usetikzlibrary{arrows.meta,calc,decorations.pathreplacing}
\usepackage{float}
\usepackage{placeins}
\usepackage{mathrsfs}
\usepackage[toc,page]{appendix}
\usepackage{bbm}
\usepackage{ytableau}
\usepackage{tkz-berge}
\usepackage{booktabs}
\usepackage{microtype}
\usepackage[linesnumbered, ruled, vlined]{algorithm2e}
\usepackage[top=1in, bottom=1in, left=1in, right=1in]{geometry}
\usepackage[colorlinks=true]{hyperref}

\usepackage{array}
\usepackage{multirow}

\hypersetup{
    colorlinks,
    linkcolor={blue!80!black},
    citecolor={blue!80!black},
    urlcolor={blue!80!black},
}
\colorlet{linkequation}{blue}

\definecolor{proofblue}{HTML}{245B93}
\definecolor{proofteal}{HTML}{16857A}
\definecolor{prooforange}{HTML}{C4661F}
\definecolor{proofgreen}{HTML}{3A7D44}
\definecolor{proofink}{HTML}{243447}
\definecolor{proofbg}{HTML}{F6F8FB}

\def\R{\mathbb{R}}

\def\a{\mathbf{a}}
\def\g{\mathbf{g}}
\def\e{\mathbf{e}}

\def\u{\mathbf{u}}
\def\bv{\mathbf{v}}
\def\x{\mathbf{x}}
\def\y{\mathbf{y}}
\def\z{\mathbf{z}}
\def\w{\mathbf{w}}
\def\m{\mathbf{m}}
\def\n{\mathbf{n}}

\def\P{\mathbb{P}}
\def\C{\mathbb{C}}
\def\E{\mathbb{E}}
\def\N{\mathbb{N}}
\def\1{\mathbf{1}}
\def\cA{\mathcal{A}}
\def\cU{\mathcal{U}}
\def\cX{\mathcal{X}}
\def\cY{\mathcal{Y}}
\def\cZ{\mathcal{Z}}
\def\cB{\mathcal{B}}
\def\cE{\mathcal{E}}

\def\cQ{\mathcal{Q}}
\def\cP{\mathcal{P}}
\def\cC{\mathcal{C}}

\def\cV{\mathcal{V}}
\def\cL{\mathcal{L}}

\def\cR{\mathcal{R}}

\def\cW{\mathcal{W}}
\def\cD{\mathcal{D}}
\def\cS{\mathcal{S}}
\def\bD{\mathbf{D}}

\def\de{\operatorname{d}}
\def\Im{\operatorname{Im}}
\def\Re{\operatorname{Re}}
\def\op{\mathrm{op}}

\def\eps{\varepsilon}
\def\Oprec{O_{\prec}}

\def\Tr{\operatorname{Tr}}

\def\diag{\operatorname{diag}}

\def\supp{\operatorname{supp}}

\def\spec{\operatorname{spec}}
\def\bw{\boldsymbol{w}}
\def\by{\boldsymbol{y}}

\def\<{\langle} 
\def\>{\rangle}

\newtheorem{theorem}{Theorem}[section]
\newtheorem{lemma}[theorem]{Lemma}
\newtheorem{coro}[theorem]{Corollary}

\newtheorem{proposition}[theorem]{Proposition}

\newtheorem{assumption}{Assumption}

\theoremstyle{definition}
\newtheorem{defi}[theorem]{Definition}
\newtheorem{example}[theorem]{Example}
\newtheorem{remark}[theorem]{Remark}

\numberwithin{equation}{section}
\numberwithin{figure}{section}

\title[Anisotropic local law under quadratic-form concentration]{The anisotropic local law for sample covariance matrices\\
under quadratic-form concentration}

\author[Ma]{Renyuan Ma}
\email{jack.ma.rm2545@yale.edu}

\author[Misiakiewicz]{Theodor Misiakiewicz$^{\dagger}$}
\email{theodor.misiakiewicz@yale.edu}

\address{$^{\dagger}$Department of Statistics and Data Science, Yale University}

\date{}

\allowdisplaybreaks

\begin{document}

\begin{abstract}
           We study sample covariance matrices $K = \frac{1}{N} \sum_{i=1}^N \x_i \x_i^* \in \R^{n \times n}$ in the proportional regime $n \asymp N$. The columns $\x_1, \ldots, \x_N \in \R^n$ are independent and centered, with common covariance $\E \x_i \x_i^* = \Sigma$, but may otherwise have strongly and nonlinearly dependent coordinates. Assuming only that quadratic forms of the columns concentrate uniformly at the optimal rate $| \x_i^* A \x_i - \Tr \Sigma A | \prec \| A \|_F$, together with polynomial norm moments and a standard nondegeneracy condition on $\Sigma$, we prove the optimal anisotropic local law: on regular spectral domains, uniformly down to spectral scales $\eta:= \Im z \geq N^{-1 + \tau}$,
    \[
    \big| \< \u , \big( (K-z)^{-1} - (-zI_n-z\widetilde m_0(z)\Sigma \big)^{-1} \bv \> \big| \prec \sqrt{\frac{\Im \widetilde m_0 (z)}{N\eta}} + \frac{1}{N\eta}
    \] 
    for all deterministic unit vectors $\u,\bv \in \C^n$, where $\widetilde m_0(z)$ is the Stieltjes transform of the deformed Marchenko-Pastur law. This removes the higher-cumulant tensor assumption of Fan, Ma, Paquette, and Wang (2026), thereby answering the question raised in their work. The result applies, among other examples, to every centered log-concave column distribution with bounded, nondegenerate covariance, nonlinear tilts of Gaussian vectors, deep random features, and a high-temperature spherical 4-spin model for which the cumulant assumption is known to fail.

        The proof adapts the zig-zag strategy to a Hermitian block linearization of $K$. Along a covariance-preserving Ornstein-Uhlenbeck flow, we follow a three-parameter characteristic of the block Dyson equation. The forward comparison combines It\^o calculus with Ward identities. For the reverse comparison, we differentiate fluctuation moments along the same Ornstein–Uhlenbeck flow and decompose its generator into one-column contributions. Conditional Schur-complement identities express these contributions in terms of linear and quadratic forms in one column, which are controlled by quadratic-form concentration alone, without expanding the vector into its coordinates. 

\end{abstract}

\maketitle

\setcounter{tocdepth}{1}
\tableofcontents

\clearpage

\section{Introduction}

\subsection{The problem}

Let $\x_1,\ldots,\x_N\in\R^n$ be independent centered random vectors with common covariance $\Sigma=\E\x_i\x_i^*$, and let
\begin{equation}\label{eq:Kdef}
 K=\frac1N\sum_{i=1}^N\x_i\x_i^*
   =\frac1N X X^*,\qquad X=[\x_1,\ldots,\x_N]\in \R^{n \times N},
\end{equation}
be the associated sample covariance matrix. We work in the proportional regime $n\asymp N$. Together with its companion Gram matrix $\widetilde K=N^{-1}X^*X$, the matrix $K$ is a basic object of random matrix theory and high-dimensional statistics, with applications ranging from covariance estimation and regression \cite{bai2010spectral,dobriban2018high,hastie2022surprises} and wireless communications \cite{tulino2004random,couillet2011random} to kernel methods and the theory of neural networks \cite{el2010spectrum,cheng2013spectrum,pennington2017nonlinear,louart2018random,jacot2018neural,mei2022generalization,couillet2022random}. Its global spectral theory goes back to the seminal work of Marchenko and Pastur \cite{marvcenko1967distribution}. Under suitable concentration assumptions on the columns, the empirical spectral distribution of $K$ is well approximated by the \emph{deformed Marchenko--Pastur law}, a deterministic distribution depending only on the aspect ratio $n/N$ and spectrum of $\Sigma$ \cite{silverstein1995empirical,silverstein1995strong}. This macroscopic description, however, neither resolves the spectrum at the scale of individual eigenvalue spacings nor describes the orientation of the eigenvectors, both of which are essential in many modern applications.

The \emph{local law} program \cite{erdHos2009local,erdHos2017dynamical} seeks to describe these finer spectral properties through the resolvent $R(z)=(K-z)^{-1}$, where $z=E+i\eta\in\C^+$. The imaginary part $\eta$ sets the resolution, and a local law asserts that $R(z)$ is close to a deterministic matrix uniformly down to the optimal scale $\eta\ge N^{-1+\tau}$, for any fixed $\tau >0$, just above the typical eigenvalue spacing. \emph{Averaged laws} control the normalized trace of $R(z)$, \emph{entrywise laws} its individual entries $R_{ij} (z)$, and \emph{anisotropic laws} the bilinear forms $\u^*R(z)\bv$ along arbitrary deterministic directions. The most comprehensive results have been established for separable columns $\x_i=\Sigma^{1/2}\z_i$, where $\z_i$ has independent entries. Pillai and Yin \cite{pillai2014universality} proved the averaged and entrywise laws for $\Sigma=I_n$, and Bloemendal, Erd\H{o}s, Knowles, Yau, and Yin \cite{bloemendal2014isotropic} established the corresponding isotropic law. For general population covariance, Knowles and Yin \cite{knowles2017anisotropic} proved the anisotropic law in the optimal form
\begin{equation}\label{eq:intro-anisotropic-local-law}
 \big|\u^*R(z)\bv-\u^*\Pi(z)\bv\big|\prec\Psi(z):=\sqrt{\frac{\Im\widetilde m_0(z)}{N\eta}}+\frac1{N\eta},\qquad
 \Pi(z)=-\big(z+z\widetilde m_0(z)\Sigma\big)^{-1},
\end{equation}
uniformly over deterministic unit vectors $\u,\bv\in\C^n$ and over $z$ in regular spectral domains. Here $\widetilde m_0$ is the Stieltjes transform of the deformed Marchenko--Pastur law associated with $\widetilde K$, characterized by the fixed-point equation \eqref{eq:MP} below, and $\prec$ denotes stochastic domination, that is, an inequality valid up to $N^{\eps}$ factors outside an event of probability at most $C_{\eps,D}N^{-D}$ for every $\eps,D>0$ (Definition~\ref{def:prec}). 

The anisotropic law is essential for many applications. It implies eigenvector delocalization in every deterministic direction and is a key ingredient in the spectral analysis of spiked models \cite{baik2005phase,bloemendal2016principal} and in the proofs of edge universality for general population covariance matrices \cite{bao2015universality,lee2016tracy,knowles2017anisotropic,fan2022tracy}. Estimates of this type also underpin the deterministic equivalents used to analyze ridge regression and random-feature models in high dimensions \cite{louart2018random,hastie2022surprises,mei2022generalization,cheng2024dimension,misiakiewicz2024non}. Separability, however, is a restrictive assumption. Proofs of \eqref{eq:intro-anisotropic-local-law} and related results typically rely on resolvent expansions and fluctuation averaging over the independent coordinates of $\z_i$. Yet many natural high-dimensional distributions admit no such representation. Examples include uniform measures on convex bodies and, more generally, log-concave measures; Gibbs measures with nonquadratic Hamiltonians; and random-feature models of the form $\x=\sigma(W\z)$ arising in machine learning \cite{pennington2017nonlinear,louart2018random,mei2022generalization}. In such settings, the column coordinates can exhibit dense nonlinear dependencies that cannot be eliminated by a linear change of variables. 

At global spectral scales, independence has long been known to be unnecessary. Bai and Zhou \cite{bai2008large} showed that the deformed Marchenko-Pastur limit persists whenever 
\begin{equation}\label{eq:qualitative_proba_convergence}
\frac{1}{n}(\x^*A\x-\Tr\Sigma A)\overset{\P}{\longrightarrow} 0,
\end{equation}
uniformly over matrices of bounded operator norm. Yaskov \cite{yaskov2016necessary} subsequently identified necessary and sufficient forms of this condition; see also \cite{elkaroui2009concentration,pajor2009limiting} for global laws with dependent coordinates and \cite{louart2018concentration} for deterministic equivalents of the resolvent at fixed spectral parameters under concentration of measure. The key observation is that, by the Schur complement formula, the resolvent of $K$ depends on any individual column only through linear and quadratic forms in that column. Controlling these forms therefore suffices to establish global laws. In the local setting, Pillai and Yin similarly noted that independence among the entries of a column enters their entrywise-law argument only through large-deviation estimates for these forms \cite[Theorem~3.6]{pillai2014universality}. 

At local spectral scales, the qualitative law of large numbers \eqref{eq:qualitative_proba_convergence} is no longer sufficient \cite[Section~2.6]{fan2026anisotropic}. The natural quantitative strengthening is concentration at the optimal rate,
\begin{equation}\label{eq:intro-quad-form-conc}
 \big|\x^*A\x-\Tr\Sigma A\big|\prec\|A\|_F,
\end{equation}
uniformly over deterministic matrices $A\in\C^{n\times n}$. This is the rate achieved by a Gaussian vector, and it holds in remarkable generality: for separable columns with uniformly bounded moments by classical large deviation estimates \cite{rudelson2013hanson,erdHos2017dynamical}; for vectors satisfying convex concentration with subpolynomial constant, in particular under a dimension-free logarithmic Sobolev inequality, by Adamczak's Hanson--Wright inequality for dependent vectors \cite{adamczak2015note}; and for every isotropic log-concave vector \cite{bao2025extreme} as a consequence of the recent progress of Chen \cite{chen2021almost} and Klartag and Lehec \cite{klartag2022bourgain} on the Kannan--Lov\'asz--Simonovits conjecture.

Building a local theory on the quadratic form concentration assumption alone is the program initiated by Fan, Ma, Paquette, and Wang \cite{fan2026anisotropic}. Under \eqref{eq:intro-quad-form-conc}, polynomial norm moments, and a standard nondegeneracy condition on $\Sigma$ (Assumptions~\ref{ass:A1} and~\ref{ass:A2} below), they proved the optimal averaged local law for $K$ and $\widetilde K$ together with an entrywise law for the Gram resolvent, and derived the standard consequences: absence of outliers, eigenvalue rigidity, and $\ell^\infty$-delocalization of the eigenvectors of $\widetilde K$. Their anisotropic law, however, requires an additional hypothesis on all higher-order cumulant tensors $\kappa_k(\x_i)$, $k\ge3$, of the columns \cite[Assumption~3]{fan2026anisotropic}: a non-generic decay of cumulant contractions against rank-one directions which drives their tensor-network analysis of fluctuation averaging. They verify the additional hypothesis for several structured models, including separable columns and certain conditionally mean-zero distributions, as well as one-layer random-feature models under restricted activations and random-feature tilts of Gaussian measures at small coupling.  As they demonstrate with a spherical spin-glass example (Example~\ref{ex:spherical-p-spin}), this higher-cumulant condition is genuinely stronger than quadratic-form concentration: there are measures satisfying a logarithmic Sobolev inequality with a dimension-free constant, hence \eqref{eq:intro-quad-form-conc}, whose fourth cumulant is a dense, disordered tensor that violates their Assumption~3. For such disordered-cumulant vectors, the anisotropic local law remained open.

This isolates a clean question: \emph{does concentration of quadratic forms \eqref{eq:intro-quad-form-conc} alone imply the anisotropic local law at the optimal scale?}

\subsection{Main contributions}

This paper answers the question affirmatively. Our main result is the optimal anisotropic local law \eqref{eq:intro-anisotropic-local-law} under the concentration assumption \eqref{eq:intro-quad-form-conc}. The following informal theorem is a special case of Theorem~\ref{thm:target} below.

\begin{theorem}[Anisotropic local law (informal)]\label{thm:informal}
Suppose that the columns $\x_i$ are independent and centered with covariance $\Sigma$, satisfy \eqref{eq:intro-quad-form-conc} and polynomial norm moments, and that $n\asymp N$, $\|\Sigma\|_\op\le C$, and a positive fraction of the eigenvalues of $\Sigma$ is bounded below. Let $\bD$ be a regular spectral domain bounded away from the origin. Then, uniformly in $z\in\bD$ and over deterministic unit vectors $\u,\bv\in\C^n$,
\[
 \big|\u^*R(z)\bv-\u^*\Pi(z)\bv\big|\prec\Psi(z),
\]
where $\Pi(z)$ and $\Psi(z)$ are stated in \eqref{eq:intro-anisotropic-local-law}.
The same bound holds for every block of the linearized resolvent of $X$, in particular for the Gram resolvent $(\widetilde K-z)^{-1}$ with deterministic equivalent $\widetilde m_0(z)I_N$.
\end{theorem}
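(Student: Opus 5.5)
The plan follows the zig-zag strategy outlined in the abstract, applied to the Hermitian linearization
\[
H(z)=\begin{pmatrix} -zI_n & N^{-1/2}X\\ N^{-1/2}X^* & -I_N\end{pmatrix},
\]
or an equivalent variant whose resolvent blocks recover $R(z)$ and $(\widetilde K-z)^{-1}$. Its deterministic equivalent $M(z)$ solves the block Dyson equation. The $n\times n$ block of $M$ is $\Pi(z)$ and the $N\times N$ block is $\widetilde m_0(z)I_N$. The averaged local law of Fan, Ma, Paquette and Wang \cite{fan2026anisotropic} is taken as input; it holds under Assumptions~\ref{ass:A1} and~\ref{ass:A2} alone. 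The goal is to upgrade it to isotropic control of $\langle \u, (G-M)\bv\rangle$.

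\emph{Interpolation and the forward step.} We interpolate along a covariance-preserving Ornstein--Uhlenbeck flow
\[
\x_i(t)=e^{-t/2}\x_i+\sqrt{1-e^{-t}}\,\Sigma^{1/2}\g_i .
\]
The law at time $0$ is the target. For large $t$ the columns are essentially Gaussian with covariance $\Sigma$, and there the anisotropic law is known by Knowles and Yin \cite{knowles2017anisotropic}. At each time we let the spectral parameter move along a characteristic of the block Dyson equation. This is a three-parameter characteristic in $(z,t)$ together with the block weights, chosen so that $\eta$ decreases from order one to $N^{-1+\tau}$ while the deterministic equivalent is transported consistently.

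In the forward (zig) step we apply Itô's formula to $\langle \u,(G_t-M_t)\bv\rangle^{2p}$. Along the characteristic the leading drift cancels. What remains is a martingale part and quadratic-variation terms, which we bound by Ward identities, $\sum_j |G_{\u j}|^2=\Im G_{\u\u}/\eta$, together with the a priori bound on $\Im M$. A Gronwall and stopping-time argument then propagates $\Psi$-control from large $\eta$ down to small $\eta$, and hence to the Gaussian-divisible ensemble at time $t$.

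\emph{Reverse step.} The zag step removes the Gaussian component at fixed $z$ with $\eta\ge N^{-1+\tau}$ by comparing $\E|\langle\u,(G-M)\bv\rangle|^{2p}$ at times $t$ and $0$. Differentiating in $t$ along the OU flow, the generator splits into a sum over columns $i$. Each summand is $-\tfrac12\x_i\cdot\nabla_{\x_i}+\tfrac12\Tr(\Sigma\nabla^2_{\x_i})$ acting on the resolvent functional. We must not expand in the coordinates of $\x_i$. Instead we use Schur-complement identities for column $i$: we express $G$ through the minor $G^{(i)}$ (column removed) plus rank-one corrections. These corrections involve the linear forms $\langle \x_i, G^{(i)}\u\rangle$ and the quadratic forms $\x_i^*G^{(i)}\x_i$, $\x_i^*G^{(i)}\u\u^*G^{(i)}\x_i$.

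Conditionally on the other columns, the two pieces of the generator nearly cancel in expectation, because $\E\x_i\x_i^*=\Sigma$ and the Gaussian part has the same covariance. The residual consists of centered quadratic forms $\x_i^*A\x_i-\Tr\Sigma A$ with $A$ built from $G^{(i)}$ and the vectors $\u,\bv$. By \eqref{eq:intro-quad-form-conc} each is $\prec\|A\|_F$, and the Frobenius norms are controlled by Ward identities, for instance $\|G^{(i)}\u\u^*G^{(i)}\|_F\le \Im G^{(i)}_{\u\u}/\eta$. Linear forms are handled by applying concentration to rank-one $A=\bw\bw^*$, which gives $|\langle \x_i,\bw\rangle|^2\prec\|\bw\|^2$. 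The polynomial moment assumption controls the low-probability events.

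\emph{Main obstacle.} The hard part is obtaining a gain of order $N^{-1}$ per column in the reverse step, as opposed to an $O(1)$ per-column error which would sum to $O(N)$. Separable proofs obtain this through cumulant expansions and fluctuation averaging over coordinates, which is unavailable here. The first approach I would try is to organize the summand so that each term carries either a factor $\Psi$ from the induction hypothesis or a centered quadratic form whose conditional expectation vanishes exactly. The time derivative would then be bounded by $C(\Psi^2$-type$)\cdot\E|\cdot|^{2p-1}$ and closed via Gronwall over a time interval of length $O(\log N)$.

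Once the three steps are in place, a grid argument in $z$ together with a net argument over the deterministic unit vectors $\u,\bv$ would extend the bound to hold simultaneously for all $z$ and all such vectors. The other resolvent blocks, including the Gram resolvent, are treated by the same argument.
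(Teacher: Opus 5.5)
\textbf{The gap is in how you close the reverse step.} Under quadratic-form concentration alone, the one-column generator terms are not $\Psi^2$-small.

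The centerings $\E_i\x_i^*\mathbf b=0$ and $\E_i[\x_i^*A\x_i-\Tr\Sigma A]=0$ remove only the leading pieces. What remains are mixed moments such as $\E_i\bigl[\cP_i\,(N^{-1/2}\x_i^*B_i\u)\,\Tr(\x_i\x_i^*-\Sigma)N^{-1}A_i\bigr]$, which are essentially third-cumulant contractions of $\x_i$, and covariances of two centered quadratic forms, which involve fourth cumulants. Assumption~\ref{ass:A2} controls these only through Cauchy--Schwarz. That brings in the $\Sigma$-weighted Ward quantities $\|\Sigma^{1/2}B_i\u\|_2^2\lesssim\Lambda_h$, and $\Lambda_h$ equals $\Im\widetilde m_0/\eta$ at the terminal source.

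Second-order information therefore yields only (cf.\ Lemma~\ref{lem:L_i-bounds})
\[
\Bigl|\frac{\de}{\de s}\E|Y(s)|^{2L}\Bigr|\prec N^{C\delta}(\Lambda_h+1)\sum_{l=1}^{2L}\Phi_1^{l}\,\E|Y(s)|^{2L-l}.
\]
This is linear in a coarse parameter $\Phi_1$, not quadratic in $\Psi$. After taking $2L$-th roots it gives an additive error of order $t(\Lambda_h+1)\Phi_1$. Over an OU time of order one or $\log N$ at the target spectral parameter, this is about $\Phi_1\Im\widetilde m_0/\eta$, which can exceed $\Psi$ by a factor up to $N^{1-\tau}$. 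So a single long zig followed by one Gronwall zag cannot close, and neither can a comparison started from the Gaussian (Knowles--Yin) ensemble. Your ``first approach'' is exactly the step that fails: beating this bound would require the higher-cumulant information the theorem is meant to avoid.

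\textbf{What is missing is the alternating zig-zag iteration together with the spectral bootstrap.}

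First, the time steps must be short. The paper takes sources $h_0=3\log N>h_1>\dots>h_K=0$ with $t_k=h_{k-1}-h_k$ and requires $t_k(\Lambda_{h_k}+1)\le CN^\delta$. This is possible because $\Lambda_h\lesssim e^{-h}+h^{-1}$ for $h>\eta$ and $\Lambda_h\lesssim\eta^{-1}$ otherwise. Each zig restarts from the original ensemble, and each zag reverses only that short time $t_k$.

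Second, the zag needs a priori minor bounds $N^{-1/2}\|B_i\u\|_2,\ N^{-1}\|A_i\|_F\prec\Phi_1$ with $N^{10\delta}\Phi_1\le N^{C_0\delta}\Psi_h$, together with denominator bounds. These come from the law already proved at scale $N^{\delta}\eta$, via Herglotz monotonicity and Ward identities on minors (Lemmas~\ref{lem:minor-estimates} and~\ref{lem:bootstrap-in-zag}), not from the current scale.

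Two further points need correcting.
\begin{itemize}
\item The averaged law of \cite{fan2026anisotropic} is not the input your zig needs. The drift $\cR\cS[\cR-M]\cR$ involves traces of generalized resolvents with sources $Z_h\in\operatorname{span}(I_n,\Sigma)$ and $\beta_h\in\C$, at all OU times, because the characteristic leaves the family $\diag(zI_n,I_N)$ unless $\Sigma\propto I_n$. The paper must prove an averaged law there, with a new stability analysis (Proposition~\ref{prop:averaged zig} and Theorem~\ref{thm:strong-stability}).
\item Your net over unit vectors is wrong. A bound simultaneous in all unit vectors would give $\|R-\Pi\|_\op\prec\Psi$, which is false for $\eta\ll1$. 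The statement only needs the $\prec$-constants to be uniform in $\u,\bv$.
\end{itemize}
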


We impose no symmetry and no structural condition on the columns beyond the quadratic concentration assumption \eqref{eq:intro-quad-form-conc}. In particular, Theorem~\ref{thm:informal} removes the higher-order cumulant condition in \cite[Theorem~2.8]{fan2026anisotropic}. The error $\Psi(z)$ and the spectral scale $\eta\ge N^{-1+\tau}$ are optimal, matching the separable theory \cite{knowles2017anisotropic}, and the estimate holds simultaneously for all $z\in\bD$ on a single event of overwhelming probability. As a consequence, the eigenvectors of $K$ and of $\widetilde K$ associated with eigenvalues in the regular part of the spectrum are delocalized in every deterministic direction (Corollary~\ref{cor:delocalization}). 

The hypotheses are verified in Section~\ref{sec:examples} for several classes of columns whose coordinates are nonlinearly dependent and for which no cumulant structure is assumed: columns satisfying convex concentration with a subpolynomial constant, in particular columns whose law satisfies a dimension-free logarithmic Sobolev inequality (Example~\ref{ex:convex}); all centered log-concave columns with bounded, nondegenerate covariance, for instance the centered uniform measure on a convex body (Example~\ref{ex:logconcave}); nonlinear tilts $e^{-\frac12\x^*\Lambda\x-W(\x)}$ of Gaussian measures with a uniformly bounded Hessian perturbation $W$, with no further structure imposed on $W$ (Example~\ref{ex:tilt}); deep random-feature models of fixed depth with Lipschitz activations (Example~\ref{ex:deepRF}); and independent samples from the spherical $p$-spin Gibbs measure at high temperature (Example~\ref{ex:spherical-p-spin}). The last example is the one used in \cite{fan2026anisotropic} to show that their cumulant hypothesis can fail under \eqref{eq:intro-quad-form-conc}; Theorem~\ref{thm:informal} yields the anisotropic local law for it. For the tilted and compositional models, verifying the cumulant hypothesis was left open in \cite{fan2026anisotropic}; we show that no such verification is needed. In the separable case, finally, our argument gives a new, dynamical proof of the anisotropic local law of Knowles and Yin \cite{knowles2017anisotropic} for real columns.

\subsection{Technical contribution}\label{sec:technical-contribution}

Our proof adapts the zig-zag strategy to sample covariance matrices and provide a novel implementation of the comparison step that requires no information about higher cumulants. Introduced by Cipolloni, Erd\H{o}s, and Schr\"oder \cite{cipolloni2022mesoscopic} and developed in \cite{cipolloni2023edge,cipolloni2024out,erdos2024eigenstate,erdos2025zigzag}, the zig-zag strategy proves local laws dynamically; see \cite{erdos2025lecture} for an exposition. The random matrix evolves along a stochastic flow while the spectral parameter follows a characteristic of the associated Dyson equation, chosen so that the leading It\^o correction is cancelled. The local law is transported from far away from the spectrum, where it is elementary, down to the optimal scale through a sequence of short steps. Each step combines transport along the flow and characteristic (the zig) with a Green function comparison that removes the Gaussian component injected by the flow (the zag).

Our implementation differs from previous zig-zag proofs in two main respects. First, since $K$ is quadratic in $X$, we work with the block linearization $\cR$ of $X$ with diagonal source $(Z,\beta I_N)$ with $Z\in\C^{n\times n}$ and $\beta\in\C$. The associated block Dyson equation admits an explicit characteristic that stays in the three-parameter family $Z\in\operatorname{span}(I_n,\Sigma)$, $\beta\in\C$, and along which the deterministic equivalent evolves by scalar multiplication. We use the covariance-preserving Ornstein--Uhlenbeck flow on the columns. The averaged law needed along the characteristic for the zig step is proved separately, by a static fluctuation-averaging argument adapted from \cite{fan2026anisotropic}. Second, and principally, our zag step treats each column as a whole. Previous implementations use entrywise Green function comparisons based on cumulant expansions in the matrix entries, which require information beyond \eqref{eq:intro-quad-form-conc}. We instead differentiate the fluctuation moments $\E|\u^*(\cR-M)\u|^{2L}$ along the column Ornstein--Uhlenbeck flow. The generator decomposes as a sum of one-column generators $\cL_i$, allowing us to estimate each contribution $\E\,\cL_i(\cdot)$ conditionally on the other columns.

Let us further comment on that second point and the reason the reverse comparison step avoids the cumulant assumption in \cite{fan2026anisotropic}. The static proofs of anisotropic laws in \cite{knowles2017anisotropic,fan2026anisotropic} expand $\u^*R\bv$ in the coordinates of a column. Taking high moments then produces contractions of cumulant tensors $\kappa_k(\x)$ against resolvents, whose control requires suitable structure in these tensors. In our proof, the Schur complement formula expresses the dependence of $\u^*\cR\u$ on $\x_i$, conditionally on the other columns, as an explicit rational function of a linear form $\x_i^*\mathbf b_i$ and a quadratic form $\x_i^*A_i\x_i$. Here $\mathbf b_i$ and $A_i$ depend only on the minor and are therefore independent of $\x_i$ (Lemma~\ref{lem:schur-one-column}). The one-column generator $\cL_i$ preserves this class of observables (Lemma~\ref{lem:product-and-chain-rule} and the proof of Lemma~\ref{lem:L_i-bounds}). These linear and quadratic forms are precisely the quantities controlled by \eqref{eq:intro-quad-form-conc}, and their centerings, $\E_i\,\x_i^*\mathbf b_i=0$ and $\E_i[\x_i^*A_i\x_i-\Tr\Sigma A_i]=0$, provides the leading cancellations. Two features make this second-order information sufficient. First, the Gaussian noise driving the flow gives the exact generator identity
\[
\frac{\de}{\de t}\E f(X_t)
=
\sum_i\E\,\cL_i f(X_t),
\]
with no higher-order expansion remainder. Second, each zag reverses only the short flow of the preceding zig. The generator estimates give a sublinear differential inequality for a normalized fluctuation moment and integrating its $2L$-th root yields an additive comparison error. The time-step condition and the gap between the coarse and target precisions allow this error to be absorbed at each zag; see Proposition~\ref{prop:full-block-zag}.

\subsection{Related work}

\emph{Local laws for covariance-type matrices.}
Beyond the separable setting \cite{pillai2014universality,bloemendal2014isotropic,knowles2017anisotropic}, the matrix Dyson equation has been used to establish local laws for Gram matrices with general variance profiles and for ensembles with correlated entries \cite{alt2017local,ajanki2019stability,erdos2019random}. These works impose quantitative conditions on correlations across entries, whereas we allow dense dependence among the coordinates of each independent column.

\emph{Dependent coordinates.}
For log-concave ensembles, \cite{adamczak2010quantitative} established operator-norm bounds for $K$, and \cite{chafai2018convergence} proved convergence of the extreme eigenvalues. Bao and Xu \cite{bao2025extreme} obtained eigenvalue rigidity for isotropic log-concave columns and, under an additional unconditionality assumption, upper-edge Tracy-Widom fluctuations. Their proof builds on the entrywise law of \cite{pillai2014universality} and the concentration estimates following from \cite{chen2021almost,klartag2022bourgain}. Deterministic equivalents at fixed spectral parameters under concentration assumptions were developed in \cite{elkaroui2009concentration,louart2018random,louart2018concentration}, while anisotropic estimates for nonlinear models at a fixed distance from the spectrum appear in \cite{wang2024nonlinear,fan2026anisotropic}. The work most closely related to ours is \cite{fan2026anisotropic}, discussed in the introduction. We use their regular domain framework and their operator norm bound \cite[Lemma~3.8]{fan2026anisotropic}. In Section~\ref{sec:averaged}, we also adapt their proof of the averaged law to the generalized resolvents along our characteristic.

\emph{Dynamical methods.}
The characteristic flow method goes back to the work of Pastur \cite{pastur1972spectrum}. It was used to give a short dynamical proof of a local law in \cite{vonsoosten2019random} and developed further for Dyson Brownian motion and Wigner matrices in \cite{huang2019rigidity,adhikari2020dyson,bourgade2021extreme}. Its combination with Green function comparison, the zig-zag strategy, appeared in \cite{cipolloni2022mesoscopic} and was developed and named in \cite{cipolloni2023edge,cipolloni2024out}; see \cite{erdos2025lecture} for a survey and \cite{erdos2024eigenstate,erdos2025cusp,erdos2025zigzag} for further applications. Section~\ref{sec:technical-contribution} explains how our implementation differs from these earlier proofs.

\emph{Statistics and machine learning.}
Random-feature maps give rise to columns of the form $\x=\sigma(W\z)$ with dependent coordinates. Global laws and deterministic equivalents for these models were established in \cite{pennington2017nonlinear,louart2018random,benigni2021eigenvalue,fan2020spectra,mei2022generalization,wang2024nonlinear}. These results underpin the analysis of high-dimensional ridge regression and interpolation \cite{wu2020optimal,hastie2022surprises,mei2022generalization,couillet2022random,defilippis2024dimension}. For related deep random-feature models (Example~\ref{ex:deepRF}), anisotropic deterministic equivalents at fixed spectral parameters were proved in \cite{schroder2023deterministic}. Subject to the remaining assumptions of Theorem~\ref{thm:target}, our result reduces the control of within-column dependence to the quadratic-form concentration hypothesis \eqref{eq:intro-quad-form-conc}.

\subsection{Organization}

Section~\ref{sec:main} states the assumptions, the deterministic theory, and the main results, and discusses examples and open problems. Section~\ref{sec:prelim} introduces the block linearization and its characteristic flow, reduces Theorem~\ref{thm:target} to a local law for the generalized resolvent (Theorem~\ref{thm:generalized-local-law}), and describes the zig-zag scheme. Sections~\ref{sec:zig} and~\ref{sec:zag} prove the zig and zag estimates, and Section~\ref{sec:proof-main} combines them with the spectral-scale bootstrap to prove the main results. Section~\ref{sec:averaged} proves the averaged law along the characteristic needed in the zig step.

\section{Model and main results}\label{sec:main}

\subsection{Notation and stochastic domination}

For $m\in\N$ we write $[m]=\{1,\ldots,m\}$. For a matrix $A$, denote by $A^*$ its conjugate transpose and by $\Im A=(A-A^*)/(2i)$ its imaginary part when $A$ is square. For vectors, $\|\cdot\|_2$ denotes the Euclidean norm; for matrices, $\|\cdot\|_\op$ and $\|\cdot\|_F$ denote the operator and Hilbert-Schmidt norms. Constants $c,C>0$ may change from line to line and depend only on the fixed model and regularity parameters. We write $a\asymp b$ when $c\le a/b\le C$. All test vectors in local laws are deterministic and may be complex. Uniformity in auxiliary parameters is made precise through the following notion.

\begin{defi}[Stochastic domination]\label{def:prec}
Let $\xi=\xi^{(N)}(u)$ and $\zeta=\zeta^{(N)}(u)\ge0$, with a possibly $N$-dependent parameter $u$.  We write $\xi\prec\zeta$, uniformly in $u$, if for every $\epsilon,D>0$ there is $C_{\epsilon,D}<\infty$ such that
\[
 \sup_u\P \bigl(|\xi^{(N)}(u)|>N^\epsilon \zeta^{(N)}(u)\bigr)\le C_{\epsilon,D}N^{-D}.
\]
We write $\xi=O_\prec (\zeta)$ if $|\xi|\prec\zeta$, and we say that an event holds with overwhelming probability if its complement has probability at most $C_DN^{-D}$ for every $D>0$.
\end{defi}

\subsection{Assumptions}

We work under the two assumptions of \cite{fan2026anisotropic}.

\begin{assumption}[Basic assumptions]\label{ass:A1}
    There exist constants $c,C >0$ such that $c\le n/N\le C$, $\| \Sigma \|_\op \leq C$, and $\Sigma$ is positive definite with at most $(1-c)n$ of its eigenvalues belonging to $[0,c]$.
\end{assumption}

\begin{assumption}[Quadratic-form concentration]\label{ass:A2}
 The vectors $\x_1,\ldots,\x_N\in\R^n$ are independent with $\E\x_i = 0$ and $\E\x_i \x_i^* = \Sigma$. For each $k \geq 1$, there exists a constant $C_k>0$ such that, uniformly in $i$,
\[
 \E\|\x_i\|_2^k\le n^{C_k}.
\]
Finally, for any $\eps,D >0$, there exists a constant $C \equiv C(\eps,D)>0$ such that, for every $i = 1, \ldots, N$ and every $A\in\C^{n\times n}$,
\begin{equation}\label{eq:assumption-qf}
 \P \big[ |\x_i^*A\x_i-\Tr(\Sigma A)| > N^\eps \|A\|_F \big] \le C N^{-D}.
\end{equation}
\end{assumption}

In other words, $\x_i^*A\x_i-\Tr(\Sigma A) \prec \|A\|_F$ uniformly in $i$ and in $A$. The columns need not be identically distributed, and no assumption is made on their higher cumulants. Applying \eqref{eq:assumption-qf} to $A=\bv\bv^*$ gives
\[
 |\bv^*\x_i|^2=\bv^*\Sigma \bv+O_\prec(\|\bv\|_2^2)
 =O_\prec(\|\bv\|_2^2),
\]
and therefore $|\bv^*\x_i|\prec\|\bv\|_2$.

\subsection{The deformed Marchenko--Pastur law}

Let $\sigma_1,\ldots,\sigma_n$ be the ordered eigenvalues of $\Sigma$.  The associated deformed Marchenko--Pastur Stieltjes transform $\widetilde m_0:\C_+\to\C_+$ is the unique solution in $\C_+$ of
\begin{equation}\label{eq:MP}
 z=-\frac1{\widetilde m_0(z)}
 +\frac1N\sum_{\alpha=1}^n
 \frac{\sigma_\alpha}{1+\sigma_\alpha\widetilde m_0(z)}.
\end{equation}
Existence and uniqueness go back to \cite{marvcenko1967distribution,silverstein1995analysis,silverstein1995strong}; see \cite[Section~2.2]{fan2026anisotropic} for a review of the facts used here. The function $\widetilde m_0$ is the Stieltjes transform of a probability measure $\widetilde\mu_0$ on $[0,\infty)$, the deformed Marchenko--Pastur law of $\widetilde K$, and under Assumption~\ref{ass:A1} we have $\supp \widetilde\mu_0\subset[0,C_*]$ for a constant $C_*$. We use the same symbol for its Stieltjes integral on $\C\setminus[0,\infty)$,
\[
 \widetilde m_0(z)=\int_{[0,\infty)}\frac{d\widetilde\mu_0(\lambda)}
 {\lambda-z},
\]
which is analytic off $[0,\infty)$ and continues to satisfy \eqref{eq:MP}. Define
\begin{equation}\label{eq:Pi-Psi}
 \Pi(z)=-(z+z\widetilde m_0(z)\Sigma)^{-1},\qquad
 \Psi(z)=\sqrt{\frac{\Im\widetilde m_0(z)}{N\eta}}
          +\frac1{N\eta},\quad z=E+i\eta.
\end{equation}
The matrix $\Pi(z)$ is the deterministic equivalent of $R(z)$, and $\Psi(z)$ is the error control parameter of \cite{knowles2017anisotropic,fan2026anisotropic}.

The support of $\widetilde\mu_0$ is described by the meromorphic inverse of \eqref{eq:MP},
\begin{equation}\label{eq:z0-inverse}
 z_0(m)=-\frac1m+\frac1N\sum_{\alpha=1}^n
 \frac{\sigma_\alpha}{1+\sigma_\alpha m}.
\end{equation}
Its real critical points, counted with multiplicity on the extended real line, are $0 > m_1 \geq m_2 \geq m_3 > m_4 \geq \ldots > m_{2p_*-2} \geq m_{2p_*-1} > - \sigma_{n}^{-1}$ and $m_{2p_*}\in(-\infty,-\sigma_{n}^{-1})\cup(0,\infty]$; see \cite[Section~2.2]{fan2026anisotropic} for details. Setting $x_j=z_0(m_j)$, one has
\[
 \supp  \widetilde\mu_0\cap(0,\infty)
 =\bigcup_{k=1}^{p_*}[x_{2k},x_{2k-1}]\cap(0,\infty).
\]
The points $x_j$ are the edges (or cusps) of $\widetilde\mu_0$, and the intervals $[x_{2k},x_{2k-1}]$ are its bulk components. Finally, at each point $x>0$, it was shown in \cite{silverstein1995analysis} that $\widetilde \mu_0$ admits a continuous density    
\[
\rho_0 (x) = \lim_{z \in \C_+ \to x} \frac{1}{\pi} \Im \widetilde m_0(z).
\]

Local laws are stated on spectral domains on which the deterministic theory is regular. We use the regular spectral domains of \cite[Definition~2.3]{fan2026anisotropic}, which follow \cite{knowles2017anisotropic}, with one additional requirement: the domain must be bounded away from the origin and from the negative real axis. This requirement is needed for the square-root reparametrization of Section~\ref{sec:block}. For definiteness, we record the properties that are used below.

\begin{defi}
A ($\tau$-)regular spectral domain is a set $\bD\subset\{z\in\C^+:N^{-1+\tau}\leq\Im z\le1\}$ such that the following conditions hold for some constants $C,c>0$:
\begin{enumerate}[label=(\alph*)]
    \item (Vertically closed) For all $E+i\eta\in\bD$, one has $E+it\in\bD$ for every $t\in[\eta,1]$.
    \item (Boundedness) For all $z\in\bD$, we have
    \begin{equation}\label{eq:regular-basic}
    c\leq \Re z,\quad 
    c\le |z|\le C,\qquad
    c\le|\widetilde m_0(z)|\le C,\qquad
    \min_\alpha|1+\sigma_\alpha\widetilde m_0(z)|\ge c.
    \end{equation}
    \item (Regular density) Let $\kappa=\min_{1\le j\le2p_*}|x_j-E|$ denote the distance to the closest edge. For all $z\in\bD$, $z = E + i \eta$, we have
    \begin{equation}\label{eq:regular-density}
    \Im \widetilde m_0(z) \asymp
    \begin{cases}
    \sqrt{\kappa+\eta},
     &E\in\supp  \widetilde\mu_0,\\[2pt]
    \eta/\sqrt{\kappa+\eta},
     &E\notin\supp  \widetilde\mu_0.
    \end{cases}
    \end{equation}
    \item (Fixed-point stability) For all $z\in\bD$, we have
    \[
        c\sqrt{\kappa+\eta}\leq|\widetilde m_0(z)|^2|z_0'(\widetilde m_0(z))|\leq C\sqrt{\kappa+\eta}.
    \]
    Moreover, there exists a constant $c_*>0$ such that whenever $\kappa + \eta \leq c_*$, we also have
    \[
        |z_0''(\widetilde m_0(z))|\geq c.
    \]
\end{enumerate}
\end{defi}

The preceding regularity assumptions hold in a regular bulk and near a regular edge. For a regular bulk component $[x_{2k},x_{2k-1}]$, define
\[
    \bD_k^b(\delta,\tau) = \{ E + i \eta : x_{2k} + \delta \leq E \leq x_{2k-1}- \delta, \ N^{-1 + \tau} \leq \eta \leq 1 \}.
\]
For a regular soft edge $x_j$ uniformly separated from zero, define
\[
    \bD_j^e(\delta,\tau) = \{ E + i \eta : |E - x_j| \leq \delta, \ N^{-1 + \tau} \leq \eta \leq 1 \}.
\]
The following lemma is adapted from \cite{knowles2017anisotropic} and \cite[Lemma 2.4]{fan2026anisotropic}.

\begin{lemma} 
 Suppose Assumption \ref{ass:A1} holds. Fix any constant $\tau \in (0,1)$.
\begin{enumerate}
\item[(i)] For a bulk component $[x_{2k},x_{2k-1}]$, suppose there exist constants $\delta,c_0 >0$ such that $\rho_0 (x) > c_0$ for all $x \in [x_{2k} + \delta, x_{2k-1}- \delta]$. Then $\bD_k^b(\delta,\tau)$ is a regular domain.
\item[(ii)] For an edge $x_j = z_0 (m_j)$, suppose there exists a constant $\delta>0$ such that $x_j >\delta$, $|x_j - x_k| >\delta$ for each $k \neq j$, and $\min_{\alpha} | m_j + \sigma_\alpha^{-1} | > \delta$. Then there exists a constant $\delta'>0$ such that $\bD_j^e(\delta',\tau)$ is a regular domain.
\end{enumerate}
\end{lemma}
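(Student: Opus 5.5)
The plan is to check the four conditions (a)--(d) one at a time. Each can be read off from the local behaviour of $\widetilde m_0$ near the real axis, using the inverse map $z_0$ of \eqref{eq:z0-inverse}. Since the lemma is adapted from \cite{knowles2017anisotropic} and \cite[Lemma 2.4]{fan2026anisotropic}, I would import from those references the estimates that do not involve the one new requirement. These are: the square-root behaviour of $\Im\widetilde m_0$ in \eqref{eq:regular-density}; the stability bound on $|z_0'(\widetilde m_0)|$; the nondegeneracy of $z_0''$ near a regular edge; and the bounds $c\le|\widetilde m_0|\le C$ and $\min_\alpha|1+\sigma_\alpha\widetilde m_0|\ge c$. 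The only genuinely new item is $\Re z\ge c$ in \eqref{eq:regular-basic}, which I check directly.

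\textbf{Conditions (a) and the new part of (b).} Condition (a) is immediate because both $\bD_k^b$ and $\bD_j^e$ are defined as vertical strips $\{E+i\eta:E\in I,\ N^{-1+\tau}\le\eta\le 1\}$. For the lower bound on $\Re z$:
\begin{itemize}
\item In the bulk case, $E\ge x_{2k}+\delta\ge\delta$, because $\supp\widetilde\mu_0\subset[0,C_*]$ forces $x_{2k}\ge0$.
\item In the edge case, $E\ge x_j-\delta'\ge\delta/2$ once $\delta'\le\delta/2$.
\end{itemize}
In both cases $|z|\le C_*+\delta+1$ and $|z|\ge\Re z$, which gives the two-sided bound on $|z|$.

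\textbf{Remaining parts of (b) and condition (c).} The bounds on $\widetilde m_0$ and on $1+\sigma_\alpha\widetilde m_0$ are handled as follows.
\begin{itemize}
\item \emph{Bulk.} $\rho_0>c_0$ keeps $\Im\widetilde m_0$ bounded below on the strip near the real axis. This gives $|\widetilde m_0|\ge c$ and $|1+\sigma_\alpha\widetilde m_0|\ge\sigma_\alpha\Im\widetilde m_0$ whenever $\sigma_\alpha$ is not small; when $\sigma_\alpha$ is small, the bound $|\widetilde m_0|\le C$ suffices. The upper bound $|\widetilde m_0|\le C$ comes from \eqref{eq:MP} together with $\Re z\ge c$ and continuity of $\rho_0$.
\item \emph{Edge.} Near the edge, $\widetilde m_0(z)$ stays close to $m_j$. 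The hypotheses $\min_\alpha|m_j+\sigma_\alpha^{-1}|>\delta$ and $x_j>\delta$ give $|1+\sigma_\alpha m_j|\ge c$ and $m_j$ bounded away from $0$ and $\infty$.
\end{itemize}
Condition (c) is trivial in the bulk, where $\kappa+\eta\asymp1$. At the edge, it follows from a Taylor expansion $z-x_j=\tfrac12 z_0''(m_j)(\widetilde m_0-m_j)^2+O(|\widetilde m_0-m_j|^3)$ with $z_0''(m_j)\ne0$.

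\textbf{Condition (d).} The same expansion gives $|z_0'(\widetilde m_0)|\asymp|\widetilde m_0-m_j|\asymp\sqrt{\kappa+\eta}$, so the stability bound holds. It also gives $|z_0''|\ge c$ on a small neighbourhood, and this is what determines the choice of $\delta'$ and $c_*$.

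\textbf{Main obstacle.} The hardest step is the edge case: I need a quantitative lower bound $|z_0''(m_j)|\ge c$ that depends only on $\delta$, together with continuity of $\widetilde m_0$ up to the real axis with uniform constants. My first attempt would be a compactness argument on the rational function $z_0$. Its poles $-\sigma_\alpha^{-1}$ are $\delta$-separated from $m_j$, so $z_0$ has uniformly bounded derivatives on a disc around $m_j$. The separation $|x_j-x_k|>\delta$ from the other edges then rules out $z_0''(m_j)$ being small, since a small $z_0''(m_j)$ would force a nearby second critical point, and hence a nearby edge or cusp. Since all of this is already carried out in \cite{knowles2017anisotropic,fan2026anisotropic}, I would cite it and only verify the new bound $\Re z\ge c$ in detail.
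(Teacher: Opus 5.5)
Your plan matches the paper's proof, which likewise obtains (a)--(c) by citing \cite[Lemma~2.4]{fan2026anisotropic} and obtains (d) by citing the edge estimates \cite[(A.9), (A.16)]{knowles2017anisotropic} and the bulk estimate \cite[(A.17)]{knowles2017anisotropic}; your explicit check that $\Re z\ge c$ (via $x_{2k}\ge 0$ in the bulk and $\delta'\le\delta/2$ at the edge) fills in the one new requirement that the paper passes over. Note that your paragraph on (d) treats only the edge, so the bulk bound $|\widetilde m_0|^2|z_0'(\widetilde m_0)|\asymp 1$ must come from the imported stability estimate; alternatively it follows from the imaginary part of \eqref{eq:MP} together with $\Im\widetilde m_0\asymp 1$ and Assumption~\ref{ass:A1}.
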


\begin{proof}
Conditions (a)--(c) follow directly from \cite[Lemma 2.4]{fan2026anisotropic}. For condition (d), suppose $z\in\bD_j^e(\delta',\tau)$ (near a regular edge). Regular-edge stability then gives $|\widetilde m_0(z)|^2|z_0'(\widetilde m_0(z))|\asymp\sqrt{\kappa+\eta}$; this is the estimate
\cite[(A.16)]{knowles2017anisotropic}, also used in
\cite[Lemma~C.3(b)]{fan2022tracy}. The edge curvature $|z_0''(\widetilde m_0(z))|$ is bounded away from zero by
\cite[Lemma~A.3, equation (A.9)]{knowles2017anisotropic}; see also
\cite[Propositions~3.1--3.3]{fan2022tracy}. If $z\in\bD_k^b(\delta,\tau)$ (in a regular bulk), then bulk stability gives $|\widetilde m_0(z)|^2|z_0'(\widetilde m_0(z))|\asymp 1$; this is the estimate \cite[Lemma A.6, equation (A.17)]{knowles2017anisotropic}. Combining the two cases completes the proof.
\end{proof}

The population matrix $\Sigma=\Sigma_N$, the domain $\bD=\bD_N$, its edge locations $x_j=x_{j,N}$, and the number of connected components of the domain may depend on $N$. All constants in \eqref{eq:regular-basic}--\eqref{eq:regular-density}, and all constants derived from them below, are uniform in $N$. At spectral parameters at fixed distance from the spectrum, including near the negative real axis, an anisotropic law under Assumptions~\ref{ass:A1} and~\ref{ass:A2} alone was already obtained in \cite[Theorem~2.10]{fan2026anisotropic}, with error $N^{-1/2+o(1)}$; the content of the present paper is the local regime.

\subsection{Main theorem}
Define the resolvents
\[
    K=N^{-1}XX^*,\quad \widetilde K=N^{-1}X^*X,\quad R(z)=(K-zI_n)^{-1},\quad \widetilde R(z)=(\widetilde K-zI_N)^{-1}.
\]
The full covariance pencil and its deterministic equivalent are
\begin{equation}\label{eq:FMPW-pencil}
    \mathcal L_{\rm F}(z)=\begin{pmatrix}-zI_n&N^{-1/2}X\\
 N^{-1/2}X^*&-I_N\end{pmatrix},\qquad \mathcal M_{\rm F}(z)=\diag\bigl(\Pi(z),z\widetilde m_0(z)I_N\bigr).
\end{equation}
 By the Schur complement formula,
 \[
\cL_F(z) = \begin{pmatrix}R(z)&N^{-1/2}R(z)X\\
 N^{-1/2}X^* R(z)&z \widetilde R(z)\end{pmatrix},
 \]
 and both resolvents are recovered from the diagonal blocks.

\begin{theorem}[Anisotropic local law]\label{thm:target}
Suppose Assumptions \ref{ass:A1} and \ref{ass:A2} hold. Fix $\tau>0$, and let $\bD$ be a $\tau$-regular spectral domain. Then, for all deterministic pair of unit vectors $U,V\in\C^{n+N}$,
\begin{equation}\label{eq:main-full}
 \left|U^*\left(\mathcal L_{\rm F}(z)^{-1}
 -\mathcal M_{\rm F}(z)\right)V\right|\prec\Psi(z)
\end{equation}
uniformly for $z\in\bD$.  In particular, for all deterministic
unit vectors $\u,\bv\in\C^n$ and $\widetilde \u, \widetilde \bv \in \C^N$,
\begin{equation}\label{eq:main-upper}
 |\u^*(R(z)-\Pi(z))\bv|\prec\Psi(z),\qquad
 |\widetilde \u^*(\widetilde R(z)-\widetilde m_0(z)I_N)\widetilde \bv|\prec\Psi(z).
\end{equation}
Moreover, the uniformity in the spectral parameter holds on a single event:
for every $\epsilon,D>0$,
\begin{equation}
 \P\left(
 \sup_{z\in\bD}
 \frac{\left|U^*(\mathcal L_{\rm F}(z)^{-1}
       -\mathcal M_{\rm F}(z))V\right|}{\Psi(z)}
 >N^\epsilon\right)\le C_{\epsilon,D}N^{-D}.           \label{eq:main-simultaneous}
\end{equation}
\end{theorem}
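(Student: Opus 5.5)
We propose to prove Theorem~\ref{thm:target} by the zig-zag strategy applied to a Hermitian block linearization of $X$, following the outline of Section~\ref{sec:technical-contribution}. Concretely, we introduce the generalized resolvent
\[
\cR(Z,\beta)=\begin{pmatrix} Z & N^{-1/2}X\\ N^{-1/2}X^* & \beta I_N\end{pmatrix}^{-1}
\]
with source $Z\in\operatorname{span}(I_n,\Sigma)$ and $\beta\in\C$. We compute its deterministic equivalent $M(Z,\beta)$ from the block Dyson equation; it is block diagonal with entries that are rational in $Z$, $\beta$, and one scalar Stieltjes-type function. At $(Z,\beta)=(-z I_n,-1)$ this reduces to $\mathcal M_{\rm F}(z)$, and the reduction uses the square-root reparametrization together with the conditions $c\le\Re z$, $c\le|z|$ in \eqref{eq:regular-basic}. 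So \eqref{eq:main-full} becomes a local law for $\cR$ at the endpoint of a characteristic.

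Next we run the covariance-preserving Ornstein--Uhlenbeck flow on the columns:
\[
\de\x_i(t)=-\tfrac12\x_i(t)\,\de t+\Sigma^{1/2}\de\mathbf B_i(t).
\]
Each column keeps covariance $\Sigma$, and $\x_i(t)$ is a mixture of $e^{-t/2}\x_i$ and an independent Gaussian vector. We choose the three-parameter characteristic $(Z_t,\beta_t)$ so that the Itô drift of $M(Z_t,\beta_t)$ cancels the leading second-order Itô term. Along this characteristic $\Im$ of the spectral parameter decreases like $\eta_t\approx\eta_0-ct\,\Im\widetilde m_0$.

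\textbf{Initial input.} At $\eta\asymp 1$ the law with error $N^{-1/2}$ holds, either by \cite[Theorem~2.10]{fan2026anisotropic} or directly from Schur complements and \eqref{eq:assumption-qf}.

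\textbf{Zig step.} For $\u^*(\cR_t-M_t)\bv$ we write the Itô SDE along the characteristic. The martingale term has quadratic variation $\lesssim N^{-1}\sum_i|\u^*\cR\e_i|^2\ldots$, which we bound via Ward identities by $\Im(\u^*\cR\u)/(N\eta_t)$. The remaining drift contains averaged quantities such as $\langle\cR-M\rangle$, which we control by the averaged law along the characteristic; that law is adapted from the fluctuation-averaging proof in \cite{fan2026anisotropic}. A stopping-time/Gronwall argument then gives $\prec\Psi$ at time $t$ for the flowed matrix, provided the input holds at $t=0$ with slightly larger $\eta$. Each zig shrinks $\eta$ by a factor $N^{-\delta}$.

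\textbf{Zag step.} This step removes the Gaussian component injected by the flow, and we expect it to be the main obstacle. We differentiate $F(t)=\E|\u^*(\cR-M)\bv|^{2L}$, at fixed spectral parameter, along the same OU flow run in the reverse direction. The generator splits as $\sum_i\cL_i$, where $\cL_i=-\tfrac12\x_i\cdot\nabla_i+\tfrac12\Tr(\Sigma\nabla_i^2)$ acts on column $i$ alone. Conditionally on the minor, Schur complement identities (Lemma~\ref{lem:schur-one-column}) make each entry of $\cR$ a rational function of $\x_i^*\mathbf b_i$ and $\x_i^*A_i\x_i$. We then apply $\cL_i$ and use the centering identities
\[
\E_i\,\x_i^*\mathbf b_i=0,\qquad \E_i\bigl[\x_i^*A_i\x_i-\Tr\Sigma A_i\bigr]=0,
\]
together with \eqref{eq:assumption-qf}, which bounds the fluctuations by $\|\mathbf b_i\|$ and $\|A_i\|_F$. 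The main risk is getting each term's bound to be $N^{-1}$ times powers of the fluctuation. The norms $\|A_i\|_F$ and $\|\mathbf b_i\|$ must be bounded through Ward identities, which brings in $\Im\cR/\eta$, and the a priori coarse law from the previous scale is used here. The target is a sublinear inequality
\[
|F'|\lesssim N^{\xi}\sum_k(\Psi^{k}F^{1-k/2L}).
\]
Integrating $F^{1/2L}$ over the short time $t\le N^{-\delta}$ transfers the bound $\Psi$ back to the original matrix, up to an additive error absorbed by the gap between the coarse and target precisions (Proposition~\ref{prop:full-block-zag}).

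\textbf{Iteration and uniformity.} We iterate zig and zag finitely many times, $O(1/\delta)$, to reach $\eta\ge N^{-1+\tau}$. Uniformity over $z\in\bD$ on a single event \eqref{eq:main-simultaneous} follows from a grid of $N^{C}$ points, a union bound, and Lipschitz continuity of $\cR$ and $M$ in $z$ with constant $\eta^{-2}\le N^2$. Finally, \eqref{eq:main-upper} is read off from the diagonal blocks of $\mathcal L_{\rm F}^{-1}$, using $z\widetilde R$ with $|z|\ge c$.
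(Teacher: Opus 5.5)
Your ingredients match the paper's: the square-root block linearization, the explicit characteristic with $M_h=e^{-h/2}M_*$, an It\^o--Ward zig fed by an averaged law, and a one-column OU-generator zag. However, the bootstrap as you describe it does not close.

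\textbf{The gap: where the zag's coarse input comes from.} You run a single chain of $O(1/\delta)$ zig--zag pairs and take the zag's coarse input from ``the previous scale''. That input consists of bounds on
\begin{itemize}
\item $N^{-1/2}\|B_i\u\|_2$ and $N^{-1}\|A_i\|_F$,
\item their $\Sigma^{1/2}$-weighted versions,
\item the denominators $|\beta+N^{-1}\x_i^*A_i\x_i|^{-1}$ and $|\beta+N^{-1}\Tr\Sigma A_i|^{-1}$.
\end{itemize}
It is needed at the \emph{current} source and for every intermediate OU time $s\in[0,t]$. Consecutive points of your chain are the sources $\cD_{h_{k-1}}(z)$ and $\cD_{h_k}(z)$. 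These differ by a shift of the real part, not by an increase of the imaginary part. So no monotonicity carries $\Im\cR$ from the old source to the new one, and the law at the previous point of the chain gives nothing at the current one. In the Wigner zig--zag the previous scale is simply a larger $\Im z$, which is why this issue does not arise there.

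\textbf{The missing idea: Herglotz in $z$ and a nested bootstrap.} The paper's fix is Lemma~\ref{lem:Herglotz-function}: for fixed $h$, the map $z\mapsto\u^*\cR_{h,t}(z)\u$ is Herglotz despite the $z$-dependent non-scalar source. Its proof uses $\Im\cD_h(z)>0$ on all of $\C^+$ and the vanishing of the linear term. Monotonicity is therefore available only vertically in the target $z$ at a fixed level $h_k$, and this forces a nested bootstrap.
\begin{itemize}
\item The outer induction runs over $\bD_l$, the part of the domain with $\Im z\ge N^{-l\delta}$.
\item For each $l$, the whole sweep $h_0=3\log N>h_1>\dots>h_K=0$ is rerun from the trivial remote source.
\item The zag at level $h_k$ and target $z\in\bD_{l+1}$ draws its input, at each OU time $s$, from the same level and time at a target $z'\in\bD_l$ with $\Im z'\le N^\delta\Im z$.
\item Consequently each zag must output the law for all $s\in[0,t_k]$, not only at $s=0$ (Lemmas~\ref{lem:bootstrap-in-zag} and~\ref{lem:zig-zag-and-bootstraps}).
\end{itemize}
The $N^\delta$ losses do not accumulate because the Ward identity takes a square root: the prior bound $N^{C_0\delta}\Psi_h$ becomes $\Phi_1=N^{(C_0+1)\delta/2}\Psi_h\le N^{-10\delta}\Phi_2$.

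\textbf{Three smaller points.}
\begin{itemize}
\item Your differential inequality omits the factor $\Lambda_h+1$. It comes from the weighted norms $\|\Sigma^{1/2}B_i\u\|_2\prec N^{\delta}\sqrt{\Lambda_h}$ and is of order $\eta^{-1}$ once $h\lesssim\eta$. The step condition is therefore $t(\Lambda_h+1)\le CN^\delta$, not $t\le N^{-\delta}$; indeed the first zig has length of order $\log N$.
\item \cite[Theorem~2.10]{fan2026anisotropic} cannot provide the initial input, because the characteristic through the scalar terminal source starts at non-scalar sources. The paper instead initializes at $h\ge 2\log N$, where $\cR\approx-\cD^{-1}$.
\item The Ward estimates need $N\eta\Im\widetilde m_0\ge N^{3C_0\delta}$. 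The remaining near-edge region must be reached by a separate Herglotz continuity argument.
\end{itemize}
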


\begin{remark}[Relation to \cite{fan2026anisotropic}]\label{rem:fmpw-comparison}
Theorem~\ref{thm:target} gives the counterpart of \cite[Theorem~2.8]{fan2026anisotropic} without its higher-cumulant Assumption~3; only their Assumptions~1 and~2 (our Assumptions~\ref{ass:A1} and \ref{ass:A2}) remain. The error $\Psi(z)$ and the scale $\eta\ge N^{-1+\tau}$ are optimal, matching the separable theory of \cite{knowles2017anisotropic}. The second bound in \eqref{eq:main-upper} follows from \eqref{eq:main-full} since $|z|\ge c$ on $\bD$. A word on notation: following \cite{knowles2017anisotropic}, our $\Pi(z)$ in \eqref{eq:Pi-Psi} denotes the \emph{deterministic} equivalent of the resolvent $R(z)$; in \cite{fan2026anisotropic}, the same symbol denotes the random linearized resolvent, which is our $\mathcal L_{\rm F}(z)^{-1}$.
\end{remark}

The standard consequence of an anisotropic law is the delocalization of eigenvectors in arbitrary deterministic directions. For an eigenvalue $\lambda$ of $K$ (respectively $\widetilde K$), let $P_\lambda$ (respectively $\widetilde P_\lambda$) denote the orthogonal projection onto the corresponding eigenspace.

\begin{coro}[Delocalization in deterministic directions]\label{cor:delocalization}
Suppose Assumptions \ref{ass:A1} and \ref{ass:A2} hold. Let $I\subset(0,\infty)$ be a compact interval such that, for every $\tau\in(0,1)$, the domain $\bD_\tau=\{E+i\eta:E\in I,\ N^{-1+\tau}\le\eta\le1\}$ is a $\tau$-regular spectral domain. Then, for all deterministic unit vectors $\u\in\C^n$ and $\bv\in\C^N$,
\[
 \sup_{\lambda\in I\cap\operatorname{Spec}K}\|P_\lambda\u\|
 \prec N^{-1/2},\qquad
 \sup_{\lambda\in I\cap\operatorname{Spec}\widetilde K}
 \|\widetilde P_\lambda\bv\|\prec N^{-1/2}.
\]
\end{coro}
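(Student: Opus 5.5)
The plan is the standard spectral-decomposition argument, with Theorem~\ref{thm:target} as the only probabilistic input. Write $K=\sum_k \lambda_k \mathbf{w}_k\mathbf{w}_k^*$ with orthonormal eigenvectors. For $z=E+i\eta$ we have
\[
\Im\,\u^*R(z)\u=\sum_k\frac{\eta\,|\mathbf{w}_k^*\u|^2}{(\lambda_k-E)^2+\eta^2}.
\]
Fix an eigenvalue $\lambda\in I\cap\operatorname{Spec}K$ and take $E=\lambda$. Every term in the sum is nonnegative, so keeping only the eigenvalues equal to $\lambda$ gives
\[
\|P_\lambda\u\|^2=\sum_{\lambda_k=\lambda}|\mathbf{w}_k^*\u|^2\le\eta\,\Im\,\u^*R(\lambda+i\eta)\u .
\]
It therefore suffices to show $\Im\,\u^*R(z)\u\prec 1$ uniformly on $\bD_\tau$ and then set $\eta=N^{-1+\tau}$. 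This yields $\|P_\lambda\u\|^2\prec N^{-1+\tau}$, and since $\tau>0$ is arbitrary, $\|P_\lambda\u\|\prec N^{-1/2}$.

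For the bound on $\Im\,\u^*R\u$, apply \eqref{eq:main-upper}: $|\u^*R\u|\le|\u^*\Pi\u|+O_\prec(\Psi)$. On a regular domain $\Psi\le 2$, because $\Im\widetilde m_0\le C$ and $N\eta\ge N^{\tau}$. Moreover $\|\Pi(z)\|_\op=\|(z(1+\widetilde m_0\Sigma))^{-1}\|_\op\le C$, since \eqref{eq:regular-basic} gives $|z|\ge c$ and $\min_\alpha|1+\sigma_\alpha\widetilde m_0|\ge c$. Hence $\Im\,\u^*R\u\prec1$ pointwise in $z$.

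The one point needing care is uniformity over $\lambda$. The eigenvalue is random, so we must evaluate at a random spectral parameter. We use the simultaneous statement \eqref{eq:main-simultaneous}, which holds on a single event of overwhelming probability for all $z\in\bD_\tau$ at once. Alternatively, we could take a union bound over an $N^{-10}$-net of $I\times\{N^{-1+\tau}\}$ and use Lipschitz continuity of the resolvent in $z$, with constant $\eta^{-2}\le N^2$. Either way, the bound holds for all $E\in I$ simultaneously, so we may set $E=\lambda$ for every eigenvalue in $I$ at once.

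The statement for $\widetilde K$ is identical. We use $\widetilde R$, the second bound in \eqref{eq:main-upper}, and $|\widetilde m_0|\le C$, or equivalently the lower-right block of \eqref{eq:main-full} divided by $z$, which is allowed because $|z|\ge c$. Zero eigenvalues of $\widetilde K$ are excluded automatically, since $I\subset(0,\infty)$. I expect no real obstacle: the only subtle point is the random-$z$ uniformity, and \eqref{eq:main-simultaneous} handles it directly.
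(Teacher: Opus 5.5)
Your proposal is correct and follows essentially the same route as the paper. The paper also bounds $\|P_\lambda\u\|_2^2\le\eta\,\Im\,\u^*R(\lambda+i\eta)\u$ with $\eta=N^{-1+\epsilon}$ and $\tau=\epsilon$, and handles the random $\lambda$ through the simultaneous bound \eqref{eq:main-simultaneous} with $U=V=(\u,0)$. It then uses $\|\Pi\|_\op\le C$ and $\Psi\le C$, and treats $\widetilde K$ through the lower-right block $z\widetilde R(z)$ divided by $|z|\ge c$.
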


By \cite[Lemma~2.4]{fan2026anisotropic}, the hypothesis on $I$ holds for every compact subinterval of the interior of a bulk component on which the density of $\widetilde\mu_0$ is bounded below, and for a neighborhood of every regular edge $x_j>0$.

\begin{proof}
Fix $0 <\epsilon<1$ and $D>0$, put $\tau=\epsilon$ and $\eta=N^{-1+\epsilon}$, and apply \eqref{eq:main-simultaneous} with the domain $\bD_\epsilon$, the vectors $U=V=(\u,0)$, and the exponent $\epsilon$. On the resulting event of probability at least $1-C_{\epsilon,D}N^{-D}$, for every eigenvalue $\lambda\in I$ of $K$ the point $z=\lambda+i\eta$ lies in $\bD_\epsilon$, and the spectral decomposition of $K$ gives
\[
 \|P_\lambda\u\|_2^2\le\sum_{\mu\in\spec K}\frac{\eta^2}{(\mu-\lambda)^2+\eta^2}\|P_\mu\u\|_2^2
 =\eta\,\Im\,\u^*R(z)\u\le\eta\big(\|\Pi(z)\|_\op+N^{\epsilon}\Psi(z)\big)\le CN^{-1+2\epsilon}.
\]
In the last step we used $\|\Pi(z)\|_\op\le (c\min_\alpha|1+\sigma_\alpha\widetilde m_0(z)|)^{-1}\le C$ by \eqref{eq:regular-basic}, and $\Psi(z)\le C$ since $N\eta\ge1$. For $\widetilde K$, the lower-right block of \eqref{eq:FMPW-pencil} gives, with $V=(0,\bv)$ and $z=\lambda+i\eta$ for $\lambda\in I\cap\spec\widetilde K$,
\[
 \|\widetilde P_\lambda\bv\|_2^2\le\eta\,\Im\,\bv^*\widetilde R(z)\bv\le\frac{\eta}{|z|}\big(|z\widetilde m_0(z)|+N^{\epsilon}\Psi(z)\big)\le CN^{-1+2\epsilon}.
\]
Since $\epsilon$ and $D$ are arbitrary, both claims follow.
\end{proof}

\subsection{Discussion and examples}\label{sec:examples}

Assumption~\ref{ass:A2} is a quantitative strengthening of the condition
\begin{equation}\label{eq:weak-concentration}
 \frac{1}{n} \big(\x^*A\x-\Tr \Sigma A\big)\overset{\P}{\longrightarrow} 0\qquad\text{uniformly over }\|A\|_\op\le1,
\end{equation}
under which Bai and Zhou \cite{bai2008large} established the global Marchenko-Pastur law; see \cite{yaskov2016necessary} for its necessary and sufficient form. Condition \eqref{eq:weak-concentration} alone does not suffice at local scales: \cite[Section~2.6]{fan2026anisotropic} exhibit columns satisfying \eqref{eq:weak-concentration} for which $\Tr K$ fluctuates on a polynomial scale, which is incompatible with eigenvalue rigidity, hence with an averaged local law at the optimal scale. The rate in \eqref{eq:assumption-qf} is the one satisfied by a Gaussian column, and by Markov's inequality it can equivalently be formulated as $L^p$ bounds on the centered quadratic form.

Below we describe classes of column distributions satisfying Assumptions \ref{ass:A1} and \ref{ass:A2}, and hence the anisotropic local law in Theorem \ref{thm:target}. For separable columns $\x=\Sigma^{1/2}\z$ with independent, centered entries having uniformly bounded moments of all orders, \eqref{eq:assumption-qf} is classical and follows from large-deviation estimates for quadratic forms \cite[Theorem~7.7]{erdHos2017dynamical}; in this case, Theorem \ref{thm:target} recovers the anisotropic local law (for real-valued columns) of Knowles and Yin \cite{knowles2017anisotropic} by a new, dynamical proof. The random-feature model $\x=\sigma(W\z)$ with restricted activations and a class of conditionally mean-zero distributions were shown in \cite{fan2026anisotropic} to satisfy Assumption~\ref{ass:A2} together with their cumulant condition. By contrast, the following examples concern columns with nonlinear coordinate dependence for which no cumulant structure is assumed.

\begin{example}[Convex concentration]\label{ex:convex} Suppose that the independent centered columns $\x_i$ have covariance $\Sigma$ satisfying Assumption~\ref{ass:A1}, and that there is a deterministic sequence $K_N=N^{o(1)}$ such that, uniformly in $i\in[N]$, every convex $1$-Lipschitz function $f:\R^n\to\R$ satisfies
\begin{equation}\label{eq:convex-concentration}
 \P\big(|f(\x_i)-\E f(\x_i)|>t\big)\le 2\exp(-t^2/K_N^2), \qquad t >0.
\end{equation}
Adamczak proved a Hanson--Wright inequality for such vectors.
\begin{proposition}[{\cite[Theorem~2.3]{adamczak2015note}}]\label{prop:adamczak}
Let $\x\in\R^n$ be a centered random vector satisfying the convex concentration property \eqref{eq:convex-concentration} with constant $K$. There exists an absolute constant $c>0$ such that, for every $A\in\R^{n\times n}$ and every $t>0$,
\[
 \P\big(|\x^*A\x-\E\,\x^*A\x|>t\big)\le 2\exp\Big(-c\min\Big\{\frac{t^2}{K^4\|A\|_F^2},\frac{t}{K^2\|A\|_\op}\Big\}\Big).
\]
\end{proposition}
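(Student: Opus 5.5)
We derive the stated Hanson--Wright bound from convex concentration by the standard decoupling-free route of Adamczak: reduce to a symmetric matrix, split the quadratic form into a positive semidefinite part and a negative semidefinite part, and for each part apply \eqref{eq:convex-concentration} to a convex Lipschitz function built from the square root of that part.

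\textbf{Reductions.} Replacing $A$ by $(A+A^*)/2$ leaves $\x^*A\x$ unchanged and does not increase $\|A\|_F$ or $\|A\|_\op$. So we may take $A$ symmetric and write $A=A_+-A_-$ with $A_\pm\succeq0$, $\|A_\pm\|_F\le\|A\|_F$ and $\|A_\pm\|_\op\le\|A\|_\op$. A union bound with $t/2$ for each part then reduces the claim to $A\succeq0$, at the cost of adjusting the absolute constant $c$.

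\textbf{The PSD case.} For $A\succeq0$, set $f(\x)=\|A^{1/2}\x\|_2$. This function is convex and $\|A\|_\op^{1/2}$-Lipschitz, so by \eqref{eq:convex-concentration}, $f$ has sub-Gaussian fluctuations of order $K\|A\|_\op^{1/2}$ around its mean (or its median). The difficulty is to pass from $f$ to $f^2=\x^*A\x$ with the \emph{Frobenius} rate rather than the trivial rate $\sqrt{\Tr A\cdot\|A\|_\op}$. Use $f^2-(\E f)^2=(f-\E f)^2+2\E f\,(f-\E f)$. This alone gives fluctuations of size $K\|A\|_\op^{1/2}\E f\approx K\|A\|_\op^{1/2}\sqrt{\Tr\Sigma A}$, which is too weak when $A$ has many small eigenvalues.

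The remedy, following Adamczak, is a dyadic decomposition of the spectrum of $A$. Write $A=\sum_k A_k$, where $A_k$ collects the eigenvalues in $(2^{-k-1}\|A\|_\op,2^{-k}\|A\|_\op]$, and apply the previous estimate to each $A_k$. For $A_k$ with rank $r_k$ and scale $\lambda_k=2^{-k}\|A\|_\op$, we have $\E f_k\lesssim\sqrt{\lambda_k r_k}$, using $\E\,\x^*A_k\x=\Tr\Sigma A_k$ together with moment bounds on $\Sigma$. The bounded-$\Sigma$ input must be replaced by the convex-concentration bound $\|\Sigma\|_\op\lesssim K^2$, obtained by applying \eqref{eq:convex-concentration} to the linear maps $\x\mapsto\bv^*\x$. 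This yields fluctuations of size $K\lambda_k\sqrt{r_k}\asymp K\|A_k\|_F$ in the Gaussian regime, and $K^2\lambda_k$ in the exponential regime. One then sums the tail bounds over $k$ with weights $t_k$ chosen so that $\sum_k t_k\le t$. Since $\sum_k\|A_k\|_F^2=\|A\|_F^2$ and the $\lambda_k$ decay geometrically, the union bound produces the mixed tail $\exp(-c\min\{t^2/(K^4\|A\|_F^2),\,t/(K^2\|A\|_\op)\})$. Note also that $\E f^2-(\E f)^2=\operatorname{Var}f\lesssim K^2\|A\|_\op$, so centering at $\E\,\x^*A\x$ instead of $(\E f)^2$ costs only a term absorbed in the exponential regime.

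\textbf{Main obstacle.} The delicate step is the dyadic summation: choosing the $t_k$ so that the union bound over $O(\log n)$ pieces does not lose a logarithm and yields exactly the $\min\{\cdot,\cdot\}$ form. I would first try $t_k\propto t\,(\|A_k\|_F/\|A\|_F+2^{-k/2})$, normalized, and verify that each resulting exponent dominates the target exponent plus a term growing in $k$. That growth makes the sum of tails geometric, with total bounded by a constant multiple of the target tail.
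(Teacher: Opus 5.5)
The paper gives no proof of this statement; it is quoted from Adamczak \cite{adamczak2015note}, so your argument has to stand on its own. Several parts are fine:
\begin{itemize}
\item the symmetrization and the split $A=A_+-A_-$, which has only two pieces;
\item the bound $\|\Sigma\|_\op\lesssim K^2$ obtained from linear functionals;
\item the $\operatorname{Var} f$ correction to the centering.
\end{itemize}
For a single spectrally flat block $A_k$, the route through $f_k=\|A_k^{1/2}\x\|_2$ does give a Gaussian scale $K\|A_k\|_\op^{1/2}\E f_k\lesssim K^2\lambda_k\sqrt{r_k}\asymp K^2\|A_k\|_F$. Note the factor is $K^2$, not $K$ as you wrote. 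It also gives an exponential scale $K^2\|A_k\|_\op$.

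\textbf{The gap is the dyadic summation, and no choice of weights $t_k$ fixes it.} Take eigenvalues $\lambda_k=2^{-k}$ on blocks of rank $r_k=4^k$, for $0\le k<m$ with $4^m\asymp n$. Every block has $\|A_k\|_F\asymp1$. Hence $\|A\|_F\asymp\sqrt m$, while $\sum_k\|A_k\|_F\asymp m=\sqrt m\,\|A\|_F$.

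Now consider a union bound $\P(|\sum_kY_k|>t)\le\sum_k\P(|Y_k|>t_k)$ with $\sum_kt_k\le t$. Making each term at most $e^{-x}$ forces $t_k\gtrsim\sqrt x\,K^2\|A_k\|_F$, since $\min\{a,b\}\ge x$ needs $a\ge x$. This per-block scale is sharp already for Gaussian $\x$. It follows that $x\lesssim t^2/(mK^4\|A\|_F^2)$ for $1\lesssim x\lesssim m$.

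So the scheme loses a factor $m\asymp\log n$ in the Gaussian exponent. Your weights $t_k\propto t(\|A_k\|_F/\|A\|_F+2^{-k/2})$ have normalizing sum $\asymp\sqrt m$ in this example, so they realize exactly this loss. This dimension-dependent factor is what the cited theorem avoids. For the paper's own use, which works with $\prec$ and $K_N=N^{o(1)}$, a $\log n$ loss would be harmless. It does not, however, give the stated inequality with an absolute constant.

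\textbf{The missing idea.} Stop passing through $\|A^{1/2}\x\|_2$: its Lipschitz constant only sees $\|A\|_\op^{1/2}$, and its mean sees $\Tr\Sigma A$. Instead, for $A\succeq0$, apply convex concentration to the convex function $\x\mapsto\x^*A\x$ itself. Its gradient $2A\x$ typically has size $\lesssim K\|A\|_F$. For $R>0$ set
\[
 \phi_R(\x)=\sup_{\|A\y\|_2\le R}\big\{\y^*A\y+2\y^*A(\x-\y)\big\}.
\]
\begin{itemize}
\item $\phi_R$ is a supremum of affine functions whose gradients have norm at most $2R$, so it is convex and $2R$-Lipschitz.
\item It lies below $\x^*A\x$, and equals it when $\|A\x\|_2\le R$.
\item The bad event $\{\|A\x\|_2>R\}$ is controlled by convex concentration for $\x\mapsto\|A\x\|_2$. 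This function is convex and $\|A\|_\op$-Lipschitz, with mean at most $(\Tr\Sigma A^2)^{1/2}\lesssim K\|A\|_F$.
\end{itemize}
Choose $R=CK\|A\|_F+\sqrt{t\|A\|_\op}$. Then $\P(\|A\x\|_2>R)\le2e^{-t/(K^2\|A\|_\op)}$, and
\[
 \frac{t^2}{K^2R^2}\gtrsim\min\Big\{\frac{t^2}{K^4\|A\|_F^2},\frac{t}{K^2\|A\|_\op}\Big\}.
\]
Both regimes therefore come out at once, with no spectral decomposition. The $t$-dependence of the center $\E\phi_R(\x)$ is removed by the usual passage through a median of $\x^*A\x$. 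Combined with your reduction to $A_\pm\succeq0$, this proves the proposition.
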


Since $\E\,\x_i^*A\x_i=\Tr\Sigma A$, the choice $t=N^\eps\|A\|_F$ and $K=K_N=N^{o(1)}$ gives \eqref{eq:assumption-qf} for real $A$; complex $A$ are handled by splitting into real and imaginary parts. Applying \eqref{eq:convex-concentration} to $f=\|\cdot\|_2$, together with $\E\|\x_i\|_2\le(\Tr\Sigma)^{1/2}\le\sqrt{Cn}$, gives the norm moments. Hence Assumption~\ref{ass:A2} holds, and Theorem~\ref{thm:target} applies. Several familiar classes of distributions satisfy the convex concentration property \eqref{eq:convex-concentration}. For example, it holds with $K_N=O(1)$ for vectors with independent, uniformly bounded coordinates, and for their images under linear maps of bounded operator norm, by Talagrand's inequality \cite{talagrand1996new}. By the Herbst argument \cite{bakry2014analysis}, it also holds, for all Lipschitz functions and not only convex ones, whenever the law of $\x_i$ satisfies a logarithmic Sobolev inequality with a dimension-free constant; by the Bakry--\'Emery criterion, this is the case for strongly log-concave vectors. Proposition \ref{prop:adamczak} is what Examples~\ref{ex:tilt}--\ref{ex:spherical-p-spin} use, together with stability of the Lipschitz concentration property under composition with Lipschitz maps. 
\end{example}

\begin{example}[Log-concave columns]\label{ex:logconcave}
    Let the columns be independent, centered, and log-concave, with covariance $\Sigma$ satisfying Assumption~\ref{ass:A1}; for instance, uniformly distributed on an isotropic convex body. Then $\z_i=\Sigma^{-1/2}\x_i$ is isotropic and log-concave. Using the polylogarithmic bound on the Kannan--Lov\'asz--Simonovits constant of \cite{klartag2022bourgain}, following the earlier progress in \cite{chen2021almost}, Bao and Xu \cite[Lemma~3.3]{bao2025extreme} proved that $|\z_i^*B\z_i-\Tr B|\prec\|B\|_F$ for every deterministic $B\in\C^{n\times n}$. With $B=\Sigma^{1/2}A\Sigma^{1/2}$ this gives
\[
 |\x_i^*A\x_i-\Tr\Sigma A|\prec\|\Sigma^{1/2}A\Sigma^{1/2}\|_F\le\|\Sigma\|_\op\|A\|_F,
\]
and the norm moments follow from Borell's lemma \cite{borell1974convex}, which gives exponential tails for $\|\z_i\|_2/\sqrt n$. Hence Assumption~\ref{ass:A2} holds for every log-concave column distribution, and Theorem~\ref{thm:target} applies whenever the covariance satisfies Assumption~\ref{ass:A1}. Bao and Xu \cite{bao2025extreme} proved eigenvalue rigidity for isotropic log-concave columns; Theorem~\ref{thm:target} adds anisotropic control for general covariance, with no unconditionality or other symmetry assumption.
\end{example}

\begin{example}[Nonlinear tilts of Gaussian measures]\label{ex:tilt}
Let $cI_n\preceq\Lambda\preceq CI_n$, let $\w\in\R^n$ have density proportional to $e^{-V}$ with
\[
 V(\y)=\frac12\y^*\Lambda\y+W(\y),\qquad \sup_{\y\in\R^n}\|\nabla^2W(\y)\|_\op\le\frac c2,
\]
and let the columns be independent copies of $\x=\w-\E\w$. Then $\frac c2I_n\preceq\nabla^2V\preceq(C+\frac c2)I_n$, so the law of $\w$ satisfies a logarithmic Sobolev inequality with a constant depending only on $c$ by the Bakry--\'Emery criterion, and $\x$ satisfies \eqref{eq:convex-concentration} with $K_N=O(1)$. The Brascamp--Lieb and Cram\'er--Rao inequalities give $(C+\frac c2)^{-1}I_n\preceq\Sigma\preceq\frac2cI_n$, so Assumption~\ref{ass:A1} holds, and Example~\ref{ex:convex} gives Assumption~\ref{ass:A2}. At sufficiently small coupling $\lambda$, this class contains the random-feature tilts $W(\y)=\lambda\sum_{i\le d}\sigma(\bw_i^*\y)$ with bounded $\sigma''$ and $\|\sum_{i\le d}\bw_i\bw_i^*\|_\op\le C$, considered in \cite[Proposition~2.18]{fan2026anisotropic}, where the cumulant condition was verified under bounds on the derivatives of $\sigma$ of all orders. Theorem~\ref{thm:target} requires neither the ridge structure nor derivative bounds beyond the second order: the perturbation $W$ may couple all coordinates, and its cumulant tensors need not have any sparse or low-rank structure.
\end{example}

\begin{example}[Deep random features]\label{ex:deepRF}
    Fix a depth $L$ and widths $d_0,\ldots,d_L=n$. For $\ell\in[L]$, let $T_\ell:\R^{d_{\ell-1}}\to\R^{d_\ell}$ be a deterministic linear map with $\|T_\ell\|_\op\le C$, and let $\sigma_\ell:\R^{d_\ell}\to\R^{d_\ell}$ be $C$-Lipschitz, for instance the entrywise application of a Lipschitz activation function. Let $\by\in\R^{d_0}$ be a random vector whose law satisfies a logarithmic Sobolev inequality with a dimension-free constant, let
\[
 F(\by)=\sigma_L\big(T_L\sigma_{L-1}(T_{L-1}\cdots\sigma_1(T_1\by)\cdots)\big),
\]
and let the columns be independent copies of $\x=F(\by)-\E F(\by)$, whose covariance $\Sigma$ is assumed to satisfy Assumption~\ref{ass:A1}. Since $L$ is fixed, $F$ is Lipschitz with a bounded constant, so the Herbst argument gives \eqref{eq:convex-concentration} with $K_N=O(1)$, and Example~\ref{ex:convex} gives Assumption~\ref{ass:A2}. In random-feature models the maps $T_\ell$ are random weight matrices independent of $\by$; the conclusion then holds conditionally on the weights, on the event that their operator norms are bounded and Assumption \ref{ass:A1} holds. Anisotropic deterministic equivalents at fixed spectral parameters for related deep random-feature models were obtained in \cite{schroder2023deterministic}. The cumulant condition of \cite{fan2026anisotropic} was verified in \cite[Proposition~2.17]{fan2026anisotropic} for one layer and polynomial or entire activations with bounded growth, and its verification for deep models was left open; under the assumptions above, Theorem~\ref{thm:target} gives the anisotropic local law at the optimal scale without it.
\end{example}

\begin{example}[High-temperature spherical $p$-spin model]\label{ex:spherical-p-spin} 
    Our last example is the spherical spin-glass model considered in \cite[Section~2.6]{fan2026anisotropic}. Fix an even integer $p$ and let $J\in(\R^n)^{\otimes p}$ be a symmetric tensor with independent standard Gaussian entries up to symmetry, so that $\|J\|_F\asymp n^{p/2}$ with overwhelming probability. Define
\begin{equation}\label{eq:p-spin}
 H_J(\x)=n^{-(p-1)/2}\langle J,\x^{\otimes p}\rangle,\qquad \x\in S^{n-1}(\sqrt n),
\end{equation}
and let $\mu_{J,\beta}$ be the Gibbs measure with density proportional to $e^{-\beta H_J(\x)}$ with respect to the uniform probability measure on $S^{n-1}(\sqrt n)$, where $\beta>0$ is the inverse temperature. Conditionally on $J$, let $\x_1,\ldots,\x_N$ be independent samples from $\mu_{J,\beta}$. For every fixed $p$ there exists $\beta_0(p)>0$ such that, with overwhelming probability over $J$, the measure $\mu_{J,\beta}$ satisfies a logarithmic Sobolev inequality with a constant independent of $n$ for all $\beta\le\beta_0(p)$; this follows from the Bakry--\'Emery criterion on the sphere \cite[Section~2.6]{fan2026anisotropic}. Since $p$ is even, $\mu_{J,\beta}$ is symmetric and $\E\x_i=0$; since $\|\x_i\|_2^2=n$, all norm moments are trivial and $\Tr\Sigma=n$. The Poincar\'e inequality gives $\|\Sigma\|_\op=O(1)$, which together with $\Tr\Sigma=n$ forces a positive fraction of the eigenvalues of $\Sigma$ to be bounded below, so Assumption~\ref{ass:A1} holds. The logarithmic Sobolev inequality gives dimension-free Lipschitz concentration, and Example~\ref{ex:convex} gives Assumption~\ref{ass:A2}. Consequently, conditionally on $J$, Theorem~\ref{thm:target} gives the anisotropic local law for every fixed even $p$ at sufficiently high temperature. For $p=4$ and $\beta\leq \beta_*$ for some constant $\beta_*>0$, the fourth cumulant tensor of $\mu_{J,\beta}$ is disordered and violates the cumulant condition of \cite{fan2026anisotropic}. Theorem~\ref{thm:target} therefore establishes the anisotropic local law in an example that lies outside the reach of the cumulant hypothesis.
\end{example}

\medskip
\noindent\emph{Open problems.}
We mention three directions. First, our regular spectral domains are bounded away from the origin, as required by the square-root reparametrization of Section \ref{sec:block}. Establishing the anisotropic law on regular domains approaching zero, in particular near the hard edge when $n/N \to 1$, remains open; for independent entries, see \cite{cacciapuoti2013local,bloemendal2014isotropic}.  Second, at a uniformly regular soft edge bounded away from zero, \cite[Corollary~2.7]{fan2026anisotropic} gives optimal eigenvalue rigidity under Assumptions~\ref{ass:A1} and \ref{ass:A2}, while Theorem~\ref{thm:target} supplies the corresponding anisotropic local law. It is therefore natural to ask whether these two assumptions alone imply edge universality at every 
regular soft edge. This is known for broad classes of independent-entry models \cite{bao2015universality,lee2016tracy,knowles2017anisotropic,ding2018necessary,fan2022tracy}, and Bao and Xu \cite{bao2025extreme} proved the Tracy--Widom law for isotropic log-concave columns under an additional unconditionality assumption. A proof in the present setting would require an additional dynamical or Green-function comparison argument adapted to whole-column dependence. Third, the zig-zag strategy was developed as a dynamical method for proving, in particular, multi-resolvent local laws \cite{cipolloni2022mesoscopic,erdos2025lecture}. Developing multi-resolvent analogues of both the characteristic-flow and whole-column comparison estimates in the present covariance setting could lead to eigenvector-overlap or eigenstate-thermalization-type bounds for nonseparable sample covariance matrices, in analogy with the results for Wigner-type matrices \cite{cipolloni2023edge,erdos2024eigenstate}. For general $\Sigma$, one expects the relevant deterministic eigenvector profile to depend on the spectral energy. Such conclusions require substantial additional analysis and do not follow from the one-resolvent anisotropic law proved here.

\section{Notation and preliminaries}\label{sec:prelim}
This section introduces the notation and preliminary results needed to prove the anisotropic local law (Theorem \ref{thm:target}). We follow the zig--zag strategy of \cite{cipolloni2022mesoscopic,erdos2025lecture}, adapting it to the block setting. Rather than proving Theorem \ref{thm:target} directly, we first establish a local law for a slightly different linearized block and then transfer the result to the setting of Theorem \ref{thm:target}.

\subsection{The characteristic flow}\label{sec:block}
For $Z\in\C^{n\times n}$ and $\beta\in\C$, with $\Im Z>0$ and $\Im\beta>0$, define
\begin{equation} \label{eq:full-L}
 \cL(X;Z,\beta) =
 \begin{pmatrix}
   -Z&N^{-1/2}X\\
   N^{-1/2}X^*&-\beta I_N
 \end{pmatrix},
 \qquad
 \cR(X;Z,\beta)=\cL(X;Z,\beta)^{-1}=\begin{bmatrix}
     \cR_{11} & \cR_{12}\\
     \cR_{21} & \cR_{22}
 \end{bmatrix}.
\end{equation}
At the scalar source $(Z,\beta)=(wI,w)$, the block matrix inversion formula gives
\begin{equation} \label{eq:singular-linearization-blocks}
 \cR_{11}=w(N^{-1}XX^*-w^2)^{-1},\qquad \cR_{22}=w(N^{-1}X^*X-w^2)^{-1}.
\end{equation}
The covariance pencil $\cL_{\rm F}(z)$ defined in \eqref{eq:FMPW-pencil} satisfies 
\begin{equation}\label{eq:linear-pencil-equivalence}
    \cL_{\rm F}(w^2)^{-1} =\diag(I_n,wI_N)\cR(X,wI_n,w)\diag(w^{-1}I_n,I_N).
\end{equation}
To use this identity, we must choose $\sqrt{z}=w$; the next lemma shows that this choice is well-defined.
\begin{lemma}\label{lem:z-companion-w}
Let $\bD$ be a regular domain. Then the principal branch of the square root $\sqrt{\bD}\subseteq\C^+$ exists. Moreover, for any $z=E+i\eta\in\bD$ and $w:=\sqrt{z}\in\C^+$, we have
\[
    c\leq|w|\leq C,\quad c\eta\leq\Im{w}\leq C\eta,\quad \Re{w}\geq c,
\]
for some constants $C,c>0$.
\end{lemma}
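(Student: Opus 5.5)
The lemma is elementary and follows from the boundedness condition (b) of the regular domain. By (b), every $z\in\bD$ satisfies $\Re z\ge c$ and $|z|\le C$. Since $\bD\subset\C^+$ as well, $\bD$ lies in the open right half-plane $\{\Re z>0\}$. This region avoids the branch cut $(-\infty,0]$, so the principal square root $w=\sqrt z=|z|^{1/2}e^{i\arg(z)/2}$ is well-defined and analytic on $\bD$, with $\arg z\in(0,\pi/2)$. Hence $\arg w\in(0,\pi/4)$, which gives $w\in\C^+$ and $\Re w>0$.

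The modulus bound comes directly from $|w|=|z|^{1/2}$. Combined with $c\le|z|\le C$ from (b), this yields $c^{1/2}\le|w|\le C^{1/2}$.

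For the real part, I would use $\arg w\in(0,\pi/4)$, which gives $\Re w=|w|\cos(\arg w)\ge|w|/\sqrt2\ge c'$. Alternatively, the identity $(\Re w)^2-(\Im w)^2=\Re z\ge c$ gives $\Re w\ge\sqrt{c}$ directly.

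For the imaginary part, write $z=w^2$ with $w=a+ib$. Then $\eta=\Im z=2ab$, so $\Im w=b=\eta/(2\Re w)$. We have $\Re w\le|w|\le C^{1/2}$ and $\Re w\ge c'$ from the previous step. Together these give $\eta/(2C^{1/2})\le\Im w\le\eta/(2c')$, which is the claimed two-sided bound $\Im w\asymp\eta$.

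There is no real obstacle. The only point needing care is that the lower bound $\Re z\ge c$ in (b) is exactly what makes $\Im w$ comparable to $\eta$ rather than to $\sqrt\eta$. Near the origin or the negative axis, $\Re w$ could be small, and the relation $\Im w=\eta/(2\Re w)$ would then degenerate. This is why the definition of a regular domain requires $\bD$ to stay away from both.
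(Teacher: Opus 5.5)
Your proof is correct and follows the paper's argument. In both, $\Re z\ge c$ and $\Im z>0$ put $\arg z\in(0,\pi/2)$, the modulus bound is $|w|=|z|^{1/2}$, the real-part bound comes from $(\Re w)^2-(\Im w)^2=\Re z\ge c$, and $\Im w=\eta/(2\Re w)$ gives $\Im w\asymp\eta$; your alternative real-part bound via $\arg w\in(0,\pi/4)$ is an equally valid minor variant.
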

\begin{proof}
By definition, a regular domain $\bD$ is bounded away from $0$. Thus, a branch of the square root exists, and $c\leq|w|\leq C$. Moreover, for all $z=E+i\eta\in\bD$, we can write $z=|z|e^{i\theta}$ for some $0<\theta<\pi/2$. Hence, defining $w=|z|^{1/2}e^{i\theta/2}$ shows that $w\in\C^+$ and $\Re w\geq 0$.

Let $w=a+ib$. It follows that $w^2=a^2-b^2+2abi=E+i\eta$, so $a^2\geq c+b^2\geq c$, which proves $\Re w\geq \sqrt{c}$. Finally, rearranging $2ab=\eta$ gives $b/\eta=1/(2a)$. Since $a\leq |w|\leq C$ and $a\geq \sqrt{c}$, this proves $c\eta\leq\Im{w}\leq C\eta$.
\end{proof}

Throughout the paper, for a regular domain $\bD$ and $z\in\bD$, we refer to $w:=\sqrt{z}\in\C^+$ as the square root of $z$. We reserve the notation $\eta$ for the imaginary part of $z$ and always write $z=E+i\eta$. We also apply Lemma \ref{lem:z-companion-w} to switch between $\Im w$ and $\Im z=\eta$ without further comment.

Put $\cD=\diag(Z,\beta I_N)$, and define the self-energy map
\begin{equation}\label{eq:block-self-energy}
 \cS[A]
 =
 \begin{pmatrix}
  (N^{-1}\Tr A_{22})\Sigma&0\\
  0&(N^{-1}\Tr\Sigma A_{11})I_N
 \end{pmatrix}.
\end{equation}
The deterministic approximation to the generalized pencil is a block-diagonal solution of its matrix Dyson equation (MDE).  For a source $(Z,\beta)$, such a solution has the form
\[
 M=\diag(P,qI_N),\qquad P=-(Z+q\Sigma)^{-1},\qquad q=-(\beta+s)^{-1},\qquad s=N^{-1}\Tr\Sigma P,
\]
and obeys
\begin{equation}\label{eq:block-MDE}
 -M^{-1}=\cD+\cS[M].
\end{equation}
No existence or uniqueness statement for an arbitrary generalized source is needed below; the only branch we use is constructed explicitly.

\begin{lemma}[Explicit block characteristic and MDE transport]
\label{lem:block-M-transport}
Suppose an invertible block-diagonal matrix $M_0$ and a source $\cD_0$ satisfy \eqref{eq:block-MDE} at time $t_0$. On every interval on which $\Im Z_t>0$ and $\Im\beta_t>0$, define
\begin{equation}\label{eq:explicit-block-characteristic}
 M_t=e^{(t-t_0)/2}M_0,\qquad -\cD_t=M_t^{-1}+\cS[M_t].
\end{equation}
Then $M_t$ satisfies \eqref{eq:block-MDE} at $\cD_t$, and the pair obeys
\[
    -\frac{\de }{\de t}\cD_t=\frac{1}{2}\cD_t+\cS[M_t],\quad \frac{\de}{\de t}M_t=\frac{1}{2}M_t.
\]
\end{lemma}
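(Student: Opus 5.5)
The lemma is essentially algebraic, and I will verify it directly from the definitions. Two structural facts about $\cS$ in \eqref{eq:block-self-energy} do all the work. First, $\cS$ is linear, so $\cS[M_t]=e^{(t-t_0)/2}\cS[M_0]$. Second, $\cS$ maps block-diagonal matrices to block-diagonal matrices of the form $\diag(a\Sigma,bI_N)$. The first claim, that $M_t$ satisfies \eqref{eq:block-MDE} at $\cD_t$, is then immediate. $M_t$ is invertible as a nonzero multiple of the invertible $M_0$, and $\cD_t$ is defined precisely by $-\cD_t=M_t^{-1}+\cS[M_t]$, which is \eqref{eq:block-MDE}. I will also note that $\cD_t$ is block diagonal, of the form $\diag(Z_t,\beta_tI_N)$. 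Indeed $M_t^{-1}=e^{-(t-t_0)/2}M_0^{-1}=e^{-(t-t_0)/2}\diag(P_0^{-1},q_0^{-1}I_N)$, and adding $\cS[M_t]$ keeps the lower block scalar. The upper block, $-e^{-(t-t_0)/2}P_0^{-1}-e^{(t-t_0)/2}(N^{-1}\Tr (M_0)_{22})\Sigma$, is a combination of $Z_0$ and $\Sigma$. So $\cD_t$ is a genuine source of the required type, staying in $\operatorname{span}(I_n,\Sigma)$ whenever $Z_0$ does. The hypothesis $\Im Z_t>0$, $\Im\beta_t>0$ is only needed so that $(Z_t,\beta_t)$ is an admissible source; the identities themselves hold for all $t$.

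For the differential identities, $\frac{\de}{\de t}M_t=\frac12M_t$ follows by differentiating the exponential. For $\cD_t$, I will write
\[
-\cD_t=e^{-(t-t_0)/2}M_0^{-1}+e^{(t-t_0)/2}\cS[M_0],
\]
which gives
\[
-\frac{\de}{\de t}\cD_t=-\tfrac12 e^{-(t-t_0)/2}M_0^{-1}+\tfrac12e^{(t-t_0)/2}\cS[M_0]=-\tfrac12M_t^{-1}+\tfrac12\cS[M_t].
\]
Substituting $M_t^{-1}=-\cD_t-\cS[M_t]$ yields $\frac12\cD_t+\frac12\cS[M_t]+\frac12\cS[M_t]=\frac12\cD_t+\cS[M_t]$, as claimed.

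There is no real obstacle here. The only point requiring care is bookkeeping: keeping the sign conventions straight and checking that $M_t$ retains the block form $\diag(P_t,q_tI_N)$ with $P_t=-(Z_t+q_t\Sigma)^{-1}$ and $q_t=-(\beta_t+s_t)^{-1}$. Reading \eqref{eq:block-MDE} blockwise at time $t$, using linearity of $\cS$, gives exactly these relations, so this consistency is automatic. This is what will later let the paper treat $M_t$ as the deterministic equivalent along the characteristic.
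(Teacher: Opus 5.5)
Your proof is correct and follows the paper's argument. The MDE at $\cD_t$ holds by definition, and differentiating $-\cD_t=M_t^{-1}+\cS[M_t]$ with $\dot M_t=\frac12 M_t$ and linearity of $\cS$, then substituting $M_t^{-1}=-\cD_t-\cS[M_t]$, gives $-\dot\cD_t=\frac12\cD_t+\cS[M_t]$; your extra remarks on the block structure of $\cD_t$ are accurate but not needed for the lemma itself.
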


\begin{proof}
The definition of $\cD_t$ is equivalent to $-M_t^{-1}=\cD_t+\cS[M_t]$.  Since $\cS$ is linear,
\[
 -\dot{\cD}_t
 =-\frac12M_t^{-1}+\frac12\cS[M_t]
 =\frac12\cD_t+\cS[M_t].
\]
The identity $\dot M_t=M_t/2$ is immediate. This completes the proof.
\end{proof}

Let $H_T>0$ be the terminal time with terminal source $(Z_*,\beta_*)=(wI_n,w)$ and terminal deterministic equivalent matrix
\[
    M_*=\diag((-w-w\widetilde m_0(z)\Sigma)^{-1},w\widetilde{m}_0(z)I_N).
\]
We parameterize the characteristic by its backward distance from the target, with $0\leq h\leq H_T$. Thus, $h=0$ is the terminal point, while larger values of $h$ lie farther inside the easy initialization regime. By Lemma \ref{lem:block-M-transport} and the deformed MP equation \eqref{eq:MP}, we have
\begin{align}
 M_{h}&=e^{-h/2}M_*,\qquad  Z_{h}=e^{h/2}wI_n+(e^{h/2}-e^{-h/2})w\widetilde{m}_0(z)\Sigma,
 \label{eq:overview-characteristic}\\
 \beta_{h}&=e^{h/2}w+(e^{h/2}-e^{-h/2})(-w-(w\widetilde m_0(z))^{-1}).
 \notag
\end{align}
Thus, every generalized source used in the proof is the explicit source
in \eqref{eq:overview-characteristic}. For $t\geq 0$, introduce the covariance-preserving Ornstein--Uhlenbeck flow
\begin{equation}
 \de X_t=-\frac12X_t\,\de t+\Sigma^{1/2}\de B_t,
 \label{eq:overview-OU}
\end{equation}
where $B_t$ is a real standard $n\times N$ matrix Brownian motion,
independent of $X_0:=X$. For fixed $t$, $X_t$ is equal in law to
\[
 e^{-t/2}X_0+\sqrt{1-e^{-t}}\Sigma^{1/2}G_0.
\]
Here, $G_0\in\R^{n\times N}$ has independent standard Gaussian entries. In the remainder of the paper, we use the shorthand
\begin{equation}\label{eq:general-resolvent}
    \cR_{h,t}:=\cR(X_t,Z_{h},\beta_{h }),\quad \cD_h:=\diag(Z_h,\beta_hI_N),\quad \cX_t:=\begin{pmatrix}
        0 & N^{-1/2}X_t\\
        N^{-1/2}X_t^* & 0
    \end{pmatrix}.
\end{equation}

For $h\geq 0$ and $z=E+i\eta$ in a regular domain, define the deterministic control parameters
\[
    \Psi:=\Psi(z)=\sqrt{\frac{\Im\widetilde{m}_0(z)}{N\eta}}+\frac{1}{N\eta},\quad \Psi_h:=e^{-h/2}\Psi.
\]

Our main task is to prove the following theorem.
\begin{theorem}\label{thm:generalized-local-law}
Under Assumptions \ref{ass:A1} and \ref{ass:A2}, let $\bD$ be a regular domain. Uniformly over all $z\in\bD$, the corresponding square roots $w=\sqrt{z}\in\C^+$, and all deterministic unit vectors $\u,\bv\in\C^{n+N}$, we have
\[
    |\u^*(\cR_{0,0}-M_*)\bv|\prec\Psi_0=\Psi(z).
\]
\end{theorem}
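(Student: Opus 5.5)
We prove Theorem~\ref{thm:generalized-local-law} by the zig-zag scheme along the characteristic \eqref{eq:overview-characteristic} and the Ornstein--Uhlenbeck flow \eqref{eq:overview-OU}. Fix $z\in\bD$ and $w=\sqrt z$.

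\emph{Initialization.} Choose $H_T$ a large constant, so that $\Im Z_{H_T}$ and $\Im\beta_{H_T}$ are of order $e^{H_T/2}$. On the spectral side, the bound $\|\cR_{h,t}\|_\op\le(\min(\Im Z_h,\Im\beta_h))^{-1}$ makes the resolvent bounded and well conditioned there. On the probabilistic side, the global (fixed-distance) anisotropic law of \cite[Theorem~2.10]{fan2026anisotropic} gives $|\u^*(\cR_{H_T,t}-M_{H_T})\bv|\prec N^{-1/2}\le\Psi_{H_T}$ for every $t$. This uses the Schur-complement argument, which only needs Assumption~\ref{ass:A2}. Here $\Im\widetilde m_0\asymp\eta$ is not needed, since at this scale $\Psi_{H_T}\gtrsim N^{-1/2}e^{-H_T/2}$.

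\emph{Step grid.} Discretize $h_k=H_T-k\delta$ with $\delta=N^{-\omega}$-dependent step sizes. The steps are adapted so that $\Im Z_h$, which scales like $\eta+h$, shrinks by a bounded factor per step, giving $O(\log N)$ steps. At each step we prove the law at $(h_{k+1},0)$ from the law at $(h_k,0)$ by the following two moves.

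\emph{Zig step.} Run the flow for time $s=h_k-h_{k+1}$, i.e. pass from $(h_k,0)$ to $(h_{k+1},s)$. Itô's formula for $\u^*(\cR_{h_k-t,t}-M_{h_k-t})\bv$ proceeds as follows.
\begin{enumerate}[label=(\roman*)]
\item The drift $-\frac12X\,\de t$, combined with $\dot{\cD}$ from Lemma~\ref{lem:block-M-transport}, cancels against the Itô second-order term $\cR\,\cS[\cR]\,\cR$.
\item What remains is $\cR\,\cS[\cR-M]\,\cR$, which is controlled by the averaged law along the characteristic (Section~\ref{sec:averaged}), together with the trivial $\frac12$-rescaling absorbed into $\Psi_h=e^{-h/2}\Psi$.
\item The martingale term is bounded via BDG, with quadratic variation of order $N^{-1}\u^*\cR\Sigma\cR^*\u$. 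The Ward identity converts this to $\Im\u^*\cR\u/(N\Im)$, which is at most $\Psi_h^2$ times the integrable weight in $t$.
\end{enumerate}
A stopping-time/Gronwall argument then yields the optimal bound at $(h_{k+1},s)$.

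\emph{Zag step.} Remove the Gaussian component by comparing $(h_{k+1},s)$ back to $(h_{k+1},0)$. Differentiate $F(t)=\E|\u^*(\cR_{h_{k+1},t}-M)\bv|^{2L}$ in $t$ and write the generator as $\sum_i\cL_i$, where $\cL_i$ acts on column $i$ alone. By Lemma~\ref{lem:schur-one-column}, conditionally on the minor $X^{(i)}$ the observable is a rational function of $\x_i^*\mathbf b_i$ and $\x_i^*A_i\x_i$. The generator $\cL_i=-\frac12\x_i\cdot\nabla+\frac12\Tr\Sigma\nabla^2$ produces terms whose $\E_i$-centering cancels the leading order. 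The remaining fluctuations are bounded using Assumption~\ref{ass:A2} with $\|A_i\|_F$ estimated by Ward identities, e.g. $\|\cR^{(i)}\u\|^2=\Im\u^*\cR^{(i)}\u/\Im$. Here the a priori coarse bound from the previous step, at precision $N^{\xi}\Psi$, feeds in. The result is a differential inequality
\[
|F'|\le C N\cdot N^{-1}\bigl(\Psi^{2}N^{2\xi}\bigr)F^{1-1/L}+\cdots,
\]
and integrating over the short time $s$ gives $F(0)^{1/2L}\le F(s)^{1/2L}+O(sN^{C\xi}\Psi)$, which is absorbed.

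Finally, a union bound over a grid in $z$ and Lipschitz continuity upgrade the result to uniformity. \textbf{The main obstacle} is the zag bound: showing that every one-column generator term is $O(N^{-1})$ times the target fluctuation size, using only second-order (quadratic-form) information. The delicate terms are the cross terms quadratic in the $\x_i$-linear forms, where the Schur denominators must be expanded and Ward identities applied to $\mathbf b_i$.
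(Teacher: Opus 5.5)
\textbf{Gap: no source for the a priori input of the zag.} The comparison from $(h_{k+1},t_{k+1})$ back to $(h_{k+1},0)$, with $t_{k+1}:=h_k-h_{k+1}$, is a Gr\"onwall argument over the whole OU interval. The one-column generator bounds must therefore hold at \emph{every} $s\in[0,t_{k+1}]$. By Lemma~\ref{lem:L_i-bounds} these need coarse a priori control of $\cR_{h_{k+1},s}$ at the current $z$:
\begin{itemize}
\item $N^{-1/2}\|B_i\u\|_2$ and $N^{-1}\|A_i\|_F$ bounded by $\Phi_1$;
\item their $\Sigma$-weighted analogues bounded by $N^\delta\sqrt{\Lambda_h}$;
\item the denominators, $|\beta+N^{-1}\x_i^*A_i\x_i|^{-1}=|\e_i^*\cR\e_i|\prec N^{\delta}$.
\end{itemize}
Nothing in your scheme provides this. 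The zig controls only the characteristic segment from $(h_k,0)$ to $(h_{k+1},t_{k+1})$. The previous step controls the source $h_k$, not $h_{k+1}$, and there is no analogue in $h$ of the Herglotz monotonicity in $\eta$, since $\Re\cD_h$ moves as well. The trivial bound $\|\cR_{h,s}\|_\op\le Ce^{-h/2}\eta^{-1}$ gives nothing at small $\eta$. So ``the a priori coarse bound from the previous step'' is either circular or refers to the wrong source.

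The missing idea is the paper's outer bootstrap in the spectral scale:
\begin{itemize}
\item The entire sweep, for all $h_k$ and all $s\le t_k$, is first completed on $\bD_l=\{\Im z\ge N^{-l\delta}\}$.
\item It is then transferred to $\bD_{l+1}$ at fixed $(h,s)$ via the Herglotz property of $z\mapsto\u^*\cR_{h,s}(z)\u$ (Lemma~\ref{lem:Herglotz-function}), losing only $N^{\delta}$ (Lemma~\ref{lem:bootstrap-in-zag}). That Herglotz property needs proof because the source depends on $z$.
\item Fed through the minor estimates (Lemma~\ref{lem:minor-estimates}), this gives $\Phi_1=N^{(C_0+1)\delta/2}\Psi_h$.
\end{itemize}
The square root coming from the Ward identity is exactly what creates the gap $N^{10\delta}\Phi_1\le\Phi_2=N^{C_0\delta}\Psi_h$. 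The zag error is $N^{C\delta}t(\Lambda_h+1)\Phi_1$. It is governed by the coarse scale $\Phi_1$ and the time-step condition $t(\Lambda_h+1)\le CN^\delta$. The per-column bound is $c_i(\Lambda_h+1)N^{C\delta}\Phi_1^{L}$ with only $\sum_i c_i\lesssim1$, not $O(N^{-1})$ per column. In your sketch the coarse input has the same precision $N^{\xi}\Psi$ you are propagating, and the error is $sN^{C\xi}\Psi$. So it is not explained why that error can be absorbed.

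\textbf{Three further points need repair.}

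First, the initialization. \cite[Theorem~2.10]{fan2026anisotropic} concerns the standard pencil, not the generalized source $(Z_{H_T},\beta_{H_T})$. Also, $\Im Z_{H_T}$ is not of order $e^{H_T/2}$ where $\Im\widetilde m_0\asymp\eta/\sqrt{\kappa+\eta}$ is small, or in directions where $\Sigma$ is small. At constant $H_T$, the naive approximation $-\cD_{H_T}^{-1}$ misses $M_{H_T}$ by order $e^{-3H_T/2}$, so you would need a genuine global law for generalized sources. The paper instead starts at $H_0\ge2\log N$ (Lemma~\ref{lem:zig-initialization}). There $\|\cR_{H_0,0}-M_{H_0}\|_\op\prec e^{-H_0}\eta^{-1}+e^{-3H_0/2}\le\Psi_{H_0}$ follows from a two-term expansion using only $\|\cX\|_\op\prec1$.

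Second, the domain. Your martingale and drift estimates rest on the weighted Ward bound $\|\cW^{1/2}\cR\u\|_2^2\lesssim\Lambda_h$ (Lemma~\ref{lem:ward-estimate}). This requires $N^{\eps}\Psi_h\lesssim e^{-h/2}\Im\widetilde m_0$, i.e.\ $N\eta\Im\widetilde m_0\ge N^{c}$, which fails near edges outside the support. The paper proves the law on $\hat\bD$ and reaches the rest of $\bD$ by a final Herglotz continuity argument in $\eta$.

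Third, the number of steps. Each zig--zag pair passes from high-probability bounds to moments and back through Markov's inequality, losing a factor $N^{\eps}$ multiplicatively. With $O(\log N)$ steps these losses compound. The paper uses $K\le 2\delta^{-1}$ steps, with $h_k$ shrinking by factors $N^{-\delta}$, after checking $\int\Lambda_h\,\de h\le C\log N$ and $t_k(\Lambda_{h_k}+1)\le CN^{\delta}$.
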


\subsection{Bootstraps and zig-zag}\label{subsec:proof-outline}
We now explain the proof strategy. To control a bilinear form of the form $\u^*(\cR_{h,t}-M_h)\bv$ for $\u,\bv\in\C^{n+N}$, it suffices to consider $\u,\bv\in\R^{n+N}$, since we can decompose $\u,\bv$ into their entrywise real and imaginary parts. Moreover, because $\cR_{h,t},M_h$ are complex symmetric matrices, for any $\u,\bv\in\R^{n+N}$ we have
\[
    \frac{1}{4}[(\u+\bv)^*(\cR_{h,t}-M_h)(\u+\bv)-(\u-\bv)^*(\cR_{h,t}-M_h)(\u-\bv)]=\u^*(\cR_{h,t}-M_h)\bv.
\]
Thus, it suffices to consider $\u=\bv\in\R^{n+N}$. For a regular domain $\bD$, we first restrict attention to the smaller domain $\hat\bD:=\{z\in\bD:N\eta\Im\widetilde m_0(z)\geq N^{3C_0\delta}\}$, where we fix constants $C_0 >100$ and $0 < \delta <\tau/(10 C_0)$. For each $z\in\hat\bD$, we perform a constant number of bootstraps on the spectral scale, descending from height one to height $\eta$ in multiplicative increments of $N^{-\delta}$. Each bootstrap iteration uses the so-called ``zig--zag'' strategy.

Both the ``zig'' and ``zag'' comparisons use the covariance-preserving Ornstein--Uhlenbeck flow
\begin{equation}
 \de X_s=-\frac12X_s\,\de s+\Sigma^{1/2}\de B_s,
 \label{eq:bootstrap-OU}
\end{equation}
where $B_s$ is a real standard $n\times N$ matrix Brownian motion
independent of $X$. In the zig step, the source moves forward along
\eqref{eq:overview-characteristic} together with this flow. The
order-one It\^o correction is canceled by the motion of $M$, and the
remaining martingale and drift terms are estimated in
Proposition~\ref{prop:full-block-zig}.  In the zag step, the new source is
held fixed while the OU smoothing is reversed. Removing one column
then leaves only complete linear and centered quadratic forms of that
column. We control the errors using both Assumption~\ref{ass:A2} and a coarse estimate obtained from the previous bootstrap iteration.

Each grid point at OU time zero represents the original ensemble. Figure~\ref{fig:proof-strategy} displays three consecutive spectral bootstraps. Within each panel, the current zig provides an endpoint estimate at OU time $t_k$, while the estimate already established on the preceding spectral domain supplies the coarse imaginary-part and minor bounds needed for the current zag. The output at $s=0$ then becomes the input to the next zig. After reaching $h_K=0$, the enlarged spectral domain becomes the coarse-input domain for the next complete source sweep.

Thus, the zig moves both the ensemble and the source, while the zag returns to the original ensemble at the new fixed source. Proposition~\ref{prop:full-block-zig} and Proposition~\ref{prop:full-block-zag} provide the two comparison steps. Iterating the spectral bootstrap gives the estimate on $\hat{\bD}$, and a final continuity argument extends it to all of $\bD$.

\begin{figure}[t]
\centering
\begin{tikzpicture}[
    x=.94cm,y=.78cm,>=Latex,
    dot/.style={circle,fill=proofink,inner sep=0pt,minimum size=3.5pt},
    zig/.style={-{Latex[length=1.7mm,width=1.15mm]},draw=proofink,line width=.70pt},
    zag/.style={-{Latex[length=1.7mm,width=1.15mm]},draw=proofink,line width=.70pt,dashed},
    prior/.style={-{Latex[length=1.8mm,width=1.2mm]},draw=prooforange,line width=.95pt},
    guide/.style={draw=proofink!35,line width=.5pt,dotted},
    every node/.style={font=\footnotesize,text=proofink}
]

\newcommand{\sweeppanel}[3]{%
  \begin{scope}[shift={(#1,0)},name prefix=#2-]
    \node[font=\small\bfseries] at (1.55,7.62) {on $#3$};
    \draw[-{Latex[length=1.7mm]},line width=.52pt] (.32,.30) -- (3.22,.30);
    \draw[-{Latex[length=1.7mm]},line width=.52pt] (.32,.30) -- (.32,7.18);
    \node[above] at (.32,7.18) {$h$};
    \node[below=1pt] at (3.22,.30) {$s$};
    \draw[guide] (.55,.30) -- (.55,6.82);
    \node[below=1pt] at (.55,.30) {$0$};

    \node[dot] (A0) at (.55,6.70) {};
    \node[dot] (A1) at (.55,5.42) {};
    \node[dot] (B1) at (3.02,5.42) {};
    \draw[zig] (A0) -- (B1);
    \draw[zag] (B1) -- (A1);

    \node at (.55,4.60) {$\vdots$};
    \node[dot] (Akm) at (.55,3.88) {};
    \node[dot] (Ak) at (.55,2.76) {};
    \node[dot] (Bk) at (2.34,2.76) {};
    \draw[zig] (Akm) -- (Bk);
    \draw[zag] (Bk) -- (Ak);

    \node at (.55,1.98) {$\vdots$};
    \node[dot] (AKm) at (.55,1.38) {};
    \node[dot] (AK) at (.55,.70) {};
    \node[dot] (BK) at (1.28,.70) {};
    \draw[zig] (AKm) -- (BK);
    \draw[zag] (BK) -- (AK);

    \coordinate (zagoneout) at ($(A1)!.58!(B1)$);
    \coordinate (zagonein) at ($(A1)!.42!(B1)$);
    \coordinate (zagkout) at ($(Ak)!.58!(Bk)$);
    \coordinate (zagkin) at ($(Ak)!.42!(Bk)$);
    \coordinate (zagKout) at ($(AK)!.58!(BK)$);
    \coordinate (zagKin) at ($(AK)!.42!(BK)$);
    \coordinate (base) at (1.55,.03);
  \end{scope}%
}

\sweeppanel{.10}{L}{\bD_{l-1}}
\sweeppanel{5.45}{M}{\bD_l}
\sweeppanel{10.80}{R}{\bD_{l+1}}

\node[anchor=east] at (.35,6.70) {$h_0=H_0$};
\node[anchor=east] at (.35,5.42) {$h_1$};
\node[anchor=east] at (.35,3.88) {$h_{k-1}$};
\node[anchor=east] at (.35,2.76) {$h_k$};
\node[anchor=east] at (.35,1.38) {$h_{K-1}$};
\node[anchor=east] at (.35,.70) {$h_K=0$};

\node[font=\scriptsize,rotate=-23] at (1.87,6.20) {Zig$_1$};
\node[font=\scriptsize] at (1.78,5.62) {Zag$_1$};
\node[font=\scriptsize,rotate=-31] at (1.48,3.46) {Zig$_k$};
\node[font=\scriptsize] at (1.42,2.96) {Zag$_k$};

\foreach \p in {L-zagoneout,L-zagkout,L-zagKout,
                 M-zagoneout,M-zagkout,M-zagKout}{
  \node[circle,fill=prooforange,inner sep=0pt,minimum size=3.1pt] at (\p) {};
}
\foreach \a/\b in {L-zagoneout/M-zagonein,L-zagkout/M-zagkin,L-zagKout/M-zagKin,
                    M-zagoneout/R-zagonein,M-zagkout/R-zagkin,M-zagKout/R-zagKin}{
  \draw[prior] (\a) .. controls +(.85,-.24) and +(-.85,-.24) .. (\b);
}
\node[text=prooforange,fill=white,inner sep=1.2pt] at (4.15,4.08)
  {prior Zags $\Rightarrow\Phi_{1,k}$};
\node[text=prooforange,fill=white,inner sep=1.2pt] at (9.50,4.08)
  {prior Zags $\Rightarrow\Phi_{1,k}$};

\node (D0) at (-.10,-.85) {$\bD_0$};
\node (dotsL) at (.75,-.85) {$\cdots$};
\node (Dl) at (1.65,-.85) {$\bD_{l-1}$};
\node (Dlp) at (7.00,-.85) {$\bD_l$};
\node (Dlpp) at (12.35,-.85) {$\bD_{l+1}$};
\node (dotsR) at (13.95,-.85) {$\cdots$};
\node (Dh) at (14.82,-.85) {$\widehat{\bD}$};
\node (D) at (15.72,-.85) {$\bD$};
\draw[-{Latex[length=1.6mm]},line width=.55pt] (D0) -- (dotsL);
\draw[-{Latex[length=1.6mm]},line width=.55pt] (dotsL) -- (Dl);
\draw[-{Latex[length=1.8mm]},line width=.82pt,prooforange] (Dl) -- (Dlp);
\draw[-{Latex[length=1.8mm]},line width=.82pt,prooforange] (Dlp) -- (Dlpp);
\draw[-{Latex[length=1.6mm]},line width=.55pt] (Dlpp) -- (dotsR);
\draw[-{Latex[length=1.6mm]},line width=.55pt] (dotsR) -- (Dh);
\draw[-{Latex[length=1.6mm]},line width=.55pt] (Dh) --
  node[above=2pt] {continuity} (D);
\draw[guide] (L-base) -- (Dl.north);
\draw[guide] (M-base) -- (Dlp.north);
\draw[guide] (R-base) -- (Dlpp.north);
\node[font=\small\scshape] at (7.75,-1.48) {spectral-scale bootstrap};
\end{tikzpicture}
\caption{Three consecutive spectral bootstraps, drawn using the sparse coordinate convention of \cite[Figure~1]{erdos2025lecture}. Within each panel, a solid diagonal arrow represents a Zig step, while a dashed horizontal arrow represents the ensuing Zag step. At every displayed source level, an orange curve begins at the Zag estimate already established on $\bD_j$ and enters the corresponding Zag step on $\bD_{j+1}$. Through Lemma~\ref{lem:bootstrap-in-zag}(b), the Herglotz comparison, and the minor and denominator estimates, this provides the coarse input $\Phi_{1,k}$. The diagonal Zig step in the new panel provides $\Phi_{2,k}$, and Lemma~\ref{lem:bootstrap-in-zag}(c) then closes the Zag step. The bottom line records the outer spectral bootstrap and the final continuity argument. Each Zig arrow represents an application of Proposition~\ref{prop:full-block-zig}, conditional on its normalized-trace input.}
\label{fig:proof-strategy}
\end{figure}

\section{The zig estimate}\label{sec:zig}
The zig proof starts from an exact stochastic equation for the moving
resolvent.  After stopping the process at the anisotropic envelope, the two Ward
identities control the linear drift, nonlinear drift, and quadratic
variation.  A maximal martingale estimate and a first-exit argument then remove the stop.

\FloatBarrier
We begin by proving some basic estimates.
\begin{lemma}[Basic estimates]\label{lem:basic-estimates}
Under Assumption \ref{ass:A1}, let $\bD$ be a regular domain. Then, for all $h,t\geq 0$ and $z\in\bD$, the following hold for some constants $C,c>0$:
\begin{enumerate}[label=(\alph*)]
    \item For $\cW=\diag(\Sigma,I_N)\in\R^{(n+N)\times(n+N)}$, we have, in the positive-semidefinite ordering,
    \[
        \diag(\Im Z_h,\Im\beta_h I_N)\geq ce^{h/2}(\eta+(1-e^{-h})\Im\widetilde m_0(z))\cW+ce^{h/2}\eta I_{n+N}.
    \]
    \item We have
    \[
        \|M_*\|_\op\leq C,\quad \|\Im M_*\|_\op\leq C\Im\widetilde m_0(z),\quad \|\cR_{h,t}\|_\op\leq Ce^{-h/2}\eta^{-1}.
    \]
    \item For all $\u\in\R^{n+N}$, we have
    \[
        \|\cR_{h,t}\u\|_2\leq C\sqrt{\frac{e^{-h/2}\u^*\Im\cR_{h,t}\u}{\eta}},\quad \|\cR_{h,t}\|_F\leq C\sqrt{\frac{e^{-h/2}\Tr\Im\cR_{h,t}}{\eta}},
    \]
    \[
        \|\cW^{1/2}\cR_{h,t}\u\|_2\leq C\sqrt{\frac{e^{-h/2}\u^*\Im\cR_{h,t}\u}{\eta+(1-e^{-h})\Im\widetilde m_0(z)}},\quad \|\cW^{1/2}\cR_{h,t}\|_F\leq C\sqrt{\frac{e^{-h/2}\Tr\Im\cR_{h,t}}{\eta+(1-e^{-h})\Im\widetilde m_0(z)}}.
    \]
\end{enumerate}
\end{lemma}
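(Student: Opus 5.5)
We prove (a) by computing the imaginary parts of the explicit characteristic \eqref{eq:overview-characteristic}, and then derive (b) and (c) from (a) together with the resolvent identity for $\Im\cR$.

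\textbf{Part (a).} Write $m:=\widetilde m_0(z)$. From \eqref{eq:overview-characteristic},
\[
\Im Z_h=e^{h/2}\Im w\, I_n+(e^{h/2}-e^{-h/2})\Im(wm)\Sigma .
\]
Lemma~\ref{lem:z-companion-w} gives $\Im w\asymp\eta$. The key input is a lower bound $\Im(wm)\ge c\,\Im m$. Since $wm=\Re w\,\Im m+\Im w\,\Re m$ in imaginary part, this needs a comparison of $\Im w\,|\Re m|$ with $\eta$, and I would use a cleaner route. At the scalar source, \eqref{eq:singular-linearization-blocks} shows that $w m(w^2)$ is the Stieltjes-type transform $\int w/(\lambda-w^2)\,d\widetilde\mu_0(\lambda)$. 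Its imaginary part is
\[
\int\frac{\Im w\,(\lambda+|w|^2)}{|\lambda-w^2|^2}\,d\widetilde\mu_0(\lambda)\ \ge\ \Im w\,|w|^2\int|\lambda-z|^{-2}\,d\widetilde\mu_0 \;=\;|w|^2\,\Im w\,\Im m/\eta,
\]
where $\lambda\ge0$ was used. This gives $\Im(wm)\ge c\Im m$ with $c>0$ by Lemma~\ref{lem:z-companion-w}. The inequality also uses $\Im w\ge c\eta$ and $e^{h/2}-e^{-h/2}=e^{h/2}(1-e^{-h})$.

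For $\beta_h$ I use $-(wm)^{-1}$, whose imaginary part is $\Im(wm)/|wm|^2\ge c\Im m$ because $|wm|\le C$ by \eqref{eq:regular-basic}. Then
\[
\Im\beta_h=e^{h/2}\Im w+(e^{h/2}-e^{-h/2})\big(-\Im w+\Im(-(wm)^{-1})\big).
\]
The term $-(e^{h/2}-e^{-h/2})\Im w$ is dominated by $e^{h/2}\Im w$, since its coefficient is smaller. This yields $\Im\beta_h\ge c e^{h/2}\big(\eta+(1-e^{-h})\Im m\big)$. Finally, splitting $\eta$ in half gives the extra $ce^{h/2}\eta I$, because $\Sigma\le C$, so that $\eta\,\cW\le C\eta I$.

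\textbf{Part (b).} The bounds on $\|M_*\|_\op$ follow from \eqref{eq:regular-basic}: $\|(-w-wm\Sigma)^{-1}\|\le C/(|w|\min_\alpha|1+\sigma_\alpha m|)$ and $|wm|\le C$. For $\Im M_*$ on the second block, use $\Im(wm)\le C\Im m$, which follows from the same integral bound since $\lambda$ is bounded on the support. On the first block, write $\Im P=P^*\,\Im(w+wm\Sigma)\,P$ and use $\Im w\le C\eta\le C\Im m$; this last inequality holds on regular domains by \eqref{eq:regular-density}, as $\kappa+\eta\le C$. For $\cR_{h,t}$, the matrix $\cL$ has $\Im(-\cL)=\diag(\Im Z_h,\Im\beta_h I)\ge ce^{h/2}\eta I$ by (a). The standard bound $\|\cL^{-1}\|\le 1/\lambda_{\min}(\Im(-\cL))$ then gives $\|\cR_{h,t}\|_\op\le Ce^{-h/2}\eta^{-1}$.

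\textbf{Part (c).} The Ward-type identity $\Im\cR=\cR^*\,\Im(-\cL)\,\cR=\cR^*\diag(\Im Z_h,\Im\beta_h)\cR$ holds because $\cX$ is Hermitian. For real $\u$, (a) gives
\[
\u^*\Im\cR\,\u\ \ge\ ce^{h/2}\eta\,\|\cR\u\|^2 \qquad\text{and}\qquad \u^*\Im\cR\,\u\ \ge\ ce^{h/2}\big(\eta+(1-e^{-h})\Im m\big)\|\cW^{1/2}\cR\u\|^2 .
\]
Rearranging gives the vector bounds. The Frobenius bounds follow by summing over an orthonormal real basis $\u=\e_j$.

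The main obstacle is the lower bound $\Im(wm)\gtrsim\Im m$ and its analogue for $-(wm)^{-1}$, which is what the integral representation above handles. The rest is routine.
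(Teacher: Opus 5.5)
Your proposal is correct and follows essentially the paper's proof: the integral representation gives $\Im(w\widetilde m_0)\asymp\Im\widetilde m_0$, this yields a lower bound on $\Im\cD_h$, and then $\|\cR_{h,t}\|_\op\le\|(\Im\cD_h)^{-1}\|_\op$ together with the Ward identity $\Im\cR_{h,t}=\cR_{h,t}^*\,\Im\cD_h\,\cR_{h,t}$ finishes the argument (your ordering of this identity even removes the need for complex symmetry and real $\u$). One step in (a) should be made explicit: cancelling $-(e^{h/2}-e^{-h/2})\Im w$ against $e^{h/2}\Im w$ leaves only $e^{-h/2}\Im w$, so for large $h$ the term $ce^{h/2}\eta$ must come from $(e^{h/2}-e^{-h/2})\Im\widetilde m_0$ via $\eta\le C\Im\widetilde m_0$ (which you invoke only in (b)); alternatively, as the paper does, use \eqref{eq:MP} to rewrite $-w-(w\widetilde m_0)^{-1}=N^{-1}\Tr\Sigma(-w-w\widetilde m_0\Sigma)^{-1}$, whose imaginary part is nonnegative, so the full $e^{h/2}\Im w$ survives.
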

\begin{proof}
For (a), we first use (\ref{eq:MP}) to rewrite $\beta_h$ as
\[
    \beta_h=e^{h/2}w+(e^{h/2}-e^{-h/2})N^{-1}\Tr\Sigma(-w-w\widetilde m_0(z)\Sigma)^{-1}.
\]
Thus, taking imaginary parts gives, in the positive-definite ordering,
\begin{align*}
    \Im Z_{h}&\geq e^{h/2}\min_{\substack{\sigma\in\spec(\Sigma)\\\sigma>0}}\{\Im{w}/\sigma+(1-e^{-h})\Im{w\widetilde{m}_0(z)}\}\Sigma,\\
    \Im\beta_{h}I_N&\geq e^{h/2}(\Im{w}+(1-e^{-h})N^{-1}\Im\Tr\Sigma(-w-w\widetilde m_0(z)\Sigma)^{-1}))I_N.
\end{align*}
We have
\begin{align*}
    \Im{w\widetilde m_0(z)}&=\int\frac{\Im{w(\lambda-\bar z)}}{|\lambda-z|^2}\;d\widetilde\mu_0(\lambda)=\int\frac{(\lambda+|w|^2)\Im w}{|\lambda-z|^2}\;d\widetilde\mu_0(\lambda)\\
    &\geq C\int\frac{\eta}{|\lambda-z|^2}\;d\widetilde\mu_0(\lambda)=C\Im\widetilde m_0(z).
\end{align*}
Here we used $\lambda\geq 0$, $|w|\geq c$, $z=w^2$, and $\Im w\geq C\eta$. Similarly,
\begin{align*}
    &N^{-1}\Im\Tr\Sigma(-w-w\widetilde m_0(z)\Sigma)^{-1})=\frac{1}{N}\sum_{\sigma\in\spec(\Sigma)}\frac{\sigma(\Im{w}+\sigma\Im{w\widetilde m_0(z)})}{|w+w\sigma\widetilde m_0(z)|^2}\geq C(\eta+\Im\widetilde m_0(z)),
\end{align*}
where the sum over $\sigma\in\spec(\Sigma)$ is with multiplicity.
Here, we used Assumption \ref{ass:A1} and the definition of a regular domain. Hence, for $\cD_h=\diag(Z_h,\beta_h I_N)$, the positive-semidefinite ordering gives
\begin{align*}
    \Im\cD_h&=\diag(\Im Z_{h},\Im\beta_{h}I_N)\\
    &\geq Ce^{h/2}\min\{\eta/\|\Sigma\|_\op+(1-e^{-h})\Im{\widetilde m_0(z)},\eta+(1-e^{-h})\Im\widetilde m_0(z)\}\cW\\
    &\geq C'e^{h/2}\underbrace{(\eta+(1-e^{-h})\Im\widetilde m_0(z))}_{\eta_h}\cW .
\end{align*}
Here, we again used Assumption \ref{ass:A1}. We also have the trivial bound
\[
    \Im\cD_h=\diag(\Im Z_{h},\Im\beta_{h})\geq Ce^{h/2}\eta I_{n+N} . 
\]
Combining the two bounds proves (a). For (b), the first bound follows directly from the definition of a regular domain. For the second bound, the same arguments as above give
\begin{align*}
    \|\Im M_*\|_\op&=\max\{\|\Im(-w-w\widetilde m_0(z)\Sigma)^{-1}\|_\op,|\Im w\widetilde m_0(z)|\}\\
    &\leq C\max\{\eta+\Im \widetilde m_0(z),\Im\widetilde m_0(z)\}\leq C\Im\widetilde m_0(z).
\end{align*}
Finally, for the third bound, we use $\|\cR_{h,t}\|_\op\leq \|(\Im\cD_h)^{-1}\|_\op\leq Ce^{-h/2}\eta^{-1}$.

For (c), the identity $A^{-1}-B^{-1}=A^{-1}(B-A)B^{-1}$ gives
\[
    \Im\cR_{h,t}=(\cR_{h,t}-\cR_{h,t}^*)/2i=\cR_{h,t}\Im\cD_h\cR_{h,t}^*,\quad \cD_h=\diag(Z_{h},\beta_{h}).
\]
Since $\cR_{h,t}$ is complex symmetric and $\u$ is real, we have $\|\cR_{h,t}^*\u\|_2=\|\cR_{h,t}\u\|_2$. Thus, using the lower bound from (a), we can write
\[
    \|\cR_{h,t}\u\|_2^2=\|\cR_{h,t}^*\u\|_2^2=\frac{(e^{h/2}\eta)\u^*\cR_{h,t}\cR_{h,t}^*\u}{e^{h/2}\eta}\leq C\frac{e^{-h/2}\u^*\Im\cR_{h,t}\u}{\eta}.
\]
Similarly,
\[
    \|\cW^{1/2}\cR_{h,t}\u\|_2^2=\frac{(e^{h/2}\eta_h)\u^*\cR_{h,t}\cW\cR_{h,t}^*\u}{e^{h/2}\eta_h}\leq C\frac{e^{-h/2}\u^*\Im\cR_{h,t}\u}{\eta_h}.
\]
The corresponding arguments for $\|\cR_{h,t}\|_F$ and $\|\cW^{1/2}\cR_{h,t}\|_F$ are identical and are omitted. This completes the proof.
\end{proof}

The next lemma records the Ward estimates used throughout the characteristic. For $z=E+i\eta\in\C^+$, define 
\[
    \Lambda_h:=\Lambda_h(z)=\frac{e^{-h}\Im\widetilde m_0(z)}{\eta+(1-e^{-h})\Im\widetilde m_0(z)}.
\]

\begin{lemma}[Ward estimates]\label{lem:ward-estimate}
Under Assumptions \ref{ass:A1} and \ref{ass:A2}, let $\bD$ be a regular domain. Fix any constants $0<2\beta<\beta'$. For any $h,t\geq 0$ and any deterministic unit vector $\u\in\R^{n+N}$, define the event
\[
    \Omega:=\{\omega:|\u^*(\cR_{h,t}-M_{h})\u|\leq N^\beta\Psi_h\}.
\]
Then, for all $z\in\bD$ satisfying $N\eta\Im\widetilde m_0(z)\geq N^{\beta'}$, the following estimates hold for some constant $C>0$:
\begin{equation}
    N^{-1/2}\|\cR_{h,t}\u\|_2\1_\Omega\leq CN^{\beta/2}\Psi_h,
    \quad  \|\cW^{1/2}\cR_{h,t}\u\|_2\1_\Omega\leq C\sqrt{\Lambda_h}.
\end{equation}
Here, $\cW=\diag(\Sigma,I_N)$.
\end{lemma}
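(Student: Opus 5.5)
The plan is to combine the deterministic inequalities of Lemma~\ref{lem:basic-estimates}(c) with the event $\Omega$. On $\Omega$ the event pins $\u^*\Im\cR_{h,t}\u$ near $\u^*\Im M_h\u$, and the two Ward bounds then follow from bounding this imaginary part.

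\textbf{Step 1: the imaginary part on $\Omega$.} Since $\u$ is real and $M_h=e^{-h/2}M_*$, taking imaginary parts gives
\[
\u^*\Im\cR_{h,t}\u=\Im\,\u^*\cR_{h,t}\u\le e^{-h/2}\u^*\Im M_*\u+|\u^*(\cR_{h,t}-M_h)\u|.
\]
On $\Omega$ the second term is at most $N^\beta\Psi_h$. By Lemma~\ref{lem:basic-estimates}(b), $\|\Im M_*\|_\op\le C\Im\widetilde m_0(z)$. Hence, on $\Omega$,
\[
\u^*\Im\cR_{h,t}\u\le Ce^{-h/2}\big(\Im\widetilde m_0(z)+N^\beta\Psi\big).
\]
The hypothesis $N\eta\Im\widetilde m_0\ge N^{\beta'}$ gives $\Psi\le 2\sqrt{\Im\widetilde m_0/(N\eta)}\le 2N^{-\beta'/2}\Im\widetilde m_0$. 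Because $\beta<\beta'/2$, this yields $N^\beta\Psi\le 2\Im\widetilde m_0$. Therefore $\u^*\Im\cR_{h,t}\u\le Ce^{-h/2}\Im\widetilde m_0(z)$ on $\Omega$.

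\textbf{Step 2: the first estimate.} Lemma~\ref{lem:basic-estimates}(c) gives
\[
N^{-1}\|\cR_{h,t}\u\|_2^2\le Ce^{-h/2}\,\u^*\Im\cR_{h,t}\u/(N\eta).
\]
For this bound it is better to keep the cruder form from Step 1, namely $\u^*\Im\cR_{h,t}\u\le Ce^{-h/2}(\Im\widetilde m_0+N^\beta\Psi)$. Substituting it gives
\[
N^{-1}\|\cR_{h,t}\u\|_2^2\le Ce^{-h}\Big(\frac{\Im\widetilde m_0}{N\eta}+\frac{N^\beta\Psi}{N\eta}\Big).
\]
The first term is at most $\Psi^2$. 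For the second, $1/(N\eta)\le\Psi$, so it is at most $N^\beta\Psi^2$. Thus the whole expression is at most $CN^\beta e^{-h}\Psi^2=CN^\beta\Psi_h^2$. Taking square roots gives $N^{-1/2}\|\cR_{h,t}\u\|_2\1_\Omega\le CN^{\beta/2}\Psi_h$.

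\textbf{Step 3: the second estimate.} The weighted bound in Lemma~\ref{lem:basic-estimates}(c), combined with Step 1, gives
\[
\|\cW^{1/2}\cR_{h,t}\u\|_2^2\le C\,\frac{e^{-h/2}\cdot e^{-h/2}\Im\widetilde m_0}{\eta+(1-e^{-h})\Im\widetilde m_0}=C\Lambda_h.
\]
Taking square roots gives the second estimate.

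The main obstacle is the absorption $N^\beta\Psi\lesssim\Im\widetilde m_0$ in Step 1. This is exactly where the conditions $2\beta<\beta'$ and $N\eta\Im\widetilde m_0\ge N^{\beta'}$ are used, and it must be done uniformly in $h$. That uniformity holds because the factor $e^{-h/2}$ appears identically in $\Psi_h$ and in $M_h$.
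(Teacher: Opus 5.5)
Your proof is correct and follows the paper's argument essentially line by line. As in the paper, you bound $\u^*\Im\cR_{h,t}\u$ on $\Omega$ by $Ce^{-h/2}\Im\widetilde m_0(z)+N^\beta\Psi_h$ using $\|\Im M_*\|_\op\le C\Im\widetilde m_0(z)$, insert this into the two Ward inequalities of Lemma~\ref{lem:basic-estimates}(c), and use $N\eta\Im\widetilde m_0\ge N^{\beta'}$ with $2\beta<\beta'$ to absorb $N^\beta\Psi_h\lesssim e^{-h/2}\Im\widetilde m_0$ in the weighted bound. (The refined form from your Step 1 would actually give the unweighted estimate without the $N^{\beta/2}$ loss. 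Keeping the cruder form, as both you and the paper do, simply avoids invoking the hypothesis there.)
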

\begin{proof}
We first establish the unweighted Ward estimates. Applying Lemma \ref{lem:basic-estimates} gives
\begin{align*}
    N^{-1}\|\cR_{h,t}\u\|_2^2\1_\Omega&\leq Ce^{-h/2}\frac{\u^*\Im\cR_{h,t}\u}{N\eta}\1_\Omega\leq Ce^{-h}\frac{\|\Im M_{*}\|_\op}{N\eta}+C\frac{e^{-h/2}}{N\eta}N^\beta\Psi_h\\
    &\leq Ce^{-h}\frac{\Im\widetilde m_0(z)}{N\eta}+N^\beta\Psi_h^2\leq CN^\beta\Psi_h^2.
\end{align*}
For the weighted Ward estimates, Lemma \ref{lem:basic-estimates} similarly gives
\begin{align*}
    \|\cW^{1/2}\cR_{h,t}\u\|_2^2\1_\Omega&\leq Ce^{-h/2}\frac{\u^*\Im\cR_{h,t}\u}{\eta_h}\1_\Omega\leq Ce^{-h}\frac{\|\Im M_*\|_\op}{\eta_h}+Ce^{-h/2}\frac{1}{\eta_h}N^\beta\Psi_h\\
    &\leq Ce^{-h}\frac{\Im\widetilde m_0(z)}{\eta_h}+Ce^{-h}\frac{\Im\widetilde m_0(z)}{\eta_h}\leq C\frac{e^{-h}\Im\widetilde m_0(z)}{\eta+(1-e^{-h})\Im\widetilde m_0(z)}.
\end{align*}
In the third step, we use $N\eta\Im\widetilde m_0(z)\geq N^{\beta'}$ and $2\beta\leq \beta'$ to deduce
\[
    N^\beta\Psi_h\leq e^{-h/2}\sqrt{\frac{N^{2\beta}\Im\widetilde m_0(z)}{N\eta}}+e^{-h/2}\frac{N^\beta}{N\eta}\leq Ce^{-h/2}\Im\widetilde m_0(z).
\]
This completes the proof.
\end{proof}

The following lemma initializes the zig estimate at a remote point.
\begin{lemma}\label{lem:zig-initialization}
Suppose Assumptions \ref{ass:A1} and \ref{ass:A2} hold, $\bD$ is a regular domain, and $H_0\geq 2\log(N)$. Then, uniformly over all $z\in\bD$ and all deterministic unit vectors $\u\in\R^{n+N}$, we have
\[
    |\u^*(\cR_{H_0,0}-M_{H_0})\u|\prec \Psi_{H_0}.
\]
\end{lemma}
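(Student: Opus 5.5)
The plan is a direct perturbative bound. At $h=H_0\ge 2\log N$ the source $\cD_{H_0}$ is enormous, of order $e^{H_0/2}\ge N$. Both $\cR_{H_0,0}$ and $M_{H_0}$ are therefore of size $O(e^{-H_0/2})$, and their difference is smaller by a further factor $e^{-H_0/2}\le N^{-1}$. No fluctuation analysis should be needed: the only random input is an operator-norm bound on $X$.

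First I would write the exact resolvent identity. We have $\cR_{H_0,0}^{-1}=\cX_0-\cD_{H_0}$, and the block MDE \eqref{eq:block-MDE} (which $M_h$ satisfies at source $\cD_h$ by Lemma~\ref{lem:block-M-transport}) gives $-\cD_{H_0}=M_{H_0}^{-1}+\cS[M_{H_0}]$. Hence $\cR_{H_0,0}^{-1}-M_{H_0}^{-1}=\cX_0+\cS[M_{H_0}]$, and
\[
 M_{H_0}-\cR_{H_0,0}=\cR_{H_0,0}\bigl(\cX_0+\cS[M_{H_0}]\bigr)M_{H_0}.
\]
For any unit $\u$ this yields
\[
|\u^*(\cR_{H_0,0}-M_{H_0})\u|\le \|\cR_{H_0,0}\|_\op\bigl(\|\cX_0\|_\op+\|\cS[M_{H_0}]\|_\op\bigr)\|M_{H_0}\|_\op .
\]

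Next I would bound each factor. By Lemma~\ref{lem:basic-estimates}(b), $\|\cR_{H_0,0}\|_\op\le Ce^{-H_0/2}\eta^{-1}$, deterministically. Also $M_{H_0}=e^{-H_0/2}M_*$ with $\|M_*\|_\op\le C$, so $\|M_{H_0}\|_\op\le Ce^{-H_0/2}$. The self-energy \eqref{eq:block-self-energy} only involves normalized traces against $\Sigma$, so together with $n\asymp N$ and $\|\Sigma\|_\op\le C$ it gives $\|\cS[M_{H_0}]\|_\op\le C\|M_{H_0}\|_\op\le Ce^{-H_0/2}$.

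For the random factor, $\|\cX_0\|_\op=\|N^{-1/2}X\|_\op=\|K\|_\op^{1/2}$. I would invoke the operator norm bound \cite[Lemma~3.8]{fan2026anisotropic}, available under Assumptions~\ref{ass:A1}--\ref{ass:A2}, which gives $\|K\|_\op\le C$ with overwhelming probability. This is the only probabilistic step and the main point to check. If only a cruder bound were at hand, such as $\|\cX_0\|_\op\prec N^{1/2}$ from the norm moments, I would simply enlarge $H_0$ by a constant multiple of $\log N$. As stated with $H_0\ge2\log N$, the sharp bound is what makes the constants match.

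Finally I would combine these bounds. On the event $\{\|K\|_\op\le C\}$, and using $e^{-H_0/2}\le N^{-1}$,
\[
|\u^*(\cR_{H_0,0}-M_{H_0})\u|\le C\frac{e^{-H_0/2}}{\eta}\,e^{-H_0/2}\le C\frac{e^{-H_0/2}}{N\eta}\le C\Psi_{H_0}.
\]
The last step holds because $\Psi\ge (N\eta)^{-1}$. This bound holds simultaneously for all $z\in\bD$ and all unit $\u$ on a single event of overwhelming probability, which gives the claimed uniform stochastic domination.
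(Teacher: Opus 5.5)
Your proof is correct and is essentially the paper's argument. Both are purely perturbative at the remote source. The only random input in both is $\|\cX\|_\op\prec 1$ from \cite[Lemma~3.8]{fan2026anisotropic}, combined with the deterministic bound $\|\cR_{H_0,0}\|_\op\le Ce^{-H_0/2}\eta^{-1}$.

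The one difference is in how $\cR_{H_0,0}$ is compared to $M_{H_0}$. The paper goes through the intermediate $-\cD_{H_0}^{-1}$, which requires a Weyl-inequality bound $\|\cD_{H_0}^{-1}\|_\op\le Ce^{-H_0/2}$. Your single MDE-based identity $M_{H_0}-\cR_{H_0,0}=\cR_{H_0,0}(\cX_0+\cS[M_{H_0}])M_{H_0}$ needs only $\|M_{H_0}\|_\op\le Ce^{-H_0/2}$, so it is slightly more economical.
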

\begin{proof}
At the remote source $H_0\geq 2\log(N)$, $-\cD_{H_0}^{-1}$ is close to $M_{H_0}$. Moreover, because $-\cD_{H_0}$ has large magnitude, $\cR_{H_0,0}$ is close to $-\cD_{H_0}^{-1}$. By the explicit source \eqref{eq:overview-characteristic} and $w^2=z$, we have
\[
    -\cD_{H_0}=\diag(-Z_{H_0},-\beta_{H_0}I_N)=M_{H_0}^{-1}+e^{-H_0/2}\underbrace{\diag(w\widetilde m_0(z)\Sigma, (-w-(w\widetilde m_0(z))^{-1})I_N)}_{\cE}.
\]
In particular, the definition of a regular domain and Lemma \ref{lem:basic-estimates} give $\|\cE\|_\op\leq C$ and $\|M_{H_0}\|_\op\leq Ce^{-H_0/2} \leq CN^{-1}$. It then follows from Weyl's inequality that $\|\cD_{H_0}^{-1}\|_\op\leq Ce^{-H_0/2}$. Using the identity $A^{-1}-B^{-1}=A^{-1}(B-A)B^{-1}$, we obtain
\[
    \|-\cD_{H_0}^{-1}-M_{H_0}\|_\op=e^{-H_0/2}\|\cD_{H_0}^{-1}\cE M_{H_0}\|_\op\leq Ce^{-3H_0/2},
\]
Here, we also applied $\|M_{H_0}\|_\op\leq Ce^{-H_0/2}$. On the other hand,
\[
    \|\cR_{H_0,0}-(-\cD_{H_0}^{-1})\|_\op=\|\cR_{H_0,0}\cX\cD_{H_0}^{-1}\|_\op\prec e^{-H_0}\eta^{-1},\quad \cX=\begin{pmatrix}
        0 & N^{-1/2}X\\
        N^{-1/2}X^* & 0
    \end{pmatrix}.
\]
Here we used \cite[Lemma~3.8(b)]{fan2026anisotropic} to deduce $\|\cX\|_\op\prec 1$ and Lemma \ref{lem:basic-estimates} to deduce $\|\cR_{H_0,0}\|_\op\leq e^{-H_0/2}\eta^{-1}$. Combining these estimates gives
\begin{align*}
    \|\cR_{H_0,0}-M_{H_0}\|_\op&\leq \|\cR_{H_0,0}-(-\cD_{H_0}^{-1})\|_\op+\|-\cD_{H_0}^{-1}-M_{H_0}\|_\op\\
    &\prec e^{-H_0}\eta^{-1}+e^{-3H_0/2}\prec e^{-H_0/2}((N\eta)^{-1}+N^{-2})\prec \Psi_{H_0}.
\end{align*}
In particular, this implies
\[
    |\u^*(\cR_{H_0,0}-M_{H_0})\u|\prec \Psi_{H_0}
\]
uniformly over all deterministic unit vectors $\u\in\R^{n+N}$.
\end{proof}

With the initialization in place, we next derive the exact resolvent evolution along the characteristic.
\begin{lemma}[Exact block zig equation]\label{lem:exact-block-zig}
For any $0\leq t\leq h$, abbreviate $\cR_t:=\cR_{h-t,t},M_t=M_{h-t}$, and $\cD_t:=\cD_{h-t}$. Then
\begin{equation}
 \de(\cR_t-M_t)
 =\left[
 \frac12(\cR_t-M_t)
 +\cR_t\cS[\cR_t-M_t]\cR_t
 \right]\de t
 +\frac1N\cE(\cR_t)\de t
 -\cR_t\cW^{1/2}\de\cB_t\cW^{1/2}\cR_t.                  \label{eq:exact-block-zig}
\end{equation}
Here,
\[
    \de\cB_t=
    \begin{pmatrix}0&N^{-1/2}\de B_t\\
    N^{-1/2}\de B_t^*&0
    \end{pmatrix},\quad \cE(\cR_t)
 =\cR_t\cW
 \begin{pmatrix}
  0&[\cR_t]_{12}\\
  [\cR_t]_{21}&0
 \end{pmatrix}
 \cW\cR_t.
\]
\end{lemma}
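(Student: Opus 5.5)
The plan is to apply It\^o's formula to the matrix-valued process $\cR_t=\cL_t^{-1}$, where $\cL_t:=\cL(X_t;Z_{h-t},\beta_{h-t})=\cX_t-\cD_t$, and to use the characteristic of Lemma~\ref{lem:block-M-transport} to simplify the drift. For a nondegenerate matrix semimartingale the inverse satisfies
\[
\de\cR_t=-\cR_t\,\de\cL_t\,\cR_t+\cR_t\,\de\cL_t\,\cR_t\,\de\cL_t\,\cR_t .
\]
This is justified because $\Im\cD_t>0$ by Lemma~\ref{lem:basic-estimates}(a), which gives the deterministic bound $\|\cR_t\|_\op\le Ce^{-(h-t)/2}\eta^{-1}$. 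The resolvent is therefore a smooth function of $(X_t,t)$ with bounded derivatives, and It\^o's formula applies without any localization.

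First I compute $\de\cL_t$. From \eqref{eq:overview-OU}, and writing $\cW^{1/2}=\diag(\Sigma^{1/2},I_N)$, we get $\de\cX_t=-\tfrac12\cX_t\,\de t+\cW^{1/2}\de\cB_t\cW^{1/2}$. The source is $\cD_{h-t}$ and $M_{h-t}=e^{-(h-t)/2}M_*$, so in the variable $t$ we are moving forward along the characteristic. Lemma~\ref{lem:block-M-transport} then gives $\frac{\de}{\de t}M_t=\tfrac12M_t$ and $-\frac{\de}{\de t}\cD_t=\tfrac12\cD_t+\cS[M_t]$. Since $\cL_t=\cX_t-\cD_t$, this yields
\[
\de\cL_t=\bigl(-\tfrac12\cL_t+\cS[M_t]\bigr)\de t+\cW^{1/2}\de\cB_t\cW^{1/2}.
\]
The first-order term $-\cR_t\,\de\cL_t\,\cR_t$ therefore contributes the drift $\tfrac12\cR_t-\cR_t\cS[M_t]\cR_t$ and the martingale term $-\cR_t\cW^{1/2}\de\cB_t\cW^{1/2}\cR_t$.

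Next I evaluate the quadratic-variation term $\cR_t\,(\cW^{1/2}\de\cB_t\cW^{1/2})\,\cR_t\,(\cW^{1/2}\de\cB_t\cW^{1/2})\,\cR_t$. The main computation is the block-wise evaluation of $\de\cB\,A\,\de\cB$ for $A=\cW^{1/2}\cR_t\cW^{1/2}$, using $\de B_{ai}\,\de B_{bj}=\delta_{ab}\delta_{ij}\,\de t$.
\begin{itemize}
\item The diagonal blocks give $N^{-1}\Tr(A_{22})I_n\,\de t$ and $N^{-1}\Tr(A_{11})I_N\,\de t$. After conjugating by $\cW^{1/2}$ these become exactly $\cS[\cR_t]\,\de t$, by \eqref{eq:block-self-energy}.
\item The off-diagonal blocks produce transposes, for instance $(\de B\,A_{21}\,\de B)_{aj}=(A_{21})_{ja}\,\de t$. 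Because $\cR_t$ is complex symmetric and $\cW$ is real symmetric, $A_{21}^{\intercal}=A_{12}$. After conjugating by $\cW^{1/2}$ this gives $N^{-1}\cW\begin{pmatrix}0&[\cR_t]_{12}\\ [\cR_t]_{21}&0\end{pmatrix}\cW\,\de t$.
\end{itemize}
Sandwiching these between the two outer factors $\cR_t$ yields $\cR_t\cS[\cR_t]\cR_t\,\de t+N^{-1}\cE(\cR_t)\,\de t$.

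Finally I subtract $\de M_t=\tfrac12M_t\,\de t$. The total drift is then
\[
\tfrac12(\cR_t-M_t)+\cR_t\bigl(\cS[\cR_t]-\cS[M_t]\bigr)\cR_t+N^{-1}\cE(\cR_t),
\]
and since $\cS$ is linear this is exactly \eqref{eq:exact-block-zig}.

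I expect the main obstacle to be the off-diagonal quadratic-variation term. The index bookkeeping for $\de B\,A\,\de B^{*}$ versus $\de B\,A\,\de B$ is where errors are easy to make. Obtaining the stated form of $\cE$ requires both the complex symmetry of $\cR_t$ (because $X$ is real) and the correct placement of the weight factors $\cW^{1/2}$ so that they combine into $\cW$. Everything else is mechanical once the characteristic identities of Lemma~\ref{lem:block-M-transport} are inserted.
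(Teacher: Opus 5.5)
Your proposal is correct and follows essentially the same route as the paper: It\^o's formula for the inverse of $\cL_t=\cX_t-\cD_t$, the characteristic identity $\dot\cD_t+\frac12\cD_t=-\cS[M_t]$ to cancel the order-one drift, and a blockwise Brownian contraction that uses the complex symmetry of $\cR_t$ to produce $\cS[\cR_t]$ and $N^{-1}\cE(\cR_t)$. The only difference is cosmetic: you absorb $-\frac12\cX_t+\frac12\cD_t$ directly into $-\frac12\cL_t$, whereas the paper uses the identity $\cR_t\cX_t\cR_t=\cR_t+\cR_t\cD_t\cR_t$.
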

\begin{proof}
Write $\cL_t=\cR_t^{-1}=-\cD_{t}+\cX_t$.  Since
$\de\cX_t=-\frac{1}{2}\cX_t\de t+\cW^{1/2}\de\cB_t\cW^{1/2}$, It\^o's formula and $\cR_t\cX_t\cR_t=\cR_t+
\cR_t\cD_{t}\cR_t$ give
\begin{align*}
    \de\cR_t
    &=\cR_t\dot\cD_{t}\cR_t\de t
      +\tfrac12\cR_t\cX_t\cR_t\de t
      -\cR_t\cW^{1/2}\de\cB_t\cW^{1/2}\cR_t\\
    &\quad+\cR_t\cW^{1/2}\de\cB_t\cW^{1/2}\cR_t\cW^{1/2}\de\cB_t\cW^{1/2}\cR_t\\
    &=\tfrac12\cR_t\de t
      +\cR_t(\dot\cD_{t}+\tfrac12\cD_{t})\cR_t\de t
      -\cR_t\cW^{1/2}\de\cB_t\cW^{1/2}\cR_t\\
    &\quad+\cR_t\cW^{1/2}\de\cB_t\cW^{1/2}\cR_t
      \cW^{1/2}\de\cB_t\cW^{1/2}\cR_t.
\end{align*}
The Brownian contractions are
\[
    \cW^{1/2}\de\cB_t\cW^{1/2}\cR_t\cW^{1/2}\de\cB_t\cW^{1/2}
    =\cS[\cR_t]\de t+
    \frac1N\begin{pmatrix}
    0&\Sigma[\cR_t]_{12}\\
    [\cR_t]_{21}\Sigma&0
    \end{pmatrix}\de t.
\]
Set $\de C=N^{-1/2}\Sigma^{1/2}\de B_t$. The off-diagonal term is the contraction of real Brownian motion. Indeed, $\cR_t^\top=\cR_t$ and
\begin{align*}
 \E(\de C[\cR_t]_{21}\,\de C)_{aj}
 &=\sum_{k=1}^N\sum_{l=1}^n
   ([\cR_t]_{21})_{kl}
   \E(\de C_{ak}\de C_{l j})\\
 &=\frac1N\sum_{l=1}^n
   \Sigma_{al}([\cR_t]_{21})_{jl}\,\de t
 =\frac1N(\Sigma[\cR_t]_{12})_{aj}\de t.
\end{align*}
The lower-left contraction is its transpose, while the diagonal
contractions are $N^{-1}\Tr([\cR_t]_{22})\Sigma\,\de t$ and $N^{-1}\Tr(\Sigma[\cR_t]_{11})I_N\de t$. Substituting
$\dot{\cD}_t+\cD_t/2=-\cS[M_t]$ and subtracting
$\de M_t=\dot M_t\de t=M_t/2\de t$ proves \eqref{eq:exact-block-zig}.
\end{proof}

The stochastic term will be controlled uniformly in time by the next
moving-envelope estimate.

\begin{lemma}[Moving envelope estimate]
\label{lem:maximal-envelope-martingale}
For any $0\leq h'\leq h$ and $0\leq t\leq h-h'$, define
$b_t=e^{-t/2}(\Lambda_{h-t}/N)^{1/2}$. Let $\xi\leq h-h'$ be a
stopping time and let $\m_t$ be a
continuous complex local martingale with $\m_0=0$, whose
quadratic variation satisfies
\[
 \de\langle\m_t,\bar{\m_t}\rangle_t
 \le \frac{C\Lambda_{h-t}^2}{N}\,\de t,\qquad t\le\xi.
\]
Here, $C>0$ is a constant. Define the stopped martingale
\[
 \widehat\m_t=\int_0^{t\wedge\xi}e^{-s/2}\,\de\m_s.
\]
Then
\begin{equation}
  |\widehat\m_\xi|\prec b_\xi\left(\int_0^{h-h'}\Lambda_{h-t}\de t\right)^{1/2}.
\end{equation}
Here, the quadratic variation of a complex martingale means the sum of the
quadratic variations of its real and imaginary parts.
\end{lemma}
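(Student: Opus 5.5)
The plan is to reduce the random normalization $b_\xi$ to deterministic ones by a dyadic decomposition in time, and then use a standard martingale tail bound on each piece. Two monotonicity facts drive the argument, and both come directly from the definition of $\Lambda_h$.

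\textbf{Monotonicity.} Write $m:=\Im\widetilde m_0(z)$. Then
\[
e^{-s}\Lambda_{h-s}=\frac{e^{-h}m}{\eta+(1-e^{-(h-s)})m}.
\]
As $s$ increases, $h-s$ decreases, so $1-e^{-(h-s)}$ decreases and the denominator decreases. Hence $s\mapsto e^{-s}\Lambda_{h-s}$ is nondecreasing on $[0,h-h']$. Since $b_t^2=e^{-t}\Lambda_{h-t}/N$, the map $t\mapsto b_t$ is nondecreasing as well.

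\textbf{Quadratic variation of the stopped martingale.} Set $I:=\int_0^{h-h'}\Lambda_{h-s}\,\de s$. For a deterministic time $t$, the hypothesis gives
\[
\langle\widehat\m,\overline{\widehat\m}\rangle_t\le \frac{C}{N}\int_0^{t\wedge\xi} e^{-s}\Lambda_{h-s}^2\,\de s .
\]
For $s\le t$ the monotonicity gives $e^{-s}\Lambda_{h-s}\le e^{-t}\Lambda_{h-t}$. Therefore
\[
\langle\widehat\m,\overline{\widehat\m}\rangle_t\le C\,\frac{e^{-t}\Lambda_{h-t}}{N}\int_0^{t}\Lambda_{h-s}\,\de s\le C\,b_t^2\, I .
\]
This bound is deterministic. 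By the Burkholder--Davis--Gundy inequality applied with high moments $p$ to the real and imaginary parts, $\E\sup_{r\le t}|\widehat\m_r|^{p}\le C_p (b_t^2I)^{p/2}$. Equivalently, one can use the exponential-martingale (Bernstein) bound for continuous martingales with bounded quadratic variation. Markov's inequality then gives $\sup_{r\le t}|\widehat\m_r|\prec b_t I^{1/2}$ for each fixed deterministic $t$.

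\textbf{Removing the randomness of $b_\xi$.} This is the main obstacle, because $\xi$ is random and the claimed bound is normalized by $b_\xi$. I would choose deterministic times $0=t_0<t_1<\dots<t_K=h-h'$ such that $b_{t_k}\le 2b_{t_{k-1}}$ for each $k$. This is possible since $b_t$ is monotone and continuous.

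The number of levels must be $K=O(\log N)$. Since $\Lambda_{h-t}\le m/\eta\le CN$ and $b_t$ is nondecreasing, it suffices to know that $b_0$ is bounded below by a polynomial in $N$. In the applications $h\le H_0=O(\log N)$, which gives $\Lambda_h\ge e^{-h}m/C\ge N^{-C}$ on a regular domain, so $b_t$ ranges over a polynomially bounded window. If $h$ were unbounded, I would first use $|\widehat\m_\xi|\le\sup_t|\widehat\m_t|$ to discard the tiny regime where $b_t\le N^{-D}$ against the trivial bound.

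On the event $\{\xi\in(t_{k-1},t_k]\}$ (and for $k=0$ on $\{\xi=0\}$) we have
\[
|\widehat\m_\xi|\le\sup_{r\le t_k}|\widehat\m_r|\prec b_{t_k}I^{1/2}\le 2\,b_{t_{k-1}}I^{1/2}\le 2\,b_\xi I^{1/2}.
\]
A union bound over the $O(\log N)$ values of $k$ preserves the overwhelming-probability estimate. This yields $|\widehat\m_\xi|\prec b_\xi I^{1/2}$, which is the claim.
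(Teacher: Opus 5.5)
Your proof is correct, but it deals with the random normalization $b_\xi$ differently from the paper. Both arguments rest on the same key input, namely that $s\mapsto e^{-s}\Lambda_{h-s}$, and hence $b_s$, is nondecreasing.

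\emph{The paper's route.} It renormalizes inside the stochastic integral, setting $\n_t=\int_0^{t\wedge\xi}b_s^{-1}e^{-s/2}\,\de\m_s$. The quadratic variation density of $\n$ is at most $C\Lambda_{h-s}$, so the maximal BDG inequality is applied once, to $\n$. It then returns to $\widehat\m$ through the integration-by-parts identity $\widehat\m_\xi=b_\xi\n_\xi-\int_0^\xi\n_s\,\de b_s$. Since $b$ is nondecreasing, the correction term divided by $b_\xi$ is at most $\sup_s|\n_s|$, so $|\widehat\m_\xi|/b_\xi\le 2\sup_s|\n_s|$.

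\emph{Your route.} You use the monotonicity to get the deterministic bound $\langle\widehat\m,\overline{\widehat\m}\rangle_t\le C b_t^2 I$, with $I=\int_0^{h-h'}\Lambda_{h-s}\,\de s$, at each fixed time. You then remove the randomness of $\xi$ by a dyadic peeling of the range of $b$ and a union bound.

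\emph{Comparison.} The paper's argument needs no discretization and no control on the dynamic range of $b$. Yours uses only fixed-time BDG, but it requires the number of dyadic levels to be at most polynomial in $N$.

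On that last point, your caveat about unbounded $h$ is unnecessary. The fix you sketch for it (discarding a regime against a ``trivial bound'') would not by itself give an estimate relative to $b_\xi$. Your own formula already gives, uniformly in $h$,
\[
\frac{b_{h-h'}^2}{b_0^2}=\frac{\eta+(1-e^{-h})\Im\widetilde m_0(z)}{\eta+(1-e^{-h'})\Im\widetilde m_0(z)}\le 1+\frac{\Im\widetilde m_0(z)}{\eta},
\]
and the right-hand side is polynomially bounded. Hence $K=O(\log N)$ in all cases, without needing a lower bound on $b_0$.
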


\begin{proof}
Define
\[
    \n_t:=\int_0^{t\wedge\xi}\frac{e^{-s/2}}{b_s}\de\m_s
    =\int_0^t\frac{1}{b_s}\de\widehat{\m}_s.
\]
Then $\n_t$ is a stopped martingale and 
\[
    \de\<\n,\bar{\n}\>_s=\frac{1}{b_s^2}\de\<\widehat{\m},\bar{\widehat{\m}}\>_s
    \leq C\Lambda_{h-s}\de s
\] 
The maximal BDG inequality then gives, for every even moment order $2p$,
\[
    \E\sup_{s\leq h-h'}|\n_s|^{2p}\leq 
    C_p\left(\int_0^{h-h'}\Lambda_{h-t}\de t\right)^{p}.
\]
Since $\de\widehat{\m}_s=b_s\de\n_s$, integration by parts at the random time 
$\xi$ gives
\[
    \widehat{\m}_\xi=b_\xi\n_\xi-\int_0^\xi\n_s\de b_s.
\]
Since $b_s$ is positive and nondecreasing, we have
\[
    \frac{|\widehat{\m}_\xi|}{b_\xi}\leq |\n_\xi|
  +\frac{1}{b_\xi}\int_0^\xi|\n_s|\de b_s\leq \sup_{s\leq\xi}|\n_s|
  +\frac{b_\xi-b_0}{b_\xi}\sup_{s\leq\xi}|\n_s|
  \leq 2\sup_{s\leq h-h'}|\n_s|.
\]
Combining these estimates gives
\[
    \E(|\widehat{\m}_\xi|/b_\xi)^{2p}\leq C_p
    \left(\int_0^{h-h'}\Lambda_{h-t}\de t\right)^{p}.
\]
The claim follows from Markov's inequality.
\end{proof}

We next prove the zig estimate for the quadratic form. To this end, we first establish a sufficiently sharp zig estimate for the normalized trace; the following proposition summarizes this estimate. Its proof follows a static fluctuation-averaging argument adapted from \cite{fan2026anisotropic} and is deferred to Section \ref{sec:averaged}.

\begin{proposition}[Averaged zig]\label{prop:averaged zig}
Suppose Assumptions \ref{ass:A1} and \ref{ass:A2} hold and $\bD$ is a regular domain. Then, for any $0\leq h'\leq h\leq 5\log(N)$, uniformly over all $z\in\bD$, we have
\[
    \sup_{0\leq t\leq h-h'}e^{(h-t)/2}(N\eta)(N^{-1}|\Tr\Sigma(\cR_{h-t,t}-M_{h-t})_{11}|
    +|N^{-1}\Tr(\cR_{h-t,t}-M_{h-t})_{22}|)\prec 1.
\]
\end{proposition}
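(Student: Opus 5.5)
For each fixed $t$, I would treat $\cR_{h-t,t}$ as the static generalized resolvent of the matrix $X_t$ at the fixed source $(Z_{h-t},\beta_{h-t})$. Note that $X_t$ is equal in law to $e^{-t/2}X_0+\sqrt{1-e^{-t}}\Sigma^{1/2}G_0$. Its columns are independent and centered with covariance $\Sigma$, and they still satisfy Assumption~\ref{ass:A2}: a quadratic form in a sum of a concentrated vector and an independent Gaussian splits into two quadratic forms and a cross term, and the cross term is handled conditionally. So the averaged bound reduces to a static averaged law. I would prove it uniformly on a grid in $(t,z)$ of polynomial size, and then extend to the supremum over $t$ and to all of $\bD$ by Lipschitz continuity in $t$ and $z$, using $\|\cR\|_\op\le Ce^{-h/2}\eta^{-1}$ from Lemma~\ref{lem:basic-estimates}.

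For the static law I would follow the fluctuation-averaging scheme of \cite{fan2026anisotropic}. First, the Schur complement with respect to column $i$ expresses the diagonal $(22)$-entry as
\[
\cR_{ii}=-\bigl(\beta+N^{-1}\x_i^*\cR^{(i)}_{11}\x_i\bigr)^{-1}.
\]
Quadratic-form concentration gives
\[
N^{-1}\x_i^*\cR^{(i)}_{11}\x_i=N^{-1}\Tr\Sigma\cR^{(i)}_{11}+O_\prec\bigl(N^{-1}\|\cR^{(i)}_{11}\|_F\bigr).
\]
The Frobenius norm is bounded via the Ward identity of Lemma~\ref{lem:basic-estimates}(c) by $\sqrt{e^{-h/2}\Tr\Im\cR/\eta}$. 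Together with an a priori bound on $\Im$ of the trace, this yields an error of order $e^{-h/2}\Psi$. Averaging over $i$ and using the $(11)$-block identity
\[
\cR_{11}=-\bigl(Z+\tfrac1N\textstyle\sum_i\cR_{ii}\x_i\x_i^*\bigr)^{-1},
\]
rewritten in terms of minors, I would obtain a self-consistent equation for $(s,q)=(N^{-1}\Tr\Sigma\cR_{11},N^{-1}\Tr\cR_{22})$. This equation is the perturbed MDE \eqref{eq:block-MDE} with error given by the averages of the centered fluctuations $Q_i(\cdot)$ of the linear and quadratic forms. Then comes the fluctuation-averaging step: I would show that $N^{-1}\sum_iQ_i(\cR_{ii})$ is of size $\Psi^2$ rather than $\Psi$. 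Following \cite{fan2026anisotropic}, this is done by high-moment expansion in which only independence of columns and the centerings $\E_i$ are used, with no cumulant assumption, since only averaged traces appear.

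Finally, stability of the MDE along the characteristic converts the $\Psi^2$ error into the claimed $e^{-(h-t)/2}(N\eta)^{-1}$ bound. Since $M_{h}=e^{-h/2}M_*$ and the source carries the extra positive imaginary part of Lemma~\ref{lem:basic-estimates}(a), the linearized operator at $M_h$ is at least as stable as the one at $h=0$. The latter has inverse bounded by $C/\sqrt{\kappa+\eta}$ by the fixed-point stability condition (d). This, combined with $\Psi^2/\sqrt{\kappa+\eta}\lesssim(N\eta)^{-1}$ (using \eqref{eq:regular-density}), gives the bound. A continuity bootstrap in $\eta$ from $\eta=1$ down to $N^{-1+\tau}$ closes the a priori assumption.

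The main obstacle is this stability step for the generalized source, namely showing that the perturbed two-component MDE for $(s,q)$ at $(Z_{h-t},\beta_{h-t})$ has a linearization whose inverse is bounded uniformly in $h\le5\log N$ with the correct edge behavior. I would handle it by writing the linearization at $M_h$ explicitly in terms of $\widetilde m_0$ and $e^{-h}$, and comparing it with $z_0'(\widetilde m_0)$ plus a positive term of size $(1-e^{-h})$. Near edges, where $\kappa+\eta$ is small, I would use $|z_0''|\ge c$ to control the quadratic term.
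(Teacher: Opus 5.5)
Your proposal follows essentially the same route as the paper. For each fixed $t$ the paper treats $X_t$ as a static ensemble satisfying Assumptions~\ref{ass:A1}--\ref{ass:A2} and reduces to a perturbed fixed-point equation. That equation is scalar in $q$, with $s-F(q)$ absorbed into the error; it is equivalent to your $(s,q)$ system, whose linearization has determinant $1-e^{-2h}c_0$. The paper then controls the residual by the cumulant-free fluctuation averaging of \cite{fan2026anisotropic}, and closes with a strong-stability bound $|q-q_0|\lesssim\Delta/\bigl((1-e^{-h})+\sqrt{\kappa+\eta}+\sqrt{\Delta}\bigr)$, a vertical lattice continuity argument, and a polynomial grid in $(h,t)$.

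Two points need tightening. First, when $N\eta\sqrt{\kappa+\eta}\ll1$ (e.g.\ at a regular edge with $\eta\ll N^{-2/3}$), your inequality $\Psi^2/\sqrt{\kappa+\eta}\lesssim(N\eta)^{-1}$ fails for the $(N\eta)^{-2}$ part of $\Psi^2$. In that regime the bound $(N\eta)^{-1}$ comes from the square-root stability $|q-q_0|\lesssim\sqrt{\Delta}$, which uses $|z_0''|\ge c$ as a lower bound, not merely as a perturbation to be controlled. Second, $t\mapsto X_t$ is not Lipschitz, so the grid extension in $t$ needs a Brownian maximal (modulus-of-continuity) estimate, as in the paper.
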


\begin{proposition}[Zig estimate]
\label{prop:full-block-zig}
Suppose Assumptions \ref{ass:A1} and \ref{ass:A2} hold, $\bD$ is a regular domain, and $0\leq h'\leq h\leq 5\log(N)$. Fix a constant $C_0>10$ and let $\delta\in(0,\tau/(10C_0))$. Assume that, uniformly over all $z\in\bD$ and all deterministic unit vectors $\u\in\R^{N+n}$, we have
\[
    \int_0^{h-h'}\Lambda_{h-t}\de t\leq C\log(N),
    \quad \u^*(\cR_{h,0}-M_h)\u\prec N^{C_0\delta}\Psi_h.
\]
Then, uniformly over all $z\in\bD$ with $N\eta\Im\widetilde m_0(z)\geq N^{3C_0\delta}$ and all deterministic unit vectors $\u\in\R^{N+n}$,
\[
    \u^*(\cR_{h',h-h'}-M_{h'})\u\prec N^{C_0\delta}\Psi_{h'}.
\]
\end{proposition}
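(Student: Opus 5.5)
We plan to integrate the exact block zig equation (Lemma~\ref{lem:exact-block-zig}) along the characteristic and control each term by a stopping-time argument. Fix $z$ with $N\eta\Im\widetilde m_0(z)\ge N^{3C_0\delta}$ and a real unit vector $\u$. Write $\cR_t=\cR_{h-t,t}$ and $M_t=M_{h-t}$, and set $f_t:=\u^*(\cR_t-M_t)\u$. The linear term $\frac12(\cR_t-M_t)$ in \eqref{eq:exact-block-zig} is removed by the integrating factor $e^{-t/2}$. Indeed, $\de(e^{-t/2}f_t)=e^{-t/2}[\u^*\cR_t\cS[\cR_t-M_t]\cR_t\u+N^{-1}\u^*\cE(\cR_t)\u]\de t-e^{-t/2}\de\m_t$, where $\m_t=\int_0^t\u^*\cR_s\cW^{1/2}\de\cB_s\cW^{1/2}\cR_s\u$. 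Since $\Psi_{h-t}=e^{t/2}\Psi_h$, the target bound $|f_{h-h'}|\prec N^{C_0\delta}\Psi_{h'}$ is equivalent to $|e^{-(h-h')/2}f_{h-h'}|\prec N^{C_0\delta}\Psi_h$. The initial value is of this size by hypothesis. Define the stopping time $\xi$ as the first $t\le h-h'$ at which $|f_t|\ge N^{\beta}\Psi_{h-t}$, with $C_0\delta<\beta<\frac32C_0\delta$ chosen so that $2\beta<3C_0\delta$. Lemma~\ref{lem:ward-estimate} then applies on $[0,\xi]$.

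\textbf{Bounding the three terms for $t\le\xi$.} (i) \emph{Quadratic variation.} The martingale increment is a sum of products of $N^{-1/2}\de B$ entries against $\cW^{1/2}\cR_t\u$. Hence $\de\langle\m,\bar\m\rangle\le CN^{-1}\|\cW^{1/2}\cR_t\u\|_2^4\de t\le CN^{-1}\Lambda_{h-t}^2\de t$, using the weighted Ward estimate. Lemma~\ref{lem:maximal-envelope-martingale} together with $\int\Lambda\le C\log N$ gives $|\widehat\m_\xi|\prec e^{-\xi/2}(\Lambda_{h-\xi}/N)^{1/2}$. We then check that $\sqrt{\Lambda_{h-t}/N}\,e^{-t/2}\lesssim\Psi_h$. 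Since $\Lambda_{h-t}\le e^{-(h-t)}\Im\widetilde m_0/\eta$, this follows from $\sqrt{\Im\widetilde m_0/(N\eta)}\le\Psi$. (ii) \emph{Nonlinear drift.} We write $\u^*\cR_t\cS[\cR_t-M_t]\cR_t\u$ in block form as $(\cR_t\u)_1^*\Sigma(\cR_t\u)_1\cdot N^{-1}\Tr(\cR_t-M_t)_{22}$ plus the analogous $(2,2)$ term. Proposition~\ref{prop:averaged zig} bounds the traces by $e^{-(h-t)/2}(N\eta)^{-1}N^{\epsilon}$. The weighted Ward estimate bounds the prefactor by $\Lambda_{h-t}$. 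Integrating $e^{-t/2}\Lambda_{h-t}e^{-(h-t)/2}(N\eta)^{-1}$ and using $\int\Lambda\le C\log N$ yields $\prec\Psi_h$. (iii) \emph{Off-diagonal correction.} For $N^{-1}\u^*\cE(\cR_t)\u$, we bound it by $N^{-1}\|\cW^{1/2}\cR_t\u\|_2^2\|\cR_t\|_{\op}$, which a priori is too weak. Instead, we use $\|[\cR_t]_{12}\|_\op\prec e^{-(h-t)/2}$ via $\|\cX\|_\op\prec1$ and the identity $\cR\cX\cR=\cR+\cR\cD\cR$. This gives $N^{-1}\Lambda_{h-t}\cdot O_\prec(1)$, whose time integral is $O(N^{-1}\log N)\le\Psi_h$ up to $e^{-h/2}$ factors. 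If the $\op$-bound on the off-diagonal block is too lossy at large $h$, we instead use the Ward bound $\|\cR_t\u\|_2\le N^{1/2+\beta/2}\Psi_{h-t}$ together with $\|\cW^{1/2}\cR_t\u\|\le\sqrt{\Lambda}$.

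\textbf{Closing the argument.} Combining these bounds, $|e^{-(\xi)/2}f_\xi|\prec N^{C_0\delta}\Psi_h+N^{\epsilon}\Psi_h$ with $\epsilon$ arbitrary. This is strictly smaller than $N^{\beta}\Psi_h$ with overwhelming probability, so by continuity $\xi=h-h'$ on that event (first-exit argument). Uniformity in $z$ and $\u$ follows because each bound is uniform. The \textbf{main obstacle} is the drift bookkeeping: the factors $e^{\pm t/2}$, $e^{-(h-t)/2}$ and $\Lambda_{h-t}$ must combine so that every term is at most $\Psi_h$ times a factor at most $\log N$. This matters most for the $\cE$ term and near $h-t\approx0$, where $\Lambda$ can be as large as $\Im\widetilde m_0/\eta$. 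There I would rely on the integrated bound $\int\Lambda\le C\log N$ rather than pointwise estimates.
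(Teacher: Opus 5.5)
Your architecture matches the paper's: the integrating factor $e^{-t/2}$, a stopping time at the envelope $N^{\beta}\Psi_{h-t}$ with $2\beta<3C_0\delta$ so that Lemma~\ref{lem:ward-estimate} applies, Proposition~\ref{prop:averaged zig} plus the weighted Ward bound for the $\cS$-drift, Lemma~\ref{lem:maximal-envelope-martingale} for the martingale, and a first-exit argument. Steps (i) and (ii), and your first-exit closing, are correct as written.

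The step that fails is (iii). The bound you discard as ``a priori too weak'' is exactly what the paper uses, and it suffices because $(N\eta)^{-1}$ is itself one of the two summands of $\Psi$. By Lemma~\ref{lem:basic-estimates}(b), $\|\cR_{h-t,t}\|_\op\le Ce^{-(h-t)/2}\eta^{-1}$, so for $t\le\xi$
\[
 \frac{e^{-t/2}}{N}\,|\u^*\cE(\cR_t)\u|\le \frac{Ce^{-t/2}}{N}\|\cR_t\|_\op\|\cW^{1/2}\cR_t\u\|_2^2\le \frac{Ce^{-h/2}\Lambda_{h-t}}{N\eta}.
\]
The hypothesis $\int_0^{h-h'}\Lambda_{h-t}\,\de t\le C\log N$ then gives $C\log N\,e^{-h/2}(N\eta)^{-1}\le C\log N\,\Psi_h$.

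What you put in its place is false: $\|[\cR_t]_{12}\|_\op\prec e^{-(h-t)/2}$ does not hold. At the terminal source $(wI_n,w)$ one has $[\cR]_{12}=(N^{-1}XX^*-z)^{-1}N^{-1/2}X$. Its singular values are $s_k/|s_k^2-z|$, where $s_k$ are the singular values of $N^{-1/2}X$. In the bulk some $s_k^2$ lie within $\eta$ of $E$, so this norm is of order $\eta^{-1}$, and the identity $\cR\cX\cR=\cR+\cR\cD\cR$ cannot improve that.

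Your fallback does not close the step either. Since $\u^*\cE(\cR_t)\u=2\,(\Sigma(\cR_t\u)_1)^\top[\cR_t]_{12}(\cR_t\u)_2$, the block sits between two random vectors. A Ward bound on $\|\cR_t\u\|_2$ therefore cannot control it without reintroducing $\|\cR_t\|_\op$. Replace (iii) by the operator-norm bound above and the proof is complete.
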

\begin{proof}
We first estimate the trace errors, drift, and martingale under a stopping
rule.  A first-exit argument then removes the stop. By Lemma 
\ref{lem:exact-block-zig}, we have
\begin{align*}
  \de(\u^*(\cR_{h-t,t}-M_{h-t})\u)&=\frac{1}{2}\u^*(\cR_{h-t,t}-M_{h-t})\u\de t
  +\u^*\cR_{h-t,t}\cS[\cR_{h-t,t}-M_{h-t}]\cR_{h-t,t}\u\de t\\
  &+\frac{1}{N}\u^*\cE(\cR_{h-t,t})\u\de t
  -\u^*\cR_{h-t,t}\cW^{1/2}\de\cB_t\cW^{1/2}\cR_{h-t,t}\u.
\end{align*}
Multiplying both sides by $e^{-t/2}$ and rearranging gives
\begin{align*}
  \de(e^{-t/2}\underbrace{\u^*(\cR_{h-t,t}-M_{h-t})\u}_{Y_t})&=\underbrace{e^{-t/2}
  \u^*\cR_{h-t,t}\cS[\cR_{h-t,t}-M_{h-t}]\cR_{h-t,t}\u}_{\mathbf{I}_t}\de t
  +\underbrace{\frac{e^{-t/2}}{N}\u^*\cE(\cR_{h-t,t})\u}_{\mathbf{II}_t}\de t\\
  &-e^{-t/2}\underbrace{\u^*\cR_{h-t,t}\cW^{1/2}\de\cB_t\cW^{1/2}
  \cR_{h-t,t}\u}_{\de \m_t}.
\end{align*}

Fix $0<\eps<C_0\delta/10$, define the stopping time
\[
 \xi=\inf\left\{t\in[0,h-h']:
   |\u^*(\cR_{h-t,t}-M_{h-t})\u|\ge N^{\eps+C_0\delta}\Psi_{h-t}\right\}\wedge (h-h').
\]
It follows that $2(C_0\delta+\eps)\leq (2+1/5)C_0\delta<3C_0\delta$, so the assumptions of Lemma \ref{lem:ward-estimate} are satisfied for the stopped process. By continuity and the definition of $\xi$, for all $t\leq\xi$, we have
\[
    |\u^*(\cR_{h-t,t}-M_{h-t})\u|\leq N^{\eps+C_0\delta}\Psi_{h-t}.
\]
Integrating the stochastic differential equation up to time $\xi$ gives
\[
    Y_\xi-e^{\xi/2}Y_0=e^{\xi/2}\int_0^\xi\mathbf{I}_t\de t
    +e^{\xi/2}\int_0^\xi\mathbf{II}_t\de t
    -e^{\xi/2}\int_0^\xi e^{-t/2}\de\m_t.
\]
We now bound each term separately. For the first term,
\[
\begin{aligned}
    \mathbf{I}_t
    &=e^{-t/2}\u^*\cR_{h-t,t}\cW^{1/2}
      \begin{bmatrix}
        N^{-1}\Tr(\cR_{h-t,t}-M_{h-t})_{22}I_n &0\\
        0& N^{-1}\Tr\Sigma(\cR_{h-t,t}-M_{h-t})_{11}I_N
      \end{bmatrix}\\
    &\qquad{}\cW^{1/2}\cR_{h-t,t}\u.
\end{aligned}
\]
Thus, we can bound
\begin{align*}
    |\mathbf{I}_t|
    &\leq Ce^{-t/2}\|\cW^{1/2}\cR_{h-t,t}\u\|_2^2\\
    &\quad\times (|N^{-1}\Tr(\cR_{h-t,t}-M_{h-t})_{22}|
    +|N^{-1}\Tr\Sigma(\cR_{h-t,t}-M_{h-t})_{11}|).
\end{align*}
The definition of $\xi$, Lemma \ref{lem:ward-estimate}, and Proposition \ref{prop:averaged zig} imply that, for any $D>0$,
with probability at least $1-CN^{-D}$,
\[
    e^{\xi/2}\left|\int_0^{\xi}\mathbf{I}_t\de t\right|
    \leq CN^{\eps/10+C_0\delta}e^{\xi/2}\int_0^\xi e^{-t/2}\Psi_{h-t}\Lambda_{h-t}\de t
    \leq CN^{\eps/5+C_0\delta}\Psi_{h-\xi}.
\]
Here, the first inequality uses $e^{-(h-t)/2}(N\eta)^{-1}\leq \Psi_{h-t}$, while the last uses $e^{(\xi-t)/2}\Psi_{h-t}=\Psi_{h-\xi}$
and $N^{C_0\delta+\eps/10}\log(N)\leq CN^{C_0\delta+\eps/5}$.

For the second term, we have
\[
    |\mathbf{II}_t|\leq Ce^{-t/2}N^{-1}\|\cR_{h-t,t}\|_\op
    \|\cW^{1/2}\cR_{h-t,t}\u\|_2\|\cW^{1/2}\cR_{h-t,t}\bar{\u}\|_2
    \leq Ce^{-h/2}\frac{\Lambda_{h-t}}{N\eta}.
\]
For the last inequality, we used
$\|\cR_{h-t,t}\|_\op\leq e^{-(h-t)/2}/\eta$ as in the proof of Lemma 
\ref{lem:ward-estimate}. Therefore,
\[
    e^{\xi/2}\left|\int_0^\xi\mathbf{II}_t\de t\right|
    \leq C\Psi_{h-\xi}\int_0^\xi\Lambda_{h-t}\de t
    \leq CN^{C_0\delta}\Psi_{h-\xi}. 
\]
Here, we used $e^{-(h-\xi)/2}/(N\eta)\leq\Psi_{h-\xi}$
and $C\log(N)\leq N^{C_0\delta}$.

Finally, since
\[
    \de\<\m_t,\bar\m_t\>_t\leq (N^{-1/2}\|\cW^{1/2}\cR_{h-t,t}\u\|_2^2)^2\de t\leq \frac{C\Lambda_{h-t}^2}{N}\de t,\quad t\leq\xi . 
\]
Lemma \ref{lem:maximal-envelope-martingale} shows that, for any $D>0$, with probability
at least $1-CN^{-D}$,
\[
    e^{\xi/2}\left|\int_0^\xi e^{-t/2}\de\m_t\right|\leq 
    N^{\eps/10}\sqrt{\frac{\Lambda_{h-\xi}}{N}}\left(\int_0^{h-h'}\Lambda_{h-t}\de t\right)^{1/2}
    \leq CN^{C_0\delta+\eps/10}\Psi_{h-\xi} . 
\]
Here, the last step uses
$\sqrt{\Lambda_{h-\xi}/N}\leq C\Psi_{h-\xi}$.

We remove the stop by a first-exit argument. Suppose, for contradiction,
that $\xi<h-h'$. By continuity at time $\xi$, we must then have
$|Y_\xi|=N^{C_0\delta+\eps}\Psi_{h-\xi}$. Together with the bound
$|e^{\xi/2}Y_0|\leq N^{C_0\delta+\eps/5}\Psi_{h-\xi}$, the preceding estimates show that, for any $D>0$, with probability at least $1-CN^{-D}$, we have
\[
    |Y_\xi|\leq e^{\xi/2}|Y_0|+CN^{C_0\delta+\eps/5}\Psi_{h-\xi}
    \leq CN^{C_0\delta+\eps/5}\Psi_{h-\xi}\leq
    \frac{1}{2}N^{C_0\delta+\eps}\Psi_{h-\xi}<N^{C_0\delta+\eps}\Psi_{h-\xi}
\]
which is a contradiction. Thus, we must have $\xi=h-h'$. In particular, this implies
\[
    |Y_{h-h'}|=|\u^*(\cR_{h',h-h'}-M_{h'})\u|\leq N^{C_0\delta+\eps}\Psi_{h'}.
\]
Since $\eps$ was arbitrary, this completes the proof.
\end{proof}

\section{The zag estimate}\label{sec:zag}
We next prove the estimates used in the zag step. The source is held fixed while the
column Ornstein--Uhlenbeck flow is run from the original ensemble to its
Gaussian smoothing.  The endpoint law is known at the smoothed time, and
the purpose of the zag is to propagate it backward to time zero.

Solving the OU-flow equation explicitly shows that, for each $t\geq0$, $X_t$ has the same distribution as
\begin{equation}\label{eq:explicite OU solution}
    X_t =e^{-t/2}X_0+\sqrt{1-e^{-t}}G.
\end{equation}
Here, the columns of $G$ are independent Gaussian vectors with mean zero and covariance $\Sigma$, and $X_0$ is independent of $G$. In particular, the columns of $X_t$ have mean zero and covariance $\Sigma$, and the resulting model satisfies Assumptions \ref{ass:A1} and \ref{ass:A2} for all $t\geq 0$.

In this section, both $h$ and $t$ will generally be fixed, and we suppress the indices to avoid notational clutter when there is no risk of confusion.

\subsection{Column OU-generator}
The key tool in the zag step is the following OU-generator identity.
\begin{lemma}[OU-generator]\label{lem:OU-generator}
For $t> 0$, define
\[
    X_t:=e^{-t/2}X_0+\sqrt{1-e^{-t}}G\in\R^{n\times N},\quad X_t=\begin{bmatrix}
        | & \cdots &|\\
        \x_1 & \cdots & \x_N\\
        | & \cdots & |
    \end{bmatrix}.
\]
Here, the columns of $G$ are independent Gaussian vectors with mean zero and covariance $\Sigma$, and $X_0$ is independent of $G$. Let $f:\R^{n\times N}\rightarrow\C$ be a twice continuously differentiable function whose first two derivatives have at most polynomial growth. Then
\[
    \frac{\de }{\de t}\E f(X_t)=\sum_{i=1}^N\E\cL_if(X_t),
\]
where
\[
    \cL_i:=\frac{1}{2}\sum_{\alpha,\beta=1}^n\Sigma[\alpha,\beta]\frac{\de^2}{\de\x_i[\alpha]\de\x_i[\beta]}-\frac{1}{2}\sum_{\alpha=1}^n\x_i[\alpha]\frac{\de}{\de\x_i[\alpha]}
\]
is the OU-generator for the $i$-th column.
\end{lemma}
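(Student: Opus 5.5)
Since $X_t$ is defined through its law, I will compute $\frac{\de}{\de t}\E f(X_t)$ directly from the explicit representation $X_t=e^{-t/2}X_0+\sqrt{1-e^{-t}}G$, conditioning on $X_0$. Write $a_t=e^{-t/2}$ and $b_t=\sqrt{1-e^{-t}}$, so that $\dot a_t=-\tfrac12 a_t$ and $\dot b_t=\tfrac{e^{-t}}{2b_t}$. The polynomial growth of the first two derivatives of $f$, together with the polynomial norm moments of Assumption~\ref{ass:A2} and Gaussian moments of $G$, justifies differentiating under the expectation by dominated convergence on compact subintervals of $(0,\infty)$. The chain rule then gives
\[
\frac{\de}{\de t}\E f(X_t)=\sum_{i=1}^N\sum_{\alpha=1}^n\E\Big[\partial_{i\alpha}f(X_t)\big(-\tfrac12 a_t X_0[\alpha,i]+\dot b_t\, G[\alpha,i]\big)\Big],
\]
where $\partial_{i\alpha}=\partial/\partial\x_i[\alpha]$.

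The key step is to handle the $G$-term by Gaussian integration by parts. The column $\g_i$ of $G$ is $N(0,\Sigma)$ and is independent of $X_0$ and of the other columns. Stein's identity therefore gives
\[
\E[\g_i[\alpha]\,F(\g_i)]=\sum_\beta\Sigma[\alpha,\beta]\,\E\,\partial_{\g_i[\beta]}F.
\]
Moreover $\partial_{\g_i[\beta]}=b_t\,\partial_{i\beta}$ when applied to $f(X_t)$, so the $G$-term becomes
\[
\dot b_t\, b_t\sum_{\alpha,\beta}\Sigma[\alpha,\beta]\,\E\,\partial_{i\alpha}\partial_{i\beta}f(X_t)=\tfrac12 e^{-t}\sum_{\alpha,\beta}\Sigma[\alpha,\beta]\,\E\,\partial^2_{i\alpha i\beta}f(X_t).
\]
Stein's identity requires integrability of $\partial_{i\beta}\partial_{i\alpha} f$ against the Gaussian, which again follows from polynomial growth.

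The drift term is not yet in generator form, since it involves $X_0$ rather than $X_t$. I will rewrite it using $a_tX_0=X_t-b_tG$:
\[
-\tfrac12 a_t X_0[\alpha,i]=-\tfrac12\x_i[\alpha]+\tfrac12 b_t\,G[\alpha,i].
\]
Applying Stein's identity once more to the new $G$-term produces $\tfrac12 b_t^2\sum_{\alpha,\beta}\Sigma[\alpha,\beta]\,\E\,\partial^2_{i\alpha i\beta}f$, that is, $\tfrac12(1-e^{-t})$ times the same Hessian contraction. Adding the two second-order contributions gives the total coefficient $\tfrac12 e^{-t}+\tfrac12(1-e^{-t})=\tfrac12$. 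This yields exactly $\sum_i\E\,\cL_i f(X_t)$, with $\cL_i$ as in the statement.

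The only delicate point is analytic rather than algebraic: we must justify the interchange of $\frac{\de}{\de t}$ and $\E$, and the integration-by-parts boundary terms. For this I would use the bound $\E\|X_0\|^k\le N n^{C_k}$ from Assumption~\ref{ass:A2} together with the fact that $b_t^{-1}$ stays bounded for $t$ in compacts of $(0,\infty)$, which is why the statement restricts to $t>0$. The resulting bounds are uniform on such compacts, so the identity holds pointwise for every $t>0$.
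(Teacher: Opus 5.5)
Your proposal is correct and follows essentially the same route as the paper: differentiate under the expectation, substitute $e^{-t/2}X_0=X_t-\sqrt{1-e^{-t}}\,G$, and apply Gaussian integration by parts column by column. The only cosmetic difference is that the paper first merges the two $G$-terms into the single coefficient $1/(2\sqrt{1-e^{-t}})$ and integrates by parts once, whereas you integrate by parts twice and add $\tfrac12 e^{-t}+\tfrac12(1-e^{-t})=\tfrac12$.
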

\begin{proof}
We interchange differentiation and expectation and apply Gaussian integration by parts. These operations are justified by the polynomial growth condition and the dominated convergence theorem. We obtain
\begin{align*}
    \frac{\de}{\de t}\E f(X_t)&=\E\left[-\frac{1}{2}e^{-t/2}\Tr X_0^*\nabla f(X_t)+\frac{e^{-t}}{2\sqrt{1-e^{-t}}}\Tr G^*\nabla f(X_t)\right]\\
    &=-\frac{1}{2}\E\left[\Tr X_t^*\nabla f(X_t)\right]+\frac{1}{2\sqrt{1-e^{-t}}}\E\left[\Tr G^*\nabla f(X_t)\right] . 
\end{align*}
Here, $\nabla f(X_t)\in\C^{n\times N}$ denotes the gradient matrix of $f$; in the last step, we used the definition of $X_t$. Let $\g_i$ denote the $i$-th column of $G$. Gaussian integration by parts gives
\begin{align*}
    &\E[\Tr G^*\nabla f(X_t)]=\sum_{i=1}^N\sum_{\alpha=1}^n\E\left[\g_i[\alpha]\frac{\de f}{\de\x_i[\alpha]}\right]\\
    &=\sum_{i=1}^N\sum_{\alpha=1}^n\sum_{\beta=1}^n\Sigma[\alpha,\beta]\sqrt{1-e^{-t}}\E\left[\frac{\de^2 f}{\de\x_i[\alpha]\de\x_i[\beta]}\right].
\end{align*}
Combining these calculations gives
\[
    \frac{\de}{\de t}\E f(X_t)=\frac{1}{2}\sum_{i=1}^N\sum_{\alpha,\beta=1}^n\Sigma[\alpha,\beta]\E\left[\frac{\de^2 f}{\de\x_i[\alpha]\de\x_i[\beta]}\right]-\frac{1}{2}\sum_{i=1}^N\sum_{\alpha=1}^n\E\left[\x_i[\alpha]\frac{\de f}{\de\x_i[\alpha]}\right].
\]
This completes the proof.
\end{proof}

We next record two useful properties of the operator $\cL_i$.
\begin{lemma}\label{lem:product-and-chain-rule}
Under the hypotheses of Lemma \ref{lem:OU-generator}, let $f,g:\R^{n}\rightarrow\C$ and $h:\C\rightarrow\C$ be such that $h$ is holomorphic on an open neighborhood of the range of $f$, and $f,g$, and $h(f)$ are twice differentiable functions whose first two derivatives have at most polynomial growth. Then, for all $i\in[N]$, the following hold:
\begin{enumerate}
    \item (Product rule)
    \[
        \cL_i(fg)=f\cL_ig+g\cL_if+\nabla f^\top\Sigma\nabla g.
    \]
    \item (Chain rule)
    \[
        \cL_ih(f)=h'(f)\cL_if+\frac{1}{2}h''(f)\nabla f^\top\Sigma\nabla f.
    \]
\end{enumerate}
\end{lemma}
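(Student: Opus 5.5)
The plan is to verify both identities directly from the definition of $\cL_i$ as a second-order differential operator. We split $\cL_i=\frac12\Delta_\Sigma-\frac12\x_i^\top\nabla$, where
\[
\Delta_\Sigma=\sum_{\alpha,\beta}\Sigma[\alpha,\beta]\,\partial_\alpha\partial_\beta,\qquad \partial_\alpha=\frac{\de}{\de\x_i[\alpha]} .
\]
The first-order part $-\frac12\x_i^\top\nabla$ is a derivation, so it satisfies the Leibniz rule and the ordinary chain rule exactly. All correction terms therefore come from the second-order part, and the task reduces to computing $\Delta_\Sigma(fg)$ and $\Delta_\Sigma h(f)$. We treat $f,g$ as functions of $\x_i\in\R^n$, with the other columns frozen. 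The regularity and polynomial-growth hypotheses are needed only so that the expressions make sense and so that the results can later be used inside Lemma \ref{lem:OU-generator}; the identities themselves are pointwise.

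For the product rule, we expand $\partial_\alpha\partial_\beta(fg)=f\,\partial_\alpha\partial_\beta g+g\,\partial_\alpha\partial_\beta f+\partial_\alpha f\,\partial_\beta g+\partial_\beta f\,\partial_\alpha g$. Contracting with $\Sigma[\alpha,\beta]$ and using the symmetry of $\Sigma$, the two cross terms are equal, and their sum is $2\nabla f^\top\Sigma\nabla g$. Multiplying by $\frac12$ and adding the first-order part gives $f\cL_ig+g\cL_if+\nabla f^\top\Sigma\nabla g$. We write $\nabla f^\top$ rather than $\nabla f^*$ because $f,g$ are complex-valued and no conjugation appears; this bilinear form is the one entering the later computations.

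For the chain rule, $f$ takes complex values and $h$ is holomorphic near the range of $f$. We therefore use the complex chain rule for the real-differentiable map $\x_i\mapsto h(f(\x_i))$. Writing $f=u+iv$, holomorphy gives $\partial_\alpha h(f)=h'(f)\partial_\alpha f$ by the Cauchy--Riemann equations, since the $\bar f$-derivative vanishes. Differentiating again, $\partial_\beta\partial_\alpha h(f)=h'(f)\partial_\alpha\partial_\beta f+h''(f)\partial_\alpha f\,\partial_\beta f$, using once more that $h'$ is holomorphic. Contracting with $\frac12\Sigma$ and adding $-\frac12\x_i^\top\nabla h(f)=h'(f)\bigl(-\frac12\x_i^\top\nabla f\bigr)$ yields $h'(f)\cL_if+\frac12h''(f)\nabla f^\top\Sigma\nabla f$.

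The only step needing care is the holomorphy argument in the chain rule: justifying that no $\partial_{\bar f}$ terms arise, which requires the range of $f$ to lie in the open set where $h$ is holomorphic. Everything else is routine bookkeeping with the symmetric matrix $\Sigma$.
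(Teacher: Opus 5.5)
Your proposal is correct and follows the same route as the paper, which simply says both identities follow by direct computation. Your split of $\cL_i$ into the first-order derivation and the $\Sigma$-weighted second-order part, together with the holomorphy argument giving $\partial_\alpha h(f)=h'(f)\,\partial_\alpha f$, is exactly that computation written out in full.
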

\begin{proof}
The claims follow from direct computations.
\end{proof}

\subsection{Minor and one-column decomposition}
Unlike many classical local-law arguments, our proof requires only minors obtained by removing a single column.
\begin{defi}
For any $i\in[N]$, let $X^{(i)}\in\R^{n\times N}$ be the matrix with columns
\[
    X^{(i)}\e_j=\begin{cases}
        0 & \text{if $i=j$,}\\
        \x_j & \text{otherwise.}
    \end{cases}
\]
Define also
\begin{align*}
    \cR^{(i)}&=\cR(X^{(i)},Z,\beta)\in\C^{(n+N)\times(n+N)},\\
    A_i&:=[\cR^{(i)}]_{11}\in\C^{n\times n},\\
    B_i&=([\cR^{(i)}]_{11},[\cR^{(i)}]_{12})\in\C^{n\times (n+N)}.
\end{align*}
The expectation with respect to the column $\x_i$ is denoted by
\[
    \E_i[\cdot]:=\E[\cdot|\{\x_j\}_{j\neq i}].
\]
\end{defi}

We use Roman indices $i,j,...$ for $[N]$ and Greek indices $\alpha,\beta,...$ for $[n]$. The following resolvent identities follow from the Schur complement formula.
\begin{lemma}[Resolvent identities]\label{lem:schur-one-column}
We have
\begin{enumerate}[label=(\alph*)]
    \item For all $i\in[N]$,
    \[
        \e_i^*\cR\e_i=-\frac{1}{\beta+N^{-1}\x_i^*A_{i}\x_i},\quad 
        \cR_{11}=A_i-\frac{N^{-1}A_i\x_i\x_i^*A_i}{\beta+N^{-1}\x_i^*A_i\x_i}.
    \]
    \item For all $i\in[N]$,
    \begin{equation}\label{eq:minor-identity}
    \u^*\cR\u=\u^*\cR^{(i)}\u-\frac{(N^{-1/2}\x_i^*B_i\u)^2}{\beta+N^{-1}\x_i^*A_i\x_i}+2\u[i]\frac{N^{-1/2}\x_i^*B_i\u}{\beta+N^{-1}\x_i^*A_i\x_i}+\beta^{-1}(\u[i])^2-\frac{(\u[i])^2}{\beta+N^{-1}\x_i^*A_i\x_i} . 
    \end{equation}
\end{enumerate}
\end{lemma}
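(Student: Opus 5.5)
The plan is to apply the Schur complement formula to the single row and column of $\cL=\cL(X;Z,\beta)$ indexed by the $i$-th column of $X$, i.e.\ the coordinate $n+i$ of $\C^{n+N}$. For vectors in $\C^{n+N}$ we write $\u[i]$ for this coordinate (the notation of \eqref{eq:minor-identity}). As in Section~\ref{subsec:proof-outline}, we take $\u$ real, so that $\u^*=\u^\top$. We also use throughout that $\cR$ and $\cR^{(i)}$ are complex symmetric and that $\x_i$ is real.

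\emph{Step 1: the minor.} Let $\widehat\cL$ denote $\cL$ with row and column $n+i$ deleted, and put $\mathbf a:=N^{-1/2}(\x_i^\top,0)^\top\in\R^{n+N-1}$. After permuting coordinate $n+i$ to the last position,
\[
\cL=\begin{pmatrix}\widehat\cL&\mathbf a\\ \mathbf a^\top&-\beta\end{pmatrix}.
\]
The matrix $\cL(X^{(i)};Z,\beta)$ has the same block $\widehat\cL$, but its off-diagonal blocks vanish because column $i$ of $X^{(i)}$ is zero. Hence $\cR^{(i)}$ is block diagonal, equal to $\widehat\cL^{-1}$ off coordinate $n+i$ and to $-\beta^{-1}$ at $(n+i,n+i)$. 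In particular, for the first $n$ coordinates, the rows of $\widehat\cL^{-1}$ are given by $[\cR^{(i)}]_{11}=A_i$ and $[\cR^{(i)}]_{12}$, with the zero entry at index $i$.

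\emph{Step 2: the block inversion formula.} Set
\[
s:=-\beta-\mathbf a^\top\widehat\cL^{-1}\mathbf a=-\bigl(\beta+N^{-1}\x_i^*A_i\x_i\bigr).
\]
The block inversion formula gives
\[
(\cR)_{n+i,n+i}=s^{-1},\qquad \cR_{\cdot,n+i}=-s^{-1}\widehat\cL^{-1}\mathbf a,\qquad
\widehat{\cR}=\widehat\cL^{-1}+s^{-1}\widehat\cL^{-1}\mathbf a\mathbf a^\top\widehat\cL^{-1},
\]
where $\widehat{\cR}$ is $\cR$ with row and column $n+i$ deleted. The first identity is the first claim of (a). Restricting the third identity to the upper-left $n\times n$ block, where $\widehat\cL^{-1}\mathbf a=N^{-1/2}(A_i\x_i,\ast)$, yields the formula for $\cR_{11}$ in (a).

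\emph{Step 3: the quadratic form.} Split $\u=(\widehat\u,\u[i])$. Then
\[
\u^\top\cR\u=\widehat\u^\top\widehat\cR\widehat\u+2\u[i]\,\cR_{n+i,\cdot}\widehat\u+\u[i]^2s^{-1}.
\]
Since $\mathbf a$ is supported on the first $n$ coordinates, $\mathbf a^\top\widehat\cL^{-1}\widehat\u=N^{-1/2}\x_i^*B_i\u$. Here the $i$-th coordinate of $\u$ does not contribute, because the relevant entries of $[\cR^{(i)}]_{12}$ and the coupling to $n+i$ vanish. Moreover,
\[
\widehat\u^\top\widehat\cL^{-1}\widehat\u=\u^*\cR^{(i)}\u+\beta^{-1}\u[i]^2 .
\]
Substituting these relations and $s=-(\beta+N^{-1}\x_i^*A_i\x_i)$ gives exactly the five terms of \eqref{eq:minor-identity}.

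Each step is a direct computation; the only real obstacle is bookkeeping. One must track signs coming from $-\beta$ and $s$, and check that complex symmetry and reality of $\u$ justify writing $\u^*$ in place of $\u^\top$ on both sides.
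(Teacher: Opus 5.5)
Your proposal is correct and follows the paper's route: a Schur complement on the $(n+i)$-th row and column of $\cL$, where the upper-left block of the block inverse gives $\cR_{11}$ (the paper phrases this step as the Woodbury identity for $(-Z+(N\beta)^{-1}XX^*)^{-1}$), and all four blocks are then paired with $\u$ to obtain \eqref{eq:minor-identity}. The complex symmetry you invoke for the factor $2$ and the square in (b) holds because the sources used satisfy $Z^\top=Z$, since $Z_h\in\operatorname{span}(I_n,\Sigma)$.
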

\begin{proof}
We can write
\[
    \cR=\left(\begin{bmatrix}
    0 & N^{-1/2}X^{(i)}\\
    N^{-1/2}X^{(i)*} & 0
    \end{bmatrix}+
    \left[
    \begin{array}{cc|c}
    -Z & 0 & N^{-1/2}\x_{i} \\
    0 & -\beta I_{N-1} & 0 \\
    \hline
    N^{-1/2}\x_{i}^* & 0 & -\beta
    \end{array}
    \right]
    \right)^{-1}.
\]
Part (a) follows by applying the Schur complement formula to the lower-right block and the Woodbury matrix identity to $\cR_{11}=(-Z+(N\beta)^{-1}XX^*)^{-1}$. Part (b) follows by applying the Schur complement formula to all four blocks, using the Woodbury matrix identity for the upper-left block, and multiplying both sides by $\u\in\R^{n+N}$.
\end{proof}

We also use the following standard closure properties of stochastic domination, recorded in \cite[Lemma~3.3]{fan2026anisotropic}.
\begin{lemma}[Stochastic-domination calculus]\label{lem:stochastic-domination-calculus}
Let $\cU=\cU_N$ and $\cV=\cV_N$ be deterministic parameter sets with $|\cV|\leq N^C$ for some constant $C>0$, and let all comparison scales below be deterministic and nonnegative.
\begin{enumerate}[label=(\alph*)]
    \item If $X(u,v)\prec \zeta(u,v)$ uniformly over $u\in\cU$ and $v\in\cV$, then, uniformly over $u\in\cU$,
    \[
        \sum_{v\in\cV}X(u,v)\prec\sum_{v\in\cV}\zeta(u,v).
    \]
    \item If $X_1\prec\zeta_1$ and $X_2\prec\zeta_2$ uniformly over $u\in\cU$, then $X_1X_2\prec\zeta_1\zeta_2$ uniformly over $u\in\cU$.
    \item Suppose that $X(u)\prec\zeta(u)$ uniformly over $u\in\cU$, where $\zeta$ is deterministic and $\zeta(u)>N^{-C}$ for some constant $C>0$. Suppose further that, for every fixed $k\geq 1$, there is a constant $C_k>0$ such that
    \[
        \E|X(u)|^k\leq N^{C_k}
    \]
    uniformly over $u\in\cU$. Then, uniformly over $u\in\cU$ and all sub-$\sigma$-fields $\mathcal G$ of the underlying probability space,
    \[
        \E[X(u)\mid\mathcal G]\prec\zeta(u).
    \]
\end{enumerate}
\end{lemma}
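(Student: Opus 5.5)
The three claims are standard consequences of the definition of $\prec$, and I would prove them directly from Definition~\ref{def:prec} by union bounds and a moment truncation.

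\textbf{Part (a).} Fix $\epsilon,D>0$. I would apply the hypothesis with $\epsilon$ and with $D'=D+C$, where $C$ is the exponent in $|\cV|\le N^C$. For each fixed $u$, a union bound over $v\in\cV$ shows that, outside an event of probability at most $N^C\cdot C_{\epsilon,D'}N^{-D'}=C_{\epsilon,D'}N^{-D}$, every $v$ satisfies $|X(u,v)|\le N^\epsilon\zeta(u,v)$. On that event,
\[
\Big|\sum_{v}X(u,v)\Big|\le N^\epsilon\sum_v\zeta(u,v).
\]
The constants do not depend on $u$, so the bound is uniform over $\cU$.

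\textbf{Part (b).} I would apply both hypotheses with $\epsilon/2$. On the intersection of the two good events we have $|X_1X_2|\le N^{\epsilon}\zeta_1\zeta_2$. The complement of this intersection has probability at most $2C_{\epsilon/2,D}N^{-D}$, which gives the claim.

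\textbf{Part (c).} This is the only step needing care. Fix $\epsilon,D$ and let $\Omega=\{|X(u)|\le N^{\epsilon/2}\zeta(u)\}$. Split
\[
\E[X\mid\mathcal G]=\E[X\1_\Omega\mid\mathcal G]+\E[X\1_{\Omega^c}\mid\mathcal G].
\]
The first term is bounded deterministically by $N^{\epsilon/2}\zeta$.

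For the second term, I would apply conditional Markov's inequality and then the unconditional Markov inequality to the nonnegative random variable $Y=\E[|X|\1_{\Omega^c}\mid\mathcal G]$:
\[
\P\big(Y>N^{\epsilon/2}\zeta\big)\le \frac{\E|X|\1_{\Omega^c}}{N^{\epsilon/2}\zeta}.
\]
By Cauchy--Schwarz,
\[
\E|X|\1_{\Omega^c}\le (\E|X|^2)^{1/2}\P(\Omega^c)^{1/2}\le N^{C_2/2}\big(C_{\epsilon/2,D'}N^{-D'}\big)^{1/2}.
\]
Now use $\zeta>N^{-C}$ and choose $D'=2D+2C+C_2$. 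The probability above is then at most $C'N^{-D}$.

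So outside an event of probability $O(N^{-D})$, we get $|\E[X\mid\mathcal G]|\le 2N^{\epsilon/2}\zeta\le N^\epsilon\zeta$ for $N$ large. Small $N$ are absorbed into the constant. All constants are independent of $u$ and of $\mathcal G$, which gives the required uniformity.

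The main obstacle is exactly this interplay in part (c). The polynomial moment bound and the lower bound $\zeta>N^{-C}$ are what let the rare bad event contribute negligibly after conditioning; without them, conditioning could spread a tiny-probability large value over a positive-probability set.
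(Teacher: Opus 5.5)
Your proof is correct. The paper does not prove this lemma itself; it quotes it from \cite[Lemma~3.3]{fan2026anisotropic}, and your argument is the standard one behind that citation: a union bound for (a), intersection of the good events for (b), and for (c) truncation plus Markov and Cauchy--Schwarz, using only the second moment and $\zeta>N^{-C}$. One cosmetic point: in (c) you do not need a conditional Markov inequality, only Markov for $Y$ together with the tower property $\E Y=\E|X|\1_{\Omega^c}$.
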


In the remainder of the section, we develop the estimates needed in the zag step. Define
\[
    \cY_i:=\x_i^*(N^{-1/2}B_i\u),\quad \cZ_i:=(N^{-1/2}\u^*B_i^\top)(\x_i\x_i^*-\Sigma)(N^{-1/2}B_i\u),
\]
\[
    \cA_i:=\Tr(\x_i\x_i^*-\Sigma)(N^{-1}A_i),\quad \cC_i:=(N^{-1/2}\u^*B_i^\top)\Sigma(N^{-1/2}B_i\u),\quad \cP_i:=(\beta+\x_i^*(N^{-1}A_i)\x_i)^{-1}.
\]
Note that the factor $N^{-1/2}$ always accompanies $B_i\u$, while the factor $N^{-1}$ always accompanies $A_i$.

\begin{lemma}\label{lem:L_i-bounds}
Under Assumptions \ref{ass:A1} and \ref{ass:A2}, let $\bD$ be a regular domain and fix $h,t\geq 0$. Suppose there exist $\delta>0$ and a deterministic $\Phi:\bD\rightarrow[N^{-10},1]$ such that, uniformly over all $i\in[N]$, all $z\in\bD$, and all deterministic unit vectors $\u\in\R^{n+N}$, we have
\[
    N^{-1/2}\|B_i\u\|_2, N^{-1}\|A_i\|_F\prec\Phi,\quad \|\Sigma^{1/2}B_i\u\|_2, N^{-1/2}\|\Sigma^{1/2}A_i\|_F\prec N^\delta\sqrt{\Lambda_h},
\]
and
\[
    |\beta+N^{-1}\Tr\Sigma A_i|^{-1}, |\beta+N^{-1}\x_i^*A_i\x_i|^{-1}\prec N^\delta.
\]
Then, for every fixed $L\geq 1$, uniformly over all $l_1,l_2,l_3,l_4\geq 0,l_1+l_2+l_3+l_4=L$, all $i\in[N]$, all $z\in\bD$, and all deterministic unit vectors $\u\in\R^{N+n}$,
\[
    |\E_i\cL_i[\cP_i^L(\u[i]\cY_i)^{l_1}\cZ_i^{l_2}(\u[i]^2\cA_i)^{l_3}(\cC_i\cA_i)^{l_4}]|\prec c_i(\Lambda_h+1) N^{\delta (L+4)}\Phi^{L}.
\]
Here
\[
    |c_i|\leq N^{-1}+N^{-1/2}|\u[i]|+\u[i]^2.
\]
\end{lemma}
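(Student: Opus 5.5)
The plan is to work conditionally on $\{\x_j\}_{j\neq i}$. Then $B_i\u$, $A_i$, $\Sigma$, $\beta$ and $\u[i]$ are frozen, and the observable
\[
F:=\cP_i^L(\u[i]\cY_i)^{l_1}\cZ_i^{l_2}(\u[i]^2\cA_i)^{l_3}(\cC_i\cA_i)^{l_4}
\]
is a polynomial in the column $\x_i$ composed with the holomorphic map $q\mapsto(\beta+q)^{-1}$. I would compute $\cL_iF$ exactly with Lemma~\ref{lem:product-and-chain-rule} and then bound $\E_i$ of the result using only Assumption~\ref{ass:A2}, the hypotheses of the lemma, and Lemma~\ref{lem:stochastic-domination-calculus}(c) to pass from $\prec$ to conditional expectations. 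The polynomial moment bounds needed for (c) come from Assumption~\ref{ass:A2} together with $|\cP_i|\le(\Im\beta)^{-1}\le N^{C}$.

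\textbf{Step 1: the generator on the building blocks.} With $\mathbf b:=N^{-1/2}B_i\u$ and $Q:=N^{-1}A_i$ (complex symmetric), the generator acts as follows:
\[
\cL_i\cY_i=-\tfrac12\cY_i,\qquad \cL_i\cZ_i=-\cZ_i,\qquad \cL_i\cA_i=-\cA_i,\qquad \cL_i\cC_i=0.
\]
The gradients are $\nabla\cY_i=\mathbf b$, $\nabla\cZ_i=2\cY_i\mathbf b$ and $\nabla\cA_i=2Q\x_i$. For the resolvent factor, the chain rule gives
\[
\cL_i\cP_i=\cP_i^2\cA_i+4\cP_i^3\,\x_i^*Q\Sigma Q\x_i .
\]
The product rule then writes $\cL_iF$ as the sum of two kinds of terms:
\begin{enumerate}[label=(\roman*)]
\item first-order terms, in which one factor is replaced by its generator. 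These are a constant multiple of $F$, plus terms in which one $\cP_i$ is traded for $\cP_i^2\cA_i$ or $\cP_i^3\x_i^*Q\Sigma Q\x_i$;
\item carré-du-champ terms $\nabla f^\top\Sigma\nabla g$ over pairs of factors.
\end{enumerate}
The carré-du-champ terms only involve the contractions
\[
\mathbf b^\top\Sigma\mathbf b=\cC_i,\qquad \mathbf b^\top\Sigma Q\x_i,\qquad \x_i^*Q\Sigma Q\x_i .
\]

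\textbf{Step 2: size bookkeeping.} By Assumption~\ref{ass:A2} and the hypotheses I would record the following bounds:
\begin{itemize}
\item $|\cY_i|\prec\Phi$ and $|\cZ_i|\prec\|\mathbf b\|_2^2\prec\Phi^2$;
\item $|\cA_i|\prec\|Q\|_F\prec\Phi$ and $|\cP_i|\prec N^\delta$;
\item $\cC_i\le\|\Sigma^{1/2}\mathbf b\|_2^2\prec N^{2\delta-1}\Lambda_h$;
\item $|\mathbf b^\top\Sigma Q\x_i|\prec N^{-1}\|\Sigma^{1/2}B_i\u\|\,\|\Sigma^{1/2}A_i\|_F\prec N^{2\delta-1/2}\Lambda_h$ (this uses $\Lambda_h\lesssim1$ on $\bD$ up to $\log$-free constants);
\item $\x_i^*Q\Sigma Q\x_i\prec N^{-2}\|\Sigma^{1/2}A_i\|_F^2+\|Q\Sigma Q\|_F\prec N^{2\delta-1}\Lambda_h+\Phi^2$.
\end{itemize}
The weights $\u[i]$, $\u[i]^2$ and $\cC_i$ attached to the factors $\cY_i$ and $\cA_i$ are what produce the prefactor $c_i$. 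Every carré-du-champ pairing either contains a contraction carrying $N^{-1}\Lambda_h$ or pairs two factors that already carry $\u[i]$ weights. Each such term is therefore $\prec c_i(\Lambda_h+1)N^{\delta(L+4)}\Phi^L$, with one power of $N^\delta$ per $\cP_i$ and a few extra from the contractions.

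\textbf{Step 3: the main obstacle, first-order terms without an explicit $c_i$ weight.} The critical monomials are those with $l_2=L$, i.e. $\cP_i^L\cZ_i^L$, together with the $\cP_i^{L+1}\cA_i(\cdots)$ terms produced by $\cL_i\cP_i$. Naively these are of size $\Phi^{2L}$ or $\Phi^{L+1}$ with no factor $c_i$. For them I would not bound pointwise. Instead I would use the structure $\E_i\cL_iF=\E_i[\tfrac12\Tr\Sigma\nabla^2F-\tfrac12\x_i\cdot\nabla F]$ and pair the drift with the trace term.

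Concretely, I would Taylor-expand $\cP_i$ around the deterministic value $\widehat{\cP}_i=(\beta+N^{-1}\Tr\Sigma A_i)^{-1}$, which is $\prec N^\delta$ by hypothesis, in powers of the centered form $\cA_i$. Since $\cZ_i=\cY_i^2-\cC_i$, the pure-$\cZ_i$ monomials become polynomials in $\cY_i$ and $\cA_i$. After this expansion, the surviving first-order contributions are exactly cancelled by the corresponding second-order (Hessian) contributions: $\E_i$ of the exact generator applied to a polynomial in centered linear and quadratic forms only leaves Σ-contractions of the type handled in Step 2. What remains carries at least one factor $\cC_i$, $N^{-1}\|\Sigma^{1/2}A_i\|_F^2$ or $\u[i]$, hence at least one power of $c_i\Lambda_h$. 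Taylor remainders of order $>L$ are absorbed using $\Phi\le1$.

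I expect this cancellation bookkeeping to be the hardest part. If the exact cancellation for the pure-$\cZ_i$ terms fails, my fallback is to keep $\cZ_i$ in the rank-one form $\cY_i^2-\cC_i$ and apply the generator identity once more, i.e. Gaussian-type integration by parts in $\x_i$, to $\E_i[\cY_i\cdot G]$. That route again exposes a $\cC_i$ factor.
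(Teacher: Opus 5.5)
The gap is in Step~3, where the whole difficulty of the lemma lies. The cancellation you invoke, that $\E_i$ of $\cL_i$ applied to a polynomial in the centred forms leaves only $\Sigma$-contractions, is Stein's identity for $N(0,\Sigma)$, the invariant law of $\cL_i$. For a general column it fails as soon as the degree in $\x_i$ exceeds two. Take $\mathbf b=N^{-1/2}B_i\u$. The leading term of your expansion of $\E_i\cL_i[\cP_i^2\cZ_i^2]$ is $\widehat{\cP}_i^{\,2}\,\E_i\cL_i\cZ_i^2$, and
\[
\E_i\cL_i\cZ_i^2=-2\E_i\cZ_i^2+4\cC_i\,\E_i\cY_i^2=-2\bigl(\E_i\cY_i^4-3\cC_i^2\bigr).
\]
This is a fourth-cumulant contraction of $\x_i$ against $\mathbf b^{\otimes 4}$, which is exactly the information this lemma is meant to avoid. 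It is small, but only because of a size estimate, not a cancellation. Your fallback, Gaussian integration by parts in $\x_i$, fails for the same reason: $\x_i$ is not Gaussian, and at $s=0$ it is the original column. The truncated Taylor expansion of $\cP_i$ in powers of $\cA_i$ is also not justified. The order-$K$ remainder is only $O_\prec(N^{(K+1)\delta}\Phi^K)$, and $\Phi\le1$ does not make it $O_\prec(N^{-1}\Phi^L)$, which is what you need when $\u[i]=0$ and $c_i\asymp N^{-1}$.

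The missing idea is that the terms you call critical need no cancellation: bound one factor in conditional $L^1$ rather than pointwise. Using only $\E\x_i\x_i^*=\Sigma$,
\[
\E_i|\cY_i|\le\|\Sigma^{1/2}\mathbf b\|_2\prec N^{\delta-1/2}\sqrt{\Lambda_h},\qquad
\E_i|\cZ_i|\le\E_i|\cY_i|^2+|\cC_i|\le 2\|\Sigma^{1/2}\mathbf b\|_2^2\prec N^{2\delta-1}\Lambda_h,
\]
whereas pointwise you only have $|\cZ_i|\prec\Phi^2$.
\begin{itemize}
\item The paper bounds, for example, $\E_i|\cP_i^{L+1}\cZ_i^{L}\cA_i|\prec N^{\delta(L+1)}\Phi^{2L-1}\E_i|\cZ_i|$, and $\E_i|\cP_i^L\cZ_i^L|$ for $L\ge2$ in the same way.
\item Your list omits the first-order terms carrying a single weight $\u[i]\cY_i$. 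A lone $|\u[i]|$ is not $\lesssim c_i$, and the missing $N^{-1/2}$ comes from $\E_i|\cY_i|$.
\item Only the cases $L=1$ with a lone $\cY_i$ or $\cZ_i$ need more. There the exact identity $\cP_i=\widehat{\cP}_i-\widehat{\cP}_i\cP_i\cA_i$, together with $\E_i\cY_i=\E_i\cZ_i=0$, gains a factor $\cA_i$, after which the same $L^1$ bounds close.
\end{itemize}

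Your Step~2 bookkeeping also needs repair.
\begin{itemize}
\item $\Lambda_h\lesssim1$ is false: $\Lambda_0=\Im\widetilde m_0/\eta$ can be of order $N^{1-\tau}$, which is why the factor $\Lambda_h+1$ appears.
\item Both $\mathbf b^\top\Sigma Q\x_i$ and $\x_i^*Q\Sigma Q\x_i$ are in fact $\prec N^{2\delta-1}\Lambda_h$, via $\|Q\Sigma\mathbf b\|_2\le\|\Sigma^{1/2}Q\|_F\|\Sigma^{1/2}\mathbf b\|_2$ and $\|Q\Sigma Q\|_F\le\|\Sigma^{1/2}Q\|_F^2$.
\item Your weaker bounds $N^{2\delta-1/2}\Lambda_h$ and $N^{2\delta-1}\Lambda_h+\Phi^2$ do not carry the factor $N^{-1}$ that your Step~2 conclusion relies on.
\end{itemize}
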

\begin{proof}
We first record how the operators $\cL_i$ and $\nabla$ act on the factors $\cY_i,\cZ_i,\cA_i$. This will streamline the later combinatorial arguments. We have
\[
    \cL_i\cY_i=-\frac{1}{2}\cY_i,\quad \cL_i\cA_i=\cL_i\x_i^*(N^{-1}A_i)\x_i=-\cA_i,\quad \cL_i\cZ_i=-\cZ_i
\]
\[
    \nabla\cY_i=N^{-1/2}B_i\u,\quad \nabla\cZ_i=2\cY_i(N^{-1/2}B_i\u),\quad \nabla\cA_i=\nabla\x_i^*(N^{-1}A_i)\x_i=2(N^{-1}A_i\x_i)
\]
\[
    \cL_i\cP_i^L=L\cP_i^{L+1}\cA_i+2L(L+1)\cP_i^{L+2}\x_i^*(N^{-1}A_i)\Sigma(N^{-1}A_i)\x_i,\quad \nabla\cP_i^L=-2L\cP_i^{L+1}(N^{-1}A_i\x_i).
\]
These identities follow directly from the definition of $\cL_i$. We will also repeatedly use the product and chain rules from Lemma \ref{lem:product-and-chain-rule}.

By the product and chain rules, we have
\begin{align*}
    &|\E_i\cL_i[\cP_i^L(\u[i]\cY_i)^{l_1}\cZ_i^{l_2}(\u[i]^2\cA_i)^{l_3}(\cC_i\cA_i)^{l_4}]|=\underbrace{(|\u[i]|)^{l_1+2l_3}|\cC_i|^{l_4}}_{\omega_i}|\E_i\cL_i[\cP_i^L\cY_i^{l_1}\cZ_i^{l_2}\cA_i^{l_3+l_4}]|\\
    &\leq\omega_i|\E_i\cP_i^L\cL_i[\cY_i^{l_1}\cZ_i^{l_2}\cA_i^{l_3+l_4}]|+\omega_i|\E_i\cY_i^{l_1}\cZ_i^{l_2}\cA_i^{l_3+l_4}\cL_i\cP_i^L|+\omega_i|\E_i\nabla(\cP_i^L)^\top\Sigma\nabla(\cY_i^{l_1}\cZ_i^{l_2}\cA_i^{l_3+l_4})|\\
    &=\underbrace{\omega_i|\E_i\cP_i^L\cL_i[\cY_i^{l_1}\cZ_i^{l_2}\cA_i^{l_3+l_4}]|}_{\mathbf{A}}\\
    &+\underbrace{L\omega_i|\E_i\cP_i^{L+1}\cY_i^{l_1}\cZ_i^{l_2}\cA_i^{l_3+l_4+1}|}_\mathbf{B}\\
    &+\underbrace{2L(L+1)\omega_i|\E_i\cP_i^{L+2}\cY_i^{l_1}\cZ_i^{l_2}\cA_i^{l_3+l_4}\x_i^*(N^{-1}A_i)\Sigma(N^{-1}A_i)\x_i|}_{\mathbf{C}}\\
    &+\underbrace{2Ll_1\omega_i|\E_i\cP_i^{L+1}\cY_i^{l_1-1}\cZ_i^{l_2}\cA_i^{l_3+l_4}\x_i^*(N^{-1}A_i)\Sigma(N^{-1/2}B_i\u)|}_{\mathbf{D}}\\
    &+\underbrace{4Ll_2\omega_i|\E_i\cP_i^{L+1}\cY_i^{l_1+1}\cZ_i^{l_2-1}\cA_i^{l_3+l_4}\x_i^*(N^{-1}A_i)\Sigma(N^{-1/2}B_i\u)|}_{\mathbf{E}}\\
    &+\underbrace{4L(l_3+l_4)\omega_i|\E_i\cP_i^{L+1}\cY_i^{l_1}\cZ_i^{l_2}\cA_i^{l_3+l_4-1}\x_i^*(N^{-1}A_i)\Sigma(N^{-1}A_i)\x_i|}_{\mathbf{F}}.
\end{align*}
Here $\mathbf{B}$ and $\mathbf{C}$ arise from evaluating $\cL_i\cP_i^L$, while $\mathbf{D},\mathbf{E}$, and $\mathbf{F}$ arise from computing $\nabla(\cP_i^L)$ and $\nabla(\cY_i^{l_1}\cZ_i^{l_2}\cA_i^{l_3+l_4})$. We begin by bounding $\mathbf{C,D,E}$ and $\mathbf{F}$. By Assumption \ref{ass:A2} and the hypotheses of the lemma, the term $\mathbf{C}$ satisfies
\begin{align*}
    |\mathbf{C}|&\prec\omega_iN^{\delta(L+2)}\Phi^{L+l_2}|\x_i^*(N^{-1}A_i)\Sigma(N^{-1}A_i)\x_i|\\
    &\prec \omega_iN^{\delta(L+2)}\Phi^{L}N^{-1}(N^{-1/2}\|\Sigma^{1/2}A_i\|_F)^2\\
    &\prec N^{-1}(\Phi^2)^{l_4}N^{\delta(L+4)}\Lambda_h\Phi^L\prec c_iN^{\delta(L+4)}\Lambda_h\Phi^L.
\end{align*}
Here, the last step uses $\omega_i\leq |\cC_i|^{l_4}\prec (\Phi^2)^{l_4}\leq 1$.

For $\mathbf{D}$, if $l_1=0$, then $\mathbf{D}=0$, so it remains to consider the case where $l_1\geq 1$. We have
\begin{align*}
    |\mathbf{D}|&\prec\omega_iN^{\delta(L+1)}\Phi^{L+l_2-1}|\x_i^*(N^{-1}A_i)\Sigma(N^{-1/2}B_i\u)|\\
    &\prec\omega_iN^{\delta(L+1)}\Phi^{L-1}N^{-1}\|A_i\|_F\|\Sigma^{1/2}\|_\op N^{-1/2}\|\Sigma^{1/2}B_i\u\|_2\\
    &\prec\omega_iN^{-1/2}N^{\delta(L+2)}\sqrt{\Lambda_h}\Phi^{L}\prec c_iN^{\delta(L+2)}(\Lambda_h+1)\Phi^{L}.
\end{align*}
Here, the last step uses $\omega_i\prec|\u[i]|$ and $\sqrt{\Lambda_h}\leq\Lambda_h+1$. The same arguments show that $|\mathbf{E}|,|\mathbf{F}|\prec c_iN^{\delta(L+4)}\Lambda_h\Phi^L$.

For $\mathbf{B}$, if $l_3+l_4\geq 1$, we have
\[
    |\mathbf{B}|\prec\omega_iN^{\delta(L+1)}\Phi^{L+l_2+1}\prec c_iN^{\delta(L+3)}(\Lambda_h+1)\Phi^L.
\]
Here, we used $\omega_i\prec(|\u[i]|)^{2l_3}|\cC_i|^{l_4}\prec(\u[i]^2+N^{-1})N^{2\delta}(\Lambda_h+1)$. If $l_1\geq 1$, we have
\begin{align*}
    |\mathbf{B}|&\prec\omega_iN^{\delta(L+1)}\Phi^{L+l_2}\E_i|\cY_i|\leq \omega_iN^{\delta(L+1)}\Phi^{L}(\E_i|\cY_i|^2)^{1/2}\\
    &=\omega_iN^{\delta(L+1)}\Phi^{L}(N^{-1}\u^*B_i^*\Sigma B_i\u)^{1/2}\prec c_iN^{\delta(L+2)}(\Lambda_h+1)\Phi^L.
\end{align*}
Here, we used
\[
    (N^{-1}\u^*B_i^*\Sigma B_i\u)^{1/2}\prec N^{-1/2}\|\Sigma^{1/2}B_i\u\|_2\prec N^{-1/2}N^\delta\sqrt{\Lambda_h}\prec N^{-1/2}N^\delta(\Lambda_h+1).
\]
In this case, we also use the fact that $\omega_i\prec|\u[i]|$. Finally, suppose $l_1+l_3+l_4=0$. Then $l_2=L\geq 1$ and $\omega_i\prec 1$, and we have
\[
    |\mathbf{B}|\prec N^{\delta(L+1)}\E_i|\cZ_i|^L|\cA_i|\prec N^{\delta(L+1)}\Phi^{2(L-1)+1}\E_i|\cZ_i|\leq N^{\delta(L+1)}\Phi^L\E_i|\cZ_i|.
\]
Here, we used $2L-1=L+(L-1)\geq L$. We can write
\begin{align*}
    \E_i|\cZ_i|&\leq\E_i|N^{-1/2}\x_i^*B_i\u|^2+|(N^{-1/2}\u^*B_i^\top)\Sigma(N^{-1/2}B_i\u)|\\
    &\prec N^{-1}\|\Sigma^{1/2}B_i\u\|_2^2\prec N^{-1}N^{2\delta}\Lambda_h.
\end{align*}
Combining these estimates shows that $|\mathbf{B}|\prec c_iN^{\delta(L+3)}\Lambda_h\Phi^L$.

For $\mathbf{A}$, the product and chain rules give
\begin{align*}
    &\cL_i[\cY_i^{l_1}\cZ_i^{l_2}\cA_i^{l_3+l_4}]=\cZ_i^{l_2}\cA_i^{l_3+l_4}\cL_i\cY_i^{l_1}+\cY_i^{l_1}\cA_i^{l_3+l_4}\cL_i\cZ_i^{l_2}+\cY_i^{l_1}\cZ_i^{l_2}\cL_i\cA_i^{l_3+l_4}\\
    &+\cA_i^{l_3+l_4}\nabla(\cY_i^{l_1})^\top\Sigma\nabla(\cZ_i^{l_2})+\cZ_i^{l_2}\nabla(\cY_i^{l_1})^\top\Sigma\nabla(\cA_i^{l_3+l_4})+\cY_i^{l_1}\nabla(\cZ_i^{l_2})^\top\Sigma\nabla(\cA_i^{l_3+l_4}).
\end{align*}
Expanding once more with the chain rule and the displayed gradient identities gives
\begin{align*}
    &\mathbf{A}=\underbrace{(-l_1/2-l_2-l_3-l_4)\omega_i\E_i\cP_i^L\cY_i^{l_1}\cZ_i^{l_2}\cA_i^{l_3+l_4}}_{\mathbf{A}_1}\\
    &+\frac12l_1(l_1-1)\omega_i\E_i\cP_i^L\cY_i^{l_1-2}\cZ_i^{l_2}\cA_i^{l_3+l_4}(N^{-1/2}\u^*B_i^\top)\Sigma(N^{-1/2}B_i\u)\\
    &+2l_2(l_2-1)\omega_i\E_i\cP_i^L\cY_i^{l_1+2}\cZ_i^{l_2-2}\cA_i^{l_3+l_4}(N^{-1/2}\u^*B_i^\top)\Sigma(N^{-1/2}B_i\u)\\
    &+2(l_3+l_4)(l_3+l_4-1)\omega_i\E_i\cP_i^L\cY_i^{l_1}\cZ_i^{l_2}\cA_i^{l_3+l_4-2}\x_i^*(N^{-1}A_i)\Sigma(N^{-1}A_i)\x_i\\
    &+2l_1l_2\omega_i\E_i\cP_i^L\cY_i^{l_1}\cZ_i^{l_2-1}\cA_i^{l_3+l_4}(N^{-1/2}\u^*B_i^\top)\Sigma(N^{-1/2}B_i\u)\\
    &+2l_1(l_3+l_4)\omega_i\E_i\cP_i^L\cY_i^{l_1-1}\cZ_i^{l_2}\cA_i^{l_3+l_4-1}\x_i^*(N^{-1}A_i)\Sigma(N^{-1/2}B_i\u)\\
    &+4l_2(l_3+l_4)\omega_i\E_i\cP_i^L\cY_i^{l_1+1}\cZ_i^{l_2-1}\cA_i^{l_3+l_4-1}\x_i^*(N^{-1}A_i)\Sigma(N^{-1/2}B_i\u).
\end{align*}
The arguments for bounding the last six terms are similar to those for $\mathbf{B}-\mathbf{F}$. We therefore omit them and focus on bounding $\mathbf{A}_1$. If $l_3+l_4\geq 1$, then the naive bound gives
\[
    |\mathbf{A}_1|\prec\omega_i N^{\delta L}\Phi^{L+l_2}\prec c_i N^{\delta (L+2)}(\Lambda_h+1)\Phi^L.
\]
Here, as above, we used $\omega_i\prec(\u[i])^{2l_3}|\cC_i|^{l_4}\prec(\u[i]^2+N^{-1})N^{2\delta}(\Lambda_h+1)$. If $l_1\geq 2$, then the naive bound also gives
\[
    |\mathbf{A_1}|\prec\omega_i N^{\delta L}\Phi^{L+l_2}\prec c_i N^{\delta L}(\Lambda_h+1)\Phi^L.
\]
Here, we used $\omega_i\prec|\u[i]|^{l_1}\prec|\u[i]|^2$ and $1\leq \Lambda_h+1$. If $l_2\geq 2$, then we have
\begin{align*}
    |\mathbf{A}_1|\prec\omega_i N^{\delta L}\Phi^{L+l_2-2}\E_i|\cZ_i|\prec c_i N^{\delta(L+2)}(\Lambda_h+1)\Phi^L.
\end{align*}
Here, we used $l_2-2\geq 0$ and, as in the case of $\mathbf{B}$, the bound $\E_i|\cZ_i|\prec N^{-1}N^{2\delta}\Lambda_h$. It remains to consider the case where $l_1,l_2\leq 1$ and $l_3+l_4=0$.

If $l_1=l_2=1,l_3+l_4=0$, then, as in the case of $\mathbf{B}$,
\[
    |\mathbf{A}_1|\prec|\u[i]| N^{2\delta}\Phi^2\E_i|\cY_i|\prec N^{-1/2}|\u[i]|N^{\delta(L+1)}(\Lambda_h+1)\Phi^L.
\]
Here, we used $L=2$. If $l_1=1,l_2+l_3+l_4=0$, then $L=1$, and we can write
\[
    \cP_i=(\beta+N^{-1}\Tr\Sigma A_i)^{-1}-(\beta+N^{-1}\Tr\Sigma A_i)^{-1}\cP_i\cA_i.
\]
Hence,
\begin{align*}
    |\mathbf{A}_1|&\prec|\u[i]||\beta+N^{-1}\Tr\Sigma A_i|^{-1}|\E_i\cY_i|+|\u[i]||\beta+N^{-1}\Tr\Sigma A_i|^{-1}|\E_i\cP_i\cY_i\cA_i|\\
    &\prec |\u[i]|N^{2\delta}\Phi\E_i|\cY_i|\prec N^{-1/2}|\u[i]|N^{3\delta}(\Lambda_h+1)\Phi\leq c_i N^{\delta(L+2)}(\Lambda_h+1)\Phi^L.
\end{align*}
Here, we used $\E_i\cY_i=0$. Similarly, if $l_2=1,l_1+l_3+l_4=0$, then $L=1$, and we have
\begin{align*}
    |\mathbf{A}_1|&\prec |\beta+N^{-1}\Tr\Sigma A_i|^{-1}|\E_i\cZ_i|+|\beta+N^{-1}\Tr\Sigma A_i|^{-1}|\E_i\cP_i\cZ_i\cA_i|\\
    &\prec N^{2\delta}\Phi\E_i|\cZ_i|\prec N^{-1}N^{4\delta}\Lambda_h\Phi\leq c_i N^{\delta(L+3)}\Lambda_h\Phi^L.
\end{align*}
Here, we used $\E_i\cZ_i=0$. Combining all the cases above and using $\Lambda_h\leq\Lambda_h+1$ completes the proof.
\end{proof}

\begin{remark}\label{remk:complex-conjugate}
Since both $\u$ and $\x_i$ are real and $\cR$ is complex symmetric, the preceding lemma and its proof extend directly to the case in which some factors are replaced by their complex conjugates.
\end{remark}

We are now ready to prove the zag estimate.
\begin{proposition}[Zag estimate]\label{prop:full-block-zag}
Fix $h,t\geq 0$. Suppose that the hypotheses of Lemma \ref{lem:L_i-bounds} hold uniformly for all $0\leq s\leq t$ and all $i\in[N]$, with a deterministic control parameter $\Phi_1:\bD\rightarrow[N^{-10},1]$. Suppose further that there exists a deterministic $\Phi_2:\bD\rightarrow[N^{-10},1]$ satisfying $N^{10\delta}\Phi_1\leq\Phi_2$ such that, uniformly over all $z\in\bD$ and all deterministic unit vectors $\u\in\R^{N+n}$, we have
\[
    t(\Lambda_h+1)\leq CN^\delta,\quad |\u^*(\cR_{h,t}-M_h)\u|\prec \Phi_2.
\]
Here, $C>0$ is a constant. Then, uniformly over all $0\leq s\leq t$, all $z\in\bD$, and all deterministic unit vectors $\u\in\R^{N+n}$,
\[
    |\u^*(\cR_{h,s}-M_h)\u|\prec \Phi_2.
\]
\end{proposition}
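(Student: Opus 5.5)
We propose to run a moment-comparison argument backward in OU time. Fix $z\in\bD$, a deterministic unit $\u\in\R^{n+N}$, and a large integer $L$, and set
\[
F(s):=\E|\u^*(\cR_{h,s}-M_h)\u|^{2L},\qquad 0\le s\le t,
\]
where $X_s$ is the OU interpolation of Lemma~\ref{lem:OU-generator} (the source $(Z_h,\beta_h)$ is held fixed). By Lemma~\ref{lem:OU-generator}, $F'(s)=\sum_i\E\,\cL_i f(X_s)$ with $f=|Y|^{2L}$, $Y=\u^*(\cR_{h,s}-M_h)\u$. The plan is to show
\[
|F'(s)|\le N^{C\delta}(\Lambda_h+1)\Phi_1\,F(s)^{1-1/(2L)}+\text{(small)},
\]
up to $N^{\eps}$ factors. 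Integrating the $2L$-th root, $\frac{\de}{\de s}F^{1/(2L)}$, from $s$ to $t$ and using $t(\Lambda_h+1)\le CN^\delta$ then gives
\[
F(s)^{1/(2L)}\le F(t)^{1/(2L)}+N^{C\delta}\Phi_1 .
\]
The endpoint input $|Y_t|\prec\Phi_2$ together with polynomial moment bounds (from $\|\cR\|_\op\le C\eta^{-1}$) gives $F(t)^{1/(2L)}\le N^{\eps}\Phi_2$. The separation $N^{10\delta}\Phi_1\le\Phi_2$ absorbs the comparison error provided the total exponent $C\delta$ is at most $10\delta$. Markov's inequality and arbitrariness of $L$ then give $|Y_s|\prec\Phi_2$, and a union bound over a fine grid of $s$ with Lipschitz continuity in $s$ makes this uniform in $s$.

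The core step is the estimate of $\E\,\cL_i|Y|^{2L}$ for each $i$. We write, via Lemma~\ref{lem:schur-one-column}(b),
\[
Y=Y^{(i)}+\Delta_i,
\]
where $Y^{(i)}$ depends only on the minor, so $\cL_iY^{(i)}=0$ and $\nabla_{\x_i}Y^{(i)}=0$. The increment $\Delta_i$ is a rational expression in $\cP_i$, $\u[i]\cY_i$, $\cY_i^2=\cZ_i+\cC_i$, and $\u[i]^2$, with $\cP_i$ expanded around $(\beta+N^{-1}\Tr\Sigma A_i)^{-1}$ in powers of $\cA_i$ where needed. Expanding $|Y^{(i)}+\Delta_i|^{2L}$ binomially, and using the chain rule of Lemma~\ref{lem:product-and-chain-rule} with $\cL_i$ acting only on $\Delta_i,\bar\Delta_i$, we obtain terms of the form
\[
(Y^{(i)})^{a}(\bar Y^{(i)})^{b}\,\E_i\cL_i[\text{monomial in }\Delta_i,\bar\Delta_i],
\]
where the monomial has total degree $k=2L-a-b\ge1$. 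Each such monomial is a sum of the products $\cP_i^{k}(\u[i]\cY_i)^{l_1}\cZ_i^{l_2}(\u[i]^2\cA_i)^{l_3}(\cC_i\cA_i)^{l_4}$ (possibly with conjugates, by Remark~\ref{remk:complex-conjugate}). The non-centered pieces $\cC_i\cP_i$ and $\u[i]^2(\beta^{-1}-\cP_i)$ need separate care: differentiating $\cP_i$ produces an $\cA_i$ factor, which is why the $(\cC_i\cA_i)$ and $(\u[i]^2\cA_i)$ shapes appear. Lemma~\ref{lem:L_i-bounds} bounds each such term by $c_i(\Lambda_h+1)N^{\delta(k+4)}\Phi_1^{k}$. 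Summing over $i$ with $\sum_ic_i\le 1+N^{-1/2}\sum_i|\u[i]|+\sum_i\u[i]^2\le C$ (by Cauchy--Schwarz, since $\|\u\|=1$) gives
\[
|F'(s)|\lesssim\sum_{k\ge1}N^{C\delta k}(\Lambda_h+1)\Phi_1^k\,\E|Y^{(i)}|^{2L-k}.
\]
We then replace $Y^{(i)}$ by $Y$ using $|\Delta_i|\prec N^{C\delta}\Phi_1$, which follows from the hypotheses, and apply Hölder's inequality to get $\E|Y|^{2L-k}\le F^{1-k/(2L)}$. After taking $2L$-th roots, the $k\ge2$ terms integrate to a contribution of order $\Phi_1$ as well.

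\textbf{Expected obstacle.} The main difficulty is bookkeeping the $\delta$-exponents. Lemma~\ref{lem:L_i-bounds} loses $N^{\delta(k+4)}$, and the conditional expectation is only $\prec$-controlled: Lemma~\ref{lem:stochastic-domination-calculus}(c) converts it to a bound that holds with overwhelming probability, and the complement is handled by the polynomial moment bounds. To keep the loss per order of $\Phi_1$ bounded, we would group terms by powers, pairing $N^{\delta k}\Phi_1^k$ as $(N^{\delta}\Phi_1)^k$. The residual $N^{4\delta}$ together with $t(\Lambda_h+1)\le CN^{\delta}$ then yields $N^{5\delta}\Phi_1$, which stays below $\Phi_2$. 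A second subtlety is verifying the hypotheses of Lemma~\ref{lem:L_i-bounds} uniformly in $s$; this is assumed in the statement.
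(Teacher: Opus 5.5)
The overall plan is the paper's proof: differentiate $\E|Y(s)|^{2L}$ along the column OU flow (Lemma~\ref{lem:OU-generator}), split $Y$ into an $\x_i$-independent part plus a remainder via Lemma~\ref{lem:schur-one-column}(b), expand binomially so that $\cL_i$ only hits the remainder, apply Lemma~\ref{lem:L_i-bounds} with $\sum_i c_i\le C$, and close by integrating the $2L$-th root. What fails as you wrote it is the split itself.

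\textbf{The gap.} You keep $\u[i]^2(\beta^{-1}-\cP_i)$ (and $\cC_i\cP_i$) inside $\Delta_i$. You attribute the shapes $(\u[i]^2\cA_i)$ and $(\cC_i\cA_i)$ to differentiating $\cP_i$. But $\beta^{-1}-\cP_i$ is not small.
\begin{itemize}
\item The hypotheses only give $|\beta^{-1}-\cP_i|\prec N^{\delta}$, and the quantity is typically of order one. For example, at $h=0$ one expects $\cP_i\approx -w\widetilde m_0(z)$, so $\beta^{-1}-\cP_i\approx w^{-1}(1+z\widetilde m_0(z))$.
\item So for $\u$ concentrated on the lower-block coordinate $i$, $\Delta_i$ is of order one, and your claim $|\Delta_i|\prec N^{C\delta}\Phi_1$ is false.
\item The replacement of $\E|Y^{(i)}|^{2L-k}$ by $\E|Y|^{2L-k}$ therefore breaks down.
\item The monomials $(\u[i]^2(\beta^{-1}-\cP_i))^k$ are not of the form covered by Lemma~\ref{lem:L_i-bounds}. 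Applying $\cL_i$ to them gains only $O(1)$ powers of $\Phi_1$ (through $\cL_i\cP_i$), independently of $k$, rather than $\Phi_1^k$.
\item Consequently the degree-$k\ge 2$ terms cannot be absorbed, and the differential inequality does not close.
\end{itemize}
The derivatives of $\cP_i$ are not the source of the shapes; they are already handled inside the proof of Lemma~\ref{lem:L_i-bounds}, in terms $\mathbf B$ and $\mathbf C$.

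\textbf{The fix.} You do mention expanding $\cP_i$ around $\cB_i:=(\beta+N^{-1}\Tr\Sigma A_i)^{-1}$. The decisive point is that the zeroth-order term is $\x_i$-independent and must be moved into the minor part. Use the exact identity $\cP_i=\cB_i-\cB_i\cP_i\cA_i$, where $|\cB_i|\prec N^\delta$ by hypothesis, and set
\begin{gather*}
\widetilde Y=\u^*(\cR^{(i)}-M_h)\u+\beta^{-1}\u[i]^2-\u[i]^2\cB_i-\cB_i\cC_i,\\
Y-\widetilde Y=2\u[i]\cP_i\cY_i-\cP_i\cZ_i+\cB_i\cP_i\cC_i\cA_i+\u[i]^2\cB_i\cP_i\cA_i .
\end{gather*}
Now every factor $\cP_i$ in $Y-\widetilde Y$ carries exactly one of the four small shapes. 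Every monomial in the binomial expansion is then of the form in Lemma~\ref{lem:L_i-bounds}, times powers of $\cB_i$, and $|Y-\widetilde Y|\prec N^{2\delta}\Phi_1$. This is the decomposition the paper uses, and the shapes in that lemma are tailored to it.

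With this change the rest of your plan coincides with the paper's proof. Normalize by $H(s)=1+(N^{2\delta}\Phi_1)^{-2L}\E|Y(s)|^{2L}$, obtain a rate $N^{\eps+6\delta}(\Lambda_h+1)$ for $H^{1/(2L)}$, and the resulting error $N^{\eps+9\delta}\Phi_1\le N^{\eps-\delta}\Phi_2$ is absorbed. This count includes the $|\cB_i|\prec N^\delta$ factors you omitted.

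Your final grid-and-Lipschitz step is unnecessary. Uniformity in $s$ here is in the $\sup_s\P$ sense of Definition~\ref{def:prec}. Moreover, $s\mapsto\sqrt{1-e^{-s}}$ is not Lipschitz at $0$.
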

\begin{proof}
Let $\cB_i:=(\beta+N^{-1}\Tr\Sigma A_i)^{-1}$. By the minor resolvent identity, we have
\begin{align*}
    Y(s)&=\u^*(\cR_{h,s}-M_h)\u=\u^*(\cR_{h,s}^{(i)}-M_h)\u+\beta^{-1}(\u[i])^2-\frac{(\u[i])^2}{\beta+N^{-1}\x_i^*A_i\x_i}\\
    &-\frac{(N^{-1/2}\x_i^*B_i\u)^2}{\beta+N^{-1}\x_i^*A_i\x_i}+2\u[i]\frac{N^{-1/2}\x_i^*B_i\u}{\beta+N^{-1}\x_i^*A_i\x_i}\\
    &=\underbrace{\u^*(\cR_{h,s}^{(i)}-M_h)\u+\beta^{-1}(\u[i])^2-(\u[i])^2\cB_i-\cB_i\cC_i}_{\widetilde Y(s)}\\
    &+2\u[i]\cP_i\cY_i-\cP_i\cZ_i+\cB_i\cP_i\cC_i\cA_i+\cB_i(\u[i])^2\cP_i\cA_i.
\end{align*}
It follows that $\widetilde Y(s)$ is independent of $\x_i$ and, by the hypotheses of the proposition, that $|Y(s)-\widetilde Y(s)|\prec N^{2\delta}\Phi_1$.

To obtain high-probability estimates via Markov's inequality, fix a large $L>0$ and consider the $2L$-th moment of $Y(s)$. By Lemma \ref{lem:OU-generator},
\begin{align*}
    &\left|\frac{\de}{\de s}\E|Y(s)|^{2L}\right|=\left|\frac{\de}{\de s}\E(Y(s))^L(\overline{Y(s)})^L\right|\\
    &=\left|\sum_{i=1}^N\E\cL_i(\widetilde Y(s)+(Y(s)-\widetilde Y(s)))^L(\overline{\widetilde Y(s)}+(\overline{Y(s)}-\overline{\widetilde Y(s)}))^L\right|\\
    &=\left|\sum_{i=1}^N\sum_{a,a'=0}^L\E\cL_i\widetilde Y(s)^{L-a}\overline{\widetilde Y(s)}^{L-a'}(Y(s)-\widetilde Y(s))^a(\overline{Y(s)}-\overline{\widetilde Y(s)})^{a'}\right|\\
    &\leq C_L\sum_{i=1}^N\sum_{a,a'=0}^L\sum_{\substack{l_1+l_2+l_3+l_4=a\\l_1'+l_2'+l_3'+l_4'=a'}}\E|\widetilde Y(s)|^{2L-a-a'}|\cB_i|^{l_3+l_4+l_3'+l_4'}\\
    &\times|\E_i\cL_i\cP_i^{a}\overline{\cP_i}^{a'}(\u[i]\cY_i)^{l_1}(\u[i]\overline{\cY_i})^{l_1'}\cZ_i^{l_2}\overline{\cZ_i}^{l_2'}(\cC_i\cA_i)^{l_3}(\overline{\cC_i\cA_i})^{l_3'}(\u[i]^2\cA_i)^{l_4}(\u[i]^2\overline{\cA_i})^{l_4'}|\\
    &\prec N^{4\delta}(\Lambda_h+1)\sum_{i=1}^Nc_i\sum_{1\leq a+a'\leq 2L}N^{2\delta(a+a')}\Phi_1^{a+a'}\E|\widetilde Y(s)|^{2L-a-a'}\\
    &\prec N^{4\delta}(\Lambda_h+1)\sum_{l=1}^{2L}N^{2\delta l}\Phi_1^{l}\E|\widetilde Y(s)|^{2L-l}\prec N^{6\delta}(\Lambda_h+1)\sum_{l=1}^{2L}(N^{2\delta}\Phi_1)^{l}\E|Y(s)|^{2L-l}.
\end{align*}
Here we used that the summand with $a+a'=0$ vanishes under $\cL_i$, together with Lemma \ref{lem:L_i-bounds}, Remark \ref{remk:complex-conjugate}, the summability of $c_i$, the bound $|\cB_i|\prec N^\delta$, and, in the last step, the bound $|Y(s)-\widetilde Y(s)|\prec N^{2\delta}\Phi_1$.

We now complete the proof using a Gr\"onwall-type argument. Let $H(s)=1+(N^{2\delta}\Phi_1)^{-2L}\E|Y(s)|^{2L}$, so that $H(s)\geq 1$ for all $s\ge 0$. Multiplying both sides of the preceding inequality by $(N^{2\delta}\Phi_1)^{-2L}$ and applying the inequality
\[
    (\E|Y(s)|^{2L-l})^{1/(2L-l)}\leq (\E|Y(s)|^{2L})^{1/(2L)}
\]
gives for any $\eps>0$
\begin{align*}
    |H'(s)|&\leq N^\eps N^{6\delta}(\Lambda_h+1)\sum_{l=1}^{2L}[(N^{2\delta}\Phi_1)^{-2L}\E|Y(s)|^{2L}]^{(2L-l)/(2L)}\\
    &\leq N^\eps N^{6\delta}(\Lambda_h+1)\sum_{l=1}^{2L}H(s)^{(2L-l)/(2L)}\leq N^\eps N^{6\delta}(\Lambda_h+1) H(s)^{(2L-1)/(2L)}.
\end{align*}
On the other hand,
\[
    \left|\frac{\de}{\de s}H(s)^{1/(2L)}\right|=\frac{1}{2L}|H(s)^{-(2L-1)/(2L)}H'(s)|\leq C_LN^{6\delta}(\Lambda_h+1).
\]
It follows that, for any $0\leq s\leq t$,
\[
    H(s)^{1/(2L)}\leq H(t)^{1/(2L)}+\int_{s}^t\left|\frac{\de}{\de s}H(s)^{1/(2L)}\right|\de s\leq H(t)^{1/(2L)}+N^\eps N^{6\delta}t(\Lambda_h+1).
\]
The hypothesis of the proposition gives
\begin{align*}
    N^{2\delta}\Phi_1H(t)^{1/(2L)}&\leq((N^{2\delta}\Phi_1)^{2L}+\E|Y(t)|^{2L})^{1/(2L)}\\
    &\leq ((N^{2\delta}\Phi_1)^{2L}+(N^{\eps}\Phi_2)^{2L})^{1/(2L)}\leq CN^{\eps}\Phi_2.
\end{align*}
Combining this with the hypothesis $t(\Lambda_h+1)\leq CN^\delta$ gives
\begin{align*}
    N^{2\delta}\Phi_1H(s)^{1/(2L)}&=((N^{2\delta}\Phi_1)^{2L}+\E|Y(s)|^{2L})^{1/(2L)}\leq CN^{\eps}\Phi_2 + N^\eps N^{9\delta}\Phi_1\leq CN^{\eps}\Phi_2\\
    \implies&\E|Y(s)|^{2L}\leq (N^{2\delta}\Phi_1)^{2L}+C(N^{\eps}\Phi_2)^{2L}\leq C(N^{\eps}\Phi_2)^{2L}.
\end{align*}
Since $\eps$ was arbitrary, the claim follows from Markov's inequality.
\end{proof}

\section{Proof of Theorem \ref{thm:generalized-local-law}}\label{sec:proof-main}

We now prove Theorem \ref{thm:generalized-local-law} by implementing the spectral-scale bootstrap described in Section \ref{subsec:proof-outline}, with multiplicative increments of $N^{-\delta}$. This approach is similar to those of \cite{fan2026anisotropic,knowles2017anisotropic}. We begin by showing that, for fixed $h,t\geq 0$, the relevant quadratic forms of both $\cR_{h,t}(z)$ and $M_h(z)$ are Herglotz functions.

\begin{lemma}\label{lem:Herglotz-function}
For any $h,t\geq 0$, any unit vector $\u\in\R^{n+N}$, and any $z\in\C^+$, define $s_1(z):=\u^*\cR_{h,t}(z)\u$ and $s_2(z)=\u^*M_h(z)\u$ using the principal square root $w=\sqrt{z}\in\C^+$. Then, for $i=1,2$, $s_i$ is holomorphic from $\C^+$ to $\C^+$ and is therefore a Herglotz function. Moreover, the following statements hold:
\begin{enumerate}[label=(\alph*)]
    \item For any $E\in\R$ and $0<\eta_1<\eta_2$,
    \[
        \frac{\eta_1}{\eta_2}\Im s_i(E+i\eta_2)\leq\Im s_i(E+i\eta_1)\leq \frac{\eta_2}{\eta_1}\Im s_i(E+i\eta_2).
    \]
    \item For any $E\in\R$ and $0<\eta_1<\eta_2$,
    \[
        |s_i(E+i\eta_1)-s_i(E+i\eta_2)|\leq \frac{|\eta_2-\eta_1|}{\eta_1}\Im s_i(E+i\eta_2).
    \]
\end{enumerate}
\end{lemma}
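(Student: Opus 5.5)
The plan is to prove the Herglotz property directly from the explicit formulas, and then derive (a) and (b) from the Nevanlinna integral representation of Herglotz functions.

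\emph{Holomorphy.} The principal square root $z\mapsto w=\sqrt z$ is holomorphic from $\C^+$ into $\C^+$, and $z\mapsto\widetilde m_0(z)$ is holomorphic on $\C^+$. By \eqref{eq:overview-characteristic}, the source $\cD_h=\diag(Z_h,\beta_hI_N)$ and the matrix $M_h=e^{-h/2}M_*$ are therefore holomorphic in $z$ wherever the inverses defining them exist. The same then holds for $\cR_{h,t}=(-\cD_h+\cX_t)^{-1}$. So both $s_1$ and $s_2$ are holomorphic on $\C^+$ once we know the relevant matrices are invertible there, which follows from the positivity below.

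\emph{Positivity for $s_1$.} I will show that $\Im\cD_h>0$ for every $z\in\C^+$, not only on regular domains. The positivity part of the computation in Lemma~\ref{lem:basic-estimates}(a) carries over, using only $\lambda\ge0$ on $\supp\widetilde\mu_0$ and $\Im w>0$:
\[
\Im(w\widetilde m_0)=\int\frac{(\lambda+|w|^2)\Im w}{|\lambda-z|^2}\,d\widetilde\mu_0(\lambda)>0.
\]
This gives
\[
\Im Z_h=e^{h/2}\Im w\,I_n+(e^{h/2}-e^{-h/2})\Im(w\widetilde m_0)\Sigma>0 .
\]
Next, rewrite $\beta_h$ via \eqref{eq:MP} as in that proof. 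Each eigenvalue contribution $\sigma(\Im w+\sigma\Im(w\widetilde m_0))/|w+w\sigma\widetilde m_0|^2$ is nonnegative, so $\Im\beta_h\ge e^{h/2}\Im w>0$. Since $\Im(-\cL)=\Im\cD_h>0$, the pencil $\cL$ is invertible. The identity
\[
\cR-\cR^*=\cR(\cL^*-\cL)\cR^*=2i\,\cR\,\Im\cD_h\,\cR^*
\]
gives $\Im\cR_{h,t}=\cR\,\Im\cD_h\,\cR^*>0$. Because $\u$ is real and nonzero, $\Im s_1=\u^*\Im\cR_{h,t}\u>0$.

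\emph{Positivity for $s_2$.} The lower block of $M_*$ is $w\widetilde m_0$, whose imaginary part is positive by the display above. The upper block $(-w-w\widetilde m_0\Sigma)^{-1}$ is diagonal in the eigenbasis of $\Sigma$ with entries $-1/a_\sigma$, where $a_\sigma=w(1+\sigma\widetilde m_0)$. Since $\Im(-1/a_\sigma)=\Im a_\sigma/|a_\sigma|^2$ and $\Im a_\sigma=\Im w+\sigma\Im(w\widetilde m_0)>0$, we get $\Im M_h=e^{-h/2}\Im M_*>0$, and hence $\Im s_2>0$ (for the unit vectors $\u$ under consideration).

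\emph{Properties (a) and (b).} By the Nevanlinna representation,
\[
s_i(z)=a+bz+\int\Big(\frac1{\lambda-z}-\frac{\lambda}{1+\lambda^2}\Big)d\mu(\lambda),\qquad a\in\R,\ b\ge0,\ \mu\ge0 .
\]
Write $z_j=E+i\eta_j$ and $x=\lambda-E$. Then
\[
\Im s_i(z_j)=b\eta_j+\int\frac{\eta_j}{x^2+\eta_j^2}\,d\mu(\lambda).
\]
Part (a) follows termwise. For the lower bound,
\[
\frac{\eta_1}{x^2+\eta_1^2}\ \ge\ \frac{\eta_1}{x^2+\eta_2^2}=\frac{\eta_1}{\eta_2}\cdot\frac{\eta_2}{x^2+\eta_2^2}.
\]
For the upper bound, $\eta_1^2(x^2+\eta_2^2)\le\eta_2^2(x^2+\eta_1^2)$. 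The linear term $b\eta_j$ is handled trivially in both cases.

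For part (b), the affine constant $a$ cancels and
\[
s_i(z_1)-s_i(z_2)=(z_1-z_2)\Big(b+\int\frac{d\mu(\lambda)}{(\lambda-z_1)(\lambda-z_2)}\Big),
\]
with $|z_1-z_2|=\eta_2-\eta_1$. Using $|\lambda-z_1|\ge(\eta_1/\eta_2)|\lambda-z_2|$, we bound
\[
\int\frac{d\mu(\lambda)}{|\lambda-z_1||\lambda-z_2|}\le\frac{\eta_2}{\eta_1}\int\frac{d\mu(\lambda)}{|\lambda-z_2|^2}.
\]
Together with $b\le b\eta_2/\eta_1$, this gives
\[
|s_i(z_1)-s_i(z_2)|\le\frac{\eta_2-\eta_1}{\eta_1}\Big(b\eta_2+\eta_2\int\frac{d\mu(\lambda)}{|\lambda-z_2|^2}\Big)=\frac{\eta_2-\eta_1}{\eta_1}\Im s_i(z_2).
\]

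The only step requiring care is the strict positivity of $\Im\cD_h$ and of $\Im M_*$ on all of $\C^+$. The paper's earlier computation is stated on regular domains, so it has to be redone using only the signs $\lambda\ge0$ and $\Im w>0$.
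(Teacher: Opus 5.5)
Your proof is correct and follows essentially the same route as the paper. You show $\Im\cD_h>0$ (and $\Im M_h>0$) on all of $\C^+$ using only $\lambda\ge0$ on $\supp\widetilde\mu_0$ and $\Im w>0$, then read (a) and (b) off the Nevanlinna representation. The differences are minor improvements: you carry the linear coefficient $b\ge0$ through the inequalities instead of computing $\lim_{\eta\to\infty}s_1(i\eta)/(i\eta)=0$ as the paper does, you get (b) from the pointwise bound $|\lambda-z_1|\ge(\eta_1/\eta_2)|\lambda-z_2|$ rather than Cauchy--Schwarz plus (a), and you write out the $s_2$ case that the paper omits.
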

\begin{proof}
We prove the claims for $s_1(z)=\u^*\cR_{h,t}(z)\u$, with the principal square root $w=\sqrt{z}\in\C^+$. The proof for $s_2(z)=\u^*M_h(z)\u$ is analogous and simpler, so we omit it.

Recall $s_1(z)=\u^*(\cX_t-\cD_h(z))^{-1}\u$, where $\cD_h(z)=\diag(Z_h(z),\beta_h(z)I_N)$ and
\begin{align*}
    Z_h(z)&=e^{h/2}\sqrt{z}I_n
    +(e^{h/2}-e^{-h/2})\sqrt{z}\widetilde m_0(z)\Sigma,\\
    \beta_h(z)&=e^{h/2}\sqrt{z}
    +(e^{h/2}-e^{-h/2})(-\sqrt{z}-(\sqrt{z}\widetilde m_0(z))^{-1}).
\end{align*}
The function $\widetilde m_0(z)$ is well-defined and holomorphic on all of $\C^+$. Moreover,
\[
    \Im\cD_h(z)=\diag(\Im Z_h(z),\Im\beta_h(z)I_N),
\]
where
\begin{align*}
    \Im Z_h(z)&=e^{h/2}\Im \sqrt{z}I_n+(e^{h/2}-e^{-h/2})\Im \sqrt{z}\widetilde m_0(z)\Sigma\\
    \Im\beta_h(z)&=e^{h/2}\Im \sqrt{z}+(e^{h/2}-e^{-h/2})(-\Im \sqrt{z}+\Im \sqrt{z}\widetilde m_0(z)/|\sqrt{z}\widetilde m_0(z)|^2)\\
    &=e^{-h/2}\Im \sqrt{z}+(e^{h/2}-e^{-h/2})\Im \sqrt{z}\widetilde m_0(z)/|\sqrt{z}\widetilde m_0(z)|^2.
\end{align*}
As in the proof of Lemma \ref{lem:basic-estimates},
\[
    \Im \sqrt{z}\widetilde m_0(z)=\int\frac{\Im \sqrt{z}(\lambda-\bar z)}{|\lambda-z|^2}\;\de\widetilde\mu_0(\lambda)=\int\frac{\lambda\Im \sqrt{z}+|z|\Im \sqrt{z}}{|\lambda-z|^2}\;\de\widetilde\mu_0(\lambda)\geq \frac{|z|\Im \sqrt{z}}{(C+|z|)^2}>0.
\]
Thus, $\Im\cD_h(z)>0$, so $\cR_{h,t}(z)$ is also well-defined and analytic. The same lower bound shows that $\Im s_1(z)=\u^*\cR_{h,t}\Im\cD_h\cR_{h,t}^*\u>0$. Hence, $s_1$ is a Herglotz function and admits the integral representation
\[
    s_1(z)=c+dz+\int\left(\frac{1}{\lambda-z}-\frac{\lambda}{1+\lambda^2}\right)\;\de\nu(\lambda),\quad \int\frac{1}{1+\lambda^2}\;\de\nu(\lambda)<\infty.
\]
Here, $c\in\R$, $d\geq0$, and $\nu$ is a Borel measure on $\R$. The coefficient $d$ is given by $d=\lim_{\eta\rightarrow\infty}s_1(i\eta)/(i\eta)$. We have
\begin{align*}
    Z_h(i\eta)&=\sqrt{i\eta}(e^{h/2}I_n+(e^{h/2}-e^{-h/2})\widetilde m_0(i\eta)\Sigma)\\
    \beta_h(i\eta)&=\sqrt{i\eta}(e^{h/2}+(e^{h/2}-e^{-h/2})(-1-(i\eta\widetilde m_0(i\eta))^{-1})).
\end{align*}
Using $\lim_{\eta\rightarrow\infty}\widetilde m_0(i\eta)=0$ and $\lim_{\eta\rightarrow\infty}i\eta\widetilde m_0(i\eta)=-1$, we deduce
\[
    \lim_{\eta\rightarrow\infty}\cD_h/\sqrt{i\eta}=\lim_{\eta\rightarrow\infty}\diag(Z_h(i\eta)/\sqrt{i\eta},\beta_h(i\eta)/\sqrt{i\eta}I_N)=e^{h/2}I_{N+n}.
\]
This limit is nonzero for all $h\geq 0$. Thus,
\[
    d=\lim_{\eta\rightarrow\infty}s_1(i\eta)/(i\eta)=\lim_{\eta\rightarrow\infty}\u^*(\cX_t/\sqrt{i\eta}-\cD_h/\sqrt{i\eta})^{-1}\u/(i\eta)^{3/2}=0\cdot (-e^{-h/2})=0.
\]
To prove (a), observe that, for $z=E+i\eta\in\C^+$,
\[
    \Im s_1(z)=\int\frac{\eta}{(\lambda-E)^2+\eta^2}\;\de\nu(\lambda).
\]
Thus, (a) follows from the fact that, for any $E\in\R$, the function $\eta\rightarrow \eta^2/((\lambda-E)^2+\eta^2)$ is increasing and the function $\eta\rightarrow 1/((\lambda-E)^2+\eta^2)$ is decreasing.

For (b), we can write
\begin{align*}
    |s_1(E+i\eta_1)-s_1(E+i\eta_2)|&\leq |\eta_2-\eta_1|\int\frac{1}{|\lambda-(E+i\eta_1)||\lambda-(E+i\eta_2)|}\;\de\nu(\lambda)\\
    &\leq|\eta_2-\eta_1|\left(\frac{\Im s_1(E+i\eta_1)\Im s_1(E+i\eta_2)}{\eta_1\eta_2}\right)^{1/2}.
\end{align*}
Here, the last step follows from the Cauchy--Schwarz inequality. Applying (a) completes the proof.
\end{proof}

Fix $C_0>100$ and choose $\delta\in(0,\tau/(10C_0))$. Following the approach of \cite{fan2026anisotropic,knowles2017anisotropic}, we bootstrap on the spectral scale in multiplicative increments of $N^{-\delta}$. Let $\bD$ be a regular domain. We first prove Theorem \ref{thm:generalized-local-law} on the smaller domain $\hat\bD:=\{z\in\bD:N\eta\Im\widetilde m_0(z)\geq N^{3C_0\delta}\}$. For any $E\in\R$, the function $\eta\rightarrow\eta\Im\widetilde m_0(E+i\eta)$ is increasing; therefore, this smaller domain is also regular.

The following lemma provides the minor estimates used throughout the zag step.
\begin{lemma}\label{lem:minor-estimates}
Under Assumptions \ref{ass:A1} and \ref{ass:A2}, let $\bD$ be a regular domain and fix $h,t\geq 0$. Suppose that, uniformly over all $z\in\bD$ and all $i\in[N]$, we have
\[
    |\beta+N^{-1}\x_i^*A_i\x_i|^{-1}\prec N^\delta.
\]
Then, uniformly over all $z\in\bD$ satisfying $N\eta\Im\widetilde m_0(z)\geq N^{3C_0\delta}$, all deterministic unit vectors $\u\in\R^{n+N}$, and all $i\in[N]$,
\begin{align*}
    &N^{-1/2}\|B_i\u\|_2,N^{-1}\|A_i\|_F\prec\sqrt{\frac{e^{-h/2}\u^*\Im\cR\u}{N\eta}}+\left(N^\delta\Psi_h\sqrt{\frac{e^{-h/2}\u^*\Im\cR\u}{N\eta}}\right)^{1/2}\\
    &+\max_{j=1}^{n+N}\sqrt{\frac{e^{-h/2}\e_j^*\Im\cR\e_j}{N\eta}}+\max_{j=1}^{n+N}\left(N^\delta\Psi_h\sqrt{\frac{e^{-h/2}\e_j^*\Im\cR\e_j}{N\eta}}\right)^{1/2}+N^{\delta/2}\Psi_h,
\end{align*}
\begin{align*}
&\|\Sigma^{1/2}B_i\u\|_2,N^{-1/2}\|\Sigma^{1/2}A_i\|_F\prec\sqrt{\frac{e^{-h/2}\u^*\Im\cR\u}{\eta_h}}+\left(\sqrt{\Lambda_h}\sqrt{\frac{e^{-h/2}\u^*\Im\cR\u}{\eta_h}}\right)^{1/2}\\
&+\max_{j=1}^{n+N}\sqrt{\frac{e^{-h/2}\e_j^*\Im\cR\e_j}{\eta_h}}+\max_{j=1}^{n+N}\left(\sqrt{\Lambda_h}\sqrt{\frac{e^{-h/2}\e_j^*\Im\cR\e_j}{\eta_h}}\right)^{1/2}+\sqrt{\Lambda_h}.
\end{align*}
Here, $\eta_h:=\eta+(1-e^{-h})\Im\widetilde m_0(z)$.
\end{lemma}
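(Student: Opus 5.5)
Our plan is to express the minor quantities $B_i\u$ and $A_i$ through the full resolvent $\cR=\cR_{h,t}$ using the resolvent identities of Lemma~\ref{lem:schur-one-column}. We then bound the resulting full-resolvent vectors with the Ward-type inequalities of Lemma~\ref{lem:basic-estimates}(c).

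The first step is to write the rank-one relation between $\cR$ and $\cR^{(i)}$. Since $\cL(X)-\cL(X^{(i)})=N^{-1/2}(\e_{n+i}\x_i^{*}+\x_i\e_{n+i}^*)$, with $\x_i$ padded by zeros, the second resolvent identity gives $\cR^{(i)}=\cR+\cR\,\Delta_i\,\cR^{(i)}$, where $\Delta_i$ has rank two. Inverting this via the Schur complement, or reading it off from Lemma~\ref{lem:schur-one-column}(b) and its polarized version, expresses $B_i\u$ as the first block row of $\cR^{(i)}\u$. Concretely, $B_i\u$ equals the upper block of $\cR\u$ plus correction terms. Each correction is a multiple of $A_i\x_i$ (equivalently of $\cR\e_{n+i}$ up to the factor $\cP_i$), with coefficients involving $\cP_i$, $\u[i]$ and $N^{-1/2}\x_i^*B_i\u$.

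The key observation is that, by Lemma~\ref{lem:schur-one-column}(a) applied columnwise, $N^{-1/2}A_i\x_i=-\cP_i^{-1}[\cR]_{1,n+i}$, i.e.\ it is $\cP_i^{-1}$ times a column of $\cR$. The denominator hypothesis $|\cP_i|\prec N^\delta$ and the trivial bound $|\cP_i^{-1}|\le C$, which holds since $|\beta|$ and $N^{-1}\|A_i\|\,\|\x_i\|^2$ are controlled, then give
\[
N^{-1/2}\|B_i\u\|_2\prec N^{-1/2}\|\cR\u\|_2+N^{-1/2}\|\cR\e_{n+i}\|_2\,(1+|N^{-1/2}\x_i^*B_i\u|).
\]
The same estimates hold with $\cW^{1/2}$ inserted. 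The scalar $N^{-1/2}\x_i^*B_i\u$ is read from the off-diagonal entry $\e_{n+i}^*\cR\u$ through the identity $\e_{n+i}^*\cR\u=\cP_i(\ldots)$, so it is bounded by $N^\delta|\e_{n+i}^*\cR\u|$. Polarization then gives $|\e_j^*\cR\u|\le |\u^*(\cR-M_h)\u|+|\e_j^*(\cR-M_h)\e_j|+\ldots$. We expect this to produce the geometric-mean terms $(N^\delta\Psi_h\sqrt{\cdot})^{1/2}$ in the statement, since a cross term is controlled by the product of $\|\cR\u\|$ and $\|\cR\e_j\|$.

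We then apply Lemma~\ref{lem:basic-estimates}(c) to get $\|\cR\u\|_2^2\le Ce^{-h/2}\u^*\Im\cR\u/\eta$ and $\|\cW^{1/2}\cR\u\|_2^2\le Ce^{-h/2}\u^*\Im\cR\u/\eta_h$, and similarly for $\e_j$. For $A_i$ we write $N^{-2}\|A_i\|_F^2=N^{-2}\sum_\alpha\|A_i\e_\alpha\|^2$, which reduces the Frobenius bound to a maximum over $j$ of the vector bounds, giving the $\max_j$ terms. The $\Sigma$-weighted version goes the same way with $\eta_h$ and $\Lambda_h$ in place of $N\eta$ and $\Psi_h$. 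Here $N^{\delta/2}\Psi_h$ and $\sqrt{\Lambda_h}$ appear through $e^{-h}\Im\widetilde m_0/(N\eta)\le\Psi_h^2$ and the definition of $\Lambda_h$, and these absorb the deterministic $M_h$ parts.

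We expect the main obstacle to be the coefficient $N^{-1/2}\x_i^*B_i\u$ multiplying $\cR\e_{n+i}$. A naive bound by Assumption~\ref{ass:A2} gives $\|B_i\u\|$ again, which is circular. We would handle it by solving the Schur relation for that scalar in terms of entries of the full $\cR$, as above. If circularity remains, we would instead treat it as a self-consistent inequality $x\le a+bx^{1/2}$, whose solution yields exactly the square-root terms of the statement.
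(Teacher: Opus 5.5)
Your route (write $B_i\u$ through the full resolvent via the rank-two update, then apply the Ward bounds of Lemma~\ref{lem:basic-estimates}(c) to $\cR$) is viable, but the step you flag as the main obstacle is handled incorrectly.

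Carry out your update exactly, with $\mathbf b=(N^{-1/2}\x_i,0)$, $\cR^{(i)}\e_{n+i}=-\beta^{-1}\e_{n+i}$ and $\cR\mathbf b=\e_{n+i}+\beta\cR\e_{n+i}$. Writing $\u[i]$ for the $(n+i)$-th coordinate, this gives
\[
B_i\u=[\cR\u]_{1:n}+\bigl(N^{-1/2}\x_i^*B_i\u-\u[i]\bigr)[\cR\e_{n+i}]_{1:n},\qquad
\e_{n+i}^*\cR\u=\cP_i\bigl(N^{-1/2}\x_i^*B_i\u-\u[i]\bigr).
\]
So your displayed vector inequality holds, because the factors $\cP_i^{\pm1}$ cancel exactly. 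The ``trivial bound'' $|\cP_i^{-1}|\le C$ is therefore not needed there, which is fortunate, since it is false. A priori $N^{-1}\|A_i\|_\op\|\x_i\|_2^2$ is only $O_\prec(e^{-h/2}\eta^{-1})$. Even the deterministic prediction is $|\cP_i^{-1}|\approx e^{h/2}|w\widetilde m_0(z)|^{-1}$, which is unbounded for the values $h\sim\log N$ used along the lattice.

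The real problem is the scalar. The second identity gives $N^{-1/2}\x_i^*B_i\u=\u[i]+\cP_i^{-1}\e_{n+i}^*\cR\u$. Controlling this by $|\e_{n+i}^*\cR\u|$ therefore requires an upper bound on $|\cP_i^{-1}|$, i.e.\ a lower bound on $|\e_{n+i}^*\cR\e_{n+i}|$. The hypothesis $|\cP_i|\prec N^\delta$ points the other way, so your bound $N^\delta|\e_{n+i}^*\cR\u|$ does not follow, and nothing else in the lemma supplies it. The polarization step is also off target: it brings in $\u^*(\cR-M_h)\u$ and $\e_j^*(\cR-M_h)\e_j$. The lemma assumes nothing about these, and they do not appear in its conclusion.

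The missing observation is that the Assumption~\ref{ass:A2} bound you dismissed as circular actually closes, because the circular term carries a small prefactor. Conditionally on the other columns, $|N^{-1/2}\x_i^*B_i\u|\prec N^{-1/2}\|B_i\u\|_2$. Meanwhile Lemma~\ref{lem:basic-estimates}(c), together with $\e_{n+i}^*\Im\cR\e_{n+i}\le|\cP_i|\prec N^\delta$, gives
\[
N^{-1/2}\|\cR\e_{n+i}\|_2\prec \bigl(N^{\delta}e^{-h/2}/(N\eta)\bigr)^{1/2}\le N^{(\delta-\tau)/2}.
\]
The self-consistent inequality is therefore linear with an $o(1)$ coefficient, not of the form $x\le a+bx^{1/2}$. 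Absorbing it yields
\[
N^{-1/2}\|B_i\u\|_2\prec N^{-1/2}\|\cR\u\|_2+N^{-1/2}\|\cR\e_{n+i}\|_2,
\]
which by Ward is stronger than the stated bound. The $\Sigma$-weighted version follows from the same identity, using this estimate for the scalar. The $A_i$ bounds follow by taking $\u=\e_\alpha$, $\alpha\le n$, with a union bound.

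Patched this way, your argument differs from the paper's. The paper applies Ward to the minor, $N^{-1}\|B_i\u\|_2^2\lesssim e^{-h/2}\u^{(i)*}\Im\cR^{(i)}\u^{(i)}/(N\eta)$. It then transfers $\Im\cR^{(i)}$ back to $\Im\cR$ through the scalar identity of Lemma~\ref{lem:schur-one-column}(b) and a quadratic inequality. That inequality is the source of the geometric-mean terms and the additive $N^{\delta/2}\Psi_h$ and $\sqrt{\Lambda_h}$ terms in the statement. Your vector identity avoids it altogether.
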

\begin{proof}
Let $\u^{(i)}\in\R^{n+N}$ denote the vector obtained by setting the $i$-th entry of $\u$ to $0$. Since the $i$-th column of $B_i$ is zero, we have $\|B_i\u\|_2=\|B_i\u^{(i)}\|_2$. Thus, using $\u^{(i)*}B_i^*B_i\u^{(i)}\leq \u^{(i)*}\cR^{(i)*}\cR^{(i)}\u^{(i)}$, the argument from the proof of Lemma \ref{lem:ward-estimate} shows
\[
    N^{-1/2}\|B_i\u\|_2=N^{-1/2}\|B_i\u^{(i)}\|_2\prec\sqrt{\frac{e^{-h/2}\u^{(i)*}\Im\cR^{(i)}\u^{(i)}}{N\eta}}.
\]
The resolvent identity (\ref{eq:minor-identity}) and the hypothesis of the lemma therefore give
\begin{align*}
    |\u^{(i)*}\cR\u^{(i)}-\u^{(i)*}\cR^{(i)}\u^{(i)}|&\prec N^\delta\frac{e^{-h/2}\u^{(i)*}\Im\cR^{(i)}\u^{(i)}}{N\eta}\\
    &\prec N^\delta\frac{e^{-h/2}\u^{(i)*}\Im\cR\u^{(i)}}{N\eta}+N^\delta\frac{e^{-h/2}|\u^{(i)*}\cR\u^{(i)}-\u^{(i)*}\cR^{(i)}\u^{(i)}|}{N\eta}.
\end{align*}
Since $e^{-h/2}(N\eta)^{-1}\leq (N\eta)^{-1}\leq N^{-\tau}$, rearranging shows
\[
    |\u^{(i)*}\cR\u^{(i)}-\u^{(i)*}\cR^{(i)}\u^{(i)}|\prec N^\delta\frac{e^{-h/2}\u^{(i)*}\Im\cR\u^{(i)}}{N\eta}.
\]
It follows that
\begin{equation}\label{eq:leave-out-u-i}
    N^{-1/2}\|B_i\u\|_2\prec\sqrt{\frac{e^{-h/2}\u^{(i)*}\Im\cR\u^{(i)}}{N\eta}}.
\end{equation}
On the other hand, Schur's complement gives
\[
    \u^*\cR\u=\u^{(i)*}\cR\u^{(i)}+2\u[i]\frac{N^{-1/2}\x_i^*B_i\u}{\beta+N^{-1}\x_i^*A_i\x_i}-\frac{(\u[i])^2}{\beta+N^{-1}\x_i^*A_i\x_i}.
\]
Recall $\e_i^*\cR\e_i=-(\beta+N^{-1}\x_i^* A_i\x_i)^{-1}$. Taking the imaginary part of the preceding equation and rearranging gives
\[
    \u^{(i)*}\Im\cR\u^{(i)}\prec \frac{N^\delta}{\sqrt{e^{h/2}N\eta}}\sqrt{\u^{(i)*}\Im\cR\u^{(i)}}+\u^*\Im\cR\u+\e_i^*\Im\cR\e_i . 
\]
This is a quadratic inequality in $(\u^{(i)*}\Im\cR\u^{(i)})^{1/2}$. Solving it gives
\[
    \sqrt{\u^{(i)*}\Im\cR\u^{(i)}}\prec\frac{N^\delta}{\sqrt{e^{h/2}N\eta}}+\sqrt{\frac{N^{2\delta}}{e^{h/2}N\eta}+\u^*\Im\cR\u+\e_i^*\Im\cR\e_i} .
\]
Squaring and simplifying then gives
\[
    \u^{(i)*}\Im\cR\u^{(i)}\prec \u^*\Im\cR\u+\e_i^*\Im\cR\e_i+ N^\delta\sqrt{\frac{e^{-h/2}\u^*\Im\cR\u}{N\eta}}+N^\delta\sqrt{\frac{e^{-h/2}\e_i^*\Im\cR\e_i}{N\eta}}+N^\delta\Psi_h.
\]
Substituting this bound into \eqref{eq:leave-out-u-i} proves the first claim. For the second claim, the definition of the Frobenius norm gives
\[
    N^{-1}\|A_i\|_F\leq N^{-1/2}\max_{j=1}^{n+N}\|B_i\e_j\|_2 . 
\]
This result follows directly from the first claim. The corresponding weighted estimates follow from the same argument as in Lemma \ref{lem:ward-estimate}, with $\beta=C_0\delta$, so we omit the details.
\end{proof}

The next lemma uses the coarse estimate from one spectral-scale bootstrap to verify the hypotheses needed for the zag step in the next bootstrap. Its high-level structure is similar to that of \cite[Lemma~4.12]{fan2026anisotropic}.
\begin{lemma}\label{lem:bootstrap-in-zag}
Suppose Assumptions \ref{ass:A1} and \ref{ass:A2} hold, $\bD$ is a regular domain, and $0\leq h,t\leq 5\log(N)$ are such that $t(\Lambda_h+1)\leq CN^\delta$ for all $z\in\bD$. For $l\in\N$, define the domain
\[
    \bD_l:=\{z\in\bD:\Im z\in[N^{-l\delta},1],N\eta\Im\widetilde m_0(z)\geq N^{3C_0\delta}\}.
\]
Then the following statements hold:
\begin{enumerate}[label=(\alph*)]
    \item Suppose that, uniformly over all $z\in\bD_0$ and all deterministic $\u\in\R^{n+N}$, we have
    \[
        |\u^*(\cR_{h,t}-M_h)\u|\prec N^{C_0\delta}\Psi_h,
    \]
    then, uniformly over all $z\in\bD_0$, all deterministic unit vectors $\u\in\R^{n+N}$, and all $0\leq s\le t$,
    \[
        |\u^*(\cR_{h,s}-M_h)\u|\prec N^{C_0\delta}\Psi_h.
    \]
    \item For all $0\leq l\leq \delta^{-1}$ and all $h,t\geq 0$, if, uniformly over all $z\in\bD_l$, all deterministic unit vectors $\u\in\R^{n+N}$, and all $0\leq s\leq t$,
    \[
        |\u^*(\cR_{h,s}-M_h)\u|\prec N^{C_0\delta}\Psi_h,
    \]
    then, uniformly over all $z\in\bD_l$, all deterministic unit vectors $\u\in\R^{n+N}$, and all $0\leq s\leq t$,
    \[
        \u^*\Im\cR_{h,s}\u\prec e^{-h/2}\Im\widetilde m_0(z)+N^{C_0\delta}\Psi_h.
    \]
    \item For all $0\leq l\leq \delta^{-1}$, if, uniformly over all $z\in\bD_l$, all deterministic unit vectors $\u\in\R^{n+N}$, and all $0\leq s\leq t$,
    \[
        |\u^*(\cR_{h,s}-M_h)\u|\prec N^{C_0\delta}\Psi_h,\quad \u^*\Im\cR_{h,s}\u\prec e^{-h/2}\Im\widetilde m_0(z)+N^{C_0\delta}\Psi_h,
    \]
    and uniformly over all $z\in\bD_{l+1}$, all deterministic unit vectors $\u\in\R^{n+N}$,
    \[
        |\u^*(\cR_{h,t}-M_h)\u|\prec N^{C_0\delta}\Psi_h,
    \]
    then, uniformly over all $z\in\bD_{l+1}$, all deterministic unit vectors $\u\in\R^{n+N}$, and all $0\leq s\leq t$,
    \[
        |\u^*(\cR_{h,s}-M_h)\u|\prec N^{C_0\delta}\Psi_h.
    \]
\end{enumerate}
\end{lemma}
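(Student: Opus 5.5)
Each of (a), (b), (c) comes from verifying the hypotheses of Proposition~\ref{prop:full-block-zag} (equivalently, of Lemma~\ref{lem:L_i-bounds}) with suitable control parameters $\Phi_1\le N^{-10\delta}\Phi_2$, and then quoting the conclusion of that proposition. In every case the target is $\Phi_2=N^{C_0\delta}\Psi_h$.

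\textbf{Part (a): large spectral scale.} On $\bD_0$ we have $\eta\asymp 1$. By Lemma~\ref{lem:basic-estimates}(b),(c), $\|\cR_{h,s}\|_\op\le Ce^{-h/2}$ and $\|\Im\cR_{h,s}\|_\op\le C$ deterministically; the same holds for the minors $\cR^{(i)}$. Hence $N^{-1/2}\|B_i\u\|_2\le CN^{-1/2}$ and $N^{-1}\|A_i\|_F\le CN^{-1/2}$. The $\Sigma$-weighted norms are $\prec\sqrt{\Lambda_h}$, by the same Ward argument as in Lemma~\ref{lem:ward-estimate} but using only the operator bound. For the denominators, $\Im(\beta+N^{-1}\x_i^*A_i\x_i)\ge \Im\beta\ge ce^{h/2}\eta$ by Lemma~\ref{lem:basic-estimates}(a), because $\Im A_i$ has the sign making $\Im(N^{-1}\x_i^*A_i\x_i)\ge0$ (this is $-\Im(\e_i^*\cR\e_i)^{-1}$). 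Thus both denominators are bounded below by a constant. We may then take $\Phi_1=N^{-1/2}\Psi_h^{-1}\Psi_h\cdot N^{\delta}$; since $\Psi_h\gtrsim e^{-h/2}N^{-1/2}$ on $\bD_0$, a little care with the $e^{-h/2}$ factors gives $\Phi_1\le N^{-10\delta}\Phi_2$ once $C_0>100$. Proposition~\ref{prop:full-block-zag} then gives (a).

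\textbf{Part (b): imaginary part bound.} This is deterministic given the hypothesis. Write $\u^*\Im\cR_{h,s}\u\le \u^*\Im M_h\u+|\u^*(\cR_{h,s}-M_h)\u|$. Since $M_h=e^{-h/2}M_*$, Lemma~\ref{lem:basic-estimates}(b) gives $\u^*\Im M_h\u\le Ce^{-h/2}\Im\widetilde m_0(z)$, which proves the claim on $\bD_l$. Although the statement is phrased on $\bD_l$, it will be used in (c) on $\bD_{l+1}$. To get it there, I would apply Lemma~\ref{lem:Herglotz-function}(a) with $\eta_2=N^{\delta}\eta_1$ (capped at $1$). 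This loses a factor $N^{\delta}$, giving $\u^*\Im\cR\u\prec N^{\delta}(e^{-h/2}\Im\widetilde m_0+N^{C_0\delta}\Psi_h)$ at the lower scale, which suffices for coarse input.

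\textbf{Part (c): the inductive zag, and the main obstacle.} Feed the (b)-type bound at $z'=E+iN^\delta\eta$, transported down to $z\in\bD_{l+1}$ via Lemma~\ref{lem:Herglotz-function}(a), into Lemma~\ref{lem:minor-estimates}, applied both for $\u$ and for the basis vectors $\e_j$. This yields $N^{-1/2}\|B_i\u\|_2, N^{-1}\|A_i\|_F\prec \Phi_1$ with $\Phi_1\approx N^{\delta}\sqrt{\Im\widetilde m_0/(N\eta)}+N^{(C_0/2+2)\delta}\Psi_h^{1/2}(\ldots)^{1/2}$. It also gives the weighted bounds $\prec N^{\delta}\sqrt{\Lambda_h}$.

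The denominators require separate treatment. The quantity $|\beta+N^{-1}\x_i^*A_i\x_i|^{-1}=|\e_i^*\cR\e_i|$ is bounded by $|\e_i^*M_h\e_i|+N^{C_0\delta}\Psi_h$ at $z'$, and Lemma~\ref{lem:Herglotz-function}(b) transfers this to $z$ with an $N^{\delta}$ loss. Replacing $\x_i^*A_i\x_i$ by $\Tr\Sigma A_i$ costs $N^{-1}\|A_i\|_F\prec\Phi_1\ll1$ by Assumption~\ref{ass:A2}, which gives the other denominator bound. Note, however, that Lemma~\ref{lem:minor-estimates} itself presupposes the denominator bound, so it must be established first.

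The delicate point, which I expect to be the main obstacle, is checking $N^{10\delta}\Phi_1\le\Phi_2=N^{C_0\delta}\Psi_h$. The square-root cross terms scale like $N^{C_0\delta/2}\Psi_h$, which is what lets this succeed with $C_0>100$. Here the restriction $N\eta\Im\widetilde m_0\ge N^{3C_0\delta}$ is essential: it ensures $e^{-h/2}\Im\widetilde m_0/(N\eta)\le\Psi_h^2 N^{O(\delta)}$ and that $\Psi_h\le e^{-h/2}\Im\widetilde m_0 N^{-C_0\delta}$, as in the proof of Lemma~\ref{lem:ward-estimate}. Finally, apply Proposition~\ref{prop:full-block-zag} with the endpoint input at time $t$ on $\bD_{l+1}$, and use the time-step hypothesis $t(\Lambda_h+1)\le CN^\delta$, to conclude (c).
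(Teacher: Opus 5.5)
Your proposal is correct and follows the paper's proof essentially step for step. Part (a) uses operator-norm bounds at $\eta=1$ plus Proposition~\ref{prop:full-block-zag}. Part (b) is the one-line bound $\u^*\Im\cR_{h,s}\u\le\u^*\Im M_h\u+|\u^*(\cR_{h,s}-M_h)\u|$. Part (c) uses Herglotz transport from $z'=E+iN^{\delta}\eta\in\bD_l$ into Lemma~\ref{lem:minor-estimates}, followed by Proposition~\ref{prop:full-block-zag} with $\Phi_1=N^{(C_0+1)\delta/2}\Psi_h$. One small variation: in (a), your observation $\Im(\beta+N^{-1}\x_i^*A_i\x_i),\,\Im(\beta+N^{-1}\Tr\Sigma A_i)\ge\Im\beta$ is a slightly cleaner substitute for the paper's $\|\cR\|_\op$-plus-concentration argument.

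Keep the $e^{-h/2}$ factors explicit, since $h$ may be of order $\log N$. In (a), your $\Phi_1=N^{\delta-1/2}$ would violate $N^{10\delta}\Phi_1\le N^{C_0\delta}\Psi_h$; take $\Phi_1=Ce^{-h/2}N^{-1/2}\le C\Psi_h$ instead. In (c), the correct intermediate bound is $e^{-h/2}\u^*\Im\cR_{h,s}\u/(N\eta)\prec N^{(C_0+1)\delta}\Psi_h^2$, which holds without the restriction $N\eta\Im\widetilde m_0\ge N^{3C_0\delta}$; that restriction is needed only for the weighted bound $\prec N^{\delta}\sqrt{\Lambda_h}$.
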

\begin{proof}
For (a), if $z\in\bD_0$, then $\eta=1$. Hence, Lemma \ref{lem:basic-estimates} implies, for any $i\in[N]$ and any $0\leq s\leq t$,
\[
    N^{-1/2}\|B_i\u\|_2,N^{-1}\|A_i\|_F\leq CN^{-1/2}\|\cR_{h,s}^{(i)}\|_\op\leq N^{-1/2}\|(\Im\cD_h)^{-1}\|_\op\leq Ce^{-h/2}N^{-1/2}\prec\Psi_h . 
\]
Here, we used $C\eta\leq\Im\widetilde m_0(z)$ to deduce $N^{-1/2}\leq \sqrt{\Im\widetilde m_0(z)/(N\eta)}$.
Similarly,
\[
    \|\Sigma^{1/2} B_i\u\|_2,N^{-1/2}\|\Sigma^{1/2} A_i\|_F\leq C\|\cR_{h,s}^{(i)}\|_\op\leq Ce^{-h/2}\prec \sqrt{\Lambda_h}.
\]
Moreover,
\[
    |\beta+N^{-1}\x_i^* A_i\x_i|^{-1}=|\e_i^*\cR_{h,s}\e_i|\leq \|\cR_{h,s}\|_\op\prec 1\leq N^\delta,
\]
We also have
\begin{align*}
    |\beta+N^{-1}\Tr\Sigma A_i|&\geq ||\beta+N^{-1}\x_i^* A_i\x_i|-|N^{-1}\Tr(\x_i\x_i^*-\Sigma)A_i||\\
    &\geq C+\Oprec(N^{-1/2})\geq C/2 . 
\end{align*}
Here, Assumption \ref{ass:A2} gives $|N^{-1}\Tr(\x_i\x_i^*-\Sigma)A_i|\prec N^{-1}\|A_i\|_F\prec N^{-1/2}$. Consequently, $|\beta+N^{-1}\Tr\Sigma A_i|^{-1}\leq 2/C\prec N^\delta$. Proposition \ref{prop:full-block-zag} now proves (a).

For (b), the hypothesis of the lemma and Lemma \ref{lem:basic-estimates} give, uniformly over all $z\in\bD_l$ and $0\leq s\leq t$,
\[
    \u^*\Im\cR_{h,s}\u\prec\u^*\Im M_h\u+N^{C_0\delta}\Psi_h\prec e^{-h/2}\Im\widetilde m_0(z)+N^{C_0\delta}\Psi_h . 
\]
Here, we used $M_h=e^{-h/2}M_*$.

For (c), let $z=E+i\eta\in\bD_{l+1}$. Then $\eta\geq N^{-\delta(l+1)}$, and by construction there exists $z'=E+i\eta'\in\bD_l$ such that $\eta'/\eta\leq N^\delta$. Lemma \ref{lem:Herglotz-function} and the hypothesis of the lemma then give, uniformly over all $z\in\bD_{l+1}$, all $0\le s\leq t$, and all deterministic unit vectors $\u\in\R^{n+N}$,
\begin{align*}
    |\u^*\cR_{h,s}(z)\u|&\prec|\u^*\cR_{h,s}(z')\u|+|N^\delta-1|\u^*\Im\cR_{h,s}(z')\u\\
    &\prec N^\delta|\u^*M_h(z')\u|+N^{(C_0+1)\delta}\Psi_h(z')\prec N^\delta.
\end{align*}
Here, we used $|\u^*M_h(z')\u|=e^{-h/2}|\u^*M_*(z')\u|\leq C$, $\Psi_h\prec (N\eta)^{-1/2}\leq N^{-\tau/2}$, and $\delta\leq \tau/(10C_0)\leq\tau/1000$. In particular, this implies
\[ 
    |\e_i^*\cR_{h,s}(z)\e_i|=|\beta+N^{-1}\x_i^*A_i\x_i|^{-1}\prec N^\delta .
\]
Similarly,
\begin{align*}
    \u^*\Im\cR_{h,s}(z)\u&\leq \frac{\eta'}{\eta}\u^*\Im\cR_{h,s}(z')\u\prec N^\delta e^{-h/2}\Im\widetilde m_0(z')+N^{(C_0+1)\delta}\Psi_h(z')\\
    &\le N^{2\delta}e^{-h/2}\Im\widetilde m_0(z)+N^{(C_0+1)\delta}\Psi_h(z) . 
\end{align*}
In the last step, we used a version of Lemma \ref{lem:Herglotz-function} for $\widetilde m_0$, which holds because the function $\eta\rightarrow \eta\Im\widetilde m_0$ is increasing and the function $\eta\rightarrow\Im\widetilde m_0/\eta$ is decreasing. Consequently,
\[
    \frac{e^{-h/2}\u^*\Im\cR_{h,s}(z)\u}{N\eta}\prec N^{2\delta}\frac{e^{-h}\Im\widetilde m_0(z)}{N\eta}+N^{(C_0+1)\delta}\Psi_h(z)^2\prec N^{(C_0+1)\delta}\Psi_h(z)^2 .
\]
Moreover,
\[
    \frac{e^{-h/2}\u^*\Im\cR_{h,s}(z)\u}{\eta_h}\prec N^{2\delta}\Lambda_h+\frac{e^{-h/2}}{\eta_h}N^{(C_0+1)\delta}\Psi_h\prec N^{2\delta}\Lambda_h .
\]
In the last step, we used $N^{2C_0\delta}/(N\eta)\leq\Im\widetilde m_0(z)$ so that $N^{C_0\delta}\Psi_h\prec e^{-h/2}\Im\widetilde m_0(z)$. Combining these bounds with Lemma \ref{lem:minor-estimates} gives, uniformly over all $i\in[N]$, all $z\in\bD_{l+1}$, all $0\leq s\leq t$, and all deterministic unit vectors $\u\in\R^{n+N}$,
\[
    N^{-1/2}\|B_i\u\|_2,N^{-1}\|A_i\|_F\prec N^{(C_0+1)\delta/2}\Psi_h,\quad \|\Sigma^{1/2}B_i\u\|_2,N^{-1/2}\|\Sigma^{1/2}A_i\|_F\prec N^\delta\sqrt{\Lambda_h}.  
\]
The same argument as in the proof of (a) shows $|\beta+N^{-1}\Tr\Sigma A_i|^{-1}\prec N^\delta$. Applying Proposition \ref{prop:full-block-zag} with $\Phi_1:=N^{(C_0+1)\delta/2}\Psi_h$ and $\Phi_2:=N^{C_0\delta}\Psi_h$ then proves (c).
\end{proof}

We now combine the spectral bootstraps with the zig--zag steps. Let $K:=\min\{k\in\N:N^{k\delta-1}\geq 2\log(N)\}$. Then $K\leq 2\delta^{-1}$. We also define the zig--zag lattice
\[
    H_0:=h_0=3\log(N),\quad h_1=N^{(K-1)\delta}N^{-1},\quad ...\quad , h_{K-1}=N^{-1+\delta},\quad h_K=0,
\]
\[
    t_0=0,\quad t_j=h_{j-1}-h_j,1\leq j\leq K.
\]
Thus, for all $1\leq i\leq K-1$, we have the bounds
\[
    h_i\leq C,\quad t_i\leq 2N^\delta h_i,\quad t_K=h_{K-1}=N^{-1+\delta}.
\]
\begin{lemma}\label{lem:zig-zag-and-bootstraps}
Suppose Assumptions \ref{ass:A1} and \ref{ass:A2} hold, and let $\bD$ be a regular domain. For $0\leq l\leq \delta^{-1}$, let $\bD_l$ be defined as in Lemma \ref{lem:bootstrap-in-zag}. Then the following hold:
\begin{enumerate}[label=(\alph*)]
    \item For all $0\leq k\leq K$, uniformly over all $z\in\bD_0$, all deterministic unit vectors $\u\in\R^{n+N}$, and all $0\leq s\leq t_k$,
    \[
        |\u^*(\cR_{h_k,s}-M_{h_k})\u|\prec N^{C_0\delta}\Psi_{h_k}.
    \]
    \item For all $0\leq l\leq \delta^{-1}$, if, for all $0\leq k\leq K$, uniformly over all $z\in\bD_l$, all deterministic unit vectors $\u\in\R^{n+N}$, and all $0\leq s\leq t_k$,
    \[
        |\u^*(\cR_{h_k,s}-M_{h_k})\u|\prec N^{C_0\delta}\Psi_{h_k},
    \]
    then, for all $0\leq k\leq K$, uniformly over all $z\in\bD_{l+1}$, all deterministic unit vectors $\u\in\R^{n+N}$, and all $0\leq s\leq t_k$,
    \[
        |\u^*(\cR_{h_k,s}-M_{h_k})\u|\prec N^{C_0\delta}\Psi_{h_k}.
    \]
\end{enumerate}
\end{lemma}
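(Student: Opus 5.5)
We prove both parts by induction over the zig--zag lattice index $k$, alternating Proposition~\ref{prop:full-block-zig} (zig) with Lemma~\ref{lem:bootstrap-in-zag} (zag). Throughout we use the lattice facts $h_k\le 5\log N$, $t_k=h_{k-1}-h_k$, and $t_k(\Lambda_{h_k}+1)\le CN^\delta$. The last bound holds because $\Lambda_h\le e^{-h}\Im\widetilde m_0/((1-e^{-h})\Im\widetilde m_0)\le C/h$ for $h\le C$, and $t_k\le 2N^\delta h_k$ for $k\le K-1$. For $k=K$ we use $t_K=N^{-1+\delta}$ and $\Lambda_0\le\Im\widetilde m_0/\eta\le N$, hence $t_K\Lambda_0\le N^\delta$. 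For $k=1$ we have $h_0=3\log N$, so $\Lambda_{h_1}\le C e^{-h_1}/(1-e^{-h_1})$ with $h_1\ge 2\log N\cdot N^{-\delta}$, which keeps $t_1\Lambda_{h_1}$ of order $N^\delta$ after adjusting constants.

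We also need $\int_0^{h_{k-1}-h_k}\Lambda_{h_{k-1}-t}\,\de t\le C\log N$. This follows from $\int_{h_k}^{h_{k-1}}\Lambda_h\,\de h\le\int \frac{e^{-h}}{\eta/\Im\widetilde m_0+1-e^{-h}}\,\de h$, which equals $\log\bigl((\eta/\Im\widetilde m_0+1-e^{-h})\bigr)$ evaluated between the endpoints. That quantity is $\le C\log N$ because $\eta/\Im\widetilde m_0\ge N^{-2}$ on $\bD$.

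\textbf{Part (a).} We induct on $k$ for $z\in\bD_0$. The base case $k=0$, with $t_0=0$, is Lemma~\ref{lem:zig-initialization}, since $H_0=3\log N\ge 2\log N$. For the step, assume the bound at source $h_{k-1}$ and time $s=0$. Proposition~\ref{prop:full-block-zig} with $(h,h')=(h_{k-1},h_k)$ gives the bound for $\cR_{h_k,t_k}$, with error $N^{C_0\delta}\Psi_{h_k}$, on $\bD_0$. Note that $\bD_0\subset\{N\eta\Im\widetilde m_0\ge N^{3C_0\delta}\}$, and the averaged input is provided by Proposition~\ref{prop:averaged zig}. Lemma~\ref{lem:bootstrap-in-zag}(a) then propagates the bound back to all $0\le s\le t_k$. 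In particular it holds at $s=0$, which feeds the next zig.

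\textbf{Part (b).} We again induct on $k$, now on $\bD_{l+1}$. The case $k=0$ follows from Lemma~\ref{lem:zig-initialization}, because that estimate is uniform in $z\in\bD$. For the step, the inductive hypothesis at $k-1$ gives the bound at $\cR_{h_{k-1},0}$ on $\bD_{l+1}$. Proposition~\ref{prop:full-block-zig} then yields the bound for $\cR_{h_k,t_k}$ on $\bD_{l+1}$. The assumption of (b) at level $k$ on $\bD_l$, combined with Lemma~\ref{lem:bootstrap-in-zag}(b), supplies the imaginary-part bounds on $\bD_l$ for all $s\le t_k$. Lemma~\ref{lem:bootstrap-in-zag}(c) then gives the bound on $\bD_{l+1}$ for all $0\le s\le t_k$.

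\textbf{Main obstacle.} The main difficulty is checking the hypotheses of the two propositions uniformly along the lattice. The first is the time-step condition $t_k(\Lambda_{h_k}+1)\le CN^\delta$ at the first step, where $h_0$ is large and $h_1$ is comparatively small. The second is the $\log N$ bound on $\int\Lambda$. A further point is that the zig requires its input only on the domain where the conclusion is sought; it is not needed on a larger domain. If the crude $\Lambda$ bound at $k=1$ fails, I would refine the first step of the lattice by inserting extra intermediate $h$-values with geometric ratio $N^{\delta}$. This adds only $O(1/\delta)$ steps, and the stochastic-domination calculus tolerates such a finite number of iterations.
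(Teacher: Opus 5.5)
Your proposal is correct and follows the paper's proof. It uses the same induction over the lattice index $k$: Lemma~\ref{lem:zig-initialization} as the base case, Proposition~\ref{prop:full-block-zig} for each zig, and Lemma~\ref{lem:bootstrap-in-zag}(a), respectively (b)--(c), for each zag on $\bD_0$, respectively $\bD_{l+1}$. Your verification of the lattice conditions is slightly cleaner than the paper's case split on $h_k\gtrless\eta$, because $\int_{h_k}^{h_{k-1}}\Lambda_h\,\de h=\log\frac{\eta/\Im\widetilde m_0+1-e^{-h_{k-1}}}{\eta/\Im\widetilde m_0+1-e^{-h_k}}$ holds exactly and $\Lambda_h\le e^{-h}/(1-e^{-h})\le 1/h$ for every $h>0$; in particular the fallback lattice refinement in your last paragraph is never needed.
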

\begin{proof}
We begin by showing the following bounds for all $1\leq k\leq K$ and all $z\in\bD$ satisfying $N\eta\Im\widetilde m_0(z)\geq N^{3C_0\delta}$:
\[
    \int_0^{h_{k-1}-h_k}\Lambda_{h_{k-1}-t}\de t\leq C\log(N),\quad t_k(\Lambda_{h_k}+1)\leq CN^\delta.
\]
For any $z=E+i\eta\in\bD$, we can bound $\Lambda_h$ from above as
\[
    \Lambda_h=\frac{e^{-h}\Im\widetilde m_0(z)}{\eta+(1-e^{-h})\Im\widetilde m_0(z)}\leq \begin{cases}
        Ce^{-h}+C/h, & \text{if $h>\eta$,}\\
        C/\eta, & \text{if $0\leq h\leq\eta$}.
    \end{cases}
\]
Indeed, if $0\leq h\leq \eta$, then
\[
    \Lambda_h\leq Ce^{-h}\frac{\Im\widetilde m_0(z)}{\eta}\leq C\frac{1}{\eta}.
\]
If $h>\eta$, then
\[
    \Lambda_h\leq \frac{e^{-h}}{1-e^{-h}}\leq \frac{e^{-h}(1+h)}{h}\leq e^{-h}+\frac{1}{h}.
\]
Here, for all $h\geq 0$, we used $e^h\geq 1+h$, which gives $1-e^{-h}\geq h/(h+1)$. For the integral bound, we have
\begin{align*}
    \int_0^{h_{k-1}-h_k}\Lambda_{h_{k-1}-t}\de t&=\int_{h_k}^{h_{k-1}}\Lambda_{h}\de h\leq \int_0^{N}\Lambda_h\de h\leq\int_0^{\eta}\frac{1}{\eta}\de h+\int_{\eta}^{N}\left(e^{-h}+\frac{1}{h}\right)\de h\\
    &\leq 1+e^{-\eta}-e^{-N}+\log(N)+\log(\eta^{-1})\leq C\log(N) .
\end{align*}
Here, we used $\eta\geq N^{-1+\tau}$. For the second bound, suppose first that $h_k>\eta$. Then $k\leq K-2$, and
\[
    t_k\Lambda_{h_k}\leq Ct_ke^{-h_k}+Ct_k/h_k\leq CN^\delta+CN^\delta h_k/h_k\leq CN^\delta .
\]
Here, we used $t_k\leq 2N^\delta h_k\leq CN^\delta$. If instead $h_k\leq \eta$, then
\[
    t_k\Lambda_{h_k}\leq Ct_k/\eta\leq C\max\{2N^\delta h_k,N^{-1+\delta}\}/\eta\leq CN^\delta .
\]
Here, we applied $N^{-1+\delta}/\eta\leq N^{\delta-\tau}$. Combining this estimate with $t_k\leq 2N^\delta$ shows $t_k(\Lambda_{h_k}+1)\leq CN^\delta$.

We now prove (a) and (b). For (a), we first apply Lemma \ref{lem:zig-initialization}. We then iterate Proposition \ref{prop:full-block-zig} for the pair $(h_{k-1},h_k)$ and Lemma \ref{lem:bootstrap-in-zag} (a) for the pair $(t_k,h_k)$; this proves the claim.

For (b), the hypothesis of the lemma and Lemma \ref{lem:bootstrap-in-zag} (b) show that, for all $0\leq k\leq K$, uniformly over all $z\in\bD_l$, all deterministic unit vectors $\u\in\R^{n+N}$, and all $0\leq s\leq t_k$,
\[
    \u^*\Im\cR_{h_k,s}\u\prec e^{-h_k/2}\Im\widetilde m_0(z)+N^{C_0\delta}\Psi_{h_k}.
\]
Applying the same initialization-and-comparison argument on $\bD_{l+1}$---namely, Lemma \ref{lem:zig-initialization} followed by iterative applications of Proposition \ref{prop:full-block-zig} and Lemma \ref{lem:bootstrap-in-zag} (c)---proves the claim.
\end{proof}

\begin{proof}[Proof of Theorem \ref{thm:generalized-local-law} and Theorem \ref{thm:target}]
For any regular domain $\bD$, consider the smaller domain $\hat\bD:=\{z\in\bD:N\eta\Im\widetilde m_0(z)\geq N^{3C_0\delta}\}$. Since the function $\eta\rightarrow\eta\Im\widetilde m_0$ is increasing, $\hat\bD$ is also a regular domain. Thus, applying Lemma \ref{lem:zig-zag-and-bootstraps} (a) and then iteratively applying Lemma \ref{lem:zig-zag-and-bootstraps} (b) shows that, for all $0\leq k\leq K$, uniformly over all $z\in\hat\bD$, all deterministic unit vectors $\u\in\R^{n+N}$, and all $0\leq s\leq t_k$,
\[
    |\u^*(\cR_{h_k,s}-M_{h_k})\u|\prec N^{C_0\delta}\Psi_{h_k}.
\]
In particular, at $h_K=0$ and $s=0$, this gives
\[
    |\u^*(\cR_{0,0}-M_*)\u|\prec N^{C_0\delta}\Psi.
\]
To extend this result to all of $\bD$, let $z=E+i\eta\in\bD$ satisfy $N\eta\Im\widetilde m_0(z)<N^{3C_0\delta}$. By continuity, there exists $\eta'>\eta$ such that
\[
    N\eta'\Im\widetilde m_0(E+i\eta')=N^{3C_0\delta}\iff \eta'\Im\widetilde m_0(E+i\eta')=\frac{N^{3C_0\delta}}{N}.
\]
Let $z'=E+i\eta'$. Then $z'\in\hat\bD$, and Lemma \ref{lem:Herglotz-function} gives
\begin{align*}
    |\u^*(\cR_{0,0}(z)-M_*(z))\u|
    &\leq |\u^*(\cR_{0,0}(z')-M_*(z'))\u|
    +|\u^*(\cR_{0,0}(z)-\cR_{0,0}(z'))\u|\\
    &\quad+|\u^*(M_*(z)-M_*(z'))\u|\\
    &\prec N^{C_0\delta}\Psi(z')+\frac{\eta'-\eta}{\eta}\u^*\Im\cR_{0,0}(z')\u
    +\frac{\eta'-\eta}{\eta}\u^*\Im M_*(z')\u\\
    &\prec N^{C_0\delta}\Psi(z')+\frac{\eta'}{\eta}\u^*\Im\cR_{0,0}(z')\u
    +\frac{\eta'}{\eta}\u^*\Im M_*(z')\u\\
    &\prec N^{C_0\delta}\Psi(z')+2\frac{\eta'}{\eta}\Im\widetilde m_0(z')
    +\frac{\eta'}{\eta}N^{C_0\delta}\Psi(z'),
\end{align*}
where we used $\u^*\Im\cR_{0,0}(z')\u\prec \u^*\Im M_*(z')\u+N^{C_0\delta}\Psi(z')$, $\eta'>\eta$, and $\u^*\Im M_*(z')\u\prec \Im\widetilde m_0(z')$. Since the function $\eta\rightarrow\Im\widetilde m_0/\eta$ is decreasing and $\eta'>\eta$, we have $\Psi(z')\leq \Psi(z)$. The defining property of $\eta'$ also shows that $N^{C_0\delta}\Psi(z')\prec \Im\widetilde m_0(z')$ and $\eta'\Im\widetilde m_0(z')=N^{3C_0\delta}/N$. Combining these estimates gives
\[
    |\u^*(\cR_{0,0}(z)-M_*(z))\u|\prec N^{C_0\delta}\Psi(z)+\frac{\eta'\Im\widetilde m_0(z')}{\eta}\prec N^{C_0\delta}\Psi(z)+\frac{N^{3C_0\delta}}{N\eta}\prec N^{3C_0\delta}\Psi(z).
\]
Since $\delta$ is small but arbitrary, this shows that $|\u^*(\cR_{0,0}(z)-M_*(z))\u|\prec\Psi(z)$ uniformly over $\bD$. Splitting into real and imaginary parts and applying the polarization identity proves Theorem \ref{thm:generalized-local-law}. Finally, since $\cR_{0,0},M_*$ and $\Psi$ are all $N^5$-Lipschitz over $\bD$, an $N^{20}$-net, together with a union bound and the equivalence \eqref{eq:linear-pencil-equivalence}, proves Theorem \ref{thm:target}.
\end{proof}

\section{Averaged local law along the characteristic}\label{sec:averaged}
In this section, we prove Proposition \ref{prop:averaged zig} by adapting the argument for the averaged local law in \cite[Theorem 2.5]{fan2026anisotropic}. Because the two proofs are closely aligned and often agree verbatim apart from cosmetic changes, we give detailed outlines of the corresponding lemmas, with precise citations, and present the technical modifications in full only where needed.

\subsection{Stability of the fixed-point equation}
For $z=E+i\eta\in\C^+$, let $w=\sqrt z\in\C^+$ denote the principal square root, and define
\begin{equation}\label{eq:deterministic-data}
 q_0(z)=w\widetilde m_0(z),\qquad
s_0(z)=\frac1N\Tr\Sigma (-wI-q_0(z)\Sigma)^{-1}.
\end{equation}
For $r\in[0,1]$, set
\begin{equation}\label{eq:interpolated-data}
 \widehat Z_r(z)=wI+(1-r)q_0(z)\Sigma,\qquad
 \widehat\beta_r(z)=w+(1-r)s_0(z).
\end{equation}
For $q\in\C^+$, define
\begin{equation}\label{eq:maps}
 F_{r,z}(q)=-\frac1N\Tr\Sigma(\widehat Z_r(z)+rq\Sigma)^{-1},
 \qquad
 H_{r,z}(q)=-\frac1q-\widehat\beta_r(z)-rF_{r,z}(q).
\end{equation}
By the deformed MP fixed-point relation (\ref{eq:MP}), we have
\begin{equation}\label{eq:fixed-root}
 s_0(z)=-w-\frac{1}{q_0(z)},\quad F_{r,z}(q_0(z))=s_0(z),\qquad H_{r,z}(q_0(z))=0.
\end{equation}

The stability of the fixed-point map $H_{r,z}(q)$ is inherited from that of the MP fixed point $z_0(m)-z$. The following theorem summarizes this connection.
\begin{theorem}[Strong stability of $H_r$]\label{thm:strong-stability}
Suppose Assumption \ref{ass:A1} holds and $\bD$ is a regular domain. For any $z\in\bD$, define the lattice
\[
    L(z):=\{z\}\cup\{\zeta\in\bD:\Re\zeta=\Re z,\Im\zeta\in[\Im z,1]\cap\{N^{-5},2N^{-5},3N^{-5},...,1\}\}.
\]
Suppose $q:\bD\rightarrow\C^+$ is a $N^2$-Lipschitz function and $\Delta:\bD\rightarrow(0,\infty)$ is a deterministic error-control parameter such that, for all $z\in\bD$,
\[
    N^{-1}\leq\Delta(z)\leq (\log N)^{-1},\quad  |\Delta(z)-\Delta(z')|\leq N^2|z-z'|,
\]
and $\eta\rightarrow\Delta(E+i\eta)$ is nonincreasing for fixed $E$. Then there exists a constant $C>0$, depending only on the regularity constants of
$\bD$, such that, for all sufficiently large $N$, the following holds
uniformly for $r\in[0,1]$:
If, for a target $z=E+i\eta\in\bD$,
\begin{equation}\label{eq:residual-on-lattice}
 |H_{r,\zeta}(q(\zeta))|\le\Delta(\zeta)
 \qquad\text{for every }\zeta\in L(z),
\end{equation}
then
\begin{equation}\label{eq:main-bound}
 |q(z)-q_0(z)|
 \le
 \frac{C\Delta(z)}
 {(1-r)+\sqrt{\kappa+\eta}+\sqrt{\Delta(z)}}.
\end{equation}
\end{theorem}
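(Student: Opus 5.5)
We reduce the stability of $H_{r,z}$ to that of the deformed MP inverse $z_0$ and then use a continuity argument along the lattice $L(z)$.

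\emph{Step 1: local expansion.} We Taylor expand $H_{r,z}$ around its root $q_0(z)$ from \eqref{eq:fixed-root}. Write $q=q_0+\varepsilon$. Since
\[
 \widehat Z_r+rq\Sigma=wI+q_0\Sigma+r\varepsilon\Sigma,
\]
we have
\[
 F_{r,z}(q)=-\frac1N\Tr\Sigma(wI+q_0\Sigma+r\varepsilon\Sigma)^{-1}.
\]
On $\bD$, $\min_\alpha|1+\sigma_\alpha\widetilde m_0|\ge c$ and $|w|\ge c$, so $w+q_0\sigma_\alpha=w(1+\sigma_\alpha\widetilde m_0)$ is bounded below. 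Hence $F_{r,z}$ is analytic in a disc of radius $c$ around $q_0$ with bounded derivatives. This gives
\[
 H_{r,z}(q_0+\varepsilon)=a_1\varepsilon+a_2\varepsilon^2+O(|\varepsilon|^3),
\]
with the following coefficients:
\[
 a_1=\frac1{q_0^2}-r^2F'(q_0),\qquad F'(q_0)=\frac1N\Tr\Sigma^2(w+q_0\Sigma)^{-2}.
\]
I would next rewrite $a_1$ using $q_0=w\widetilde m_0$ and $z_0'(m)=m^{-2}-N^{-1}\sum_\alpha\sigma_\alpha^2(1+\sigma_\alpha m)^{-2}$. This yields
\[
 q_0^2F'(q_0)=1-\widetilde m_0^2z_0'(\widetilde m_0).
\]
Therefore
\[
 q_0^2a_1=(1-r^2)+r^2\widetilde m_0^2z_0'(\widetilde m_0).
\]
The key claim is $|a_1|\gtrsim(1-r)+\sqrt{\kappa+\eta}$. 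Both summands lie roughly in a common half-plane, so they should not cancel. I would establish this by showing that $\widetilde m_0^2z_0'(\widetilde m_0)$ has nonnegative real part, or at least argument bounded away from $\pi$. This follows from $\Im$-positivity: $1-\widetilde m_0^2z_0'=q_0^2F'$ and $|q_0^2F'|\le1$ by Cauchy--Schwarz together with the Ward-type identity $\Im q_0\ge|q_0|^2\Im F$. Combined with regularity (d), which gives $|\widetilde m_0^2z_0'|\asymp\sqrt{\kappa+\eta}$, this yields the bound. When $\kappa+\eta\le c_*$, the same computation with (d) gives $|a_2|\ge c$ as $r\to1$ (it is proportional to $z_0''$). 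For $(1-r)\gtrsim$ const, only $a_1$ matters.

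\emph{Step 2: dichotomy and continuity.} From $|H(q)|\le\Delta$ and the expansion, on $\{|\varepsilon|\le c\}$ one obtains
\[
 |\varepsilon|\le\frac{C\Delta}{|a_1|+\sqrt\Delta}\quad\text{or}\quad |\varepsilon|\ge c'(|a_1|+\sqrt{\Delta}),
\]
by the standard quadratic-root analysis (as in \cite[Lemma~A.?]{fan2026anisotropic}, \cite{knowles2017anisotropic}). The remaining task is to rule out the far branch. At the top of the lattice, $\Im\zeta=1$, and $|a_1|\ge c$ there, so any solution with $|\varepsilon|\le c$ is automatically on the good branch. To exclude $|\varepsilon|>c$, I would use a separate global argument at $\eta=1$: since $q\in\C^+$ and $H$ maps into a controlled region, $|H(q)|\le\Delta\ll1$ forces $q$ near $q_0$ (uniqueness of the root in $\C^+$).

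Then I would walk down $L(z)$ in steps of $N^{-5}$. The $N^2$-Lipschitz continuity of $q$, $q_0$, and $\Delta$ (the latter bounded by $N^{C}$ derivatives on $\bD$) means consecutive values of $\varepsilon$ differ by $N^{-3}$. This is much smaller than the gap between the two branches, whose size is at least $c(|a_1|+\sqrt\Delta)\ge c\sqrt{\Delta}\ge cN^{-1/2}$. Hence $\varepsilon$ cannot jump to the far branch. Monotonicity of $\Delta$ in $\eta$ ensures the gap, and $\Delta(\zeta)\le\Delta(z)$, are controlled along the path. The final step from the lowest lattice point to $z$ itself is covered by $z\in L(z)$ and the same Lipschitz bound.

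\emph{Main obstacle.} The main difficulty is the uniform lower bound $|a_1|\gtrsim(1-r)+\sqrt{\kappa+\eta}$ when $r$ varies, that is, showing that $(1-r^2)$ and $r^2\widetilde m_0^2z_0'$ do not cancel near the edge. I would attack it first via the sign/argument of $\widetilde m_0^2z_0'(\widetilde m_0)$ near edges. There $z_0'\approx z_0''(m_j)(\widetilde m_0-m_j)$ with $\widetilde m_0-m_j\approx\sqrt{z-x_j}$ times a constant of controlled phase, which should give the argument bound. Also needed is a uniform bound on the cubic remainder so that the $a_2$ term dominates when $(1-r)+\sqrt{\kappa+\eta}\lesssim\sqrt\Delta$, using $\Delta\le(\log N)^{-1}$.
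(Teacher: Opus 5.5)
Your route is the paper's. Its ingredients are the normal form with linear coefficient $q_0^{-2}-r^2A_0$ (your $F'(q_0)$ is the paper's $A_0$), the identity $1-c_0=\widetilde m_0^2z_0'(\widetilde m_0)$ with $c_0=q_0^2A_0$, the bound $|c_0|\le 1$ from imaginary-part positivity, regularity (d) for $|1-c_0|$ and for $z_0''$, and a dichotomy propagated down $L(z)$. Your observation $\Re(1-c_0)\ge 1-|c_0|\ge 0$ is equivalent to the paper's identity $|1-r^2c_0|^2=(1-r^2)^2+r^2|1-c_0|^2+r^2(1-r^2)(1-|c_0|^2)$. It already settles what you call the main obstacle, so the edge-phase expansion of $\widetilde m_0-m_j$ is not needed.

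The step that is not yet a proof is the anchor at $\eta=1$. ``Uniqueness of the root in $\C^+$'' is qualitative. It gives no bound on $|q-q_0|$ that is uniform in $N$, in $r\in[0,1]$, and in the $N$-dependent spectrum of $\Sigma$. Without such a bound you cannot place $q(E+i)$ in the disc where your expansion is valid, so the continuation has no starting point. The paper makes this quantitative with an exact factorization, valid for every $q\in\C^+$:
\[
 H_r(q)=\frac{q-q_0}{qq_0}\bigl(1-r^2qq_0A_x\bigr),\qquad x=(1-r)q_0+rq,\qquad A_x=\frac1N\Tr\Sigma P_x\Sigma P_0,\qquad P_x=(wI+x\Sigma)^{-1}.
\]
It combines this with your own Ward-type inequality, applied to $q$ instead of $q_0$. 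Take imaginary parts in $-1/q=\widehat\beta_r+rF_r(q)+H_r(q)$ and use $\Im w\ge c$ at height one together with $|H_r(q)|\le(\log N)^{-1}$. This gives $c\le|q|\le C$ and $r^2|q|^2N^{-1}\Tr\Sigma P_x\Sigma P_x^*\le 1-c'$. Cauchy--Schwarz against the same bound for $q_0$ then gives $r^2|qq_0A_x|\le 1-c'$. Hence $|q-q_0|\le C\Delta$ at $\eta=1$, with no a priori closeness assumption.

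A second, smaller issue is in your continuity step. It relies on a gap of size $c(|a_1|+\sqrt\Delta)$ between the two branches. When $|a_1|\lesssim\sqrt\Delta$, the two alternatives overlap and there is no gap. Split the argument as the paper does, using $\Gamma=(1-r)+\sqrt{\kappa+\eta}$, which is comparable to $|a_1|$ but, unlike it, monotone along the lattice.

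\emph{Case $\Delta_j\le c_2\Gamma_j^2$.} Here $\Delta_{j-1}\le\Delta_j$ and $\Gamma_{j-1}\ge\Gamma_j$. These keep the previous good-branch bound $C_1\Delta_{j-1}/\Gamma_{j-1}\le C_1c_2\Gamma_j$ below the bad threshold $c_1\Gamma_j$ once $c_2$ is small. The $N^{-3}$ increment is negligible because $\Gamma_j\ge c_2^{-1/2}N^{-1/2}$.

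\emph{Case $\Delta_j>c_2\Gamma_j^2$.} Here there is nothing to exclude. Since $\Gamma_j<\Gamma_*$, you have $|a_2|\ge c$. Once continuity has placed $\varepsilon_j$ in the small disc, the quadratic inequality $c|\varepsilon_j|^2\le\Delta_j+C\Gamma_j|\varepsilon_j|$ gives $|\varepsilon_j|\le C\sqrt{\Delta_j}$ directly.

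Your closing remark points toward this case split, but the argument as written uses the nonexistent gap.
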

\begin{proof}
Since $\bD$ is a regular domain, we have, for all $z\in\bD$,
\begin{equation}\label{eq:regular-bounds}
 |q_0(z)|\asymp1,\qquad
 \|(wI+q_0(z)\Sigma)^{-1}\|_{\op}\le C,\qquad
 \|\Sigma\|_{\op}\le C.
\end{equation}
We use these bounds throughout without further comment. Write $u(z)=q(z)-q_0(z)$. We divide the proof into three steps.

\subsection*{Step 1: initialization at height one}
Fix $\zeta=E+i$ and define
\[
 x=(1-r)q_0+rq,\qquad
 P_x=(wI+x\Sigma)^{-1},\qquad
 P_0=(wI+q_0\Sigma)^{-1},
\]
and set $A_x=N^{-1}\Tr\Sigma P_x\Sigma P_0$, where we suppress the dependence on $\zeta$ to avoid notational clutter. The identity $A^{-1}-B^{-1}=A^{-1}(B-A)B^{-1}$ gives
\[
 F_{r}(q)-F_{r,\zeta}(q_0)=r(q-q_0)A_x.
\]
Using \eqref{eq:fixed-root}, we obtain
\begin{align}\label{eq:exact-difference}
 H_r(q)&=-\frac{1}{q}-\hat\beta_r-rF_r(q)=-\frac{1}{q}-\hat\beta_r-rF_r(q_0)-r^2(q-q_0)A_x\\
 &=-\frac{1}{q}-w-(1-r)s_0-rs_0-r^2(q-q_0)A_x\\
 &=-\frac{1}{q}+\frac{1}{q_0}-r^2(q-q_0)A_x
 =\frac{q-q_0}{qq_0}
   \bigl(1-r^2qq_0 A_x\bigr).
\end{align}

We claim that, at height one,
\begin{equation}\label{eq:strict-cross-contraction}
 r^2|qq_0 A_x|\le 1-c
\end{equation}
for a fixed $0<c<1$. Indeed, since $P_x,P_0$ are spectral functions of $\Sigma$, direct diagonalization gives
\begin{equation}\label{eq:imaginary-Fr}
 \Im F_r(q)
 \ge(\Im x)\frac1N\Tr\Sigma P_x\Sigma P_x^*
 \ge r(\Im q)\frac1N\Tr\Sigma P_x\Sigma P_x^*.
\end{equation}
Recall the equation
\begin{equation}\label{eq:q-contraction}
 H_r(q)=-\frac{1}{q}-\hat\beta_r-rF_r(q)\iff -\frac{1}{q}=\hat\beta_r+rF_r(q)+H_r(q).
\end{equation}
At height one, $\Im w=\Im\sqrt{\zeta}\ge c$ and $|H_r(q)|\le(\log N)^{-1}$, so
\[
    \frac{1}{|q|}\geq\frac{\Im q}{|q|^2}
 \ge c+r^2(\Im q)\frac1N\Tr\Sigma P_x\Sigma P_x^*\geq c.
\]
Moreover, $\|P_x\|_{\op}\le(\Im w)^{-1}\le C$, so
\eqref{eq:q-contraction} yields $c\le|q|\le C$. Consequently,
\[
 r^2|q|^2\frac1N\Tr\Sigma P_x\Sigma P_x^*\le1-c'.
\]
The same argument with $q=q_0$ gives
$r^2|q_0|^2N^{-1}\Tr\Sigma P_0\Sigma P_0^*\le1-c'$. Finally, Cauchy--Schwarz gives
\[
    r^2|qq_0||A_x|\leq \sqrt{r^2|q|^2N^{-1}\Tr\Sigma P_x\Sigma P_x^*}\sqrt{r^2|q_0|^2N^{-1}\Tr\Sigma P_0\Sigma P_0^*}\leq 1-c'.
\]
This proves \eqref{eq:strict-cross-contraction}. Since $q$ and $q_0$ are uniformly
bounded at height one, \eqref{eq:exact-difference} and
\eqref{eq:residual-on-lattice} imply
\begin{equation}\label{eq:top-anchor}
 |q(E+i)-q_0(E+i)|\le C\Delta(E+i).
\end{equation}

\subsection*{Step 2: the uniform normal form}
For a general $\zeta\in\C^+$, the exact resolvent expansion gives
\begin{equation}\label{eq:normal-form}
 H_r(q_0+u)=(q_0^{-2}-r^2A_0) u+\alpha_r(u)u^2,
\end{equation}
where
\[
     A_0=\frac1N\Tr\Sigma P_0\Sigma P_0,\quad \alpha_r(u)=-\frac1{q_0^2(q_0+u)}
 +\frac{r^3}{N}\Tr\!\left[
 \Sigma^3(wI+q_0\Sigma)^{-2}
 (wI+(q_0+ru)\Sigma)^{-1}
 \right].
\]

Put $c_0=q_0^2A_0$ and
$B_0=N^{-1}\Tr\Sigma P_0\Sigma P_0^*$.  The fixed-point
equation for $q_0$ and the imaginary-part calculation used above give
\[
 |A_0|\le B_0,\qquad r^2|q_0|^2B_0\le1,
\]
Since the bound holds for any $r\in[0,1]$ and $c_0$ is independent of $r$, this also shows $|c_0|\leq 1$. Moreover, we have the identity
\begin{align*}
    |1-r^2c_0|^2&=(1-r^2+r^2(1-c_0))(1-r^2+r^2(1-\bar c_0))\\
    &=(1-r^2)^2+r^4|1-c_0|^2+2r^2(1-r^2)(1-\Re c_0)\\
    &=(1-r^2)^2+r^4|1-c_0|^2+r^2(1-r^2)(|1-c_0|^2+1-|c_0|^2)\\
    &=(1-r^2)^2+r^2|1-c_0|^2+r^2(1-r^2)(1-|c_0|^2),
\end{align*}
which implies
\begin{equation}\label{eq:noncancellation-scale}
 |1-r^2c_0|\asymp(1-r)+|1-c_0|.
\end{equation}
At $r=1$, writing $q=wm$, we have
\[
 wH_1(wm)=z_0(m)-z,
 \qquad
 1-c_0
 =\widetilde m_0(z)^2z_0'(\widetilde m_0(z)).
\]
Since $\bD$ is a regular domain, fixed-point stability implies
\begin{equation}\label{eq:fixed-point stability}
    |1-c_0|\asymp\sqrt{\kappa+\eta},\quad |z_0''(\widetilde m_0(z))|\geq c \text{ if $\kappa+\eta\leq c_*$}.
\end{equation}
In either case, we deduce
\begin{equation}\label{eq:beta-scale}
 |q_0^{-2}-r^2A_0|=|q_0|^{-2}|1-r^2c_0|\asymp \underbrace{(1-r)+\sqrt{\kappa+\eta}}_{\Gamma_r(z)}.
\end{equation}

Since $\bD$ is a regular domain, $|\alpha_r(0)|\leq C$; moreover, for $|u|$ sufficiently small, $\alpha_r(u)$ is uniformly Lipschitz in $u$ for some constant $C>0$ and all $r\in[0,1]$. We may therefore choose a sufficiently small constant $\rho>0$ such that
\[
    |\alpha_r(u)|\leq C,\forall |u|\leq \rho.
\]
Now suppose that, for some constant $\Gamma_*>0$, we have $\Gamma_r\geq \Gamma_*$. Then (\ref{eq:normal-form}) gives
\begin{equation}\label{eq:large gamma}
    |H_r(q_0+u)|\geq |q_0|^{-2}|1-r^2c_0||u|-|\alpha_r(u)||u|^2\geq c\Gamma_*|u|-C|u|^2\geq \frac{c\Gamma_*}{2}|u|.
\end{equation}

If $\Gamma_r$ is small, then both $1-r$ and $\sqrt{\kappa+\eta}$ must be small, placing us in a neighborhood of a regular edge. At $u=0$, we have
\begin{equation}\label{eq:curvature}
 \alpha_r(0)
 =-q_0^{-3}
 +\frac{r^3}{N}\Tr\!\left[
 \Sigma^3(wI+q_0\Sigma)^{-3}
 \right].
\end{equation}
Moreover,
\[
 \alpha_1(0)
 =\frac12w^{-3}z_0''(\widetilde m_0(\zeta)),
 \qquad
 |\alpha_r(0)-\alpha_1(0)|\le C(1-r).
\]
By (\ref{eq:fixed-point stability}), $|z_0''(\widetilde m_0(\zeta))|\geq c$. Since $\alpha_r(u)$ is uniformly Lipschitz in $u$ with constant $C>0$, it follows that there are fixed
$\Gamma_*,\rho>0$ such that
\begin{equation}\label{eq:alpha-scale}
 c\le|\alpha_r(u)|\le C
 \qquad
 \text{if }\Gamma_r(\zeta)<\Gamma_*\text{ and }|u|\leq\rho.
\end{equation}

Consequently, when $\Gamma_r<\Gamma_*$,
\begin{equation}\label{eq:factorization}
    c\le|\alpha_r(u)|\le C,
 \qquad
 |q_0^{-2}-r^2A_0|\asymp\Gamma_r(\zeta),
\end{equation}
for $|u|\le\rho$.

\subsection*{Step 3: vertical continuation}
We use a radial version of the vertical continuation argument of
\cite[Definitions~5.4 and~A.2 and Lemma~A.5]{knowles2017anisotropic};
for the full $L(z)$ formulation, see
\cite[Definition~C.1 and Lemma~C.3(b), equations~(87)--(97)]
{fan2022tracy}.
The required inputs are the top-height estimate
\eqref{eq:top-anchor} and the coefficient estimates established above.

Enumerate $L(z)$ from top to bottom as
$z_j=E+i\eta_j$, $0\leq j\leq M$, and set
\[
    u_j=q(z_j)-q_0(z_j),\qquad
    \Delta_j=\Delta(z_j),\qquad
    \Gamma_j=\Gamma_r(z_j).
\]
The Lipschitz assumption on $q$ and Lemma
\ref{lem:Herglotz-function} give
\[
    |u_j-u_{j-1}|
    \leq CN^2(\eta_{j-1}-\eta_j)
    \leq CN^{-3}.
\]
Moreover, $\Delta_j$ is nondecreasing and $\Gamma_j$ is
nonincreasing in $j$.

Write $a_j=q_0(z_j)^{-2}-r^2A_0(z_j)$, and let $\alpha_j(u)$
denote the coefficient $\alpha_r(u)$ evaluated at $z_j$.
By \eqref{eq:normal-form} and the preceding coefficient bounds,
\[
    H_{r,z_j}(q_0(z_j)+u)=a_ju+\alpha_j(u)u^2,
    \qquad
    |a_j|\asymp\Gamma_j,\qquad
    |\alpha_j(u)|\leq C
\]
for $|u|\leq\rho$.
Choose a fixed $c_1>0$ sufficiently small.
Whenever $|u_j|\leq\rho$ and $|u_j|\leq c_1\Gamma_j$, we have
\[
    |a_j+\alpha_j(u_j)u_j|\geq c\Gamma_j.
\]
The residual bound \eqref{eq:residual-on-lattice} therefore
implies the radial alternative
\begin{equation}\label{eq:radial-alternative}
    |u_j|\leq C_1\frac{\Delta_j}{\Gamma_j}
    \qquad\text{or}\qquad
    |u_j|\geq c_1\Gamma_j.
\end{equation}
Enlarge $C_1$ if necessary to cover the top-height estimate,
and choose a fixed $c_2>0$ such that $C_1c_2\leq c_1/4$.

We now proceed by induction. At height one,
$\Gamma_0\asymp1$ and $\Delta_0=o(1)$, so
\eqref{eq:top-anchor} gives
\[
    |u_0|\leq C_1\frac{\Delta_0}{\Gamma_0}.
\]
In particular, \eqref{eq:main-bound} holds at $j=0$.
Suppose it holds at $j-1$. Then
$|u_{j-1}|\leq C\sqrt{\Delta_{j-1}}=o(1)$, and the increment
bound gives $u_j=o(1)$. Thus all local expansions apply at
$z_j$ for sufficiently large $N$.

First suppose that $\Delta_j\leq c_2\Gamma_j^2$.
By monotonicity, the same inequality holds at every preceding
lattice point. Maintaining the first alternative in
\eqref{eq:radial-alternative}, we obtain
\[
    |u_{j-1}|
    \leq C_1\frac{\Delta_{j-1}}{\Gamma_{j-1}}
    \leq C_1\frac{\Delta_j}{\Gamma_j}
    \leq \frac{c_1}{4}\Gamma_j.
\]
Since $\Delta_j\geq N^{-1}$, this regime also implies
$\Gamma_j\geq c_2^{-1/2}N^{-1/2}$.
Consequently,
\[
    |u_j|\leq \frac{c_1}{4}\Gamma_j+CN^{-3}
    <c_1\Gamma_j
\]
for sufficiently large $N$.
The second alternative in \eqref{eq:radial-alternative}
is therefore excluded, and
\[
    |u_j|\leq C_1\frac{\Delta_j}{\Gamma_j}.
\]

Now suppose that $\Delta_j>c_2\Gamma_j^2$.
Since $\Delta_j\leq(\log N)^{-1}$, we have
\[
    \Gamma_j^2<(c_2\log N)^{-1},
\]
so $\Gamma_j<\Gamma_*$ for sufficiently large $N$.
The lower bound in \eqref{eq:alpha-scale} is therefore
available. The normal form and the residual estimate give
\[
    c|u_j|^2
    \leq |H_{r,z_j}(q_0(z_j)+u_j)|+|a_ju_j|
    \leq \Delta_j+C\Gamma_j|u_j|.
\]
Solving this quadratic inequality yields
\[
    |u_j|\leq C\bigl(\Gamma_j+\sqrt{\Delta_j}\bigr)
    \leq C\sqrt{\Delta_j}.
\]
By monotonicity, this regime persists at all subsequent
lattice points.

Combining the two regimes gives
\[
    |u_j|
    \leq C\frac{\Delta_j}{\Gamma_j+\sqrt{\Delta_j}}.
\]
This closes the induction and proves \eqref{eq:main-bound}.
\end{proof}

\subsection{Basic identities}
We now lay out the ingredients needed to prove Proposition \ref{prop:averaged zig}. We closely follow the method of \cite[Theorem 2.5, Section 4]{fan2026anisotropic}. Throughout this section, $X\in\R^{n\times N}$ denotes a matrix satisfying Assumptions \ref{ass:A1} and \ref{ass:A2}.
\begin{defi}[Minors]
For any $S\subseteq[N]$, let $X^{(S)}\in\R^{n\times N}$ be the matrix with columns
\[
    X^{(S)}\e_i=\begin{cases}
        \x_i & \text{if $i\notin S$}\\
        0 & \text{otherwise}.
    \end{cases}
\]
For a regular domain $\bD$, $z\in\bD$, $w=\sqrt{z}\in\C^+$, and $h\geq 0$, let 
\[
    \cR_h^{(S)}(z)=\begin{bmatrix}
        -Z_h & N^{-1/2}X^{(S)}\\
        N^{-1/2}X^{(S)*} & -\beta_h I_N
    \end{bmatrix}^{-1},\quad \hat\cR_h^{(S)}=e^{h/2}\cR_h^{(S)},\quad \hat Z_h=e^{-h/2}Z_h,\quad \hat\beta_h=e^{-h/2}\beta_h,
\]
\[
    Q_h^{(S)}(z)=[\hat\cR_h^{(S)}(z)]_{22}=e^{h/2}(N^{-1}X^{(S)*}Z_h^{-1}X^{(S)}-\beta_hI_N)^{-1},\quad q^{(S)}(z)=N^{-1}\Tr Q_h^{(S)}(z),
\]
\[
    G_h^{(S)}(z)=[\hat\cR_h^{(S)}(z)]_{11}=e^{h/2}(\beta_h^{-1}N^{-1}X^{(S)}X^{(S)*}-Z_h)^{-1},\quad s^{(S)}(z)=N^{-1}\Tr \Sigma G_h^{(S)}(z).
\]
The two identities follow from the Schur complement formula. We also define their deterministic equivalents
\[
    q_0(z)=w\widetilde m_0(z)=N^{-1}\Tr [M_*]_{22},\quad s_0(z)=N^{-1}\Tr\Sigma(-wI_n-q_0(z)\Sigma)^{-1}=N^{-1}\Tr\Sigma[M_*]_{11}.
\]
We often suppress the subscript $h$ and the parameter $z$ to avoid notational clutter.
\end{defi}

We begin with several basic identities that follow from the Schur complement formula.
\begin{lemma}[Resolvent identities]\label{lem:resolvent identities}
For any $S\subseteq[N]$:
\begin{enumerate}[label=(\alph*)]
    \item For any distinct $i,j\notin S$,
    \[
        Q_{ii}^{(S)}=-\frac{1}{\hat\beta+e^{-h}N^{-1}\x_i^*G^{(iS)}\x_i},\quad Q_{ij}^{(S)}=e^{-h}Q_{ii}^{(S)}Q_{jj}^{(iS)}\cdot N^{-1}\x_i^*G^{(ijS)}\x_j.
    \]
    \item For any $i,j,k\notin S$ (including $i=j$) with $k\notin\{i,j\}$,
    \[
        Q_{ij}^{(S)}=Q_{ij}^{(kS)}+\frac{Q_{ik}^{(S)}Q_{kj}^{(S)}}{Q_{kk}^{(S)}},\quad \frac{1}{Q_{ii}^{(S)}}=\frac{1}{Q_{ii}^{(kS)}}-\frac{Q_{ik}^{(S)}Q_{ki}^{(S)}}{Q_{ii}^{(kS)}Q_{ii}^{(S)}Q_{kk}^{(S)}}.
    \]
    \item (Sherman-Morrison) For any $i\notin S$,
    \[
        G^{(S)}=G^{(iS)}-\frac{e^{-h}N^{-1}G^{(iS)}\x_i\x_i^*G^{(iS)}}{\hat\beta+e^{-h}N^{-1}\x_i^*G^{(iS)}\x_i}=G^{(iS)}+e^{-h}Q_{ii}^{(S)}\cdot N^{-1}G^{(iS)}\x_i\x_i^*G^{(iS)}.
    \]
    \item (Ward identity) We have
    \[
        \|G^{(S)}\|_F^2\leq C\frac{\Im\Tr G^{(S)}}{\eta},\quad \|\Sigma^{1/2}G^{(S)}\|_F^2\leq C\frac{N\Im s^{(S)}}{\eta}.
    \]
    \item (Weighted-unweighted trace relation) We have
    \[
        N^{-1}\Tr[G-(-wI_n-q_0\Sigma)^{-1}]=w^{-1}\hat\beta_h(q-q_0)-w^{-1}(1-e^{-h})q_0(s-s_0).
    \]
\end{enumerate}
\end{lemma}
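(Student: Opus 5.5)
The plan is to derive all five identities from the block structure of $\cL(X^{(S)};Z_h,\beta_h)$ and the Schur complement formula. The factors $e^{\pm h/2}$ are then tracked through the rescalings $\hat\cR=e^{h/2}\cR$, $\hat Z=e^{-h/2}Z$ and $\hat\beta=e^{-h/2}\beta$. The formulas $Q^{(S)}=[\hat\cR^{(S)}]_{22}$ and $G^{(S)}=[\hat\cR^{(S)}]_{11}$ stated in the definition come from the $2\times2$ block inversion. In particular, $Q^{(S)}$ is $e^{h/2}$ times the inverse of the $N\times N$ matrix $T^{(S)}:=N^{-1}X^{(S)*}Z_h^{-1}X^{(S)}-\beta_h I_N$. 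Removing index $k$ from $S^c$ (i.e.\ passing to $X^{(kS)}$) replaces $T^{(S)}$ by a matrix whose $k$-th row and column are $-\beta_h\e_k$, with the same principal submatrix on the remaining indices. Hence $Q^{(kS)}$ restricted to indices $\neq k$ is $e^{h/2}$ times the inverse of that principal submatrix, and $Q^{(kS)}_{kk}=-e^{h/2}/\beta_h$.

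For (a), I will apply the Schur complement to the $(i,i)$ entry of the full linearization. I take $d=-\beta_h$, off-diagonal row $c=(N^{-1/2}\x_i^*,0,\dots)$, and the remaining block inverse equal to $\cR^{(iS)}$, whose $11$-block is $e^{-h/2}G^{(iS)}$. This gives $[\cR^{(S)}]_{ii}=(-\beta_h-e^{-h/2}N^{-1}\x_i^*G^{(iS)}\x_i)^{-1}$, and multiplying by $e^{h/2}$ yields $Q_{ii}^{(S)}=-(\hat\beta+e^{-h}N^{-1}\x_i^*G^{(iS)}\x_i)^{-1}$. For the off-diagonal formula, I will apply the Schur complement to the $2\times2$ block indexed by $\{i,j\}$. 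This gives $Q_{ij}=-Q_{ii}(\text{off-diagonal entry of }T)[Q^{(i)}]_{jj}$ up to rescaling, where the off-diagonal entry is read in the minor with both $i,j$ removed, which produces $G^{(ijS)}$. The signs and powers of $e^{-h}$ have to be checked against the claimed form. Part (b) is the standard pair of principal-minor identities for inverses of a symmetric matrix, applied to $T^{(S)}$ with $k$ deleted. I will use the observation above to identify the minor's inverse with $Q^{(kS)}$ on indices $\neq k$; the second identity follows from the first at $i=j$ by algebra. Part (c) is Sherman--Morrison applied to $G^{(S)}=e^{h/2}(\beta_h^{-1}N^{-1}X^{(S)}X^{(S)*}-Z_h)^{-1}$ with rank-one update $\beta_h^{-1}N^{-1}\x_i\x_i^*$, after which (a) is substituted for the denominator.

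For (d), I will use $\Im\cR=\cR\,\Im\cD_h\,\cR^*$ together with Lemma~\ref{lem:basic-estimates}(a). Restricted to the $11$-block, this gives $\Im G\ge c\,e^{h}\cdot e^{-h/2}\eta\, G G^*$ up to constants, and with the $\cW$-weighted lower bound it gives $\Im G\ge c\,\eta\, G\Sigma G^*$. Taking traces yields both Ward bounds; the $\Sigma$-weighted one uses $N^{-1}\Tr\Sigma\,\Im G=\Im s$ after cycling $\Sigma^{1/2}$. The step needing the most care is (e). I plan to take traces of the diagonal blocks of $\cL\cR=I$, namely $-Z\cR_{11}+N^{-1/2}X\cR_{21}=I_n$ and $N^{-1/2}X^*\cR_{12}-\beta\cR_{22}=I_N$. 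Since $\cR$ is complex symmetric, $\Tr X\cR_{21}=\Tr X^*\cR_{12}$, and subtracting the two traced identities gives $\Tr Z_h\cR_{11}=\beta_h\Tr\cR_{22}+N-n$. Inserting $Z_h=e^{h/2}wI+(e^{h/2}-e^{-h/2})q_0\Sigma$ and rescaling turns this into
\[
wN^{-1}\Tr G+(1-e^{-h})q_0 s=\hat\beta q+e^{-h/2}(N-n)/N.
\]
I expect the same identity for $M_*$, with $q_0,s_0$ in place of $q,s$ and $\beta_0=w$ at $h=0$. This should follow directly from the MDE form $P=-(Z+q\Sigma)^{-1}$, $q=-(\beta+s)^{-1}$ at the terminal source, using (\ref{eq:fixed-root}). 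Subtracting the two relations, the $\hat\beta q_0$ versus $w q_0$ mismatch has to be absorbed using $s_0=-w-1/q_0$. This bookkeeping of the $h$-dependent constants is the main obstacle, and I will verify it by checking the $h=0$ case first and then tracking the exact $e^{-h}$ coefficients.
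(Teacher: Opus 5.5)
Your route is the paper's. You use Schur complements for (a)--(b); working with the principal minors of $T^{(S)}$ in (b) is an equivalent variant of applying the Schur complement to $e^{h/2}\cR^{(S)}$. You use Sherman--Morrison for (c), $\Im\cR=\cR\,\Im\cD_h\,\cR^*$ for (d), and traces of the diagonal blocks of $\cL\cR=I$ with complex symmetry for (e). Parts (a)--(c) are fine, but two steps fail as written.

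\textbf{The constant in (e) is wrong.} We have $Z_h\cR_{11}=(e^{h/2}\hat Z_h)(e^{-h/2}G)=\hat Z_hG$ and $\beta_h\cR_{22}=\hat\beta_hQ$. The rescalings therefore cancel exactly, $N-n$ is untouched, and the traced identity is
\[
wN^{-1}\Tr G+(1-e^{-h})q_0s=\hat\beta_hq+1-\frac nN ,
\]
not $\hat\beta q+e^{-h/2}(N-n)/N$. Your terminal identity $wN^{-1}\Tr(-wI_n-q_0\Sigma)^{-1}=wq_0+1-n/N$ is correct: trace $(wI_n+q_0\Sigma)(-wI_n-q_0\Sigma)^{-1}=-I_n$ and use $q_0(w+s_0)=-1$. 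With $\hat\beta_h=w+(1-e^{-h})s_0$, the subtraction then closes exactly. With your constant it leaves the residual $(e^{-h/2}-1)(1-n/N)$. This is nonzero for $h>0$ and $n\neq N$, and your planned check at $h=0$ cannot detect it.

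\textbf{The $\Sigma$-weighted bound in (d) does not follow from your inequality.} Tracing $\Im G\ge c\eta\,G\Sigma G^*$ and using $G^\top=G$ gives $c\eta\|\Sigma^{1/2}G\|_F^2=c\eta\Tr G\Sigma G^*\le\Tr\Im G$. The right-hand side is $\Tr\Im G$, not $\Tr\Sigma\,\Im G=N\Im s$, and no cycling converts one into the other. The claimed bound is stronger, since $\Tr\Sigma\,\Im G$ can be much smaller than $\Tr\Im G$ when $\Sigma$ has small eigenvalues. The weighted lower bound on $\Im\cD_h$ is not what you need. Instead, conjugate the unweighted inequality $\Im G\ge c\eta\,GG^*$ by $\Sigma^{1/2}$ and take traces, which gives $N\Im s\ge c\eta\Tr G^*\Sigma G=c\eta\|\Sigma^{1/2}G\|_F^2$. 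The paper does exactly this by tracing against $\diag(\Sigma,0)$.

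\textbf{The unweighted inequality carries no factor $e^{h/2}$.} Since $\Im\cD_h\ge ce^{h/2}\eta$, we get $\Im\hat\cR=e^{-h/2}\hat\cR\,\Im\cD_h\,\hat\cR^*\ge c\eta\,\hat\cR\hat\cR^*$, hence $\Im G\ge c\eta\,GG^*$. Your version with $e^{h/2}$ is false for large $h$. This is visible already for $S=[N]$, where $G=-\hat Z_h^{-1}$ and $\Im\hat Z_h$ stays bounded in $h$. This slip does not, however, affect the stated conclusion.
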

\begin{proof}
Parts (a) and (b) follow by applying the Schur complement formula to the linearized resolvent $e^{h/2}\cR_h^{(S)}(z)$. Part (c) is the standard Sherman--Morrison formula for a rank-one update of the matrix inverse $G^{(S)}$.

For part (d), Lemma \ref{lem:basic-estimates} gives
\begin{align*}
    \|G^{(S)}\|_F^2&=\Tr G^{(S)*}G^{(S)}\leq\Tr\hat\cR^{(S)*}\diag(I_n,0I_N)\hat\cR^{(S)}\\
    &=\Tr\hat\cR^{(S)}\hat\cR^{(S)*}\diag(I_n,0I_N)\\
    &\leq C\frac{\Tr\Im\hat\cR^{(S)}\diag(I_n,0I_N)}{\eta}=C\frac{\Tr\Im G^{(S)}}{\eta},
\end{align*}
and the same argument shows $\|\Sigma^{1/2}G^{(S)}\|_F^2\leq CN\Im s^{(S)}/\eta$.

For part (e), take the traces of the upper-left and lower-right blocks of
\[
    e^{-h/2}\begin{bmatrix}
        -Z_h & N^{-1/2}X\\
        N^{-1/2}X^* & -\beta_hI_N
    \end{bmatrix}\hat\cR_h=I_{n+N}.
\]
We have
\[
    -\Tr\hat Z_hG+e^{-h/2}N^{-1/2}\Tr X[\hat\cR_h]_{21}=n,\quad e^{-h/2}N^{-1/2}\Tr X^*[\hat\cR_h]_{12}-\hat\beta_h\Tr Q=N.
\]
Since $X$ is real and $\hat\cR_h$ is complex symmetric, subtracting the two equations and applying $\hat Z_h=wI_n+(1-e^{-h})q_0\Sigma$ gives
\[
    wN^{-1}\Tr G+(1-e^{-h})q_0s-\hat\beta_hq=1-\frac{n}{N}.
\]
Subtracting the corresponding deterministic identity gives
\begin{align*}
    &wN^{-1}\Tr [G-(-wI_n-q_0\Sigma)^{-1}]+(1-e^{-h})q_0(s-s_0)-\hat\beta_h(q-q_0)\\
    &=wN^{-1}\Tr(wI_n+q_0\Sigma)^{-1}+wq_0+1-\frac{n}{N}=0.
\end{align*}
Here, the last equality follows from the definitions of $\hat\beta_h,w,q_0$ and the deformed MP fixed-point relation (\ref{eq:MP}). Rearranging completes the proof.
\end{proof}

\begin{lemma}\label{lem:exact fixed point}
For any $z\in\C^+$ and $h\geq 0$, we have
\[
    s-\underbrace{N^{-1}\Tr\Sigma(-\hat Z-e^{-h}q\Sigma)^{-1}}_{F(q)}=-\frac{e^{-h}}{N}\sum_{i=1}^NQ_{ii}d_i.
\]
Here,
\[
    d_i=N^{-1}\x_i^*G^{(i)}(\hat Z+e^{-h}q\Sigma)^{-1}\Sigma\x_i-N^{-1}\Tr\Sigma G(\hat Z+e^{-h}q\Sigma)^{-1}\Sigma.
\]
Moreover, using the fixed-point equation from (\ref{eq:fixed-root}) with $e^{-h}$ in place of $r$, we have
\begin{align*}
    H(q)&=-\frac{1}{q}-\hat{\beta}-e^{-h}F(q)\\
    &=-\frac{e^{-h}}{q}\cdot \frac{1}{N}\sum_{i=1}^N(e^{-h}Q_{ii}^2N^{-2}\x_i^*G^{(i)}\Sigma G^{(i)}\x_i-Q_{ii}N^{-1}\Tr(\x_i\x_i^*-\Sigma)G^{(i)})-\frac{e^{-2h}}{N}\sum_{i=1}^NQ_{ii}d_i.
\end{align*}
\end{lemma}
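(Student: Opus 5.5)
We will derive both identities from the Sherman--Morrison formula of Lemma~\ref{lem:resolvent identities}(c) together with a resolvent expansion of $G$ around the deterministic-like matrix $(-\hat Z-e^{-h}q\Sigma)^{-1}$. Set $T:=(\hat Z+e^{-h}q\Sigma)^{-1}$, so that $F(q)=-N^{-1}\Tr\Sigma T$. From the definition of $G=G_h$ we have $G^{-1}=e^{-h/2}(\beta_h^{-1}N^{-1}XX^*-Z_h)=\hat\beta^{-1}e^{-h}N^{-1}XX^*-\hat Z$; it is cleaner to work with the linearized form $-\hat Z G+e^{-h/2}N^{-1/2}X[\hat\cR]_{21}=I_n$, i.e. $-\hat ZG+e^{-h}N^{-1}\sum_i(\x_i\x_i^*G)\cdot(\ldots)$. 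Concretely, by Lemma~\ref{lem:resolvent identities}(a),(c) applied column by column, $e^{-h/2}N^{-1/2}X[\hat\cR]_{21}=-e^{-h}N^{-1}\sum_i Q_{ii}\x_i\x_i^*G^{(i)}$. Hence
\[
I_n=-\hat ZG-\frac{e^{-h}}{N}\sum_{i=1}^N Q_{ii}\x_i\x_i^*G^{(i)} .
\]
Adding and subtracting $e^{-h}q\Sigma G=e^{-h}N^{-1}\sum_iQ_{ii}\Sigma G$, multiplying on the left by $-T$ and then by $\Sigma$, and taking $N^{-1}\Tr$, we get
\[
s-F(q)=-\frac{e^{-h}}{N}\sum_i Q_{ii}\Big(N^{-1}\x_i^*G^{(i)}\Sigma T\x_i-N^{-1}\Tr\Sigma T\Sigma G\Big),
\]
using cyclicity of the trace and that $\Sigma,T$ commute (both are functions of $\Sigma$ since $\hat Z\in\operatorname{span}(I_n,\Sigma)$). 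This is exactly the first claim with the $d_i$ as stated, after reordering factors ($T\Sigma=\Sigma T$).

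For the second identity, start from the diagonal relation $1/Q_{ii}=-\hat\beta-e^{-h}N^{-1}\x_i^*G^{(i)}\x_i$ in Lemma~\ref{lem:resolvent identities}(a). Write $N^{-1}\x_i^*G^{(i)}\x_i=N^{-1}\Tr(\x_i\x_i^*-\Sigma)G^{(i)}+s^{(i)}$ and use Sherman--Morrison (c) to replace $s^{(i)}=s-e^{-h}Q_{ii}N^{-2}\x_i^*G^{(i)}\Sigma G^{(i)}\x_i$. Multiplying by $Q_{ii}$ and averaging over $i$ gives
\[
1=-\hat\beta q-e^{-h}qs-\frac{e^{-h}}{N}\sum_iQ_{ii}\Big(N^{-1}\Tr(\x_i\x_i^*-\Sigma)G^{(i)}-e^{-h}Q_{ii}N^{-2}\x_i^*G^{(i)}\Sigma G^{(i)}\x_i\Big).
\]
Dividing by $q$ yields $-1/q-\hat\beta-e^{-h}s=$ (the first sum times $e^{-h}/q$, with signs as in the statement). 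Finally substitute $s=F(q)-e^{-h}N^{-1}\sum_iQ_{ii}d_i$ from the first part, which produces $H(q)$ on the left and the extra term $-e^{-2h}N^{-1}\sum_iQ_{ii}d_i$ on the right; the identification $H=-1/q-\hat\beta-e^{-h}F$ matches \eqref{eq:maps} with $r=e^{-h}$, since $\hat Z_h=wI+(1-e^{-h})q_0\Sigma=\widehat Z_r$ and $\hat\beta_h=w+(1-e^{-h})s_0=\widehat\beta_r$ by \eqref{eq:overview-characteristic} and \eqref{eq:fixed-root}.

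The only delicate step is the first: correctly expressing the off-diagonal block $X[\hat\cR]_{21}$ through $Q_{ii}\x_i^*G^{(i)}$ with the right powers of $e^{-h}$ and $N$, and tracking that $T$ commutes with $\Sigma$ so the trace rearrangement lands exactly on the stated $d_i$; everything else is bookkeeping with Lemma~\ref{lem:resolvent identities}.
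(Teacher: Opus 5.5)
Your proposal is correct and follows essentially the same route as the paper. The first identity comes from the upper-left block of the defining identity for $\hat\cR$, using $e^{-h/2}N^{-1/2}X[\hat\cR]_{21}=-e^{-h}N^{-1}\sum_iQ_{ii}\x_i\x_i^*G^{(i)}$, multiplying by $(\hat Z+e^{-h}q\Sigma)^{-1}\Sigma$ and taking the trace; the second comes from Lemma~\ref{lem:resolvent identities}(a),(c) combined with the first identity. The only difference is cosmetic: you average $1=Q_{ii}\cdot Q_{ii}^{-1}$ over $i$, whereas the paper expands $q-(-\hat\beta-e^{-h}s)^{-1}$ and factors out $\hat\beta+e^{-h}s$, and both give the same expression.
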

\begin{proof}
We begin by proving the formula for $s-F(q)$. We have the matrix identity
\[
    \begin{bmatrix}
        -\hat Z & e^{-h/2}N^{-1/2}X\\
        e^{-h/2}N^{-1/2}X^* & -\hat\beta
    \end{bmatrix}\hat\cR=I_{n+N}.
\]
The upper-left blocks of the two sides give
\[
    I_n=-\hat ZG+e^{-h/2}N^{-1/2}\sum_{i=1}^N\x_i\e_i^*[\hat\cR]_{21}=-\hat ZG-\frac{e^{-h}}{N}\sum_{i=1}^NQ_{ii}\x_i\x_i^*G^{(i)},
\]
The last equality follows by applying the Schur complement formula to $\hat\cR$ and using $\e_i^*[\hat\cR]_{21}=-e^{-h/2}N^{-1/2}Q_{ii}\x_i^*G^{(i)}$. Multiplying both sides on the left by $B=(\hat Z+e^{-h}q\Sigma)^{-1}\Sigma$ and taking the trace, while noting that both $\hat Z$ and $B$ are spectral functions of $\Sigma$, gives
\begin{align*}
    -F(q)&=-N^{-1}\Tr G\hat Z(\hat Z+e^{-h}q\Sigma)^{-1}\Sigma-\frac{e^{-h}}{N^2}\sum_{i=1}^NQ_{ii}\x_i^* G^{(i)}(\hat Z+e^{-h}q\Sigma)^{-1}\Sigma\x_i\\
    &=-s+e^{-h}qN^{-1}\Tr\Sigma G(\hat Z+e^{-h}q\Sigma)^{-1}\Sigma-\frac{e^{-h}}{N^2}\sum_{i=1}^NQ_{ii}\x_i^* G^{(i)}(\hat Z+e^{-h}q\Sigma)^{-1}\Sigma\x_i.
\end{align*}
Since $q=N^{-1}\sum Q_{ii}$, rearranging proves the first identity. For $H(q)$, Lemma \ref{lem:resolvent identities} gives
\begin{align*}
    &q-(-\hat\beta-e^{-h}s)^{-1}=\frac{1}{N}\sum_{i=1}^N(-\hat\beta-e^{-h}N^{-1}\x_i^*G^{(i)}\x_i)^{-1}-(-\hat\beta-e^{-h}s)^{-1}\\
    &=-\frac{e^{-h}}{(\hat\beta+e^{-h}s)}\cdot\frac{1}{N}\sum_{i=1}^NQ_{ii}(N^{-1}\x_i^*G^{(i)}\x_i-s)\\
    &=-\frac{e^{-h}}{(\hat\beta+e^{-h}s)}\cdot\frac{1}{N}\sum_{i=1}^NQ_{ii}(N^{-1}\Tr\Sigma G^{(i)}-s+N^{-1}\Tr(\x_i\x_i^*-\Sigma)G^{(i)})\\
    &=-\frac{e^{-h}}{(\hat\beta+e^{-h}s)}\cdot\frac{1}{N}\sum_{i=1}^N(-e^{-h}Q_{ii}^2N^{-2}\x_i^*G^{(i)}\Sigma G^{(i)}\x_i+Q_{ii}N^{-1}\Tr(\x_i\x_i^*-\Sigma)G^{(i)}).
\end{align*}
Here, the last equality follows from Lemma \ref{lem:resolvent identities} and the identity
\[
    s=N^{-1}\Tr\Sigma G=N^{-1}\Tr\Sigma G^{(i)}+e^{-h}Q_{ii}N^{-2}\x_i^*G^{(i)}\Sigma G^{(i)}\x_i.
\]
Thus, we may write
\begin{align*}
    H(q)&=-q^{-1}-\hat\beta-e^{-h}F(q)=-(q^{-1}+\hat\beta+e^{-h}s)+e^{-h}s-e^{-h}F(q)\\
    &=-\frac{(\hat\beta+e^{-h}s)}{q}(q+(\hat\beta+e^{-h}s)^{-1})+e^{-h}(s-F(q)).
\end{align*}
Combining this identity with the two preceding formulas completes the proof.
\end{proof}

\subsection{Resolvent bounds}
For each $z\in\C^+$, define the error-control parameters
\[
    \Gamma:=\max_{i,j=1}^N|Q_{ij}-q_0\1\{i=j\}|,\quad \Theta:=|q-q_0|,\quad \Psi_\Theta:=\sqrt{\frac{\Im\widetilde m_0+\Theta}{N\eta}}+\frac{1}{N\eta}.
\]
For a fixed constant $C_0>0$, define the ($z$-dependent) event
\[
    \Xi:=\{\Gamma\leq N^{-\tau/C_0}\}.
\]

\begin{lemma}[Global bounds]\label{lem:global-bounds}
Suppose Assumptions \ref{ass:A1} and \ref{ass:A2} hold, $\bD$ is a regular domain, and $h\geq 0$. Then, for any $i\in[N]$ and any $z=E+i\eta\in\bD$, we have
\[
    |q|\leq C\eta^{-1},\quad |Q_{ii}|\leq C\eta^{-1},\quad N^{-1/2}\|G^{(i)}\|_F\leq C\eta^{-1},
\]
\[
    \|(\hat Z+e^{-h}q\Sigma)^{-1}\|_\op\leq C\eta^{-1},\quad \|(\hat Z+e^{-h}q^{(i)}\Sigma)^{-1}\|_\op\leq C\eta^{-1}.
\]
Furthermore, uniformly over $i\in[N]$, $h\geq 0$, and $z\in\bD$,
\[
    |q|^{-1}\prec\eta^{-1},\quad |q-q^{(i)}|\prec N^{-1}\eta^{-5}.
\]
\end{lemma}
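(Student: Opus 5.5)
The deterministic bounds come from the lower bound on $\Im\cD_h$ in Lemma~\ref{lem:basic-estimates}(a), after undoing the normalization $\hat\cR=e^{h/2}\cR$. The two stochastic bounds come from a Herglotz/imaginary-part argument and from the rank-one identities of Lemma~\ref{lem:resolvent identities}.

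\textbf{Deterministic bounds.} By Lemma~\ref{lem:basic-estimates}(b),
\[
\|\hat\cR_h^{(S)}\|_\op=e^{h/2}\|\cR_h^{(S)}\|_\op\le e^{h/2}\|(\Im\cD_h)^{-1}\|_\op\le C\eta^{-1}.
\]
This applies to every minor, since removing columns does not change $\cD_h$. Consequently $|Q_{ii}|\le C\eta^{-1}$ and $|q|\le N^{-1}\sum_i|Q_{ii}|\le C\eta^{-1}$. Moreover,
\[
N^{-1/2}\|G^{(i)}\|_F\le N^{-1/2}\sqrt n\,\|G^{(i)}\|_\op\le C\eta^{-1}.
\]

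For $(\hat Z+e^{-h}q\Sigma)^{-1}$, I use that $\hat Z=wI+(1-e^{-h})q_0\Sigma$ has $\Im\hat Z\ge c\eta I$, as in Lemma~\ref{lem:basic-estimates}(a) with the factor $e^{h/2}$ removed. I also need $\Im q\ge0$. This follows from $\Im\hat\cR\ge0$, which in turn follows from $\Im\hat\cR=\hat\cR(\Im\hat\cD)\hat\cR^*$. The same holds for $q^{(i)}$. Hence
\[
\Im(\hat Z+e^{-h}q\Sigma)\ge c\eta I,
\]
and, since both matrices are normal spectral functions of $\Sigma$, the operator norm of the inverse is at most $C\eta^{-1}$.

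\textbf{Lower bound on $|q|$.} It suffices to show $\Im q\succ\eta$ in the appropriate sense, since $|q|\ge\Im q$. I write
\[
\Im Q_{ii}=\e_{n+i}^*\hat\cR\,\Im\hat\cD\,\hat\cR^*\e_{n+i}\ge c\eta\|\hat\cR^*\e_{n+i}\|_2^2\ge c\eta|Q_{ii}|^2,
\]
where the first equality uses that $\hat\cR$ is complex symmetric. On the other hand, $|Q_{ii}|^{-1}=|\hat\beta+e^{-h}N^{-1}\x_i^*G^{(i)}\x_i|$. Here $|\hat\beta|\le C$ and
\[
N^{-1}|\x_i^*G^{(i)}\x_i|\le N^{-1}\|\x_i\|_2^2\,\|G^{(i)}\|_\op\prec\eta^{-1}.
\]
The bound $\|\x_i\|_2^2\prec n$ follows from Assumption~\ref{ass:A2} with $A=I$. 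This gives $|Q_{ii}|\succ\eta$, so $\Im Q_{ii}\succ\eta^3$, which is too weak. To do better, I average first: Cauchy--Schwarz gives
\[
\Im q\ge c\eta\,N^{-1}\sum_i|Q_{ii}|^2\ge c\eta|q|^2,
\]
but this gives only an upper bound on $|q|$. Instead I use the vertical-line representation: $q$ is $w$ times a Stieltjes-type quantity. Alternatively, I use the identity $-1/q\approx\hat\beta+e^{-h}s$, averaged. A simpler and adequate route follows from $|Q_{ii}|^{-1}\prec\eta^{-1}$ and $\Im Q_{ii}\ge c\eta|Q_{ii}|^2$:
\[
\Im q\ge c\eta N^{-1}\sum_i|Q_{ii}|^2\succ\eta\cdot\eta^2.
\]
I expect the stated $\eta^{-1}$ rate to come instead from using $\Im$ of $-1/Q_{ii}$. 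Indeed,
\[
\Im(-1/Q_{ii})=\Im\hat\beta+e^{-h}N^{-1}\Im\x_i^*G^{(i)}\x_i\ge\Im\hat\beta\ge c\eta,
\]
and $|Q_{ii}|\ge \Im Q_{ii}\ge |Q_{ii}|^2\Im(-1/Q_{ii})$, which, combined with $|Q_{ii}|\succ\eta$, gives $\Im Q_{ii}\succ\eta\cdot\eta^2$ again. So the point needing care, which I expect to be the main obstacle, is obtaining $|q|^{-1}\prec\eta^{-1}$. My plan is to obtain it from the scalar relation
\[
-1/q=\hat\beta+e^{-h}s+\mathrm{error},
\]
bounding $|\hat\beta+e^{-h}s|\le C+C\|G\|_\op\le C\eta^{-1}$. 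The error is controlled through the expansion in Lemma~\ref{lem:exact fixed point}, whose terms are polynomially bounded.

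\textbf{Bound on $|q-q^{(i)}|$.} By Lemma~\ref{lem:resolvent identities}(b),
\[
q-q^{(i)}=N^{-1}Q_{ii}+N^{-1}\sum_{j\ne i}\frac{Q_{ji}Q_{ij}}{Q_{ii}}.
\]
I bound $\sum_j|Q_{ij}|^2\le\|\hat\cR\|_\op^2\le C\eta^{-2}$ and $|Q_{ii}|^{-1}\prec\eta^{-1}$ from above. This yields
\[
|q-q^{(i)}|\prec N^{-1}\bigl(\eta^{-1}+\eta^{-3}\bigr),
\]
which is within the claimed $N^{-1}\eta^{-5}$ since $\eta\le1$.
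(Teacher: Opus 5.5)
There is a genuine gap: your proposal does not prove $|q|^{-1}\prec\eta^{-1}$. The rest is fine. Your deterministic bounds match the paper's. Your bound on $|q-q^{(i)}|$, via $\sum_j|Q_{ij}|^2\le\|\hat\cR\|_\op^2$ and $|Q_{ii}|^{-1}\prec\eta^{-1}$, is correct, and simpler and sharper than the paper's route. One small omission there: $q^{(i)}=N^{-1}\Tr Q^{(i)}$ still contains $Q^{(i)}_{ii}=-\hat\beta^{-1}$, so your identity misses a term $N^{-1}\hat\beta^{-1}=O(N^{-1}\eta^{-1})$, which is harmless.

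For the lower bound on $|q|$, every explicit computation in your proposal yields only $\Im q\gtrsim\eta^3$. The fallback plan via $-1/q=\hat\beta+e^{-h}s+\mathrm{error}$ cannot work. By Lemma~\ref{lem:exact fixed point}, $q=-(1+e^{-h}\mathcal E)/(\hat\beta+e^{-h}s)$, where $\mathcal E$ is the averaged error. Bounding $|q|^{-1}$ from above therefore requires a \emph{lower} bound on $|1+e^{-h}\mathcal E|$. Polynomial upper bounds on the error terms give nothing of this kind. Those terms are small only at $\eta\asymp1$ or on the a priori event $\Xi$, and neither is available for an unconditional estimate uniform over $z\in\bD$.

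The fix is already in your first computation. In $\Im Q_{ii}\ge c\eta\|\hat\cR^*\e_{n+i}\|_2^2$ you used $\|\hat\cR^*\e_{n+i}\|_2\ge|Q_{ii}|$, which loses two powers of $\eta$. Use the smallest singular value instead: $\|\hat\cR^*\e_{n+i}\|_2\ge\|\hat\cR^{-1}\|_\op^{-1}$. Here $\hat\cR^{-1}=e^{-h/2}\cX-\hat\cD$, and $\hat\cD=\diag(\hat Z,\hat\beta I_N)$ is bounded on a regular domain. The operator-norm bound \cite[Lemma~3.8(b)]{fan2026anisotropic} then gives $\|\hat\cR^{-1}\|_\op\le\|\cX\|_\op+C\prec1$. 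Averaging over $i$ yields $|q|\ge\Im q\ge c\eta(\|\cX\|_\op+C)^{-2}$, hence $|q|^{-1}\prec\eta^{-1}$; this is exactly the paper's argument.

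Alternatively, you can finish your column-wise route without the operator-norm bound. Set $g=N^{-1}\x_i^*G^{(i)}\x_i$. From $\Im G^{(i)}\ge c\eta\,G^{(i)}G^{(i)*}$ and $\|\x_i\|_2^2\prec N$, you get $\Im g\ge cN^{-\eps}\eta|g|^2$ with overwhelming probability. Then
\[
\Im Q_{ii}=\frac{\Im\hat\beta+e^{-h}\Im g}{|\hat\beta+e^{-h}g|^2}\ge cN^{-\eps}\eta\,\frac{1+e^{-h}|g|^2}{1+e^{-2h}|g|^2}\ge cN^{-\eps}\eta .
\]
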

\begin{proof}
The first three bounds follow from $\|Q\|_\op\leq \|\hat\cR\|_\op,\|Q^{(i)}\|_\op\leq\|\hat\cR^{(i)}\|_\op\leq C\eta^{-1}$ by Lemma \ref{lem:basic-estimates}. To control $\|(\hat Z+e^{-h}q\Sigma)^{-1}\|_\op$, let $\sigma$ be any eigenvalue of $\Sigma$. Then
\[
    \Im w+(1-e^{-h})\sigma\Im q_0+e^{-h}\sigma\Im q\geq \Im w\geq C\eta,
\]
because $\Im q_0,\Im q\geq 0$. Hence $\|(\hat Z+e^{-h}q\Sigma)^{-1}\|_\op\leq C\eta^{-1}$, and similarly $\|(\hat Z+e^{-h}q^{(i)}\Sigma)^{-1}\|_\op\leq C\eta^{-1}$. To bound $|q|^{-1}$, observe that
\begin{align*}
    |q|&\geq \Im q=N^{-1}\Tr\Im Q=N^{-1}\Tr\diag(0I_n,I_N)\Im\hat\cR\diag(0I_n,I_N)\\
    &=N^{-1}\Tr\diag(0I_n,I_N)\hat\cR\diag(\Im\hat Z,\Im\hat\beta I_N)\hat\cR^*\diag(0I_n,I_N)\\
    &\geq C(\|\cX\|_\op+\|\hat Z\|_\op+|\hat\beta|)^{-2}\eta\geq C(\|\cX\|_\op+1)^{-2}\eta.
\end{align*}
By \cite[Lemma 3.8]{fan2026anisotropic}, $\|\cX\|_\op\prec 1$, and therefore $|q|^{-1}\prec \eta^{-1}$. For $|q-q^{(i)}|$, Lemma \ref{lem:resolvent identities} gives
\begin{align*}
    q-q^{(i)}&=N^{-1}\sum_{k\neq i}Q_{kk}-N^{-1}\sum_{k\neq i}Q_{kk}^{(i)}+N^{-1}Q_{ii}+N^{-1}\hat\beta^{-1}\\
    &=N^{-1}\sum_{k\neq i}\frac{Q_{ki}Q_{ik}}{Q_{ii}}+N^{-1}Q_{ii}+N^{-1}\hat\beta^{-1}\\
    &=N^{-1}\sum_{k\neq i}e^{-2h}Q_{kk}Q_{ii}^{(k)}Q_{kk}^{(i)}(N^{-1}\x_i^*G^{(ik)}\x_k)^2+N^{-1}Q_{ii}+N^{-1}\hat\beta^{-1}\\
    &\prec \eta^{-3}(N^{-1}\|G^{(ik)}\|_F)^2+N^{-1}\eta^{-1}\prec N^{-1}\eta^{-5}.
\end{align*}
Here, we used $\hat\beta^{-1}\leq C\eta^{-1}$. The preceding argument gives $|Q_{kk}^{(i)}|,|Q_{ii}^{(k)}|\leq C\eta^{-1}$ and $N^{-1}\|G^{(ik)}\|_F\leq CN^{-1/2}\eta^{-1}$; we also used Assumption \ref{ass:A2}. This completes the proof.
\end{proof}

\begin{lemma}[Local bounds]\label{lem:local bounds}
Suppose Assumptions \ref{ass:A1} and \ref{ass:A2} hold and $\bD$ is a regular domain. Then, uniformly over all $z\in\bD$, all $h\geq 0$, and all $i\neq j\in[N]$,
\[
    |Q_{ij}|\1_\Xi\prec \Psi_\Theta,\quad |Q_{ii}-q|\1_\Xi\prec\Psi_\Theta.
\]
Moreover, for any fixed $L\ge 1$, uniformly over all $S\subseteq[N]$ with $|S|\leq L$, all $i\in[N]\setminus S$, all $h\geq 0$, and all $z\in\bD$, we have
\[
    |Q_{ii}^{(S)}|\1_\Xi\prec 1,\quad N^{-1}\|G^{(S)}\|_F\1_\Xi\prec\Psi_\Theta. 
\]
Finally, let $F(q)$ and $H(q)$ be defined as in Lemma \ref{lem:exact fixed point}. Then there exists a sufficiently large constant $L'>0$ such that, uniformly over all $z\in\bD$ and $h\geq 0$,
\[
    |s-F(q)|\1_\Xi\prec\Psi_\Theta^2+\max_{0\leq l\leq L'}\left|\frac{1}{N}\sum_{i=1}^N\frac{N^{-1}\Tr(\x_i\x_i^*-\Sigma)G^{(i)}(wI_n+q_0\Sigma)^{-l-1}\Sigma^{l+1}}{\hat\beta+e^{-h}N^{-1}\x_i^*G^{(i)}\x_i}\right|\1_\Xi\prec \Psi_\Theta,
\]
\[
    |s-s_0|\1_\Xi\prec\Theta+|s-F(q)|\1_\Xi,
\]
and
\begin{align*}
    |H(q)|\1_\Xi&\prec \Psi_\Theta^2+\left|\frac{1}{N}\sum_{i=1}^N\frac{N^{-1}\Tr(\x_i\x_i^*-\Sigma)G^{(i)}}{\hat\beta+e^{-h}N^{-1}\x_i^*G^{(i)}\x_i}\right|\1_\Xi\\
    &+\max_{0\leq l\leq L'}\left|\frac{1}{N}\sum_{i=1}^N\frac{N^{-1}\Tr(\x_i\x_i^*-\Sigma)G^{(i)}(wI_n+q_0\Sigma)^{-l-1}\Sigma^{l+1}}{\hat\beta+e^{-h}N^{-1}\x_i^*G^{(i)}\x_i}\right|\1_\Xi\prec \Psi_\Theta.
\end{align*}
\end{lemma}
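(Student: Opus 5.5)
We follow the standard order of a static local-law argument, working on the event $\Xi$ throughout. On $\Xi$ we have $|Q_{jj}-q_0|\le N^{-\tau/C_0}$ for every $j$. Since $|q_0|\asymp 1$ on regular domains, this gives $|Q_{jj}|\asymp 1$ and $|Q_{jj}|^{-1}\le C$.

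\textbf{Step 1: diagonal entries of minors.} We first transfer these bounds to minors $Q^{(S)}_{ii}$ with $|S|\le L$. We remove indices one at a time using Lemma~\ref{lem:resolvent identities}(b):
\[
1/Q^{(kS)}_{ii}=1/Q^{(S)}_{ii}+Q^{(S)}_{ik}Q^{(S)}_{ki}/(Q_{ii}^{(kS)}Q_{ii}^{(S)}Q_{kk}^{(S)}).
\]
On $\Xi$ the off-diagonal entries are at most $N^{-\tau/C_0}$. Induction on $|S|$ then gives $|Q^{(S)}_{ii}|\prec 1$, with $|Q^{(S)}_{ii}|^{-1}\prec 1$ as well.

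\textbf{Step 2: Frobenius norm of $G^{(S)}$.} By the Ward identity, Lemma~\ref{lem:resolvent identities}(d),
\[
N^{-2}\|G^{(S)}\|_F^2\le C\,\Im N^{-1}\Tr G^{(S)}/(N\eta).
\]
Lemma~\ref{lem:resolvent identities}(e) expresses $N^{-1}\Tr G$ through $q-q_0$ and $s-s_0$, up to a deterministic term of order $\Im\widetilde m_0$. The difference between $\Tr G^{(S)}$ and $\Tr G$ is $O_\prec(|S|\eta^{-1}N^{-1})$, which is harmless: it contributes at most $(N\eta)^{-2}$ after the Ward step. This gives
\[
N^{-1}\|G^{(S)}\|_F\prec \sqrt{(\Im\widetilde m_0+\Theta+|s-s_0|)/(N\eta)}+(N\eta)^{-1}.
\]
The term $|s-s_0|$ is handled by the bound $|s-s_0|\prec\Theta+|s-F(q)|$, proved below. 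It can be absorbed: $|s-F(q)|\prec\Psi_\Theta$, and $\sqrt{\Psi_\Theta/(N\eta)}\le\Psi_\Theta$ because $N\eta\ge1$.

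\textbf{Step 3: off-diagonal and diagonal fluctuations.} For $Q_{ij}$ we use Lemma~\ref{lem:resolvent identities}(a), $Q_{ij}=e^{-h}Q_{ii}Q^{(i)}_{jj}N^{-1}\x_i^*G^{(ij)}\x_j$. Conditioning on the other columns, we apply Assumption~\ref{ass:A2} to the bilinear form; by polarization in $\x_i,\x_j$, it is controlled by $N^{-1}\|G^{(ij)}\|_F$. This gives $|Q_{ij}|\prec\Psi_\Theta$.

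For $Q_{ii}-q$ we write
\[
1/Q_{ii}=-\hat\beta-e^{-h}N^{-1}\x_i^*G^{(i)}\x_i .
\]
The quadratic form concentrates around $N^{-1}\Tr\Sigma G^{(i)}=s^{(i)}$ with error $\prec N^{-1}\|G^{(i)}\|_F$. The difference $s^{(i)}-s$ is controlled by Sherman--Morrison: it is bounded by $N^{-2}|\x_i^*G^{(i)}\Sigma G^{(i)}\x_i|\prec N^{-1}\|\Sigma^{1/2}G^{(i)}\|_F^2/N$. Hence $1/Q_{ii}$ equals an $i$-independent quantity up to $O_\prec(\Psi_\Theta)$. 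Averaging over $i$ and using $|Q_{ii}|\asymp1$ yields $|Q_{ii}-q|\prec\Psi_\Theta$.

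\textbf{Step 4: the self-consistent equations.} We use the exact identities of Lemma~\ref{lem:exact fixed point}.

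For $H(q)$, the terms $e^{-h}Q_{ii}^2N^{-2}\x_i^*G^{(i)}\Sigma G^{(i)}\x_i$ are $\prec\Psi_\Theta^2$ by Step 2. The terms $Q_{ii}\cdot N^{-1}\Tr(\x_i\x_i^*-\Sigma)G^{(i)}$ are exactly the averaged sum displayed in the statement, after writing $Q_{ii}=-(\hat\beta+e^{-h}N^{-1}\x_i^*G^{(i)}\x_i)^{-1}$.

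For $d_i$, we replace $(\hat Z+e^{-h}q\Sigma)^{-1}$ by $(wI_n+q_0\Sigma)^{-1}$. The identity $\hat Z+e^{-h}q\Sigma=wI+q_0\Sigma+e^{-h}(q-q_0)\Sigma$ gives a Neumann expansion in powers of $e^{-h}(q-q_0)\Sigma(wI+q_0\Sigma)^{-1}$. On $\Xi$ this is a small factor, $\Theta\le N^{-\tau/C_0}$, and $\|(wI+q_0\Sigma)^{-1}\|\le C$. Truncating at order $L'$ with $L'\tau/C_0>1$ makes the remainder $\le N^{-1}\eta^{-C}\cdot N^{-L'\tau/C_0}$, which is negligible once $L'$ is large. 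Each retained term produces the weighted sums with $\Sigma^{l+1}(wI+q_0\Sigma)^{-l-1}$, times powers of $(q-q_0)$ bounded by $1$. The residual difference $G$ versus $G^{(i)}$ inside $d_i$ contributes $\Psi_\Theta^2$, again via Sherman--Morrison and Step 2.

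Each averaged sum is a priori $\prec\Psi_\Theta$, since each summand is; this gives the outer bounds in the statement. Finally, $|s-s_0|\prec\Theta+|s-F(q)|$ follows from $F(q_0)=s_0$ together with $|F(q)-F(q_0)|\le C|q-q_0|$, which holds on $\Xi$ by resolvent identities and the bound on $(wI+q_0\Sigma)^{-1}$.

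\textbf{Main obstacle.} The circularity in Step 2 is the delicate point: the Frobenius bound needs control of $s-s_0$, and that control needs Step 4. We resolve it by first proving the crude bound $|s-s_0|\1_\Xi\le CN^{-\tau/C_0}$. This crude bound holds because on $\Xi$ all $Q_{ii}\approx q_0$, which gives $s\approx F(q_0)$ via the $(a)$-type large-deviation bounds with the global estimate $\|G\|_F\le CN^{1/2}\eta^{-1}$. We then feed this bound into Step 2 and iterate once.
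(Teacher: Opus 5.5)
Your architecture matches the paper's: minor diagonal bounds by iterating Lemma~\ref{lem:resolvent identities}(b), the Ward identity with Lemma~\ref{lem:resolvent identities}(e) for $N^{-1}\|G^{(S)}\|_F$, the Neumann expansion of $(\hat Z+e^{-h}q\Sigma)^{-1}$ around $(wI_n+q_0\Sigma)^{-1}$, and the exact formulas of Lemma~\ref{lem:exact fixed point}. The gap is in the step you flag as the main obstacle: your way of closing the loop between Step~2 and Step~4 does not work.

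First, the crude bound $|s-s_0|\1_\Xi\le CN^{-\tau/C_0}$ does not follow from the global estimate. With $N^{-1/2}\|G^{(i)}\|_F\le C\eta^{-1}$, the large-deviation error in $N^{-1}\x_i^*G^{(i)}B\x_i$ is only $\prec N^{-1}\|G^{(i)}\|_F\le CN^{-1/2}\eta^{-1}$. At $\eta=N^{-1+\tau}$ this is $N^{1/2-\tau}$, not small. Nor does $\Xi$ supply the bound: $\Xi$ controls only the $22$-block, and Lemma~\ref{lem:resolvent identities}(e) is a single relation between the two unknowns $N^{-1}\Tr G$ and $s$.

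Second, even granting the crude bound, one iteration does not reach $\Psi_\Theta$. Suppose $\Im\widetilde m_0+\Theta\ll N^{-\tau/C_0}$, as happens outside the support where $\Im\widetilde m_0\asymp\eta/\sqrt{\kappa+\eta}$. Then Step~2 leaves an excess $\sqrt{N^{-\tau/C_0}/(N\eta)}\gg(N\eta)^{-1}$. After one feedback the excess is still $N^{-\tau/(4C_0)}(N\eta)^{-3/4}\gg(N\eta)^{-1}$, so you would need $O(\log(1/\eps))$ rounds.

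No a priori bound is needed. Insert Step~2 into Step~4 and into $|s-s_0|\prec\Theta+|s-F(q)|$ to get the self-consistent inequality
\[
 |s-s_0|\1_\Xi\prec\Theta+\Psi_\Theta+\sqrt{\frac{|s-s_0|}{N\eta}}\,\1_\Xi+\frac{|s-s_0|}{N\eta}\,\1_\Xi .
\]
Since $(N\eta)^{-1}\le N^{-\tau}$ and $\sqrt{ab}\le a/4+b$, this solves directly to $|s-s_0|\1_\Xi\prec\Theta+\Psi_\Theta$. Solving this quadratic inequality in $\sqrt{|s-s_0|}$ is exactly how the paper closes the argument. Your Step~2 then gives $N^{-1}\|G^{(S)}\|_F\1_\Xi\prec\Psi_\Theta$, and your Steps~3 and~4 go through.

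A smaller point in Step~3: ``polarization'' would require Assumption~\ref{ass:A2} for $\x_i\pm\x_j$, which is not assumed. Instead, condition on all columns except $\x_i$ and use $|\x_i^*\bv|\prec\|\bv\|_2$ with $\bv=G^{(ij)}\x_j$. Then $\|G^{(ij)}\x_j\|_2\prec\|G^{(ij)}\|_F$ by Assumption~\ref{ass:A2} applied to $\x_j$.
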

\begin{proof}
Since $\bD$ is a regular domain, we have $|q_0|\asymp 1$. It follows that, uniformly over all $i\in[N]$,
\[
    |Q_{ii}|\1_\Xi+|Q_{ii}|^{-1}\1_\Xi\prec 1.
\]
Thus, iteratively applying Lemma \ref{lem:resolvent identities} gives, uniformly over all $S\subseteq[N]$ with $|S|\leq L$ and all $i\in[N]\setminus S$
\[
    |Q_{ii}^{(S)}|\1_\Xi+\frac{1}{|Q_{ii}^{(S)}|}\1_\Xi\prec 1.
\]
Using this bound, Lemma \ref{lem:resolvent identities}, and Assumption \ref{ass:A2}, we follow the inductive argument from the proof of \cite[Lemma 4.2]{fan2026anisotropic} to obtain
\[
    |N^{-1}\Tr G-N^{-1}\Tr G^{(S)}|\1_\Xi\prec \frac{N^{-1}\Tr\Im G}{N\eta}\1_\Xi,\quad |s-s^{(S)}|\1_\Xi\prec \frac{\Im s}{N\eta}\1_\Xi.
\]
Now, suppose we have the bound $|s-s_0|\1_\Xi\prec\Theta+\Psi_\Theta$. Then Lemma \ref{lem:resolvent identities} gives
\begin{align*}
    &N^{-2}\|G^{(S)}\|_F^2\1_\Xi\prec\frac{N^{-1}\Tr\Im G^{(S)}}{N\eta}\1_\Xi\prec \frac{N^{-1}\Tr\Im G}{N\eta}\1_\Xi\\
    &\prec \Psi_\Theta^2+\frac{|N^{-1}\Tr G-N^{-1}\Tr(-wI_n-q_0\Sigma)^{-1}|}{N\eta}\1_\Xi\prec \Psi_\Theta^2+\frac{|s-s_0|}{N\eta}\1_\Xi\prec \Psi_\Theta^2.
\end{align*}
Here, we used $|N^{-1}\Tr\Im(-wI_n-q_0\Sigma)^{-1}|\leq \|\Im(wI_n+q_0\Sigma)^{-1}\|_\op\leq C\Im\widetilde m_0$. Similarly, Lemma \ref{lem:resolvent identities} gives
\[
    |Q_{ij}|\1_\Xi\prec |Q_{ii}Q_{jj}^{(i)}||N^{-1}\x_i^*G^{(ij)}\x_j|\1_\Xi\prec N^{-1}\|G^{(ij)}\|_F\1_\Xi\prec\Psi_\Theta.
\]
Moreover,
\begin{align*}
    |Q_{ii}-q|\1_\Xi&\leq \max_{1\leq j\leq N}|Q_{ii}-Q_{jj}|\1_\Xi=\max_{1\leq j\leq N}\frac{|Q_{ii}Q_{jj}||Q_{ii}-Q_{jj}|}{|Q_{ii}Q_{jj}|}\1_\Xi\\
    &\prec \max_{1\leq j\leq N}|Q_{ii}^{-1}-Q_{jj}^{-1}| \1_\Xi\prec\max_{1\leq j\leq N}|N^{-1}\x_i^*G^{(i)}\x_i-N^{-1}\x_j^*G^{(j)}\x_j|.
\end{align*}
By Assumption \ref{ass:A2} and Lemma \ref{lem:resolvent identities},
\begin{align*}
    &\max_{1\leq j\leq N}|N^{-1}\x_i^*G^{(i)}\x_i-N^{-1}\x_j^*G^{(j)}\x_j|\prec \max_{1\leq j\leq N}|N^{-1}\Tr\Sigma G^{(i)}-N^{-1}\Tr\Sigma G^{(j)}|+\Psi_\Theta\\
    &\prec |N^{-2}\x_i^*G^{(i)}\Sigma G^{(i)}\x_i|+\max_{1\leq j\leq N}|N^{-2}\x_j^*G^{(j)}\Sigma G^{(j)}\x_j|+\Psi_\Theta\prec \Psi_\Theta^2+\Psi_\Theta\prec\Psi_\Theta.
\end{align*}
It remains to prove the bounds on $|s-s_0|,|s-F(q)|$, and $|H(q)|$. We begin with $|s-s_0|$:
\begin{align*}
    |s-s_0|\1_\Xi&\leq |s-F(q)|\1_\Xi+|F(q)-s_0|\1_\Xi\\
    &\leq |s-F(q)|\1_\Xi+\Theta|N^{-1}\Tr(-\hat Z-e^{-h}q\Sigma)^{-1}(-wI_n-q_0\Sigma)^{-1}\Sigma^{2}|\1_\Xi\\
    &\leq |s-F(q)|\1_\Xi+\Theta\|(\hat Z+e^{-h}q\Sigma)^{-1}\|_\op\|(wI_n+q_0\Sigma)^{-1}\|_\op\1_\Xi\prec |s-F(q)|\1_\Xi+\Theta.
\end{align*}
Here, we used $\|(wI_n+q_0\Sigma)^{-1}\|_\op\leq C$. Moreover, since $|q-q_0|\1_\Xi\leq N^{-1}\sum|Q_{ii}-q_0|\1_\Xi\leq \Gamma\1_\Xi\leq N^{-\tau/C_0}$, the bound also holds for $\|(\hat Z+e^{-h}q\Sigma)^{-1}\|_\op\1_\Xi$. We next consider $|s-F(q)|\1_\Xi$. By the identity $A^{-1}-B^{-1}=A^{-1}(B-A)B^{-1}$, for any $L'>0$, we have
\begin{align*}
    (\hat Z+e^{-h}q\Sigma)^{-1}&=\sum_{l=0}^{L'}e^{-lh}(q_0-q)^l\underbrace{\Sigma^l(wI_n+q_0\Sigma)^{-l-1}}_{B_l}\\
    &+e^{-h(L'+1)}(q_0-q)^{L'+1}\underbrace{\Sigma^{L'+1}(wI_n+q_0\Sigma)^{-L'-1}(\hat Z+e^{-h}q\Sigma)^{-1}}_{J}.
\end{align*}
Using this identity and Lemma \ref{lem:exact fixed point}, we obtain
\begin{align*}
    |s-F(q)|\1_\Xi&\leq\left|\frac{1}{N}\sum_{i=1}^NQ_{ii}d_i\right|\1_\Xi\prec \max_{0\leq l\leq L'}\left|\frac{1}{N}\sum_{i=1}^NQ_{ii}(N^{-1}\x_i^*G^{(i)}\Sigma B_l\x_i-N^{-1}\Tr\Sigma G\Sigma B_l)\right|\1_\Xi\\
    &+N^{-\tau (L'+1)/C_0}\underbrace{\left|\frac{1}{N}\sum_{i=1}^NQ_{ii}(N^{-1}\x_i^*G^{(i)}\Sigma B_{L'}\x_i-N^{-1}\Tr\Sigma G\Sigma J)\right|}_{\mathbf{I}}\1_\Xi .
\end{align*}
For any $0\leq l\le L'$, Lemma \ref{lem:resolvent identities} gives
\begin{align*}
    &\left|\frac{1}{N}\sum_{i=1}^NQ_{ii}(N^{-1}\x_i^*G^{(i)}\Sigma B_l\x_i-N^{-1}\Tr\Sigma G\Sigma B_l)\right|\1_\Xi\\
    &\leq \left|\frac{1}{N}\sum_{i=1}^NQ_{ii}N^{-1}\Tr(\x_i\x_i^*-\Sigma)G^{(i)}\Sigma B_l\right|\1_\Xi+\left|\frac{1}{N}\sum_{i=1}^NQ_{ii}^2N^{-2}\x_i^*G^{(i)}\Sigma B_l\Sigma G^{(i)}\x_i\right|\1_\Xi\\
    &\prec\max_{1\leq i\leq N}N^{-1}\|\Sigma^{1/2}G^{(i)}\|_F\1_\Xi+\max_{1\leq i\leq N}(N^{-1}\|\Sigma^{1/2}G^{(i)}\|_F)^2\1_\Xi\prec \sqrt{\frac{\Im s}{N\eta}}\1_\Xi+\frac{\Im s}{N\eta}\1_\Xi.
\end{align*}
The remainder $\mathbf{I}$ admits the crude bound
\begin{align*}
    |\mathbf{I}|\1_\Xi\prec \max_{1\leq i\leq N}(\|G^{(i)}\|_\op+\|G\|_\op)\|B_{L'}\|_\op\prec \eta^{-2}\leq N^2.
\end{align*}
Taking $L'$ large enough that $\tau(L'+1)/C_0-2>1$ gives
\[
    N^{-\tau(L'+1)/C_0}|\mathbf{I}|\1_\Xi\prec N^{-\tau(L'+1)/C_0+2}\leq N^{-1}\leq \Psi_\Theta^2.
\]
Combining the preceding estimates yields
\begin{align*}
    |s-s_0|\1_\Xi
    &\prec |s-F(q)|\1_\Xi+\Theta\\
    &\prec \Theta+\Psi_\Theta^2+\frac{\Im s}{N\eta}\1_\Xi
      +\max_{0\leq l\leq L'}\left|\frac{1}{N}\sum_{i=1}^NQ_{ii}N^{-1}\Tr(\x_i\x_i^*-\Sigma)G^{(i)}\Sigma B_l\right|\1_\Xi\\
    &\leq \Theta+\Psi_\Theta^2+\sqrt{\frac{\Im s}{N\eta}}\1_\Xi
      +\frac{\Im s}{N\eta}\1_\Xi\\
    &\leq \Theta+\Psi_\Theta+\sqrt{\frac{|s-s_0|}{N\eta}}\1_\Xi
      +\frac{|s-s_0|}{N\eta}\1_\Xi.
\end{align*}
This is a quadratic inequality in $\sqrt{|s-s_0|}$. Solving it and then squaring gives
\[
    |s-s_0|\1_\Xi\prec \Theta+\Psi_\Theta,
\]
which closes the argument for $N^{-1}\|G^{(S)}\|_F\1_\Xi\prec \Psi_\Theta$. Substituting this bound into the preceding calculation for $|s-F(q)|\1_\Xi$ also gives
\[
    |s-F(q)|\1_\Xi\leq \left|\frac{1}{N}\sum_{i=1}^NQ_{ii}d_i\right|\1_\Xi\prec \Psi_\Theta^2+\max_{0\leq l\leq L'}\left|\frac{1}{N}\sum_{i=1}^NQ_{ii}N^{-1}\Tr(\x_i\x_i^*-\Sigma)G^{(i)}\Sigma B_l\right|\1_\Xi \prec \Psi_\Theta.
\]
Finally, Lemma \ref{lem:exact fixed point} gives
\begin{align*}
    &|H(q)|\1_\Xi\leq \max_{1\leq i\leq N}(N^{-1}\|G^{(i)}\|_F)^2\1_\Xi+\left|\frac{1}{N}\sum_{i=1}^NQ_{ii}d_i\right|\1_\Xi+\left|\frac{1}{N}\sum_{i=1}^NQ_{ii}N^{-1}\Tr(\x_i\x_i^*-\Sigma)G^{(i)}\right|\\
    &\prec \Psi_\Theta^2+\left|\frac{1}{N}\sum_{i=1}^NQ_{ii}N^{-1}\Tr(\x_i\x_i^*-\Sigma)G^{(i)}\right|+\max_{0\leq l\leq L'}\left|\frac{1}{N}\sum_{i=1}^NQ_{ii}N^{-1}\Tr(\x_i\x_i^*-\Sigma)G^{(i)}\Sigma B_l\right|\1_\Xi.
\end{align*}
This completes the proof.
\end{proof}

\subsection{Fluctuation averaging}
We now establish the analogue of \cite[Lemma 3.4]{fan2026anisotropic}. Its proof requires only minor notational changes.

\begin{lemma}[Fluctuation averaging]\label{lem:fluctuation averaging}
Suppose Assumptions \ref{ass:A1} and \ref{ass:A2} hold and $\bD$ is a regular domain. Suppose there exist a constant $\tau'>0$ and a deterministic function $\Phi:\bD\rightarrow[N^{-1/2},N^{-\tau'}]$ such that for any fixed $L\geq 1$, uniformly over all $S\subseteq[N]$ with $|S|\leq L$, all $i\in[N]\setminus S$, all $h\geq 0$, and all $z\in\bD$, we have
\[
    |Q_{ii}^{(S)}|\prec 1,\quad N^{-1}\|G^{(S)}\|_F\prec \Phi.
\]
Then, uniformly over all $h\geq 0$, $z\in\bD$, and deterministic matrices $A\in\C^{n\times n}$ with $\|A\|_\op\leq 1$,
\[
    \left|\sum_{i=1}^N\frac{N^{-1}\Tr(\x_i\x_i^*-\Sigma)G^{(i)}A}{\hat\beta +e^{-h}N^{-1}\x_i^*G^{(i)}\x_i}\right|\prec N\Phi^2.
\]
\end{lemma}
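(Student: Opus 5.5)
The plan is the standard high-moment fluctuation averaging argument of \cite[Lemma 3.4]{fan2026anisotropic} (cf.\ also \cite{erdHos2017dynamical}), run on the one-column minors $G^{(S)}$. Write
\[
 \mathcal Z_i:=Q_{ii}\,N^{-1}\Tr(\x_i\x_i^*-\Sigma)G^{(i)}A ,
\]
using $Q_{ii}=-(\hat\beta+e^{-h}N^{-1}\x_i^*G^{(i)}\x_i)^{-1}$ from Lemma \ref{lem:resolvent identities}(a); the sum in the statement is then $-\sum_i\mathcal Z_i$. Three inputs are available.

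\begin{enumerate}[label=(\roman*)]
\item \emph{Pointwise size.} By Assumption \ref{ass:A2}, $|\mathcal Z_i|\prec N^{-1}\|G^{(i)}A\|_F\prec\Phi$.
\item \emph{Centering.} $\E_i[N^{-1}\Tr(\x_i\x_i^*-\Sigma)G^{(i)}A]=0$, because $G^{(i)}$ does not depend on $\x_i$.
\item \emph{Moment bounds.} Lemma \ref{lem:global-bounds} gives $\E|\cdot|^k\le N^{C_k}$ for all quantities involved, so Lemma \ref{lem:stochastic-domination-calculus}(c) lets conditional expectations inherit $\prec$ bounds.
\end{enumerate}

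Fix an even $p$. We estimate $\E\big|\sum_i\mathcal Z_i\big|^{p}$ by expanding it into $\sum_{i_1,\dots,i_p}\E\prod_k\mathcal Z_{i_k}^{\#}$, where $\#$ denotes a possible complex conjugation. This is the standard decomposition. Put $\mathcal Q_i:=1-\E_i$ and write $\mathcal Z_i=\mathcal Q_i\mathcal Z_i+\E_i\mathcal Z_i$. For a fixed index tuple, let $T$ be the set of distinct indices. For each factor $i_k$ and each $j\in T\setminus\{i_k\}$, we expand the dependence of $\mathcal Z_{i_k}$ on $\x_j$ by the resolvent identities of Lemma \ref{lem:resolvent identities}:
\begin{enumerate}[label=(\alph*)]
\item part (c) (Sherman--Morrison) for $G^{(i)}$ versus $G^{(ij)}$;
\item part (b) for $Q_{ii}$ versus $Q^{(j)}_{ii}$.
\end{enumerate}
We expand to a fixed finite order, writing $\mathcal Z_{i_k}=\mathcal Z^{(T\setminus\{i_k\})}_{i_k}+{}$(correction terms).

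The correction terms are controlled as follows. Each replacement $G^{(S)}\to G^{(jS)}$ costs a factor
\[
 e^{-h}Q^{(S)}_{jj}N^{-2}\x_i^*G^{(jS)}\x_j\x_j^*G^{(jS)}\x_i ,
\]
or its analogue inside a trace against $(\x_i\x_i^*-\Sigma)$. Using the hypotheses $|Q^{(S)}_{jj}|\prec1$ and $N^{-1}\|G^{(S)}\|_F\prec\Phi$ together with Assumption \ref{ass:A2}, each such factor is $\prec\Phi$: the off-diagonal form $N^{-1}\x_i^*G\x_j$ is $\prec N^{-1}\|G\|_F\prec\Phi$ after conditioning on one of the columns, and similarly $Q_{ij}\prec\Phi$ via part (a). Consequently every index $j$ that appears only once in the tuple forces either
\begin{itemize}
\item an exact zero, since $\E_j\mathcal Q_j(\cdot)=0$ for the part independent of $\x_j$, or
\item an extra factor $\Phi$ coming from some other factor's dependence on $\x_j$.
\end{itemize}

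The counting then proceeds as in \cite{fan2026anisotropic}. A tuple with $r$ singleton indices and $|T|$ distinct indices contributes $\prec\Phi^{p+r}$. There are at most $N^{|T|}$ such tuples, and $|T|\le r+(p-r)/2$. Since $\Phi\ge N^{-1/2}$, each index beyond pairing is paid for by a factor $\Phi$, giving
\[
 N^{|T|}\Phi^{p+r}\le N^{p}\Phi^{2p}.
\]
Hence $\E|\sum_i\mathcal Z_i|^p\prec(N\Phi^2)^p$. Markov's inequality and the arbitrariness of $p$ then give the claim. Uniformity over $A$ with $\|A\|_\op\le1$, over $h$ and over $z$ holds because all bounds depend on $A$ only through $\|G^{(S)}A\|_F\le\|G^{(S)}\|_F$. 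The assumption $\Phi\le N^{-\tau'}$ ensures that a finite expansion order (depending on $p$ and $\tau'$) suffices, with remainders bounded crudely by $N^{C}\Phi^{M}$ for $M$ large.

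The main obstacle is the bookkeeping of the expansion. One has to verify that every leftover term obtained after expanding in the other columns is again a product of linear or quadratic forms in single columns against minor resolvents, to which Assumption \ref{ass:A2} applies conditionally. Only quadratic-form concentration is available here, not entrywise independence. I would handle this by always conditioning on all columns except the one currently being integrated, so that every form is of the type $\x_j^*B\x_j-\Tr\Sigma B$ or $\x_i^*B\x_j$ with $B$ independent of the column in question. For the mixed form $\x_i^*B\x_j$, I would use $A=\x_i\x_i^*$-type reductions: conditionally on $\x_i$, the form $\x_j^*(B^*\x_i)$ is linear in $\x_j$, so $|\x_i^*B\x_j|\prec\|B^*\x_i\|_2\prec\|B\|_F$.
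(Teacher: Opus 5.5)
Your proposal follows the same high-moment scheme as the paper's proof, which defers to \cite[Lemma~3.4]{fan2026anisotropic}: Sherman--Morrison/Schur recursions expand every factor into minors over the set of distinct indices, each singleton index yields a gain of $\Phi$, and the count $|T|\le r+(p-r)/2$ is closed using $\Phi\ge N^{-1/2}$. Your projections $1-\E_i$ are only a bookkeeping variant of the paper's explicit expansion of the denominator. The one step you leave implicit is the non-centered part $\E_i\mathcal Z_i$, which is nonzero because $Q_{ii}$ depends on $\x_i$. It satisfies $\E_i\mathcal Z_i\prec\Phi^2$, hence $\sum_i\E_i\mathcal Z_i\prec N\Phi^2$, because $Q_{ii}=-\cC^{(i)}+O_\prec(\Phi)$ with $\cC^{(i)}=(\hat\beta+e^{-h}N^{-1}\Tr\Sigma G^{(i)})^{-1}$ independent of $\x_i$ and $|\cC^{(i)}|\prec 1$; this is exactly what the paper's $\cC^{(S)}$ and \eqref{eq:QC-recursion} supply.
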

\begin{proof}
The proof is identical to that of \cite[Lemma 3.4]{fan2026anisotropic} after the following notational substitutions:
\[
    \g_i\rightarrow\x_i,\quad R^{(S)}\rightarrow G^{(S)},\quad 1+N^{-1}\g_i^*R^{(iS)}\g_i\rightarrow\hat\beta+e^{-h}N^{-1}\x_i^*G^{(iS)}\x_i.
\]
We briefly recall the argument. For external
indices $i,j,k\in S$, $i\notin\{j,k\}$, define
\begin{align*}
 \cY_i^{(S)}[A]
 &:=
 N^{-1/2}\Tr(\x_i\x_i^*-\Sigma)G^{(S)}A,\quad \cZ_{ijk}^{(S)}[A]
 :=
 \Tr(\x_i\x_i^*-\Sigma)
 \frac{G^{(S)}}N \x_j\x_k^*
 \frac{G^{(S)}A}{\sqrt N},\\
 \cB_{jk}^{(S)}
 &:=
 N^{-1}\x_j^*G^{(S)}\x_k,j\ne k,\quad \cP_i^{(S)}:=
 N^{-1}\Tr(\x_i\x_i^*-\Sigma)G^{(S)}\\
 \cQ_i^{(S)}&:=(\hat\beta+e^{-h}N^{-1}\x_i^*G^{(S)}\x_i)^{-1},\quad \cC^{(S)}:=(\hat\beta+e^{-h}N^{-1}\Tr\Sigma G^{(S)})^{-1}.
\end{align*}
Conditioning on the minor and applying quadratic- and mixed-form
concentration (Assumption \ref{ass:A2}) gives the exact analogue of FMPW Lemma 3.9:
\begin{equation}\label{eq:fmpw-aux-bounds}
 |\cY_i^{(S)}[A]|\prec N^{1/2}\Phi,\quad
 |\cZ_{ijk}^{(S)}[A]|\prec N^{1/2}\Phi^2,\quad
 |\cB_{jk}^{(S)}|+|\cP_i^{(S)}|\prec\Phi .
\end{equation}
For instance,
\[
 |\cY_i^{(S)}[A]|
 \prec N^{-1/2}\|G^{(S)}A\|_{F}
 \prec N^{1/2}\Phi.
\]

If $l\notin S$, then Lemma \ref{lem:resolvent identities} (the Sherman--Morrison recursion) gives the exact identities
\begin{align}
 \cY_i^{(S)}[A]
 &=
 \cY_i^{(lS)}[A]
 -e^{-h}\cZ_{ill}^{(lS)}[A]\cQ_l^{(lS)},\label{eq:Y-recursion}\\
 \cB_{ij}^{(S)}
 &=
 \cB_{ij}^{(lS)}
 -e^{-h}\cB_{il}^{(lS)}\cB_{lj}^{(lS)}\cQ_l^{(lS)},\label{eq:B-recursion}\\
 (\cQ_i^{(S)})^{-1}
 &=
 (\cQ_i^{(lS)})^{-1}
 -e^{-2h}\cB_{il}^{(lS)}\cB_{li}^{(lS)}\cQ_l^{(lS)},\label{eq:Q-recursion}\\
 (\cQ_i^{(S)})^{-1}
 &=(\cC^{(S)})^{-1}+e^{-h}\cP_i^{(S)}.\label{eq:QC-recursion}
\end{align}
The corresponding recursion for $\cZ_{ijk}^{(S)}$ is obtained by applying
Lemma \ref{lem:resolvent identities} to its two $G^{(S)}$-factors; it has the four terms
\[
 \cZ_{ijk}^{(lS)},\quad
 -e^{-h}\cZ_{ijl}^{(lS)}\cB_{kl}^{(lS)}\cQ_l^{(lS)},\quad
 -e^{-h}\cZ_{ilk}^{(lS)}\cB_{lj}^{(lS)}\cQ_l^{(lS)},\quad
 e^{-2h}\cZ_{ill}^{(lS)}\cB_{kl}^{(lS)}\cB_{lj}^{(lS)}[\cQ_l^{(lS)}]^2,
\]
The four displayed terms coincide with those in \cite[eq (3.6)--(3.12)]{fan2026anisotropic}. In particular, the
geometric expansions of $\cQ_i^{(S)}$ about $\cQ_i^{(lS)}$ and
$\cC^{(S)}$, with an arbitrarily high fixed truncation order, have the
same remainders as in \cite{fan2026anisotropic}.

For completeness, set
\[
 Y_A
 =
 \frac1{\sqrt N}\sum_{i=1}^N
 \cY_i^{(i)}[A]\cQ_i^{(i)}.
\]
In a $2p$-th moment expansion, let $S$ be the set of distinct column
indices. Repeated use of
\eqref{eq:Y-recursion}--\eqref{eq:QC-recursion} expands every factor into
minor-$S$ variables. More precisely, the single-factor expansion of \cite[Lemma 3.11]{fan2026anisotropic} applies verbatim:
each term produced from one original $\cY\cQ$-factor retains exactly one $\cY$- or $\cZ$-factor. Thus, the full $2p$-fold monomial contains the corresponding $2p$ distinguished factors. Properties (b) and (c) of \cite[Lemma 3.11]{fan2026anisotropic} apply with the substitutions above: the number of $\cB$-factors incident to the original column index is even, and the number of other distinct lower indices is bounded by the stated $\cZ$- and $\cB$-factor count. These
properties are unchanged because the extra coefficients in the present recursions are powers of $e^{-h}\in[0,1]$. Apply the singleton counting argument in the proof of
\cite[Lemma 3.4]{fan2026anisotropic}. The bounds
\eqref{eq:fmpw-aux-bounds} and $|S|\le p+\frac12|\{\text{singleton indices}\}|$ give
\[
    \E|Y_A|^{2p}\prec(N\Phi^2)^{2p}.
\]
Here, $\Phi\ge N^{-1/2}$ is used in the last power count. Markov's
inequality yields $Y_A\prec N\Phi^2$, proving the lemma.
\end{proof}

We are now ready to prove Proposition \ref{prop:averaged zig}.
\begin{proof}[Proof of Proposition \ref{prop:averaged zig}]
Uniformly over all $t\geq 0$, $X_t$ satisfies Assumptions \ref{ass:A1} and \ref{ass:A2}. We therefore work below with a general $X$ satisfying these two assumptions. Fix any $z\in\bD$ with $\eta=1$. Using Lemma \ref{lem:global-bounds}, Lemma \ref{lem:exact fixed point}, Assumption \ref{ass:A1}, and Assumption \ref{ass:A2}, we obtain
\begin{align*}
    |H(q)|&\prec \max_{1\leq i\leq N}|N^{-2}\x_i^* G^{(i)}\Sigma G^{(i)}\x_i|+\max_{1\leq i\leq N}|N^{-1}\Tr(\x_i\x_i^*-\Sigma)G^{(i)}|\\
    &+\max_{1\leq i\leq N}|N^{-1}\x_i^*G^{(i)}(\hat Z+e^{-h}q\Sigma)^{-1}\Sigma\x_i-N^{-1}\Tr\Sigma G(\hat Z+e^{-h}q\Sigma)^{-1}\Sigma|\\
    &\prec N^{-1}+N^{-1/2}+\max_{1\leq i\leq N}|N^{-1}\x_i^*G^{(i)}(\hat Z+e^{-h}q\Sigma)^{-1}\Sigma\x_i-N^{-1}\Tr\Sigma G(\hat Z+e^{-h}q\Sigma)^{-1}\Sigma|.
\end{align*}
We further expand the last term as follows:
\begin{align*}
    &|N^{-1}\x_i^*G^{(i)}(\hat Z+e^{-h}q\Sigma)^{-1}\Sigma\x_i-N^{-1}\Tr\Sigma G(\hat Z+e^{-h}q\Sigma)^{-1}\Sigma|\\
    &\leq |N^{-1}\x_i^*G^{(i)}(\hat Z+e^{-h}q^{(i)}\Sigma)^{-1}\Sigma\x_i-N^{-1}\Tr\Sigma G(\hat Z+e^{-h}q^{(i)}\Sigma)^{-1}\Sigma|\\
    &+|q-q^{(i)}||N^{-1}\x_i^*G^{(i)}(\hat Z+e^{-h}q\Sigma)^{-1}\Sigma(\hat Z+e^{-h}q^{(i)}\Sigma)^{-1}\Sigma\x_i|\\
    &+|q-q^{(i)}||N^{-1}\Tr\Sigma G(\hat Z+e^{-h}q\Sigma)^{-1}\Sigma(\hat Z+e^{-h}q^{(i)}\Sigma)^{-1}\Sigma|\\
    &\prec |N^{-1}\x_i^*G^{(i)}(\hat Z+e^{-h}q^{(i)}\Sigma)^{-1}\Sigma\x_i-N^{-1}\Tr\Sigma G(\hat Z+e^{-h}q^{(i)}\Sigma)^{-1}\Sigma|+N^{-1},
\end{align*}
Here, we used $|q-q^{(i)}|\prec N^{-1}$ and $|N^{-1}\x_i^*G^{(i)}(\hat Z+e^{-h}q\Sigma)^{-1}\Sigma(\hat Z+e^{-h}q^{(i)}\Sigma)^{-1}\Sigma\x_i|\prec \|G^{(i)}(\hat Z+e^{-h}q\Sigma)^{-1}\Sigma(\hat Z+e^{-h}q^{(i)}\Sigma)^{-1}\Sigma\|_\op\prec 1$, together with the analogous bound for $|N^{-1}\Tr\Sigma G(\hat Z+e^{-h}q\Sigma)^{-1}\Sigma(\hat Z+e^{-h}q^{(i)}\Sigma)^{-1}\Sigma|$. Finally,
\begin{align*}
    &|N^{-1}\x_i^*G^{(i)}(\hat Z+e^{-h}q^{(i)}\Sigma)^{-1}\Sigma\x_i-N^{-1}\Tr\Sigma G(\hat Z+e^{-h}q^{(i)}\Sigma)^{-1}\Sigma|\\
    &\prec |N^{-1}\Tr(\x_i\x_i^*-\Sigma)G^{(i)}(\hat Z+e^{-h}q^{(i)}\Sigma)^{-1}\Sigma|+|N^{-2}\x_i^*G^{(i)}(\hat Z+e^{-h}q^{(i)}\Sigma)^{-1}\Sigma^2G^{(i)}\x_i|\\
    &\prec N^{-1/2}+N^{-1}.
\end{align*}
Combining these estimates gives $|H(q)|\prec N^{-1/2}$.

We now follow the two-step weak-law argument of \cite[Lemmas~4.6 and~4.7]{fan2026anisotropic}. Fix $h\geq0$ and set $r=e^{-h}$; then the function $H(q)$ in Lemma \ref{lem:exact fixed point} is precisely $H_{r,z}(q)$ from Theorem \ref{thm:strong-stability}. At height one, the residual estimate above, Theorem \ref{thm:strong-stability}, Lemma \ref{lem:global-bounds}, Assumption \ref{ass:A2}, and Lemma \ref{lem:resolvent identities} give $\Gamma\prec N^{-1/4}$, exactly as in the proof of \cite[Lemma~4.6]{fan2026anisotropic}. For a general $z\in\bD$, enumerate the vertical lattice $L(z)$ downward with mesh $N^{-5}$ and repeat the stochastic-continuity induction from \cite[Lemma~4.7]{fan2026anisotropic}. The resolvent identity supplies the required Lipschitz control, Lemma \ref{lem:local bounds} supplies the residual estimate and propagates the a priori event $\Xi$, and Theorem \ref{thm:strong-stability} selects the correct root at each lattice point. The same union-bound argument therefore yields
\[
    \Gamma\prec (N\eta)^{-1/4}
\]
uniformly over $z\in\bD$.

With this weak law in hand, suppose that $\Theta\prec (N\eta)^{-c}$ for some $0<c<1$, and set
\[
    \Psi_c:=\sqrt{\frac{\Im\widetilde m_0+(N\eta)^{-c}}{N\eta}}.
\]
Then $\Psi_\Theta\prec\Psi_c$. Applying Lemma \ref{lem:fluctuation averaging} and Theorem \ref{thm:strong-stability} with $\Phi=\Psi_c$ gives
\begin{align*}
    |q-q_0|&\prec \frac{(\Im\widetilde m_0+(N\eta)^{-c})/(N\eta)}{\sqrt{(\Im\widetilde m_0+(N\eta)^{-c})/(N\eta)}+\sqrt{\kappa+\eta}}\leq \frac{\Im\widetilde m_0}{N\eta\sqrt{\kappa+\eta}}+(N\eta)^{-1/2-c/2}\\
    &\prec (N\eta)^{-1}+(N\eta)^{-1/2-c/2}.
\end{align*}
Here, we used the facts that, on a regular domain, $\eta\leq 1$ and $\Im\widetilde m_0\leq \max\{\sqrt{\kappa+\eta},\eta/\sqrt{\kappa+\eta}\}\leq \sqrt{\kappa+\eta}$. Thus, we obtain the implication
\[
    \Theta\prec (N\eta)^{-c}\implies \Theta\prec \max\{(N\eta)^{-1},(N\eta)^{-1/2-c/2}\}.
\]
For any $\eps>0$, starting with $c=1/4$ and iterating this implication a fixed number $C_\eps$ of times gives $\Theta\prec (N\eta)^{-1+\eps}$. Since $\eps$ was arbitrary, we conclude that $\Theta\prec (N\eta)^{-1}$. In particular, $\Psi_\Theta\prec\Psi$. Applying Lemma \ref{lem:local bounds} and Lemma \ref{lem:fluctuation averaging} with $\Phi=\Psi$ then gives
\[
    |s-s_0|\prec \Theta+\Psi^2\prec (N\eta)^{-1}.
\]
Returning to the normalized block traces, the definitions of $s-s_0$ and $q-q_0$ give, uniformly over all $t,h\geq 0$,
\[
    |N^{-1}\Tr\Sigma[\cR_{h,t}-M_h]_{11}|+|N^{-1}\Tr[\cR_{h,t}-M_h]_{22}|=e^{-h/2}(|s-s_0|+|q-q_0|)\prec e^{-h/2}(N\eta)^{-1}.
\]
To make the estimate uniform in $h$ and $t$, set
$T:=5\log N$ and $\delta_N:=N^{-50}$, and take uniform grids
$h_k=k\delta_N\leq T$ and $t_j=j\delta_N\leq T$.
Write $I_j=[t_j,\min\{t_j+\delta_N,T\}]$.
The fixed-time norm bound and a union bound give
$\max_jN^{-1/2}\|X_{t_j}\|_\op\prec1$.
For $t\in I_j$, the OU equation yields
\[
    X_t-X_{t_j}
    =(e^{-(t-t_j)/2}-1)X_{t_j}
    +\Sigma^{1/2}\int_{t_j}^{t}e^{-(t-u)/2}\,\de B_u.
\]
Brownian maximal estimates applied entrywise, followed by the
Frobenius-norm bound and a union bound, therefore imply
\[
    \max_j\sup_{t\in I_j}
    \|\cX_t-\cX_{t_j}\|_\op
    \prec \delta_N+\sqrt{N\delta_N}.
\]
The explicit source formulas also give
\[
    \|\cD_{h_1}-\cD_{h_2}\|_\op
    +\|M_{h_1}-M_{h_2}\|_\op
    \leq CN^{5/2}|h_1-h_2|,
    \qquad h_1,h_2\in[0,T].
\]
Consequently, using the resolvent identity and
$\|\cR_{h,t}\|_\op\leq C\eta^{-1}\leq CN$, we obtain
\begin{align*}
    &\|\cR_{h,t}-\cR_{h_k,t_j}\|_\op+\|M_h-M_{h_k}\|_\op\\
    &\qquad\prec
    N^2\bigl(\sqrt{N\delta_N}+N^{5/2}\delta_N\bigr)
    \prec N^{-45/2},
\end{align*}
simultaneously over all cells with
$0\leq h-h_k<\delta_N$ and $t\in I_j$.
Since $n/N$ and $\|\Sigma\|_\op$ are bounded, the same
interpolation error bound holds for the normalized block traces.

A union bound makes the previously established estimate
simultaneous on this polynomial-size grid.
Moreover, neighboring weights $e^{-h/2}$ differ by a factor
$1+O(\delta_N)$, and
\[
    e^{-h/2}(N\eta)^{-1}\geq N^{-7/2},
    \qquad h\in[0,T],\quad \eta\leq1.
\]
The interpolation error is therefore negligible, proving
uniformity over $(h,t)\in[0,T]^2$, with stochastic-domination
constants uniform in $z\in\bD$.
Restricting to $(h-t,t)$ for $0\leq t\leq h-h'$ completes
the proof.
\end{proof}

\section*{Acknowledgments}

T.M.~thanks Hugo Latourelle-Vigeant and Basil Saeed for their
early work exploring the zig-zag strategy, and Elliot Paquette
for helpful discussions. R.M.~thanks Zhou Fan, Elliot Paquette,
and Zhichao Wang for their collaboration on
\cite{fan2026anisotropic}, which motivated the present work,
and Haoyu Wang for helpful discussions on stochastic calculus.
AI tools assisted with language editing, proofreading, figure
preparation, and mathematical checks. The authors take full
responsibility for the content of the paper.

\bibliographystyle{amsalpha}
\bibliography{biblio}

\end{document}